\documentclass{article}

\usepackage[english]{babel}

\usepackage[letterpaper,top=2cm,bottom=2cm,left=3cm,right=3cm,marginparwidth=1.75cm]{geometry}

\usepackage{amsmath, amsfonts, amsthm, bbm, dsfont, mathrsfs, subcaption}
\usepackage{graphicx}
\usepackage[colorlinks=true, allcolors=blue]{hyperref}
\usepackage{cleveref}
\newtheorem{theorem}{Theorem}[section]
\newtheorem{definition}[theorem]{Definition}
\newtheorem{lemma}[theorem]{Lemma}

\newtheorem{assumption}[theorem]{Assumption}

\newcommand{\fc}{{\mathfrak c}}

\newlength\figureheight
\newlength\figurewidth

\def\RR{{\mathbb R}}

\newcommand\prob{\mathbb{P}}
\newcommand\expect{\mathbb{E}}

\newcommand{\be}{\begin{equation}}
\newcommand{\ee}{\end{equation}}
\newcommand{\wt}{\widetilde}

\newcommand\xx{\mathbf{x}}
\newcommand\yy{\mathbf{y}}
\newcommand\uu{\mathbf{u}}
\newcommand\ut{\tilde{\mathbf{u}}}
\newcommand\uh{\hat{\mathbf{u}}}
\newcommand\vv{\mathbf{v}}
\newcommand\vt{\tilde{\mathbf{v}}}
\newcommand\vh{\hat{\mathbf{v}}}
\renewcommand\AA{\mathsf{A}}
\newcommand\BB{\mathsf{B}}
\newcommand\DD{\mathsf{D}}

\newcommand\ii{\mathrm{i}}
\newcommand\dd{\mathrm{d}}
\numberwithin{equation}{section}

\newcommand{\Sig}{\Sigma}
\newcommand{\eps}{\varepsilon}

\newcommand{\pbb}[1]{\biggl({#1}\biggr)}
\newcommand{\pBB}[1]{\Biggl({#1}\Biggr)}

\newcommand{\beq}{\begin{equation}}
\newcommand{\bEq}{\end{equation}}

\newcommand{\al}{\alpha}

\newcommand{\e}{{\varepsilon}}

\newcommand{\Ga}{{\Gamma}}

\newcommand{\si}{\sigma}
\newcommand{\tsig}{\widetilde{\sigma}}

\newcommand{\cQ}{{\cal Q}}

\usepackage{amsmath}
\usepackage{amssymb}
\usepackage{amsthm}

\def\RR{{\mathbb R}}

\newcommand{\bb}{\mathbb}

\renewcommand{\cal}{\mathcal}

\newcommand{\ul}[1]{\underline{#1} \!\,}

\newcommand{\ctQ}{\widetilde{\mathcal{Q}}}

\newcommand{\col}{\mathrel{\mathop:}}
\newcommand{\deq}{\mathrel{\mathop:}=}

\renewcommand{\epsilon}{\varepsilon}
\renewcommand{\leq}{\leqslant}
\renewcommand{\geq}{\geqslant}

\renewcommand{\le}{\leq}
\renewcommand{\ge}{\geq}

\newcommand{\E}{\mathbb{E}}
\newcommand{\R}{\mathbb{R}}
\newcommand{\C}{\mathbb{C}}
\newcommand{\N}{\mathbb{N}}
\newcommand{\Z}{\mathbb{Z}}

\newcommand{\p}[1]{({#1})}
\newcommand{\pb}[1]{\bigl({#1}\bigr)}
\newcommand{\pB}[1]{\Bigl({#1}\Bigr)}
\newcommand{\pa}[1]{\left({#1}\right)}

\newcommand{\qb}[1]{\bigl[{#1}\bigr]}
\newcommand{\qB}[1]{\Bigl[{#1}\Bigr]}
\newcommand{\qbb}[1]{\biggl[{#1}\biggr]}

\newcommand{\qa}[1]{\left[{#1}\right]}

\newcommand{\h}[1]{\{{#1}\}}

\newcommand{\abs}[1]{\lvert #1 \rvert}
\newcommand{\absb}[1]{\bigl\lvert #1 \bigr\rvert}

\newcommand{\absa}[1]{\left\lvert #1 \right\rvert}

\newcommand{\norm}[1]{\lVert #1 \rVert}

\newcommand{\normBB}[1]{\Biggl\lVert #1 \Biggr\rVert}
\newcommand{\norma}[1]{\left\lVert #1 \right\rVert}

\DeclareMathOperator{\tr}{Tr}

\DeclareMathOperator{\supp}{supp}

\DeclareMathOperator{\im}{Im}
\renewcommand{\Re}{{\mathrm{Re}}\,}
\renewcommand{\Im}{{\mathrm{Im}}\,}

\DeclareMathOperator{\OO}{O}
\DeclareMathOperator{\oo}{o}

\DeclareMathOperator{\bv}{\mathbf{v}}

\theoremstyle{plain}

\newcommand{\Tr}{\mbox{Tr\,}}
\newcommand{\f}[1]{\boldsymbol{\mathrm{#1}}}

\allowdisplaybreaks
\title{Local Laws and Outlier Eigenvalues for Spiked Separable Covariance Matrices}
\author{
  Zhili Wang\thanks{Qiuzhen College, Tsinghua University. E-mail: wang-zl21@mails.tsinghua.edu.cn}
  \and 
  Bin Qin\thanks{Department of Probability and Statistics, School of Mathematical Sciences, Peking University. E-mail: bqin25@stu.pku.edu.cn}
}

\begin{document}
\maketitle

\begin{abstract}
We prove local laws for the resolvents of separable covariance matrices of the form $\mathcal Q=A^{1/2}XBX^*A^{1/2}$, where $X=(x_{ij})$ is a $p\times n$ random matrix whose entries $x_{ij}$ are i.i.d.~random variables with mean 0 and variance $n^{-1}$, and $A,B$ are deterministic non-negative definite symmetric (or Hermitian) matrices. Following the method developed in \cite{isotropic}, we first establish a self-consistent equation for the resolvent of $\mathcal Q$ and use it to prove optimal local laws without the technical assumption $\expect[x_{ij}^{3}]=0$, which was essential in the previous derivation of the local laws in \cite{yang2019edge}. As an application of our local law, we compute the asymptotic distribution of the outlier eigenvalues for spiked separable covariance matrices, extending the corresponding result in \cite{bao2020statisticalinferenceprincipalcomponents}.
\end{abstract}

\section{Introduction}

In multivariate statistics, a fundamental objective is to infer the covariance structure of a centered random vector $\mathbf y\in \mathbb R^p$ from independent observations. Suppose that we observe $n$ independent copies $\mathbf y_1,\cdots,\mathbf y_n$, and write the (normalized) data matrix as $Y:=n^{-1/2}(\mathbf y_1,\cdots,\mathbf y_n)\in\mathbb R^{p\times n}$. A natural estimator for the population covariance matrix $\Sigma:=\mathbb E \mathbf y \mathbf y^*$ is the sample covariance matrix $\mathcal Q := YY^*$. When $p$ is fixed and $n\to\infty$, the matrix $\mathcal Q$ converges almost surely to $\Sigma$. In many modern applications, however, such as high-dimensional statistics \cite{IJ,IJ2,IJ2008,Peter2008,DT2011,ledoit2012nonlinear,Tony2012}, economics \cite{fan2008,Onatski2}, and population genetics \cite{Genetics, Bryc_2013,xu2022eigenvalueratioapproachinferring} the dimension $p$ is comparable to, or even larger than, the sample size $n$. In this regime, the classical asymptotic framework is no longer applicable. Throughout this paper, we work in the high-dimensional setting where $d_n:=p/n\in[\tau,\tau^{-1}]$ for some fixed constant $\tau>0$. Although $\mathcal Q$ is no longer a consistent estimator of $\Sigma$ in operator norm, its spectral statistics still contain useful information about the underlying covariance structure. 
It is helpful to interpret the rows of the data matrix as spatial locations and the columns as observation times. The classical correlated sample covariance model takes the form $Y=A^{1/2}X$, where $A$ is a deterministic $p\times p$ non-negative definite symmetric (or Hermitian) matrix describing spatial correlations, and $X=(x_{i\mu})$ is a $p\times n$ random matrix with $i.i.d.$ entries satisfying $\mathbb E x_{11}=0$ and $\mathbb E |x_{11}|^2=n^{-1}$. The associated sample covariance matrix is $\mathcal Q=A^{1/2}XX^*A^{1/2}$. 
The limiting empirical spectral distribution of such matrices is described by the deformed Marchenko--Pastur law \cite{MP}. Much finer properties, including local laws, eigenvalue rigidity, and Tracy--Widom fluctuations of the extreme eigenvalues, have also been established in considerable generality; see, for example, \cite{Isotropic_local,Anisotropic,principal,DY_AAP,BAO2024143}.

In many applications, the observations exhibit correlations not only across spatial locations but also across time. Such structured dependence appears naturally in environmental statistics \cite{MF2006,KJ1999,LMS2008,MG2003} and wireless communication systems \cite{985982,RMT_Wireless,5437443}. This motivates the separable covariance model
\[
Y=A^{1/2}XB^{1/2},
\]
where $A$ and $B$ are deterministic non-negative definite symmetric (or Hermitian) matrices of dimensions $p\times p$ and $n\times n$, respectively. Neither $A$ nor $B$ is assumed to be diagonal. Thus, the model allows general spatial correlations through $A$ and general temporal correlations through $B$. The term {\it separable} reflects the fact that, in the Gaussian case, the covariance matrix of the vectorized data is $A\otimes B$, so that the spatial and temporal covariance structures factorize. The corresponding separable covariance matrix is
\[
\mathcal Q:=YY^*=A^{1/2}XBX^*A^{1/2}.
\]
Separable covariance matrices have been studied in a variety of contexts, including wireless communications, where spectral quantities are closely related to channel capacity \cite{Verdu,RMT_Wireless}. Their global spectral behavior and deterministic equivalents were investigated in \cite{Hachem2007,Separable,Separable_solution}, among other works; see also \cite{PRE,Karoui2009,WANG2014,Zhang_thesis}.

The main object of the present paper is the local spectral behavior of separable covariance matrices. Local laws give optimal-scale estimates for resolvents and provide the basic input for eigenvalue rigidity, edge universality, and the analysis of spiked models. When both $A$ and $B$ are diagonal, optimal local laws for random Gram matrices were proved in \cite{Alt_Gram,AEK_Gram}. For general non-diagonal $A$ and $B$, Yang \cite{yang2019edge} established optimal local laws and edge universality for separable covariance matrices, but the proof of the optimal local laws in the non-diagonal case required the technical vanishing third moment assumption
\begin{align}
  \expect[x_{11}^3]=0. \label{eq_3rdmoment}
\end{align}
This condition was introduced in \cite{yang2019edge} as a technical assumption required for the proof of anisotropic local laws via a self-consistent Green's function comparison argument. However, it is not believed to be an intrinsic feature of the separable covariance model. Indeed, in certain special cases where one of the population covariance matrices is diagonal, the vanishing third-moment condition can be removed by adapting the argument of \cite{Anisotropic}. For general non-diagonal matrices $A$ and $B$, however, the interaction between the two deterministic eigenvector bases makes the contributions arising from nonzero third moments substantially more difficult to control.

Our first contribution is to remove this assumption \eqref{eq_3rdmoment}. More precisely, we prove the optimal anisotropic and averaged local laws for general separable covariance matrices under a bounded support condition on the entries of $X$, without imposing \eqref{eq_3rdmoment}. Instead of relying on a Green's function comparison argument that requires third-moment matching, we follow the approach of \cite{isotropic} and control the relevant error terms directly through cumulant expansions and isotropic self-consistent equations.
This strengthens the local-law input in \cite{yang2019edge} and, consequently, removes the third-moment restriction from results whose proofs relied on those local laws.

As an application, we study spiked separable covariance matrices. Spiked models are designed to capture low-rank signal structures in high-dimensional data \cite{xu2022eigenvalueratioapproachinferring} and have been extensively studied for classical sample covariance matrices; see, for example, \cite{BBPstarter,bai2006,paul2007,bai2008,bai2012,principal}.
In the separable setting, spikes may appear in either the spatial covariance matrix or the temporal covariance matrix. Both types of spikes can generate outlier eigenvalues of the separable covariance matrix, while their associated eigenvectors carry different information about the underlying spatial and temporal signal directions. Ding and Yang \cite{ding2020spikedseparablecovariancematrices} obtained precise outlier locations, convergence rates, and eigenvector behavior for spiked separable covariance matrices, using the local laws and rigidity estimates available for the non-spiked model. Their results identify the supercritical spikes, describe the deterministic locations of the associated outlier eigenvalues, and distinguish the eigenvector behavior associated with spatial and temporal spikes. The second-order fluctuation distribution of these outliers, however, was not derived there. 
Moreover, some of the results in \cite{ding2020spikedseparablecovariancematrices} are not optimal, as they are based on the local laws established in \cite{yang2019edge}. As discussed above, the optimal local laws in that work were derived under the third-moment condition \eqref{eq_3rdmoment}.

Our second contribution is to establish the limiting distribution of the supercritical outlier eigenvalues of spiked separable covariance matrices without imposing the condition \eqref{eq_3rdmoment}. To the best of our knowledge, this is the first such distributional result for the general spiked separable model. Compared with the classical spiked sample covariance model, the separable model contains additional deterministic structure from the temporal covariance matrix $B$, and the resulting fluctuation variances depend on both population covariance matrices and on the alignment of the spike eigenvectors with the deterministic eigenbases. 
The proof relies crucially on the local laws established in this paper, which allow us to reduce the fluctuations of the outlier eigenvalues to certain resolvent functionals of the non-spiked model. These functionals are then analyzed using the local laws and a further cumulant expansion argument, following the approach of \cite{bao2020statisticalinferenceprincipalcomponents,Bao2021AOS}. In particular, when $A=I_p$ and $B=I_n$, our formula recovers the corresponding outlier fluctuation result for the classical spiked covariance model obtained in \cite{bao2020statisticalinferenceprincipalcomponents}.

\medskip

The rest of the paper is organized as follows. In Section \ref{main_result}, we introduce the non-spiked and spiked separable covariance models and state the main results: the local law, Theorem \ref{local_law}, for the non-spiked model, and the asymptotic distribution of the outlier eigenvalues, Theorem \ref{distribution of outliers}, for the spiked model. The latter relies crucially on the local laws established in Theorem \ref{local_law}. 
Section \ref{sec:stoch_step} is devoted to the stochastic step in the proof of Theorem \ref{local_law}, based on the method of cumulant expansions. In Section \ref{sec:det_step}, we carry out the deterministic step and complete the proof of Theorem \ref{local_law}. 
Finally, in Section \ref{sec:pf_outlier_distribution}, we prove Theorem \ref{distribution of outliers} by applying the local laws to the spiked model. We first reduce the outlier problem to resolvent functionals and then establish their asymptotic distribution through a cumulant expansion argument.   

\subsection*{Acknowledgement}  
We would like to thank Fan Yang for suggesting the problem, providing valuable guidance, and commenting on the draft of the paper. 
AI tools were used only for polishing the language of this paper. All mathematical results, arguments, and computations were developed, written, and checked by the authors.

\section{Model and Main Results}\label{main_result}

\subsection{Spiked separable covariance matrices}
{}We consider a class of separable sample covariance matrices of the form $\mathcal Q_1:=A^{1/2}XBX^*A^{1/2}$, where $A$ and $B$ are deterministic non-negative definite real symmetric matrices. Note that $A$ and $B$ are not necessarily diagonal. We assume that $X=(x_{i\mu})_{i\in\cal I_1,\mu\in\cal I_2}$ is a $p\times n$ random matrix, where $\cal I_1:=\{1,\cdots,p\}$ and $\cal I_2:=\{1,\cdots,n\}$ are index sets, and the entries $x_{i\mu}$ are real or complex independent random variables satisfying
\begin{equation}\label{eq_124moment}
\mathbb{E} x_{i\mu} =0, \quad \mathbb{E} \abs{x_{i\mu}}^2  = n^{-1},\quad \max_{i,\mu}\mathbb{E}\abs{x_{i\mu}}^4 \leq C_4n^{-2},  
\end{equation}
for some constant $C_4>0$. For definiteness, in this paper we focus on the real case, that is, the random variables $x_{i\mu}$ are real. However, our proof can be applied to the complex case after minor modifications if we assume in addition that $\Re x_{i\mu}$ and $\Im x_{i\mu}$ are independent centered random variables with variance $(2n)^{-1}$. 
When discussing applications of our main results—namely, local laws for the resolvents of separable covariance matrices—to the distribution of outliers (Theorem \ref{distribution of outliers}), we shall additionally assume that the random variables $x_{i\mu}$ possess moments of arbitrarily high order. More precisely, for any fixed $k\in \mathbb N$, there exists a constant $C_k>0$ such that
\begin{equation}\label{eq_highmoment} 
\max_{i,\mu}\expect\abs{x_{i\mu}}^k \le C_k n^{-k/2} 
\end{equation}
for all $n$. This is a convenient assumption that is widely used in applications. However, for the sake of generality, we impose only the weaker bounded support condition \eqref{eq_support} when establishing the local law (Theorem \ref{local_law}).

We will also use the $n \times n$ matrix $\mathcal Q_2:=B^{1/2}X^* A X B^{1/2}$. 
We denote the eigenvalues of $\mathcal Q_1$ and $\mathcal Q_2$ in descending order by $\lambda_1(\mathcal Q_1)\geq \cdots \geq \lambda_{p}(\mathcal Q_1)$ and $\lambda_1(\mathcal Q_2) \geq \cdots \geq \lambda_n(\mathcal Q_2)$. Since $\mathcal Q_1$ and $\mathcal Q_2$ share the same nonzero eigenvalues, we will simply write $\lambda_j$, $1\le j \le p \wedge n$, to denote the $j$-th eigenvalue of both $\mathcal Q_1$ and $\mathcal Q_2$ without causing any confusion. 
We shall consider the high-dimensional setting in this paper. More precisely, we assume that there exists a constant $0<\tau <1$ such that the aspect ratio $d_n:= p/n$ satisfies 
\begin{equation}
 \tau \le d_n \le \tau^{-1} \ \ \text{ for all } n. \label{eq_ratio}
 \end{equation}
We assume that $A$ and $B$ have eigendecompositions
\be\label{eigen}
A= V^a\Sig^a (V^a)^*, \quad  B= V^b \Sigma^b (V^b)^* ,
\ee
where
$$\Sig^a=\text{diag}(\si_1^a, \ldots, \si_p^a), \quad V^a= (\bv^a_1, \cdots, \bv^a_p),\quad \text{and}\quad  \Sig^b=\text{diag}( \si_1^b, \ldots,  \si_n^b), \quad V^b= (\bv^b_1, \cdots, \bv^b_n). $$ 
We denote the empirical spectral distributions (ESD) of $A$ and $B$ by
\begin{equation}\label{sigma_ESD}
\pi_A\equiv \pi_A^{(p)} := \frac{1}{p} \sum_{i\in\cal I_1} \delta_{\si_i^a} \ ,\quad \pi_B\equiv \pi_B^{(n)} := \frac{1}{n} \sum_{\mu\in\cal I_2} \delta_{\si_\mu^b}\ .
\end{equation}
We assume that there exists a small constant $0<\tau<1$ such that for all $n$ large enough,
\begin{equation}\label{assm3}
\max\left\{\si_i^a,  \sigma_\mu^b:i\in\cal I_1,\mu\in\cal I_2 \right\} \le \tau^{-1}, \quad \max\left\{\pi_A^{(p)}([0,\tau]), \pi_B^{(n)}([0,\tau])\right\} \le 1 - \tau .
\end{equation}
Note that the first condition means that the operator norms of $A$ and $B$ are bounded by $\tau^{-1}$, and the second condition means that the spectra of $A$ and $B$ cannot concentrate at zero.

To introduce spikes, we follow the setup in \cite{ding2017} and assume there exist fixed integers $r,s\in\N$ and constants $d_i^a$ for $i\in\cal I_1^+:=\{1,\ldots,r\}\subset\cal I_1$, and $d_\mu^b$ for $\mu\in\cal I_2^+:=\{1,\ldots,s\}\subset\cal I_2$, such that
\begin{equation}\label{eq_defnsigmaa}
\begin{split}
\wt A= V^a\wt \Sig^a (V^a)^*, \quad & \wt B= V^b \wt \Sigma^b (V^b)^* ,\\
 \wt\Sig^a=\text{diag}(\wt\si_1^a, \ldots, \wt\si_p^a), \quad & \wt\Sig^b=\text{diag}( \wt\si_1^b, \ldots,  \wt\si_n^b),
 \end{split}
\end{equation}
where 
  \begin{equation}\label{eq_defnsigmab}
\widetilde{\sigma}_i^a=
 \begin{cases}
 \sigma_i^a(1+d^a_i),   & 1 \leq i \leq r \\
 \sigma_i^a,  & \text{otherwise}
 \end{cases},\qquad \widetilde{\sigma}_\mu^b=
 \begin{cases}
 \sigma_\mu^b(1+d^b_\mu),   & 1 \leq \mu \leq s \\
 \sigma_\mu^b,  & \text{otherwise}
 \end{cases}.
 \end{equation} 
 Without loss of generality, we assume that the indices are reordered such that 
 \be\label{reorder} \wt\si_1^a \ge \wt\si_2^a \ge \ldots \ge \wt\si_p^a \ge 0 \ , \quad  \wt\si_1^b \ge \wt\si_2^b \ge \ldots \ge  \wt\si^b_n \ge 0 \ .\ee
 Moreover, we assume that 
 \begin{equation}\label{assm33}
\max\{\widetilde\si_1^a,  \widetilde\sigma_1^b \} \le \tau^{-1} .
\end{equation}
With (\ref{eq_defnsigmaa}) and (\ref{eq_defnsigmab}), we can write   
\be\label{AOBO} 
\begin{split}
& \widetilde A = A\Big(I_p+{V_o^a} {D}^{a} (V_o^a)^*\Big)=\Big(I_p+ V_o^{a} D^a(V_o^a)^*\Big)A, \\ 
& \widetilde B= B\Big(I_n+V_o^{b} {D}^{b} {(V_o^b)}^*\Big)=\Big(I_n+V_o^{b} {D}^{b} {(V_o^b)}^*\Big)B,
\end{split}
\ee
where 
$${D}^{a}=\text{diag}(d_1^{a},\cdots, d_r^a), \quad V_o^a=(\bv_1^a,\cdots, \bv^a_r),\quad \text{and}\quad {D}^{b}=\text{diag}(d_1^{b},\cdots, d_s^b), \quad V_o^b=(\bv_1^b,\cdots, \bv^b_s).$$
We define the spiked separable sample covariance matrices as
\begin{equation}\label{eq_sepamodel}
\widetilde{\mathcal{Q}}_1=\widetilde A^{1/2} X \widetilde B X^* \widetilde A^{1/2}, \quad  \widetilde{\mathcal{Q}}_2=\widetilde B^{1/2} X^* \widetilde A X \widetilde B^{1/2}.
\end{equation} 

We summarize the basic assumptions used below. To establish local laws for the resolvents of (non-spiked) separable sample covariance matrices, we make the following assumption.

\begin{assumption}\label{assm_big1}
Assume $X$ is a $p\times n$ random matrix with real i.i.d.~entries satisfying \eqref{eq_124moment}, $A$ and $B$ are deterministic non-negative definite symmetric matrices satisfying \eqref{eigen} and \eqref{assm3}, and $d_n$ satisfies \eqref{eq_ratio}.
\end{assumption}

For the spiked model, we additionally assume the high-moment condition \eqref{eq_highmoment}.

\begin{assumption}\label{assm_big1'}
Assume $X$ is a $p\times n$ random matrix with real i.i.d. entries satisfying \eqref{eq_124moment} and \eqref{eq_highmoment}. Moreover, we denote by $\kappa_3$ and $\kappa_4$ the third and fourth cumulants, respectively, of the entries of $X$.
In addition, assume that $A$ and $B$ are deterministic, non-negative definite symmetric matrices satisfying \eqref{eigen} and \eqref{assm3}, that \smash{$\widetilde A$} and \smash{$\widetilde B$} are deterministic, non-negative definite symmetric matrices satisfying \eqref{eq_defnsigmaa}, \eqref{eq_defnsigmab}, \eqref{reorder}, and \eqref{assm33}, and that $d_n$ satisfies \eqref{eq_ratio}.
\end{assumption}

\subsection{Resolvents and limiting law}

We study the eigenvalue statistics of $\mathcal Q_{1}$, $\mathcal Q_{2}$ and $\wt {\mathcal Q}_{1}$, $\wt {\mathcal Q}_{2}$ through their {\it{resolvents}} (or  {\it{Green's functions}}). 

\begin{definition}[Resolvents]\label{defn_resolvent}
For $z = E+ \ii \eta \in \mathbb C_+,$ we define the following resolvents for $\al=1,2$: 
\begin{equation}\label{def_green}
 \mathcal G_{\al}(X,z):=\left({\mathcal Q}_{\al}(X) -z\right)^{-1} , \ \ \ \wt{\mathcal G}_{\al} (X,z):=(\wt{\mathcal Q}_{\al}(X)-z)^{-1} .
\end{equation}
 We denote the ESD $\rho^{(p)}$ of ${\mathcal Q}_{1}$ and its Stieltjes transform as
\be\label{defn_m}
\rho^{(p)} := \frac{1}{p} \sum_{i\in\cal I_1} \delta_{\lambda_i({\mathcal Q}_1)},\quad m^{(p)}(z):=\int \frac{\rho^{(p)}(\dd x)}{x-z}=\frac{1}{p} \mathrm{Tr} \, \mathcal G_1(z).
\ee
We also introduce the following quantities:
\begin{equation}\label{defn_m1m2}
 m_1^{(n)}(z):= \frac{1}n\tr \left(A \mathcal G_1(z)\right) ,\quad m_2^{(n)}(z):=\frac{1}{n}\tr\left( B\mathcal G_2(z)\right). 
\end{equation}
\end{definition}

 It was shown in \cite{Separable} that if $d_n \to d \in (0,\infty)$ and $\pi_A^{(p)}$, $\pi_B^{(n)}$ converge to fixed probability distributions, then almost surely $\rho^{(p)}$ converges to a deterministic limit law $\rho_{\infty}$ as $n\to\infty$. We now give its definition. 
For any finite $n$, $p=nd_n$, and $z\in \mathbb C_+$, we define \smash{$(m^{(n)}_{1c}(z),m^{(n)}_{2c}(z))\in \mathbb C_+^2$} as the unique solution to the following system of self-consistent equations
\begin{equation}\label{separa_m12}
\begin{split}
& {m^{(n)}_{1c}(z)} = d_n \int\frac{x}{-z\left[1+xm^{(n)}_{2c}(z) \right]} \pi_A^{(p)}(\dd x) ,\\ 
& {m^{(n)}_{2c}(z)} =  \int\frac{x}{-z\left[1+xm^{(n)}_{1c}(z) \right]} \pi_B^{(n)}(\dd x) .
\end{split}
\end{equation}
Then we define
\begin{equation}\label{def_mc}
m_c(z)\equiv m_c^{(p)}(z):= \int\frac{1}{-z\left[1+xm^{(n)}_{2c}(z) \right]} \pi_A^{(p)}(\dd x).
\end{equation}
It is easy to verify that $m_c(z)\in \mathbb C_+$ for $z\in \mathbb C_+$. Letting $\eta \to 0^+$, we obtain a probability measure $\rho_{c}^{(p)}$ through the inversion formula
\begin{equation}\label{ST_inverse}
\rho_{c}^{(p)}(E) =\frac{1}{\pi} \lim_{\eta\to 0^+} \Im m^{(p)}_{c}(E+\ii \eta).
\end{equation}
If $d_n \to d \in (0,\infty)$ and $\pi_A^{(p)}$, $\pi_B^{(n)}$ converge to fixed probability distributions, {then $\rho_{c}^{(p)}$ converges weakly as $n\to \infty$, and its weak limit is $\rho_\infty$.}

The above definitions of $m_c^{(p)}$, $\rho_c^{(p)}$, and $\rho_\infty$ make sense due to the following theorem. Throughout the rest of this paper, we often omit the superscripts $(p)$ and $(n)$ from our notations for simplicity.

\begin{theorem} [Existence, uniqueness, and continuous density]
For any $z\in \mathbb C_+$, there exists a unique solution $(m_{1c},m_{2c})\in \mathbb C_+^2$ to the system of equations in (\ref{separa_m12}). The function $m_c$ in (\ref{def_mc}) is the Stieltjes transform of a probability measure $\mu_c$ supported on $\mathbb R^+$. Moreover, $\mu_c$ has a continuous derivative $\rho_c(x)$ on $(0,\infty)$.
\end{theorem}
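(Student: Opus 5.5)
We plan to prove this by realizing $(m_{1c},m_{2c})$ as the limit of the Stieltjes-transform quantities of an explicit random model. Existence comes from that limit, and uniqueness and regularity come from analytic properties of the system \eqref{separa_m12}.

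\textbf{Existence.} We take Gaussian $X$ and $A,B$ as given, and the resolvents $\mathcal G_1,\mathcal G_2$ of Definition \ref{defn_resolvent}. Let $m_1^{(n)},m_2^{(n)}$ be as in \eqref{defn_m1m2}; both lie in $\mathbb C_+$. Using Gaussian integration by parts together with concentration (Poincaré) inequalities, we show that for fixed $z\in\mathbb C_+$ the pair $(\expect m_1^{(n)},\expect m_2^{(n)})$ satisfies \eqref{separa_m12} up to an error $O(n^{-1}\eta^{-C})$. For this we work with the block matrix $H(z)=\begin{pmatrix} -z\, \cdot & A^{1/2}XB^{1/2}\\ B^{1/2}X^*A^{1/2} & -I\end{pmatrix}$-type linearization, or equivalently $G_1=(A^{1/2}XBX^*A^{1/2}-z)^{-1}$ directly.

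Alternatively, we can avoid randomness altogether. Define the map
\[
F(m_1,m_2)=\Big(d_n\!\int\!\frac{x\,\pi_A(\dd x)}{-z(1+xm_2)},\ \int\!\frac{x\,\pi_B(\dd x)}{-z(1+xm_1)}\Big).
\]
It is cleaner to change variables to $w_1=zm_1$, $w_2=zm_2$, or to consider the composite map $m_1\mapsto F_1(F_2(m_1))$, which sends $\mathbb C_+$ into $\mathbb C_+$. Note that $x/(-z(1+xm))$ has positive imaginary part whenever $z,m\in\mathbb C_+$ and $x>0$: write it as $-1/(z/x+zm)$, and observe that $zm$ has imaginary part $\Im(zm)$, which need not be positive. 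So we restrict to the region $\{m: \Im(zm)\ge0\}$ for $z$ in the upper half-plane with $\Re z$ large enough. On this region the composite map is a holomorphic self-map of a domain that it sends into a strictly smaller subdomain at bounded hyperbolic distance, since the bounds in \eqref{assm3} give uniform control. The Earle--Hamilton fixed point theorem then yields a unique fixed point there. Uniqueness in all of $\mathbb C_+^2$ follows because any solution automatically satisfies $\Im(z m_\alpha)>0$; this is checked by taking imaginary parts of \eqref{separa_m12} multiplied by $\bar m$. Finally, analytic continuation in $z$ (the fixed point depends holomorphically on $z$) extends the solution to all of $\mathbb C_+$.

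\textbf{Stieltjes transform property.} We verify that $m_c$ is holomorphic on $\mathbb C_+$ with $\Im m_c>0$ and $z m_c(z)\to-1$ as $z=\ii\eta$, $\eta\to\infty$. The limit holds since $m_{2c}=O(1/|z|)$, so that $1+xm_{2c}\to1$. By the Nevanlinna representation, $m_c$ is then the Stieltjes transform of a probability measure $\mu_c$. To see that $\mu_c$ is supported on $\mathbb R^+$, we show that $m_c$ extends analytically to $\mathbb C\setminus[0,\infty)$ with real values on $(-\infty,0)$. The same fixed-point argument runs with $z<0$: the map then preserves positive reals, so $m_{1c},m_{2c}>0$ and hence $m_c$ is real there.

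\textbf{Continuous density.} This is the main obstacle. For $E>0$ we must show that $(m_{1c},m_{2c})(E+\ii\eta)$ stays bounded as $\eta\downarrow0$ and has a continuous limit. For boundedness, we argue by contradiction: if $|m_{2c}|\to\infty$, then \eqref{separa_m12} forces $m_{1c}\to d_n(1-\pi_A(\{0\}))/(-z\,m_{2c}\cdot\text{avg})$, which tends to $0$. Feeding this back into the second equation gives $m_{2c}\to-\int x\,\pi_B/z$, which is bounded for $E>0$, a contradiction; the mass condition in \eqref{assm3} is used here. Given boundedness, any two subsequential limits solve the $\eta=0$ system with $\Im\ge0$. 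Uniqueness of such a limit then follows from the stability of the equation: we show the relevant Jacobian is invertible wherever the imaginary part is positive, using the standard identity $\Im m=\Im m\cdot(\text{quantity}<1)+\eta\cdot(\cdots)$. Where the imaginary part is zero, the continuity of $\Im$ still holds. Hence $m_c$ extends continuously to $(0,\infty)$, and the Stieltjes inversion formula \eqref{ST_inverse} gives a continuous density $\rho_c$. Upgrading this to a continuous derivative on $(0,\infty)$ in the stated sense reduces to this continuity of the boundary values.
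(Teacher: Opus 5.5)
Your existence/uniqueness and Nevanlinna steps can be repaired, but the continuity step has a genuine gap. After showing that $(m_{1c},m_{2c})(z)$ stays bounded as $z\to E_0>0$, you conclude that the boundary value is unique. Your reasons are that every subsequential limit solves the $\eta=0$ system with $\Im\ge 0$, and that the Jacobian is invertible where $\Im>0$. But solutions of the $\eta=0$ system with $\Im\ge 0$ are \emph{not} unique exactly where it matters. Outside the support there are several real solutions, and at the edges two of them merge and the Jacobian degenerates; this is \eqref{equationEm2}. Already for $A=I_p$, $B=I_n$, eliminating $m_{1c}$ gives $Ew^2+(1-d_n-E)w+d_n=0$ for $w=1+m_{2c}$. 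This has two real roots for $E\notin[(1-\sqrt{d_n})^2,(1+\sqrt{d_n})^2]$ and a double root at the edges. So the implicit function theorem says nothing at the edges. Your sentence ``where the imaginary part is zero, the continuity of $\Im$ still holds'' is precisely the claim that needs proof. You must rule out, for example, that along one sequence $z\to E_0$ the limit has $\Im m_{2c}>0$ while along another it is one of the real roots. (Also, at $\eta=0$ with $\Im m>0$, the identity you quote forces the factor to equal $1$, not to be $<1$. Invertibility of the Jacobian in the bulk therefore needs a strict Cauchy--Schwarz/phase argument.)

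The missing ingredient is topological. Apply your boundedness argument to both $m_{1c}$ and $m_{2c}$. Then the cluster set $K=\bigcap_{r>0}\overline{m_{2c}(\mathbb C_+\cap B(E_0,r))}$ is a decreasing intersection of compact connected sets, hence connected. Every point of $K$ avoids the poles and is a zero of $m\mapsto f(E_0,m)$. For the finitely atomic $\pi_A,\pi_B$ this is a rational function, and it is not identically zero since $f(E_0,m)\sim -m$ as $|m|\to\infty$. So its zero set is finite and $K$ is a single point. Hence $m_{2c}$, then $m_{1c}$ and $m_c$, extend continuously to $\mathbb C_+\cup(0,\infty)$. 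Together with boundedness, this gives absolute continuity on $(0,\infty)$ with continuous density $\pi^{-1}\Im m_c(E+\ii 0)$. This is essentially the Silverstein--Choi argument; the paper itself does not prove the theorem but cites \cite{Zhang_thesis,Hachem2007,Separable_solution}.

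Three smaller repairs. First, your Gaussian route only produces approximate solutions at finite $n$, so rely on the fixed-point route. Second, the restriction to ``$\Re z$ large'' followed by analytic continuation is unnecessary and, as stated, incomplete, since continuation alone does not keep the branch in $\mathbb C_+^2$. Instead, apply Earle--Hamilton for every $z\in\mathbb C_+$ on the bounded convex set $\{m:\Im m>0,\ \Im(zm)>0,\ |m|<R\}$ with $R>d_n\|A\|/\eta$. The composite map sends this set into a compact subset of itself: use $|z(1+xm)|\ge\eta$ whenever $\Im(zm)\ge0$, and the mass condition in \eqref{assm3} for the lower bounds on $\Im m$ and $\Im(zm)$. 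Third, support in $\mathbb R_+$ follows at once from $\Im(zm_c)=\int x\,\Im m_{2c}\,|1+xm_{2c}|^{-2}\pi_A(\dd x)\ge0$, rather than from continuation to $(-\infty,0)$.
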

\begin{proof}
See {\cite[Theorem 1.2.1]{Zhang_thesis}}, {\cite[Theorem 2.4]{Hachem2007}} and {\cite[Theorem 3.1]{Separable_solution}}.
\end{proof}

From (\ref{separa_m12}), it is easy to see that if we define the function
\begin{equation}\label{separable_MP}
f(z,m):=- m + \int\frac{x}{-z+xd_n \int\frac{t}{1+tm} \pi_A(\dd t)} \pi_B(\dd x) ,
\end{equation}
then $m_{2c}(z)$ can be characterized as the unique solution to the equation $f(z,m)=0$ that satisfies $\Im \, m> 0$ for $z\in \mathbb C_+$, and $m_{1c}(z)$ can be defined using the first equation in \eqref{separa_m12}.
Moreover,  {$m_{1c}(z)$ and $m_{2c}(z)$} are the Stieltjes transforms of densities {$\rho_{1c}$ and $\rho_{2c}$} defined as:
\begin{equation}\label{eq_inverse}
 \rho_{\al c}(E) = \frac{1}{\pi} \lim_{\eta\to 0^+} \Im m_{\al c}(E+\ii \eta),\quad \al=1,2.
\end{equation}
Then we have the following result.

\begin{lemma}\label{lambdar}
The densities $\rho_{c}$, $\rho_{1c}$, and $\rho_{2c}$ share the same support on $(0,\infty)$, which is a union of intervals. For $\al=1,2$,
\begin{equation}\label{support_rho1c}
{\rm{supp}} \, \rho_{c} \cap (0,\infty) ={\rm{supp}} \, \rho_{\al c} \cap (0,\infty) = \bigcup_{k=1}^{L} [e_{2k}, e_{2k-1}] \cap (0,\infty),
\end{equation}
where $L\in \mathbb N$ depends only on $\pi_{A,B}$. The points $(x,m)=(e_k, m_{2c}(e_k))$ are the real solutions to
\begin{equation}
f(x,m)=0, \quad \text{and} \quad \frac{\partial f}{\partial m}(x,m) = 0. \label{equationEm2}
\end{equation}
Finally, $e_1 = {\rm O}(1)$, $m_{1c}(e_1) \in (-(\max_{\mu}\sigma_\mu^b)^{-1}, 0)$, and $m_{2c}(e_1) \in (-(\max_i \sigma_{i}^a)^{-1}, 0)$.
\end{lemma}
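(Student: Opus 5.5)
The plan is to follow the standard analysis of Stieltjes transforms of free multiplicative convolution type laws, as in \cite{Separable_solution} and \cite{yang2019edge}. The system \eqref{separa_m12} is reduced to the single equation $f(z,m)=0$ in \eqref{separable_MP}, and everything is read off from the real-variable behaviour of $f$.

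\textbf{Step 1: common support.} From \eqref{separa_m12} and \eqref{def_mc}, $m_c$, $m_{1c}$ and $m_{2c}$ are expressed through one another by relations that are analytic and invertible away from finitely many points. In particular
\[
-zm_c = \int \frac{\pi_A(\dd x)}{1+xm_{2c}}, \qquad
m_{1c} = \frac{d_n}{-z}\int\frac{x\,\pi_A(\dd x)}{1+xm_{2c}}.
\]
Hence for $E>0$, $\Im m_{2c}(E+\ii 0^+)>0$ if and only if $\Im m_{1c}(E+\ii0^+)>0$, if and only if $\Im m_c(E+\ii 0^+)>0$. To see this, take imaginary parts: each is a positive combination of $\Im m_{2c}$ weighted by $\int x^2|1+xm_{2c}|^{-2}\pi_A(\dd x)$, and this weight is nonzero by the second condition in \eqref{assm3}. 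This shows the three densities share the same support on $(0,\infty)$.

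\textbf{Step 2: support is a finite union of intervals, with edges characterized by \eqref{equationEm2}.} For real $x>0$, consider $m\mapsto f(x,m)$ on the real line minus the finitely many poles. These poles are the points $m=-1/\sigma_i^a$, together with the zeros in $m$ of $-x+\sigma^b_\mu d_n\int t(1+tm)^{-1}\pi_A(\dd t)$. Following \cite{Separable_solution}, one shows the following.
\begin{itemize}
\item A point $E>0$ lies outside the support if and only if there is a real $m$ with $f(E,m)=0$ and $\partial_m f(E,m)<0$, on the branch continuously connected to $m_{2c}$. The derivative sign comes from the Stieltjes transform being increasing on gaps.
\item Conversely, any such real solution gives the boundary value $m_{2c}(E)$, by uniqueness of the solution of \eqref{separa_m12} in $\mathbb C_+^2$.
\end{itemize}
The complement of the support in $(0,\infty)$ is therefore the image, under the inverse function $m\mapsto x(m)$ defined implicitly by $f(x,m)=0$, of the set of $m$ where $x'(m)>0$. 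Its boundary points are the critical points $x'(m)=0$, which is exactly $\partial_m f=0$; this gives \eqref{equationEm2}. Since $f$ is a rational function of $m$ when $\pi_A$ and $\pi_B$ are finitely supported (as here, with fixed $n$), the critical points are finitely many, so $L$ is finite and determined by $\pi_{A,B}$.

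\textbf{Step 3: the rightmost edge.} Consider large real $x$. The equation gives $m_{2c}(x)\sim -x^{-1}\int t\,\pi_B(\dd t)$ and $m_{1c}(x)\sim -d_n x^{-1}\int t\,\pi_A(\dd t)$, both small and negative. Decrease $x$ along this branch until the first critical point, which is $e_1$. Along the way, $1+\sigma_\mu^b m_{1c}>0$ and $1+\sigma_i^a m_{2c}>0$ must persist. Otherwise a pole would be crossed, and near a pole $f$ is monotone with $x(m)$ sweeping all of $(0,\infty)$, which would contradict the existence of the support. This yields the bounds
\[
m_{1c}(e_1)\in\bigl(-(\max_\mu\sigma^b_\mu)^{-1},0\bigr), \qquad m_{2c}(e_1)\in\bigl(-(\max_i\sigma^a_i)^{-1},0\bigr).
\]
For $e_1=\OO(1)$, note that by \eqref{assm3} the relevant integrals are bounded, so the norm $\|\mathcal Q_1\|$ picture, or a direct bound $x(m)\le C$ on the admissible branch, gives $e_1\le C(\tau)$. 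For example, $e_1\le \|A\|\|B\|(1+\sqrt{d_n})^2$ by comparison with the Marchenko--Pastur edge.

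\textbf{Main obstacle.} The main difficulty is Step 2: rigorously establishing the correspondence between gaps of the support and the monotone real branches of $x(m)$ when both $A$ and $B$ are nontrivial. This is because $f$ is a composition of two rational maps. I would handle it by working with the pair $(m_{1c},m_{2c})$ symmetrically, parametrizing $x$ through $m_{2c}$ on the real axis, and invoking \cite[Theorem 3.1]{Separable_solution} for the analyticity and continuity of the boundary values.
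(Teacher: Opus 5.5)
Your overall route is the standard analysis that the paper does not reproduce and simply cites from Section 3 of \cite{Separable_solution}: you reduce \eqref{separa_m12} to the scalar equation $f(z,m)=0$, compare imaginary parts to show that $\rho_c,\rho_{1c},\rho_{2c}$ vanish simultaneously, characterize gaps by real roots with $\partial_m f<0$, and read off the edges as critical points. Steps 1 and 2 are acceptable as a sketch, with two small corrections. First, the weight linking $\Im m_c$ to $\Im m_{2c}$ is $E^{-1}\int x|1+xm_{2c}|^{-2}\pi_A(\dd x)$; the $x^2$-weight is the one for $m_{1c}$. Second, the converse in Step 2 is cleanest as follows. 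At a real root with $\partial_m f<0$, the implicit function theorem gives a local analytic root $m(z)$, real on the real axis, with $m'(E)=-\partial_z f/\partial_m f>0$; here $\partial_z f=\int s\bigl(-E+s\,d_n\int t(1+tm)^{-1}\pi_A(\dd t)\bigr)^{-2}\pi_B(\dd s)>0$. So $\Im m(z)>0$ for $z\in\mathbb C_+$ near $E$, and $m=m_{2c}$ by uniqueness of the $\mathbb C_+$-root of $f(z,\cdot)$. This also shows such a real root is unique, so no ``branch'' qualifier is needed.

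The genuine gap is in Step 3, i.e.\ the last assertion of the lemma. Your reason (``otherwise a pole would be crossed \ldots\ $x(m)$ sweeping all of $(0,\infty)$'') does not work, for two reasons.
\begin{enumerate}
\item Contrary to your Step 2, $m=-1/\sigma_i^a$ is \emph{not} a pole of $f(x,\cdot)$. As $m\to-1/\sigma_i^a$ the inner integral $\int t(1+tm)^{-1}\pi_A(\dd t)$ blows up, so each summand of the outer $\pi_B$-integral tends to $0$. Hence $f(x,\cdot)$ extends analytically, with $f(x,-1/\sigma_i^a)=1/\sigma_i^a\neq 0$. The only singularities are the $x$-dependent zeros of $-x+\sigma_\mu^b d_n\int t(1+tm)^{-1}\pi_A(\dd t)$, an expression equal to $-x(1+\sigma_\mu^b m_{1c}(x))$ at $m=m_{2c}(x)$.
\item The ``sweeping'' contradiction is never actually derived.
\end{enumerate}

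What works is the coupled system itself.
\begin{itemize}
\item On $(e_1,\infty)$, $m_{1c}$ and $m_{2c}$ are finite, negative and increasing, and tend to $0$ at $+\infty$.
\item Suppose some factor $1+\sigma_i^a m_{2c}$ or $1+\sigma_\mu^b m_{1c}$ vanished on $(e_1,\infty)$, and let $x_0$ be the largest such point. On $(x_0,\infty)$ both equations of \eqref{separa_m12} hold with positive denominators. Letting $x\downarrow x_0$ in the equation for the \emph{other} transform drives it to $-\infty$, contradicting its finiteness at $x_0>e_1$.
\item Hence the monotone limits at $e_1$ lie in the closed intervals. Strictness needs one more coupling step: if $m_{2c}(x)\to-(\max_i\sigma_i^a)^{-1}$ as $x\downarrow e_1$, the atom at $\max_i\sigma_i^a$ forces $m_{1c}(x)\to-\infty$. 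This contradicts $m_{1c}>-(\max_\mu\sigma_\mu^b)^{-1}$ on $(e_1,\infty)$, which uses $\max_\mu\sigma_\mu^b>0$ from \eqref{assm3}. The other case is symmetric.
\end{itemize}
This strictness is exactly what makes $(e_1,m_{2c}(e_1))$ a regular point of $f$, which your Step 2 tacitly needs in order to conclude $\partial_m f=0$ there.

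Finally, the Marchenko--Pastur comparison for $e_1=O(1)$ requires first realizing the deterministic finite-$n$ law as a limit of empirical spectra. For example, use $A\otimes I_k$ and $B\otimes I_k$ with $k\to\infty$, which keeps $\pi_A,\pi_B,d_n$ fixed; only then can you invoke $\lambda_1\le\|A\|\|B\|\lambda_1(XX^*)$.
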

\begin{proof}
See Section 3 of \cite{Separable_solution}.
\end{proof}

We shall call $e_k$ the spectral edges. In particular, we focus on the rightmost edge $\lambda_+ := e_1$. Now we make the following assumption. It guarantees a regular square-root behavior of the spectral densities {$\rho_{1c}$ and $\rho_{2c}$} near $\lambda_+$ and rules out the existence of outliers.

\begin{assumption} \label{ass:unper} 
There exists a constant $\tau>0$ such that 
\begin{equation}\label{assm_gap}
1+m_{1c}(\lambda_+) \max_\mu \sigma_\mu^b \geq \tau, \quad 1+m_{2c}(\lambda_+) \max_i \sigma_i^a \geq \tau.
\end{equation}
\end{assumption}

\subsection{Notations}

{}

The fundamental large parameter in this paper is $n$, and we always assume that $p$ is comparable to $n$. All quantities that are not explicitly constant may depend on $n$, and we usually omit $n$ from our notations. We use $C$ to denote a generic large positive constant, whose value may change from one line to the next. Similarly, we use $\epsilon$, $\tau$, $\delta$, and $c$ to denote generic small positive constants. If a constant depends on a quantity $a$, we use $C(a)$ or $C_a$ to indicate this dependence. We use $\tau>0$ in various assumptions to denote a small positive constant. All constants appearing in the statements or proofs may depend on $\tau$; we neither indicate nor track this dependence.
For two quantities $a_n$ and $b_n$ depending on $n$, the notation $a_n = \OO(b_n)$ means that $|a_n| \le C|b_n|$ for some constant $C>0$, and $a_n=\oo(b_n)$ means that $|a_n| \le c_n |b_n|$ for some positive sequence $c_n\downarrow 0$ as $n\to \infty$. We also use the notations $a_n \lesssim b_n$ if $a_n = \OO(b_n)$, and $a_n \sim b_n$ if $a_n = \OO(b_n)$ and $b_n = \OO(a_n)$. For a matrix $A$, we use $\|A\|:=\|A\|_{l^2 \to l^2}$ to denote the operator norm; for a vector $\mathbf v=(v_i)_{i=1}^n$, $\|\mathbf v\|\equiv \|\mathbf v\|_2$ stands for the Euclidean norm, while $|\mathbf v| \equiv \|\mathbf v\|_1$ stands for the $l^1$-norm. In this paper, we often write an identity matrix as $I$ or $1$ without causing any confusion. We will use $\left\langle\uu,\vv\right\rangle:=\uu^*\vv$ to denote the inner product of two vectors $\uu$ and $\vv$.  

We use $\R$, $\C$, $\Z$, and $\N$ to denote the sets of real numbers, complex numbers, integers, and natural numbers, respectively. Note that the set of natural numbers includes zero, i.e., $\mathbb{N}=\{0,1,2,3,\ldots\}$, while $\mathbb{N}_+=\{1,2,3,\ldots\}$ denotes the set of positive integers. The upper half of the complex plane and the positive, negative, and non-negative real lines are denoted by
$$\mathbb C_+:=\{z\in \mathbb C:\ \mathrm{Im}\, z>0\},\quad \mathbb R_{>0}:=(0,+\infty),\quad \mathbb R_{<0}:=(-\infty,0),\quad \mathbb R_+:=[0,\infty).$$
We will use the following notion of stochastic domination, which was first introduced in \cite{Average_fluc} and subsequently used in many works on random matrix theory, such as \cite{isotropic,principal,local_circular,Delocal,Semicircle,Anisotropic}. It simplifies the presentation of the results and proofs by systematizing statements of the form ``$\xi$ is bounded by $\zeta$ with high probability up to a small power of $n$".

\begin{definition}[Stochastic domination]\label{stoch_domination}
(i) Let
\[\xi=\left(\xi^{(n)}(u):n\in\bb N, u\in U^{(n)}\right),\quad \zeta=\left(\zeta^{(n)}(u):n\in\bb N, u\in U^{(n)}\right)\]
be two families of non-negative random variables, where $U^{(n)}$ is a parameter set depending on $n$. We say that $\xi$ is stochastically dominated by $\zeta$, uniformly in $u$, if for any fixed (small) $\epsilon>0$ and (large) $D>0$, 
\[\sup_{u\in U^{(n)}}\bb P\left[\xi^{(n)}(u)>n^\epsilon\zeta^{(n)}(u)\right]\le n^{-D}\]
holds for large enough $n\ge n_0(\epsilon, D)$, denoted by $\xi\prec\zeta$. Throughout this paper, the stochastic domination will always be uniform in all parameters that are not explicitly fixed (such as matrix indices, and $z$ that takes values in some compact set). Note that $n_0(\epsilon, D)$ may depend on quantities that are explicitly constant, such as $\tau$ in Assumption \ref{assm_big1} and \eqref{assm_gap}. If for some complex family $\xi$ we have $|\xi|\prec\zeta$, then we will also write $\xi \prec \zeta$ or $\xi=\OO_\prec(\zeta)$.
\medskip

\noindent (ii) We extend the definition of $\OO_\prec(\cdot)$ to matrices in the weak operator sense as follows. Let $A$ be a family of random matrices and $\zeta$ be a family of nonnegative random variables. Then $A=\OO_\prec(\zeta)$ means that $\left|\left\langle\mathbf v, A\mathbf w\right\rangle\right|\prec\zeta \| \mathbf v\|_2 \|\mathbf w\|_2 $ uniformly in any deterministic vectors $\mathbf v$ and $\mathbf w$. Here and throughout the following, whenever we say ``uniformly in any deterministic vectors", we mean that ``uniformly in any deterministic vectors belonging to certain fixed set of cardinality $n^{\OO(1)}$".

\medskip

\noindent (iii) We say that an event $\Xi$ holds with high probability if $\mathbbm{1}_{\Xi^c}\prec0$, or equivalently, for any constant $D>0$, $\mathbb P(\Xi)\ge 1- n^{-D}$ for large enough $n$.
\end{definition}

The following lemma collects several basic properties of stochastic domination $\prec$, which will be used tacitly in the proof.

\begin{lemma}[Lemma 3.2 in \cite{Isotropic_local}]\label{lem_stodomin}
Let $\xi$ and $\zeta$ be families of nonnegative random variables.
\begin{itemize}
  \item[(i)] Suppose that $\xi (u,v)\prec \zeta(u,v)$ uniformly in $u\in U$ and $v\in V$. If $|V|\le n^C$ for some constant $C$, then $\sum_{v\in V} \xi(u,v) \prec \sum_{v\in V} \zeta(u,v)$ uniformly in $u$.

\item[(ii)] If $\xi_1 (u)\prec \zeta_1(u)$ and $\xi_2 (u)\prec \zeta_2(u)$ uniformly in $u\in U$, then $\xi_1(u)\xi_2(u) \prec \zeta_1(u)\zeta_2(u)$ uniformly in $u$.

\item[(iii)] Suppose that $\psi(u)\ge n^{-C}$ is deterministic and $\xi(u)$ satisfies $\mathbb E[\xi(u)^2] \le n^C$ for all $u$. Then if $\xi(u)\prec \psi(u)$ uniformly in $u$, we have $\mathbb E\xi(u) \prec \psi(u)$ uniformly in $u$.
\end{itemize}
\end{lemma}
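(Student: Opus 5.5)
The three statements follow directly from the definition of $\prec$ together with a union bound, a product of events, and a truncation of the expectation, respectively.

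For (i), fix $\epsilon>0$ and $D>0$. On the event that $\xi(u,v)\le n^{\epsilon}\zeta(u,v)$ for every $v\in V$, summing over $v$ gives $\sum_v\xi(u,v)\le n^{\epsilon}\sum_v\zeta(u,v)$. The complement of this event has probability at most $|V|\sup_{v}\mathbb P[\xi(u,v)>n^\epsilon\zeta(u,v)]$. Applying the hypothesis with $D+C$ in place of $D$ bounds this by $n^C n^{-D-C}=n^{-D}$. The bound is uniform in $u$ because the hypothesis is uniform in $(u,v)$.

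For (ii), apply the hypotheses with $\epsilon/2$ and $D+1$. Outside an event of probability at most $2n^{-D-1}\le n^{-D}$, we have $\xi_1\le n^{\epsilon/2}\zeta_1$ and $\xi_2\le n^{\epsilon/2}\zeta_2$. Since all quantities are nonnegative, multiplying these gives $\xi_1\xi_2\le n^{\epsilon}\zeta_1\zeta_2$.

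For (iii), fix $\epsilon>0$ and let $\Xi:=\{\xi\le n^{\epsilon}\psi\}$. Split the expectation as
\[
\mathbb E\xi=\mathbb E[\xi\mathbbm 1_\Xi]+\mathbb E[\xi\mathbbm 1_{\Xi^c}].
\]
The first term is at most $n^\epsilon\psi$. For the second, Cauchy--Schwarz gives
\[
\mathbb E[\xi\mathbbm 1_{\Xi^c}]\le (\mathbb E\xi^2)^{1/2}\,\mathbb P(\Xi^c)^{1/2}\le n^{C/2}n^{-D/2}.
\]
Choosing $D=3C$, this is at most $n^{-C}\le\psi$. Hence $\mathbb E\xi\le 2n^{\epsilon}\psi\le n^{2\epsilon}\psi$ for $n$ large enough. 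Since $\mathbb E\xi$ is deterministic, this inequality holding for every $\epsilon>0$ is exactly the statement $\mathbb E\xi\prec\psi$. All constants are independent of $u$, so the bound is uniform.

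The only delicate point is in (iii): $\psi$ must be deterministic and polynomially bounded below, so that the tail contribution $n^{(C-D)/2}$ can be absorbed into $\psi$. This is precisely why the statement assumes $\psi\ge n^{-C}$.
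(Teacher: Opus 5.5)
Your proof is correct and matches the standard argument from \cite{Isotropic_local}, which the paper cites without reproducing. There, (i) and (ii) follow from a union bound, and (iii) follows by splitting $\mathbb E\xi$ on the event $\{\xi\le n^{\epsilon}\psi\}$, bounding the tail with Cauchy--Schwarz by $n^{C/2-D/2}$, and choosing $D\ge 3C$.
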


The following lemma provides a useful way to establish stochastic domination via moment bounds.

\begin{lemma}\label{momentmethod}
Let $\{\xi(u),u\in U\}$ be a family of random variables with $|U|\leq n^C$ for some constant $C$, and let $\psi$ be a non-negative deterministic parameter. If
$$\mathbb{E}\left[\abs{\xi(u)}^{2k}\right]\prec\psi^{2k}$$
uniformly in $u$ holds for all $k\in\mathbb{N}_+$, then we have $\max_{u\in U}\abs{\xi(u)}\prec\psi$.
\end{lemma}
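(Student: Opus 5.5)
The plan is to combine Markov's inequality with a union bound over the index set $U$. Fix small $\epsilon>0$ and large $D>0$; we must show that $\mathbb P\bigl[\max_{u\in U}|\xi(u)|>n^{\epsilon}\psi\bigr]\le n^{-D}$ for all $n\ge n_0(\epsilon,D)$.

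\textbf{Step 1: a single index $u$.} Choose an integer $k\in\mathbb N_+$ with $k\epsilon> D+C+1$. By hypothesis $\mathbb E|\xi(u)|^{2k}\prec\psi^{2k}$ uniformly in $u$. Since both sides are deterministic, this means that for any $\epsilon'>0$ we have $\mathbb E|\xi(u)|^{2k}\le n^{\epsilon'}\psi^{2k}$ for all large $n$, uniformly in $u$. For a deterministic quantity the failure probability in Definition \ref{stoch_domination} is either $0$ or $1$, so the domination is simply eventual inequality. Take $\epsilon'=k\epsilon$. Markov's inequality then gives
\begin{equation*}
\mathbb P\bigl[|\xi(u)|>n^{\epsilon}\psi\bigr]\le \frac{\mathbb E|\xi(u)|^{2k}}{n^{2k\epsilon}\psi^{2k}}\le n^{k\epsilon-2k\epsilon}=n^{-k\epsilon}.
\end{equation*}

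\textbf{Step 2: union bound.} Summing over $u\in U$ and using $|U|\le n^C$,
\begin{equation*}
\mathbb P\Bigl[\max_{u\in U}|\xi(u)|>n^{\epsilon}\psi\Bigr]\le n^{C-k\epsilon}\le n^{-D-1}\le n^{-D}.
\end{equation*}
This is exactly $\max_{u}|\xi(u)|\prec\psi$.

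\textbf{Degenerate case $\psi=0$.} Here the hypothesis forces $\mathbb E|\xi(u)|^{2k}\le n^{\epsilon'}\cdot 0=0$ for large $n$. Hence $\xi(u)=0$ almost surely, and the conclusion holds trivially. Equivalently, one can read the events above with the strict inequality $|\xi(u)|>0$, and they have probability zero.

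\textbf{Main point of care.} There is no substantial obstacle. The only subtlety is to choose $k$ depending on $(\epsilon,D,C)$ before invoking the moment hypothesis, and then to use the hypothesis only for that single $k$, with the uniformity in $u$ of the implicit $n_0$. This ordering is what makes the threshold $n_0(\epsilon,D)$ independent of $u$.
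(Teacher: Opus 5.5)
Your proof is correct and follows the same route as the paper: Markov's inequality applied to the $2k$-th moment, with $k$ chosen depending on $(\epsilon,D,C)$, followed by a union bound over $U$. The differences are only cosmetic. The paper uses the deterministic domination as $\mathbb E|\xi(u)|^{2k}\le n\,\psi^{2k}$ and requires $2k\epsilon\ge C+D+1$, whereas you use the slack $n^{k\epsilon}$. You also handle the degenerate case $\psi=0$ explicitly, which the paper leaves implicit.
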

\begin{proof}
For any $\eps,D>0$, choose $k\in\N_+$ such that $k\geq\frac{C+D+1}{2\e}$. Then, by Markov's inequality we have
$$\prob\qB{\max_{u\in U}\abs{\xi(u)}>n^{\eps}\psi}
\leq\sum_{u\in U}\prob\qb{\abs{\xi(u)}>n^{\eps}\psi}
\leq\sum_{u\in U}\frac{\expect\qb{\abs{\xi(u)}^{2k}}}{n^{2k\eps}\psi^{2k}}
\leq n^{C+1-2k\eps}\leq n^{-D}$$
holds for sufficiently large $n\geq n_0(\e,D,k)$.
\end{proof}

We now introduce the bounded support condition, which is slightly more general than the high-moment assumption 
\eqref{eq_highmoment}.

\begin{definition}[Bounded support condition] \label{defn_support}
A random matrix $X$ is said to satisfy the {\it bounded support condition} with parameter $q$ if
\begin{equation}
\max_{i,\mu}\abs{x_{i\mu}} \prec q, \label{eq_support}
\end{equation}
Here $q$ is a deterministic parameter, typically satisfying \(n^{-1/2} \le q \le n^{-c_\phi}\) for some small constant \(c_\phi>0\). Whenever \eqref{eq_support} holds we say that $X$ has support $q$. In particular, under the arbitrarily high-moment assumption (\ref{eq_highmoment}) one may take $q=n^{-1/2}$ by Lemma \ref{momentmethod}.
\end{definition}

Next, we introduce a convenient self-adjoint linearization trick, which has been proved to be useful in studying the local laws of random matrices of the Gram type \cite{Alt_Gram, AEK_Gram, Anisotropic, XYY_circular,yang2019edge}. We define the following $(p+n)\times (p+n)$ self-adjoint block matrix, which is a linear function of $X$:
 \begin{equation}\label{linearize_block}
   H \equiv H(X,z): = z^{1/2} \left( {\begin{array}{*{20}c}
   { 0 } &A^{1/2} X B^{1/2}   \\
   {B^{1/2} X^* A^{1/2} } & {0}  \\
   \end{array}} \right),  \quad z\in \C\backslash\R_{<0} .
 \end{equation}
where $z^{1/2}$ is taken to be the branch cut with positive imaginary part. Then we define its resolvent (or Green's function) as
 \begin{equation}\label{eq_gz}
 G \equiv G (X,z):= \left(H(X,z)-z\right)^{-1} .
 \end{equation}
By Schur complement formula, we can verify that (recall definition (\ref{def_green}) above)
\begin{align} 
G(z) = \left( {\begin{array}{*{20}c}
   { \mathcal G_1} & z^{-1/2}\mathcal G_1 Y \\
   {z^{-1/2}Y^* \mathcal G_1} & { \mathcal G_2 }  \\
\end{array}} \right) = \left( {\begin{array}{*{20}c}
   { \mathcal G_1} & z^{-1/2}Y \mathcal G_2   \\
   {z^{-1/2} \mathcal G_2 Y^*} & { \mathcal G_2 }  \\ 
\end{array}} \right),\label{green2} 
\end{align}
where $Y:= A^{1/2}X B^{1/2}$. 
Thus a control of $G$ yields directly a control of the resolvents $\mathcal G_{\al}$, $\al\in\{1,2\}$. 

The index set of entries of these $(p+n)\times(p+n)$ matrices is naturally given by $\cal I\equiv \cal I_1\sqcup\cal I_2:=(\cal I_1\times\{1\})\cup(\cal I_2\times\{2\})$. For simplicity of notation, we identify $\cal I$ with $\{1,\ldots,p+n\}$, with $\cal I_1=\{1,\ldots,p\}$ and $\cal I_2=\{p+1,\ldots,p+n\}$. Throughout this paper, we consistently use latin letters $i,j$ for indices in $\cal I_1$ or $\cal I_1^+= \{1,\ldots,r\}$, greek letters $\mu,\nu$ for indices in $\cal I_2$ or $\cal I_2^+=\{1,\ldots,s\}$, and $a,b$ for indices in $\cal I$. For example, $G_{i\mu}$ refers to an entry in the upper right block of $G$.
Denote the unit sphere in $\C^{p+n}$ by
$$\mathbf{S} = \{\mathbf u\in \C^{p+n}:\left \| \mathbf u \right \| =1\}.$$
Given a vector $\uu = (u_1, u_2,\cdots,u_{p+n})\in \C^{p+n}$, let
\begin{equation}\label{eq_projection}
\uh:= (u_1,u_2,\cdots,u_{p},0,\cdots,0),\quad \ut:= (0,\cdots,0,u_{p+1},u_{p+2},\cdots,u_{p+n})
\end{equation}
be its orthogonal projection onto the first $p$ coordinates and the last $n$ coordinates, respectively. For vectors $\uu,\vv\in \C^{p+n}$ and matrix $X\in \C^{(p+n)\times(p+n)}$, we abbreviate 
$$X_{\uu\vv} := \uu^{*}X\vv,\quad X_{a\vv} := e_a^{*}X\vv,\quad X_{\uu a} := \uu^{*}Xe_a,\quad\text{and}\quad\underline{X}:=\frac{1}{p+n}\Tr X,$$
where $e_a$ denotes the $a$-th standard basis vector in $\C^{p+n}$. 

Finally, it is convenient to introduce the following notion of asymptotic equality in distribution.

\begin{definition}\label{defn_asymptotic}
    Two sequences of random vectors $\mathsf{X}_N,\mathsf{Y}_N\in\R^k$ are called asymptotically equal in distribution, denoted as $\mathsf{X}_N\simeq\mathsf{Y}_N$, if they are tight and satisfy
    $$\lim_{N\to\infty}\expect\qb{f(\mathsf{X}_N)-f(\mathsf{Y}_N)}=0$$
    for any bounded continuous function $f:\R^k\to\R$.
\end{definition}

\subsection{Main result: local law}

For any constants $c_0,C_0>0$ and $\omega \le 1$, we define a domain of the spectral parameter $z$ as
\begin{equation}
S(c_0,C_0,\omega):= \left\{z=E+ \ii \eta: \lambda_+ - c_0 \leq E \leq C_0 \lambda_+, n^{-1+\omega} \leq \eta \leq C_0 \right\}. \label{SSET1}
\end{equation}
In particular, we shall denote
\begin{equation}
S(c_0,C_0,-\infty):= \left\{z=E+ \ii \eta: \lambda_+ - c_0 \leq E \leq C_0 \lambda_+, 0 \leq \eta \leq C_0 \right\}.
\end{equation}
We define the distance to the rightmost edge as
\begin{equation}
\kappa \equiv \kappa_E := \vert E -\lambda_+\vert  \quad \text{for } z= E+\ii \eta.\label{KAPPA}
\end{equation}
We have the following lemma, which summarizes some basic properties of $m_{1,2c}$ and $\rho_{1,2c}$.

\begin{lemma}\label{lem_mbehavior}
Suppose the assumptions \eqref{eq_ratio}, \eqref{assm3}, and \eqref{assm_gap} hold. Then
there exists sufficiently small constant $\wt c>0$ such that the following estimates hold:
\begin{itemize}
\item[(1)] We have
\begin{equation}
\rho_{\al c}(x) \sim \sqrt{\lambda_+-x}, \quad  \text{ for } \ \ x \in \left[\lambda_+ - 2\wt c,\lambda_+ \right],\ \al\in\{1,2\}.\label{SQUAREROOT}
\end{equation}
\item[(2)] For $z =E+\ii \eta\in S(\wt c,C_0,-\infty)$ and $\al\in\{1,2\}$, 
\begin{equation}\label{Immc}
\vert m_{\al c}(z) \vert \sim 1,  \quad  \im m_{\al c}(z) \sim \begin{cases}
    {\eta}/{\sqrt{\kappa+\eta}}, & \text{ if } E\geq \lambda_+ \\
    \sqrt{\kappa+\eta}, & \text{ if } E \le \lambda_+\\
  \end{cases}.
\end{equation}
\item[(3)] There exists a constant $\tau'>0$ such that
\begin{equation}\label{Piii}
\min_{\mu} \vert 1 + m_{1c}(z)\wt \sigma_\mu \vert \ge \tau', \quad \min_{i} \vert 1 + m_{2c}(z)\sigma_i  \vert \ge \tau',
\end{equation}
for any $z \in S(\wt c,C_0,-\infty)$.
\end{itemize}
The estimates \eqref{SQUAREROOT} and \eqref{Immc} also hold for $\rho_c$ and $m_c$. 
\end{lemma}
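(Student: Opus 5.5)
We reduce Lemma \ref{lem_mbehavior} to a local analysis of the scalar equation $f(z,m)=0$ from \eqref{separable_MP} near the regular edge $(\lambda_+, m_{2c}(\lambda_+))$. The key input is Assumption \ref{ass:unper}, which keeps the denominators $1+x m_{1c}$, $1+t m_{2c}$ away from zero at $z=\lambda_+$. Then $f$ is jointly analytic in $(z,m)$ in a fixed neighbourhood $\mathcal U$ of $(\lambda_+, m_{2c}(\lambda_+))$, with derivatives bounded uniformly in $n$ by \eqref{assm3} and \eqref{eq_ratio}.

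\textbf{Step 1: the edge is a regular square-root point.} By Lemma \ref{lambdar}, $f=\partial_m f=0$ at the edge. We show that
\[
\partial_z f(\lambda_+,m_{2c}(\lambda_+))\sim 1 \quad\text{and}\quad \partial_m^2 f(\lambda_+,m_{2c}(\lambda_+))\sim 1 ,
\]
with definite signs. For $\partial_z f$ this is a direct computation: each summand is $x/(\cdots)^2$ with a nonnegative weight, and \eqref{assm3} guarantees a mass $\ge\tau$ of $\pi_B$ away from $0$. For $\partial_m^2 f$, write $f=-m+F(z,m)$ with $F$ a composition of Stieltjes-type functions. The condition $\partial_m F=1$ at the edge lets us rewrite $\partial_m^2F$ as a sum of terms of the form $x^2 t^2/(\cdots)^3$ weighted by $\pi_A\otimes\pi_B$. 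On the real axis outside the support ($m$ real, the denominators positive thanks to \eqref{assm_gap}), these terms all have one sign. The lower bound again uses the non-concentration part of \eqref{assm3}.

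\textbf{Step 2: local expansion and $\wt c$ small.} Taylor expansion gives
\[
0=f(z,m)=\partial_z f\,(z-\lambda_+)+\tfrac12\partial_m^2 f\,(m-m_{2c}(\lambda_+))^2+\OO(|z-\lambda_+|^2+|m-m_{2c}(\lambda_+)|^3).
\]
Hence $m_{2c}(z)-m_{2c}(\lambda_+)= c\sqrt{z-\lambda_+}\,(1+\OO(|z-\lambda_+|^{1/2}))$ for $|z-\lambda_+|\le 2\wt c$, with $c\sim1$ and the branch fixed by $\Im m_{2c}>0$. Continuity of $m_{2c}$ in $z$ ensures the solution stays in $\mathcal U$. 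The elementary bounds $\Im\sqrt{\kappa+\ii\eta}\sim \eta/\sqrt{\kappa+\eta}$ for $E\ge\lambda_+$ and $\sim\sqrt{\kappa+\eta}$ for $E\le\lambda_+$ give \eqref{Immc} for $m_{2c}$ near the edge, and \eqref{SQUAREROOT} for $\rho_{2c}$ via \eqref{eq_inverse}. Transfer to $m_{1c}$ and $m_c$ goes through the first equation of \eqref{separa_m12} and \eqref{def_mc}. These are analytic functions of $m_{2c}$ with derivative $\sim 1$ (nonzero by positivity of the integrand), so the square-root behaviour is inherited. Item (3) in this neighbourhood follows from \eqref{assm_gap} by continuity, shrinking $\wt c$ and taking $\tau'=\tau/2$.

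\textbf{Step 3: the rest of $S(\wt c,C_0,-\infty)$.} For $E\ge\lambda_++\wt c$ or $\eta\ge\wt c$, $z$ is at distance $\gtrsim1$ from $\operatorname{supp}\rho_{\alpha c}$ by Lemma \ref{lambdar}. Hence $|m_{\alpha c}|\sim1$ and $\Im m_{\alpha c}\sim\eta$, which matches \eqref{Immc} since $\kappa\sim 1$ there. Item (3) follows because $m_{1c}(E)$ for real $E>\lambda_+$ is increasing and negative, lying in $(m_{1c}(\lambda_+),0)$, so $1+m_{1c}\sigma^b\ge\tau$. For complex $z$ we add a compactness/continuity argument. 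The remaining region $E\in[\lambda_+-\wt c,\lambda_+]$ with $\eta$ small is covered by Step 2.

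\textbf{Main obstacle.} We expect the main obstacle to be the uniform-in-$n$ lower bound on $\partial_m^2 f$ in Step 1, i.e.\ nondegeneracy of the square root, since $\pi_A,\pi_B$ are arbitrary. We would handle it first by the sign argument above; failing that, we would follow Section 3 of \cite{Separable_solution} or \cite{yang2019edge}, where this regularity is derived from \eqref{assm_gap}.
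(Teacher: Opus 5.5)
The paper gives no argument of its own for this lemma; it only cites Lemma 3.4 of \cite{yang2019edge}. Your regular-edge Taylor analysis of $f(z,m)$ is the standard route to such a statement, and Steps 1 and 3 and item (3) are essentially sound. One small addition is needed there: you must know $\lambda_+\gtrsim 1$ so that the denominators $-\lambda_+(1+xm_{1c}(\lambda_+))$ stay uniformly away from $0$ on a neighbourhood of fixed size. This holds because the first moment of $\mu_c$ equals $\int x\,\pi_A(\dd x)\int x\,\pi_B(\dd x)\ge\tau^4$ and $\mu_c$ lives on $[0,\lambda_+]$.

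The obstacle you anticipate is not one. Write $\Phi(m):=d_n\int t(1+tm)^{-1}\pi_A(\dd t)$ and $D:=-z+x\Phi(m)$. Then $\partial_m^2F=-\Phi''\int x^2D^{-2}\pi_B(\dd x)+2(\Phi')^2\int x^3D^{-3}\pi_B(\dd x)$. At the edge $D=-\lambda_+(1+xm_{1c}(\lambda_+))<0$ and $\Phi''>0$, so both terms are nonpositive, and the first is $\lesssim -1$ by \eqref{assm3} and \eqref{assm_gap}. You never need $\partial_mF=1$.

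The step that fails as written is deducing \eqref{Immc} for $E\ge\lambda_+$ from $m_{2c}(z)-m_{2c}(\lambda_+)=c\sqrt{z-\lambda_+}\,(1+\OO(|z-\lambda_+|^{1/2}))$.

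\begin{itemize}
\item Put $w=\sqrt{z-\lambda_+}$. For $E=\lambda_++\kappa$ and $\eta\ll\kappa$ you have $\Re w\approx\sqrt\kappa$ and $\Im w\approx\eta/(2\sqrt\kappa)$.
\item A complex relative error $\epsilon$ with $|\epsilon|\lesssim\sqrt\kappa$ contributes $c\,\Re w\,\Im\epsilon$. This can be of order $\kappa$, which is far larger than $\eta/\sqrt\kappa$ once $\eta\ll\kappa^{3/2}$.
\item In particular, at $\eta=0$, which lies in $S(\wt c,C_0,-\infty)$, your expansion does not even show $\Im m_{2c}(E)=0$.
\item For $E\le\lambda_+$ there is no issue, since then $\Im w\sim|w|$.
\end{itemize}

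The missing ingredient is that $f$ is real for real arguments. Substitute $z=\lambda_++w^2$. The quadratic part $\partial_zf\,w^2+\frac12\partial_m^2f\,(m-m_{2c}(\lambda_+))^2$ is nondegenerate with $-\partial_zf/\partial_m^2f>0$. Hence the zero set near the edge consists of two analytic branches $m=m_{2c}(\lambda_+)+g(\pm w)$. Here $g$ is analytic on a disc of $n$-independent radius, $g'(0)\sim1$, and $g$ has real Taylor coefficients. It follows that $\Im g(w)=\int_0^{\Im w}\Re g'(\Re w+\ii s)\,\dd s=\Im w\,(g'(0)+\OO(|w|))$, which gives \eqref{Immc} on both sides of the edge.

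Alternatively, use the expansion only inside the spectrum to get \eqref{SQUAREROOT}. Then obtain \eqref{Immc} for $E\ge\lambda_+$ from $\Im m_{\al c}(z)=\eta\int|t-z|^{-2}\rho_{\al c}(t)\,\dd t$.

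The same care is needed when you transfer to $m_{1c}$ and $m_c$. Their explicit $z$-dependence adds an $\OO(\eta)$ term to the imaginary part. That term is harmless relative to $\eta/\sqrt{\kappa+\eta}$, but only once the errors are controlled in this real-analytic way rather than by a complex $\OO(\cdot)$ bound.
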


\begin{proof}
See Lemma 3.4 of \cite{yang2019edge}.
\end{proof}

We define the deterministic limit $\Pi$ of the resolvent $G$ in (\ref{eq_gz}) as
\begin{equation}\label{defn_pi}
\Pi (z):=\left( {\begin{array}{*{20}c}
   { \Pi_1} & 0  \\
   0 & { \Pi_2}  \\
\end{array}} \right), 
\end{equation}
where
$$ \Pi_1:  =-z^{-1}\left(1+m_{2c}(z)A \right)^{-1},\quad \Pi_2:=- z^{-1} (1+m_{1c}(z)B )^{-1}.$$
Note that by (\ref{separa_m12}) and (\ref{def_mc}), we have
\be\label{mcPi}
\frac1{p}\tr \Pi_{1} =m_c, \quad  \frac1{n}\tr \left(A\Pi_{1}\right) =m_{1c}, \quad \frac1{n}\tr \left(B\Pi_{2}\right) =m_{2c}.
\ee
Define the control parameter
\begin{equation}\label{eq_defpsi}
\Psi (z):= \sqrt {\frac{\Im m_{2c}(z)}{{n\eta }} } + \frac{1}{n\eta}.
\end{equation}
By Lemma \ref{lem_mbehavior}, we have
\begin{equation}\label{psi12}
\|\Pi\|=\OO(1), \ \ n^{-1/2}\lesssim\Psi \lesssim (n\eta)^{-1/2}, \ \ \Psi(z) \sim  \sqrt {\frac{\Im \, m_{1c}(z)}{{n\eta }} } + \frac{1}{n\eta},
\end{equation}
for $z\in S(\wt c,C_0,-\infty)$. 
We now state the first main result of this paper: a sharp local law for $G(X,z)$ without requiring a vanishing third moment assumption as in \eqref{eq_3rdmoment} for the entries of $X$. 

\begin{theorem}
\label{local_law}
Suppose Assumptions \ref{assm_big1} and \ref{ass:unper} hold. Suppose $X$ satisfies the bounded support condition (\ref{eq_support}) with $n^{-1/2}\le q\le n^{-\tau}$ for some constant $\tau>0$. 
 Fix $C_0>1$ and let $0<c_0<\wt c$ be a sufficiently small constant where $\wt c$ is defined in Lemma \ref{lem_mbehavior}.
Then the following estimates hold for any constant $\e>0$. 
\begin{itemize}
\item[(1)] {\bf Anisotropic local law}: For any $z\in  S(c_0,C_0,\epsilon)$ and deterministic unit vectors $\mathbf u, \mathbf v \in \mathbb C^{p+n}$,
\begin{equation}
\label{anisotropic}
\left| \langle \mathbf u, G(X,z) \mathbf v\rangle - \langle \mathbf u, \Pi (z)\mathbf v\rangle \right| \prec q+\Psi(z).
\end{equation}
\item[(2)] {\bf Averaged local law}: For any $z \in S(c_0, C_0, \epsilon)$,  we have
\begin{equation}
\label{average}
|m_1(z)-m_{1c}(z)|+|m_2(z)-m_{2c}(z)|+|m(z)-m_c(z)|\prec  \frac{q^2}{\sqrt{\kappa+\eta}}\wedge q +\frac{1}{n\eta}.
\end{equation}
\end{itemize}
The above estimates are uniform in $z$ and any set of deterministic unit vectors of cardinality $n^{\OO(1)}$. 
\end{theorem}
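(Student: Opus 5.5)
The plan follows the strategy of \cite{isotropic}: derive an approximate isotropic self-consistent equation for $G$ directly by cumulant expansion, and close it with a continuity (bootstrap) argument in $\eta$. No Green's function comparison is used.

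\textbf{Step 1: the stochastic step.} Start from the identity $(H-z)G=I$. Written blockwise, it expresses $zG$ through the products $Y^*\mathcal G_1$ and $\mathcal G_1 Y$. For deterministic unit vectors $\uu,\vv$ we introduce the error matrix
\[
D:=\Pi^{-1}G-I-\text{(deterministic correction)},
\]
in which $m_{1c},m_{2c}$ are replaced by the random quantities $m_1,m_2$ of \eqref{defn_m1m2}. The entries $\langle \uu, D\vv\rangle$ are sums of the form $\sum_{i,\mu} x_{i\mu}\,(\ldots)$, where $(\ldots)$ is a polynomial in resolvent entries taken in the rotated bases $A^{1/2}\uu$, $B^{1/2}\vv$.

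We then bound $\mathbb E|\langle \uu,D\vv\rangle|^{2k}$ via the cumulant expansion $\mathbb E[x f(x)]=\sum_{\ell}\kappa_{\ell+1}\mathbb E f^{(\ell)}(x)/\ell!$.
\begin{itemize}
\item The second-cumulant term cancels against the self-energy terms $m_1,m_2$ by construction.
\item Terms with $\ell\ge 3$ are small by the bounded support condition: each contributes a factor $q$ or $n^{-1/2}$.
\item The third-cumulant terms ($\ell=2$) are the crux, and are where the condition \eqref{eq_3rdmoment} was previously needed. They have the form $\kappa_3\sum_{i,\mu} G_{\uu i}G_{\mu\mu}G_{i\vv}\cdots$. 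We control them using the a priori isotropic bound $G-\Pi=\OO_\prec(\Phi)$ together with the Ward identity $\sum_\mu|G_{\mu\vv}|^2=\Im G_{\vv\vv}/\eta$. The point is that sums such as $\sum_{i,\mu}G_{\uu i}G_{\mu\vv}$ are again isotropic entries $G_{\uu\mathbf w}$ with $\mathbf w=\sum_\mu e_\mu$ of norm $\sqrt n$. With $\kappa_3\sim n^{-3/2}$ this gives a gain of $n^{-1/2}$, hence a contribution $O_\prec(q+\Psi)$.
\item Mixed terms in which derivatives hit the other factors of the $2k$-th moment are handled, as in \cite{isotropic}, by a recursive moment estimate. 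This yields
\[
\mathbb E|D_{\uu\vv}|^{2k}\prec \sum_{j}(q+\Psi+n^{-\delta}\Phi)^{j}\,\mathbb E|D_{\uu\vv}|^{2k-j}.
\]
By Young's inequality and Lemma \ref{momentmethod}, we conclude $D=\OO_\prec(q+\Psi)+n^{-\delta}\Phi$.
\end{itemize}
For the averaged quantities, the same expansion is applied to $\frac1n\tr(AD)$ and $\frac1n\tr(BD)$ with fluctuation averaging. Here the $\kappa_3$ terms give $q^2$-type contributions, because the averaged sums $\sum_{i,\mu}G_{ii}G_{\mu\mu}G_{i\mu}$ carry an extra cancellation.

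\textbf{Step 2: the deterministic step.} We first insert the bound on $D$ into the averaged identities. This gives a perturbed version of \eqref{separa_m12}:
\[
f(z,m_2)=\OO_\prec\bigl(\text{error in the averaged bound}\bigr).
\]
Stability of $f$ near $\lambda_+$ (square-root behaviour, Lemma \ref{lem_mbehavior}, and $\partial_m f\sim\sqrt{\kappa+\eta}$) then gives $|m_\alpha-m_{\alpha c}|\prec\min\bigl(\text{error}/\sqrt{\kappa+\eta},\sqrt{\text{error}}\bigr)$. Next we invert $\Pi^{-1}G=I+\ldots$, using \eqref{Piii} to bound $(1+m_{2}A)^{-1}$ and $(1+m_1B)^{-1}$, and obtain $G-\Pi=\OO_\prec(q+\Psi)+n^{-\delta}\Phi$ entrywise in the isotropic sense.

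\textbf{Step 3: bootstrap.} For $\eta\sim 1$ the a priori bound holds trivially. We decrease $\eta$ along a grid of mesh $n^{-3}$, using Lipschitz continuity of $G$ in $z$. Step 2 improves $\Phi$ to $q+\Psi$ at each stage, which proves \eqref{anisotropic} on $S(c_0,C_0,\epsilon)$. Feeding this back into the fluctuation-averaging estimate yields the averaged error $q^2/\sqrt{\kappa+\eta}\wedge q+1/(n\eta)$, which is \eqref{average}.

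\textbf{Main obstacle.} I expect the hardest part to be the $\kappa_3$ terms in Step 1 for non-diagonal $A$ and $B$. Because the two eigenbases mix, the derivatives produce isotropic entries such as $G_{\uu,A^{1/2}e_i}$ with vectors that are not delocalized. My first attempt will be to organize every third-order term as an isotropic entry with a test vector of the form $\sum_i c_i A^{1/2}e_i$, and to bound it by Cauchy--Schwarz plus Ward identities. This should always extract at least one spare factor $n^{-1/2}$ or $q$.
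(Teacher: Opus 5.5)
Your anisotropic stochastic step is essentially the paper's: a moment expansion of $M(G)_{\uu\vh}$, exact cancellation of the second cumulant against the self-energy, and Cauchy--Schwarz plus Ward for everything else. The $\kappa_3$ terms there are routine, not the main obstacle. Every term of $\partial^r_{i\mu}(G\BB)_{\uu\mu}$ contains a factor $(G\AA)_{\uu i}$ or $(G\BB)_{\uu\mu}$, so no delocalization of $A^{1/2}e_i$ is needed.

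The genuine gap is the averaged estimate. You reduce it to ``fluctuation averaging'' plus an extra cancellation in $\kappa_3$ sums such as $\sum_{i,\mu}G_{ii}G_{\mu\mu}G_{i\mu}$. Those sums need no cancellation: Cauchy--Schwarz and Ward give $n^{-5/2}\sum_{i,\mu}|G_{i\mu}|\prec n^{-1/2}\Theta$. The real obstruction is in the Gaussian part. After the second-cumulant cancellation in $\E|\underline{\Gamma M(G)}|^{2e}$, you are left with $n^{-2}\sum_{i,\mu}\E[(\AA\Gamma G\BB)_{i\mu}\,\partial_{i\mu}\underline{\Gamma M(G)}\cdots]$, where
\[
\partial_{i\mu}\underline{\Gamma M(G)}=\frac{z^{1/2}}{p+n}(\AA\Gamma G\BB)_{i\mu}-z^{1/2}\bigl((\BB Q\AA)_{\mu i}+(\AA Q\BB)_{i\mu}\bigr),\qquad Q=GFG .
\]
The readily available bound $Q\prec(1+\phi)\Theta^2$ only gives $n^{-2}\sum_{i,\mu}|(\AA\Gamma G\BB)_{i\mu}Q_{\mu i}|\prec\Theta^3$. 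Hence $\underline{\Gamma M(G)}\prec\Theta^{3/2}$ rather than $\Phi^2$. Through stability this gives only $|m-m_c|\prec(n\eta)^{-3/4}$ in the bulk, so (2) fails. Near the edge, the averaged input $\theta$ to the averaged-to-anisotropic step is then too large to produce $\Psi$ in (1). Minor-expansion fluctuation averaging is not available off the shelf for non-diagonal $A,B$, because the entries of $A^{1/2}XB^{1/2}$ are dependent. The gain therefore has to come out of the cumulant expansion itself.

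The missing idea is Lemma \ref{qlemma}(ii), which is the key step of \cite{isotropic} your outline skips. $F$ is chosen, containing $\frac{1}{p+n}H\Gamma$ plus matching self-energy terms, so that
\[
Q_{\uu\vv}=\frac{z^{1/2}}{p+n}\sum_{i,\mu}x_{i\mu}(G\BB)_{\uu\mu}(\AA\Gamma G)_{i\vv}+\mathcal L_0 ,
\]
and its second-cumulant term cancels $\mathcal L_0$ exactly. A further moment expansion of $Q_{\uu\vv}$ then gives the self-improving implication $Q\prec\lambda\Rightarrow Q\prec(\Xi\lambda)^{1/2}$, with $\Xi=(1+\phi)^4(q+\Theta)^3$. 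Iterating yields $Q\prec\Xi$. This makes the leftover term $\OO_\prec(\Phi^4)$, and it also controls the $\kappa_3$ terms in which a derivative falls on $\underline{\Gamma M(G)}$.

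A secondary problem is your Step 3. You iterate a $\prec$-implication across roughly $n^3$ grid points, which is not legitimate when the stochastic step is proved by high moments: the thresholds $n_0(\e,D)$ compound at each step. The paper avoids this as follows.
\begin{enumerate}
\item It obtains a uniform rough bound $G\prec n^{\delta}$ on each of $O(1/\delta)$ $\eta$-blocks from the monotonicity Lemma \ref{monocity}.
\item It applies the stochastic step once at each grid point with that fixed input, so a union bound suffices.
\item It runs the continuity argument deterministically, and only for the scalar weak law.
\item It then self-improves at fixed $z$ through Lemma \ref{avrtoani}.
\end{enumerate}
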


In general, proofs of local laws can be divided into two main steps, which are essentially decoupled.

\medskip
\noindent (A) \textit{A stochastic step.}
This step establishes a self-consistent equation for the resolvent up to a small random error. More precisely, we construct a map
$$M:\C^{(p+n)\times (p+n)}\to\C^{(p+n)\times (p+n)}$$
such that the deterministic limit of $G$ is uniquely characterized as the solution to the \emph{self-consistent equation} $M(\Lambda)=0$ with positive imaginary part. We then derive high-probability error bounds for the random matrix $M(G)$.

\medskip
\noindent (B) \textit{A deterministic step.}
This step analyzes the stability of the self-consistent equation. Together with the estimates from step (A), it implies that $G$ is close to its deterministic approximation $\Pi$.

\medskip

For separable covariance matrices, step (A) is implemented as follows.
We define the map $M=M(z):\C^{(p+n)\times (p+n)}\to\C^{(p+n)\times (p+n)}$ by
\begin{equation}
\label{s-ceq}
M(\Lambda)=M(z,\Lambda):=I_{p+n}+z\Lambda+\Lambda \cal S(\Lambda),
\end{equation}
where $\cal S:\C^{(p+n)\times (p+n)}\to\C^{(p+n)\times (p+n)}$ is a linear map defined by
$$\cal S(\Lambda):=\expect[H\Lambda H].$$
It is known that the equation $M(\Lambda)=0$ admits a unique solution with positive imaginary part, which coincides with $\Pi(z)$ defined in (\ref{defn_pi}).
The remaining task in step (A) is to establish high-probability bounds for $M(G)$ and for its averaged version $\underline{\Gamma M(G)}$, for any deterministic matrix $\Gamma$ with $\|\Gamma\|=\OO(1)$, as stated in the following theorem.

\begin{theorem}
\label{maintheorem}
Under the assumptions of Theorem \ref{local_law}, suppose that, at some $z\in S(c_0,C_0,\epsilon)$, $G-\Pi=\OO_{\prec}(\phi)$ (recall the notation in (ii) of Definition \ref{stoch_domination}) for some deterministic parameter $\phi\in[n^{-1},n^{\tau/10}]$, and that the constant $\tau$ satisfies $\tau <\epsilon/10$. Then, at this $z$, we have
\begin{equation}\label{eq_anisotropic_average}
  M(G)=\OO_{\prec}(\Phi),\qquad |\underline{\Gamma M(G)}|\prec\Phi^{2},\end{equation}
for any deterministic matrix $\Gamma\in\C^{(p+n)\times(p+n)}$ with $\|\Gamma\|=\OO(1)$, where the error parameter is given by
\begin{equation}\label{defn_Phi}
\Phi:=(1+\phi)^{3}(q+\Theta),\quad \text{with}\quad \Theta:=\sqrt{\frac{\norm{\Im \Pi}+\phi+\eta}{n\eta}} .\end{equation}
Note that $\Phi\ll 1$ under the conditions $q\le n^{-\tau}$ and $\tau <\epsilon/10$.
\end{theorem}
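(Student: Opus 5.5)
We follow the cumulant-expansion strategy of \cite{isotropic}, adapted to the linearization \eqref{linearize_block}. Writing $H=z^{1/2}\mathcal{W}$ with $\mathcal{W}$ the block matrix linear in $X$, the resolvent identity $G(H-z)=I$ gives $I+zG=GH$. Hence
\[
M(G)=GH-\cdots
\]
More precisely,
\[
M(G)=-\bigl(HG\,\text{-type term}-\expect_X[\cdots]\bigr),\qquad M(G)=\underline{HG}+G\mathcal S(G),
\]
where $\underline{HG}:=HG+\mathcal S(G)G$ is the self-renormalized product (written with the ordering $I+zG=GH$, so the renormalization is $GH+G\mathcal S(G)$). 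The plan is to control $\langle \uu,M(G)\vv\rangle$ via high moments. By Lemma \ref{momentmethod}, it suffices to show $\expect|\langle\uu,M(G)\vv\rangle|^{2k}\prec\Phi^{2k}$ for every fixed $k$. Write $\expect|\mathcal M|^{2k}=\expect[\mathcal M\,\mathcal M^{k-1}\bar{\mathcal M}^{k}]$ and expand the factor $\langle \uu,GH\vv\rangle=\sum z^{1/2}(\cdots)_{i\mu}x_{i\mu}(\cdots)$ by the cumulant expansion in each $x_{i\mu}$, using $\partial_{x_{i\mu}}G=-G(\partial_{x_{i\mu}}H)G$. The second-cumulant term acting on $G$ in the first factor reproduces exactly $-\langle\uu, G\mathcal S(G)\vv\rangle$ and cancels, because $\mathcal S(\Lambda)=\expect[H\Lambda H]$ is built from the variance profile $n^{-1}$. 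What remains splits into three kinds of terms:
\begin{itemize}
\item second-cumulant terms in which the derivative hits the other factors $\mathcal M^{k-1}\bar{\mathcal M}^k$;
\item third-cumulant terms;
\item fourth- and higher-cumulant terms, truncated at order $\ell\sim C/\tau$ using the support bound \eqref{eq_support}. Each such cumulant is $\OO(n^{-1}q^{\ell-2})$.
\end{itemize}

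For the second-cumulant terms hitting other factors, $\partial\mathcal M$ produces products such as $(G A^{1/2}\cdots)_{\uu i}(\cdots G)_{\mu\vv}$. These are controlled by Ward identities, $\sum_a|G_{\uu a}|^2=\Im G_{\uu\uu}/\eta$, together with the a priori bound $G=\Pi+\OO_\prec(\phi)$, which gives $\Im G_{\uu\uu}\prec\|\Im\Pi\|+\phi$. This yields a gain of $\Theta^2$ per pair of derivatives, and H\"older's inequality then closes the estimate in the form $\expect|\mathcal M|^{2k}\prec\sum_{j}\Phi^{j}\,\expect|\mathcal M|^{2k-j}$. The factors $(1+\phi)^3$ in $\Phi$ absorb the entries of $G$ bounded by $1+\phi$. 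Fourth and higher cumulants carry an extra $q$ or $n^{-1/2}$ per order and are handled by the same Ward bounds.

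The main obstacle is the third-cumulant term. It is of size $n^{-3/2}$ per entry but is summed over $pn$ pairs $(i,\mu)$, and it produces terms like $\sum_{i,\mu}(GA^{1/2})_{\uu i}(B^{1/2}G)_{\mu\mu}(\cdots)_{ii}(G)_{\mu\vv}$, with non-diagonal $A,B$ mixing the two eigenbases. Naively this is only $\OO(1)$ small. The approach is to replace diagonal entries such as $(A^{1/2}GA^{1/2})_{ii}$ by their deterministic counterparts plus $\OO_\prec(\phi)$, and then bound the remaining sums $\sum_{i}(\cdots)_{\uu i}\,w_i$ for deterministic $w$ as an isotropic entry $G_{\uu \mathbf w}$ with $\|\mathbf w\|\le\sqrt n$. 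This gains a factor of $n^{-1/2}$ relative to Cauchy--Schwarz.

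In the fluctuating terms where no such gain is available, the plan is to perform one more cumulant expansion of the off-diagonal resolvent entry (the ``isotropic self-consistent'' trick of \cite{isotropic}). This shows that its expectation is smaller by a factor $\Phi$, so that the whole third-order contribution is $\OO_\prec(q+\Theta)$ times the moment. This step, together with keeping the bound uniform when $\|\Im\Pi\|$ is small near the edge, is the delicate part.

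For the averaged bound, apply the same expansion to $\expect|\underline{\Gamma M(G)}|^{2k}$. The trace gives an extra averaging, and each derivative now creates a sum $\frac1n\sum_a(\cdots G\Gamma G)_{aa}$. By Ward identities this is bounded by $\Theta^2$ rather than $\Theta$, which yields the fluctuation-averaging gain $\Phi^2$. Third-cumulant contributions here are treated as above, with one extra averaging supplying the second factor of $\Phi$.
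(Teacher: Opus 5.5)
Your framework is the paper's: high moments via Lemma~\ref{momentmethod}, cumulant expansion of the $x_{i\mu}$-linear part of $M(G)$, exact cancellation of the second-cumulant term acting on the resolvent factor, Ward bounds, and Young's inequality. However, you place the difficulty in the wrong estimate, and the averaged bound does not close with the mechanism you describe.

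In the anisotropic estimate the third-cumulant terms are harmless. The linear part is $z^{1/2}\sum_{i,\mu}(G\BB)_{\uu\mu}\AA_{i\vh}x_{i\mu}$, so every term carries the deterministic weight $\AA_{i\vh}$, with $\sum_i|\AA_{i\vh}|\lesssim\sqrt n$. Apply one Cauchy--Schwarz/Ward bound to the factor $(G\BB)_{\uu\mu}$ or $(G\AA)_{\uu i}$ that $\partial_{i\mu}^r(G\BB)_{\uu\mu}$ always contains. This gives $n^{-3/2}\cdot(1+\phi)^2n^{3/2}\Theta\le\Phi$ outright. Your example term drops this weight: it has $G_{\mu\vv}$ where $(\BB\vv)_\mu$ should appear, which is why you found only $\OO(1)$. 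No deterministic replacement of diagonal entries and no extra expansion is needed there.

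The genuine gap is in the averaged bound. Bounding $\partial_{i\mu}\underline{\Ga M(G)}$ by $(1+\phi)^2\Theta^2$ via Ward is correct (cf.\ \eqref{TrGaMGder}), but it is not enough. Write $\partial_{i\mu}\underline{\Ga M(G)}=\frac{z^{1/2}}{p+n}(\AA\Ga G\BB)_{i\mu}-z^{1/2}\bigl((\BB Q\AA)_{\mu i}+(\AA Q\BB)_{i\mu}\bigr)$ with $Q=GFG$. The third-cumulant term in which one derivative hits $(\AA\Ga G\BB)_{i\mu}$ and one hits $\underline{\Ga M(G)}$ then contains $n^{-5/2}\sum_{i,\mu}(\AA\Ga G\AA)_{ii}(\BB G\BB)_{\mu\mu}(\AA Q\BB)_{i\mu}$.
\begin{itemize}
\item No off-diagonal factor is left to sum against, so $Q\prec\Theta^2$ yields only $n^{-1/2}\Theta^2$. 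This is not $\OO(\Phi^4)$ in general: for $q=n^{-1/2}$, $\eta\sim1$ and $\phi\lesssim1$ it is of order $n^{-3/2}$, while $\Phi^4\sim n^{-2}$.
\item Replacing the diagonal entries by their deterministic limits does not help. The random $\OO_\prec(\phi)$ errors leave $\phi\, n^{-1/2}\Theta^2$.
\item Even in the second-cumulant term, your accounting gives $\Theta\cdot\Theta^2=\Theta^3$, not the required $\Theta^4$.
\end{itemize}

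The missing ingredient is the sharper bound $Q\prec\Xi:=(1+\phi)^4(q+\Theta)^3$ of Lemma~\ref{qlemma}(ii). The paper proves it by a separate high-moment cumulant expansion of $Q_{\uu\vv}$. There $F$ is chosen, including the random term $H\Ga$, so that the second-cumulant term from differentiating $(G\BB)_{\uu\mu}(\AA\Ga G)_{i\vv}$ exactly cancels the remaining deterministic-type part $\mathcal L_0$ of $Q_{\uu\vv}$. This yields the self-improving implication $Q\prec\lambda\Rightarrow Q\prec(\Xi\lambda)^{1/2}$, iterated from the crude bound.

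Your ``one more cumulant expansion showing its expectation is smaller by a factor $\Phi$'' does not replace this. Inside $\expect[\cdots\,\underline{\Ga M(G)}^{e-1}\overline{\underline{\Ga M(G)}}^{e}]$ you need a high-probability bound on $Q$, not a bound on the expectation of a single factor. The cancellation that makes that expansion gain is precisely the structural input your plan does not supply.
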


We first prove Theorem \ref{maintheorem} in Section \ref{sec:stoch_step}, completing the stochastic step; the deterministic step is accomplished in Section \ref{sec:det_step}. 
As a consequence of Theorem \ref{local_law}, we obtain the following rigidity estimates for the eigenvalues of $\cQ_1$ and $\cQ_2$.
\begin{theorem}[Rigidity of eigenvalues]
Suppose the assumptions of Theorem \ref{local_law} hold. Then, outside the spectrum, we have the following stronger averaged local law:
\begin{equation}\label{eq:averaged_outside}
|m_1(z)-m_{1c}(z)|+|m_2(z)-m_{2c}(z)|+|m(z)-m_c(z)|\prec
\frac{1}{N(\kappa+\eta)}
+\frac{1}{(N\eta)^2\sqrt{\kappa+\eta}},
\end{equation}
uniformly in
\(
z\in S(c_0,C_0,\varepsilon)
\cap \{z=E+\ii\eta:E\geq \lambda_+,\, N\eta\sqrt{\kappa+\eta}\geq N^\varepsilon\}
\)
for any fixed $\varepsilon>0$. 

The local laws \eqref{average} and \eqref{eq:averaged_outside} imply the following rigidity estimate for any constant $0<c_1<c_0$. Define the classical location $\gamma_j$ of the $j$-th eigenvalue of $\mathcal Q_1$ by
\begin{equation*}
\gamma_j:=\sup_x\left\{\int_x^{+\infty}\rho_c(x)\dd x>\frac{j-1}{p}\right\}.
\end{equation*}
In particular, $\gamma_1=\lambda_+$. Then, for any $j$ such that $\lambda_+-c_1\leq \gamma_j\leq \lambda_+$, we have
\begin{equation}\label{eq:rigidity}
|\lambda_j(\cal Q_1)-\gamma_j|\prec j^{-1/3}n^{-2/3}.
\end{equation}
\end{theorem}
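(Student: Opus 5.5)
We prove the stronger averaged law outside the spectrum \eqref{eq:averaged_outside} first, and then deduce rigidity \eqref{eq:rigidity} by the standard Helffer--Sjöstrand argument. Throughout, $N$ is read as $n$; since $d_n\in[\tau,\tau^{-1}]$, this choice does not matter.

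\textbf{Step 1: stability analysis for $E\ge\lambda_+$.} We return to the deterministic step of Section \ref{sec:det_step}. Taking the averaged quantities $\underline{\Gamma M(G)}$ with $\Gamma$ chosen as the appropriate blocks of $\Pi$, $A\Pi_1$ and $B\Pi_2$, we reduce the self-consistent equation to a scalar equation for the error $u:=m_2-m_{2c}$:
\[
\partial_m f(z,m_{2c})\,u+\OO(u^2)=\mathcal E,\qquad |\mathcal E|\prec \Phi^2 .
\]
Near the square-root edge, Lemma \ref{lem_mbehavior} and \eqref{equationEm2} give $|\partial_m f(z,m_{2c})|\sim\sqrt{\kappa+\eta}$. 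From the anisotropic local law \eqref{anisotropic} we may take $\phi=q+\Psi$ as input to Theorem \ref{maintheorem}. For $E\ge\lambda_+$, \eqref{Immc} gives $\Im m_{2c}\sim\eta/\sqrt{\kappa+\eta}$, and therefore
\[
\Theta^2\lesssim \frac{1}{n\sqrt{\kappa+\eta}}+\frac{|u|+\Psi}{n\eta}.
\]
Ignoring the $q$-contribution for the moment, division by $\sqrt{\kappa+\eta}$ gives a bound of order $\frac{1}{n(\kappa+\eta)}+\frac{\Psi}{n\eta\sqrt{\kappa+\eta}}$. In this regime $\Psi\lesssim (n\eta)^{-1}$ because $\Im m_{2c}/(n\eta)\lesssim (n\sqrt{\kappa+\eta})^{-1}\lesssim (n\eta)^{-2}$ fails in general. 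The cleaner route is to bound $\Im m_{2c}$ directly, which gives
\[
\Theta^2\lesssim\frac{1}{n\sqrt{\kappa+\eta}}+\frac{1}{(n\eta)^2}.
\]
Dividing by $\sqrt{\kappa+\eta}$ then yields exactly the two terms in \eqref{eq:averaged_outside}. The $u^2$ term is absorbed because $|u|\ll\sqrt{\kappa+\eta}$ on the domain $n\eta\sqrt{\kappa+\eta}\ge n^{\e}$. We propagate this bound continuously, via a bootstrap in $\eta$ from $\eta=C_0$ downward on a $n^{-3}$-net with Lipschitz continuity, to keep $u$ on the small branch.

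\textbf{Main obstacle: the $q$-terms.} Under a general bounded support condition, $\Phi^2$ contains $q^2$, which does not fit \eqref{eq:averaged_outside}. I would first try to show that in the averaged bound the $q$-contributions appear only as $q^2\,\Im(\cdot)$-weighted terms, or carry an extra factor $q\Theta$, from the cumulant expansion in Section \ref{sec:stoch_step}. Failing that, I would interpret the statement as holding with $q=n^{-1/2}$, or as obtained after a comparison with a matrix of support $n^{-1/2}$. This is the step where I expect most of the work to lie.

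\textbf{Step 2: rigidity.} Combining \eqref{eq:averaged_outside} at $\eta=n^{-2/3+\e}$ with the inequality $\Im m\ge (p\eta)^{-1}\#\{j:|\lambda_j-E|\le\eta\}$, we rule out eigenvalues at distance $\kappa\ge n^{-2/3+3\e}$ beyond $\lambda_+$. For such $E$, the left side of \eqref{eq:averaged_outside} is smaller than $(n\eta)^{-1}$, while $\Im m_c\ll (n\eta)^{-1}$. Together with a norm bound ruling out eigenvalues far away, this gives $\lambda_1\le\lambda_++n^{-2/3+\e}$ with high probability.

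Next, we apply the Helffer--Sjöstrand formula to a smoothed indicator of $[E,\infty)$ and use \eqref{average} inside the spectrum. This gives the counting estimate $|\#\{\lambda_j\ge E\}-p\int_E^\infty\rho_c|\prec 1$ for $E\ge\lambda_+-c_1$. Finally, by the square-root behaviour \eqref{SQUAREROOT}, $p\int_{\gamma_j}^\infty\rho_c\sim p(\lambda_+-\gamma_j)^{3/2}$, so a counting error of $\OO_\prec(1)$ translates into $|\lambda_j-\gamma_j|\prec j^{-1/3}n^{-2/3}$.
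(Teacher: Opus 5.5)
Your two steps reconstruct the argument that the paper does not write out but imports from Theorem 3.8 of \cite{yang2019edge}, and for $q=n^{-1/2}$ they go through. The cross terms produced by $\phi=q+\Psi$ in Lemma \ref{appliof4.2} are absorbed by AM--GM, e.g.\ $\frac{1}{n\eta}\sqrt{\Im m_{2c}/(n\eta)}\le\frac{\Im m_{2c}}{n\eta}+\frac{1}{(n\eta)^2}$ and $\frac{n^{-1/2}}{n\eta}\le\frac1n+\frac{1}{(n\eta)^2}$; this, not the garbled remark about $\Psi\lesssim(n\eta)^{-1}$, is what justifies your ``cleaner route''. Lemma \ref{stability} with $\delta=n^{\epsilon}\bigl(\frac{1}{n\sqrt{\kappa+\eta}}+\frac{1}{(n\eta)^2}\bigr)$, which is non-increasing in $\eta$, then yields \eqref{eq:averaged_outside} directly, without a new bootstrap.

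\textbf{The gap.} The genuine gap is the one you flag yourself, and your option (a) cannot close it with the paper's tools. The averaged error supplied by Theorem \ref{maintheorem} is $\Phi^2\ge q^2$, and $\phi\ge q$ also puts a term $q/(n\eta)$ into $\Theta^2$. Carried through the stability step, these give \eqref{eq:averaged_outside} only with an extra $q^2/\sqrt{\kappa+\eta}$. Your branch selection $|u|\ll\sqrt{\kappa+\eta}$ also needs $q^2\ll\kappa+\eta$, which the domain does not guarantee. Correspondingly, rigidity comes out only with an extra $q^2$; compare the $q^2$ in \eqref{eq_nonspike}.

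Already at $\kappa+\eta\sim1$ the printed bound claims $\OO_\prec(n^{-1})$, while \eqref{average} gives only $q^2$. Dropping $q$ would therefore require redoing the cumulant expansions of Section \ref{sec:stoch_step}, not just the deterministic step. A comparison argument (your option (c)) is likewise not part of the cited proof. The correct resolution is your option (b): the statement as printed should be read with $q=n^{-1/2}$, which is how it is used in Section \ref{sec:pf_outlier_distribution} under \eqref{eq_highmoment}. For general $q$, the terms $q^2/\sqrt{\kappa+\eta}$ and $q^2$ must be added.

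\textbf{A smaller slip in Step 2.} With $\eta=n^{-2/3+\epsilon}$ fixed and $\kappa=n^{-2/3+3\epsilon}$, you have $\Im m_c\sim\eta/\sqrt\kappa=n^{-1/3-\epsilon/2}$. This is larger than $(n\eta)^{-1}=n^{-1/3-\epsilon}$, so the exclusion fails at that threshold. Take $\eta=n^{-1/2-\epsilon}\kappa^{1/4}$ instead. Then, as soon as $\kappa\ge n^{-2/3+4\epsilon}$, the quantities $\Im m_c$, $\frac{1}{n\kappa}$ and $\frac{1}{(n\eta)^2\sqrt\kappa}$ are all at most $n^{-\epsilon}(n\eta)^{-1}$, and $n\eta\sqrt\kappa\ge n^{\epsilon}$. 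Also, to exclude eigenvalues beyond $C_0\lambda_+$ you need $\|\mathcal Q_1\|\le C$ with high probability. This follows from the rigidity of $XX^*$ behind Lemma \ref{priori}, not just from the $\prec1$ bound stated there.
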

\begin{proof}
This theorem is essentially Theorem 3.8 of \cite{yang2019edge} in the case where $A$ and $B$ are diagonal. For non-diagonal $A$ and $B$, \cite{yang2019edge} imposed the third moment condition (3.20) on the entries of $X$ in order to derive the local laws, which were then used to establish rigidity. Since Theorem \ref{local_law} proves the required local laws without any third moment assumption, the same argument as in \cite{yang2019edge} yields \eqref{eq:averaged_outside} and the rigidity estimates for general non-diagonal $A$ and $B$.
\end{proof}

 \subsection{Main result: outlier eigenvalues}
 
In this section, we present several applications of the local laws for (non-spiked) separable covariance matrices. Our primary focus is the asymptotic distribution of outlier eigenvalues for spiked separable covariance matrices defined in \eqref{eq_sepamodel}, which will be established in Theorem~\ref{distribution of outliers}. We also note that the local laws obtained in Theorem \ref{local_law} can be applied to improve the results in \cite{ding2020spikedseparablecovariancematrices,yang2019edge} (specifically, Theorem 2.7 of \cite{yang2019edge} and Theorems 3.7, 3.10, 3.14, 3.17, and 3.18 of \cite{ding2020spikedseparablecovariancematrices,yang2019edge}) by removing the vanishing third moment assumption for the entries of $X$, which was only used in the derivation of the local laws in \cite{yang2019edge} and is no longer needed in light of our results in Theorem \ref{local_law}. Throughout the ensuing discussion, we use the term {\it spikes} for eigenvalues of the population matrices $\wt A$ and $\wt B$, and the term {\it outlier eigenvalues} for eigenvalues of the sample separable covariance matrices $\ctQ_1$ and $\ctQ_2$.

We will see that a spike $\wt\sigma_i^a,\ i\in\cal I_1^+$, or $\wt\sigma_\mu^b,\ \mu\in\cal I_2^+$, causes an outlier eigenvalue beyond $\lambda_+$, if 
\be\label{spikes}
 \tsig_{i}^a > - m_{2c}^{-1}(\lambda_+)  \quad \text{ or } \quad  \tsig_{\mu}^b > - m_{1c}^{-1}(\lambda_+),
 \ee
 where $m_{1c}(\cdot)$ and $m_{2c}(\cdot)$ are defined in (\ref{separa_m12}). Moreover, such an outlier is around a deterministic location
\be\label{g12c} 
\theta_1(\tsig_{i}^a): = g_{2c}\left(-(\tsig_i^a)^{-1}\right) \quad \text{or} \quad \theta_2(\tsig_{\mu}^b): = g_{1c}\left(-(\tsig_\mu^b)^{-1}\right),
\ee
where $g_{1c}$ and $g_{2c}$ are the inverse functions of $m_{1c}: (\lambda_+,\infty)\to (m_{1c}(\lambda_+),0)$ and $m_{2c}: (\lambda_+,\infty)\to (m_{2c}(\lambda_+),0)$, respectively. Note that the inverse functions exist because
\begin{equation}\label{real_stiel}
 m_{\al c}(x) = \int_0^{\lambda_+} \frac{\rho_{\al c}(t)}{t-x}\dd t, \quad \al\in\{1,2\},
 \end{equation}
are monotonically increasing functions of $x$ for $x> \lambda_+$. 

\begin{assumption}\label{ass:spike} 
We assume that \eqref{spikes} holds for all $1\le i \le r$ and $1\le \mu \le s$. Otherwise, if \eqref{spikes} fails for some $\wt\sigma_i^a$ or $\wt\sigma_\mu^b$, we can simply redefine it as the unperturbed version $\sigma_i^a$ or $\sigma_\mu^b$. Moreover, we define the integers $0\le r^+ \leq r$ and $0\le s^+ \leq s$ such that
\begin{equation}\label{eq_spikeassua}
\wt{\sigma}^a_{i} \geq - m_{2c}^{-1} (\lambda_{+}) + n^{-1/3}  + q  \quad \text{if and only if} \quad  1\le  i \le r^+,
\end{equation}
and 
\begin{equation} \label{eq_spikeassub}
\wt{\sigma}^b_{\mu}\ge - m_{1c}^{-1} (\lambda_{+}) + n^{-1/3} + q  \quad \text{if and only if} \quad 1\le  \mu \le s^+ .
\end{equation}
The lower bound $n^{-1/3}+q$ is chosen for definiteness, and {it can be replaced with any $n$-dependent parameter that is of the same order}.
\end{assumption}

To state the results on outlier eigenvalues, we introduce the following labeling convention.

\begin{definition}\label{defn_relabelling}
We define the labeling functions $\al:\{1,\cdots, p\}\to \N$ and $\beta:\{1,\cdots, n\}\to \N$ as follows. For any $1\le i \le r$, we assign to it a label $\alpha(i)\in \{1,\cdots, r+s\}$ if $\theta_1(\wt{\sigma}^a_{i})$ is the $\alpha(i)$-th largest element in $\{\theta_1(\wt{\sigma}^a_i)\}_{i=1}^r \cup  \{\theta_{2}(\wt{\sigma}^b_\mu)\}_{\mu=1}^s$. We also assign to any $1\le \mu \le s$ a label $\beta(\mu)\in \{1,\cdots, r+s\}$ in a similar way. Moreover, we define $\al(i)=i+s$ if $i>r$ and $\beta(\mu)=\mu + r$ if $\mu >s$.
\end{definition}

Denote the nontrivial eigenvalues of $\widetilde{\mathcal{Q}}_{1,2}$ by $\wt\lambda_1 \geq \wt\lambda_2 \geq \cdots \geq \wt\lambda_{n \wedge p}.$ For $1\le i \le r$ and $1\le \mu \le s$, we define
\begin{align}\label{deltaimu}
& \Delta_1(\widetilde{\sigma}_i^a):= \left(\widetilde{\sigma}_i^a+m_{2c}^{-1}(\lambda_+)\right)^{1/2}, \quad \Delta_2(\widetilde{\sigma}_\mu^b):= \left(\widetilde{\sigma}_\mu^b+m_{1c}^{-1}(\lambda_+)\right)^{1/2}.
\end{align}
The following theorem from \cite{ding2020spikedseparablecovariancematrices} gives the large deviation bounds for the locations of the outliers.

\begin{theorem}[Theorem 3.6 of \cite{ding2020spikedseparablecovariancematrices}]\label{thm_outlier}
Suppose that Assumptions \ref{assm_big1'}, \ref{ass:unper}, and \ref{ass:spike} hold, except that the high-moment condition \eqref{eq_highmoment} is replaced by the slightly more general bounded support condition \eqref{eq_support}, with $ n^{-{1}/{2}} \leq q \leq n^{- c_q} $ for some constant $c_q>0$. Then
\begin{equation}\label{eq_spike}
\left|\wt\lambda_{\alpha(i)}-\theta_1(\wt\sigma_i^a)\right| \prec n^{-1/2}\Delta_1(\widetilde{\sigma}_i^a)  + q\Delta_1^2(\widetilde{\sigma}_i^a), \quad 1 \leq i \leq r,
\end{equation}
\begin{equation}\label{eq_spike2}
\left|\wt\lambda_{\beta(\mu)}-\theta_2(\wt\sigma_\mu^b)\right| \prec n^{-1/2} \Delta_2(\widetilde{\sigma}_\mu^b) + q\Delta_2^2(\widetilde{\sigma}_\mu^b), \quad 1 \leq \mu \leq s .
\end{equation}
Moreover, for any fixed integer $k>r+s$,
\begin{equation}\label{eq_nonspike}
\left|\wt\lambda_{k}-\lambda_+\right| \prec n^{-2/3} + q^2.
\end{equation}
\end{theorem}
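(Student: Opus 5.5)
The plan follows the argument of \cite{ding2020spikedseparablecovariancematrices}: reduce the outlier locations to a low-rank determinant equation, then feed in the local laws of Theorem \ref{local_law} and the rigidity estimate \eqref{eq:rigidity}. These now hold without \eqref{eq_3rdmoment}, so the original proof carries over once those inputs are replaced.

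\textbf{Reduction to a determinant equation.} Using \eqref{AOBO}, write $\wt A^{1/2}=(1+V_o^aD^a(V_o^a)^*)^{1/2}A^{1/2}$ and similarly for $\wt B$. By the linearization \eqref{linearize_block}, $x>\lambda_+$ is an eigenvalue of $\ctQ_1$ if and only if a $(r+s)\times(r+s)$ matrix built from $G(x)$ is singular. Concretely, it is of the form
\[
\det\Big( \mathcal D^{-1}+ x\,\mathbf U^* G(x)\mathbf U\Big)=0,
\]
where $\mathcal D$ is diagonal with entries depending on $D^a,D^b$. The matrix $\mathbf U$ has columns $\bv_i^a$ embedded in the first $p$ coordinates and $\bv_\mu^b$ embedded in the last $n$ coordinates. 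The appropriate normalisation uses that $A\bv_i^a=\si_i^a\bv_i^a$.

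\textbf{Outside the spectrum.} First I extend the anisotropic local law \eqref{anisotropic} to real $x\in[\lambda_+ + n^{-2/3+\epsilon},C]$, with error $\Psi$ replaced by $q+n^{-1/2}(\kappa+n^{-2/3})^{-1/4}$. This uses \eqref{eq:rigidity}, which gives $\lambda_1(\mathcal Q_1)\le\lambda_++n^{-2/3+\epsilon}$ with high probability. Together with the outer averaged law \eqref{eq:averaged_outside}, it allows $\eta\downarrow0$ by analyticity of $G$ and a standard interpolation in $\eta$. Then $\mathbf U^*G\mathbf U=\mathbf U^*\Pi\mathbf U+\OO_\prec(\cdot)$, and $\mathbf U^*\Pi\mathbf U$ is diagonal, with entries $-x^{-1}(1+m_{2c}\si_i^a)^{-1}$ and $-x^{-1}(1+m_{1c}\si_\mu^b)^{-1}$.

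\textbf{Counting and localization.} The diagonal entries of the deterministic matrix vanish exactly at $x=\theta_1(\wt\si_i^a)$ and $x=\theta_2(\wt\si_\mu^b)$, by \eqref{g12c}. Near such a point, the derivative of the corresponding entry has size $\sim\Delta^2$, because of the square-root behaviour of $m_{\alpha c}$ near $\lambda_+$ (Lemma \ref{lem_mbehavior}). A perturbation argument then gives the bounds:
\begin{itemize}
\item Dividing the error $q+n^{-1/2}\kappa^{-1/4}$ by $\Delta^{2}$, and using $\kappa\sim\Delta^4$ at $\theta$, yields \eqref{eq_spike}--\eqref{eq_spike2}.
\item To count eigenvalues and match the labels $\alpha(i),\beta(\mu)$, I use a continuity (eigenvalue interlacing / Rouché) argument, deforming $D^a,D^b$ from $0$. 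This is the step I expect to be most delicate, in particular for near-degenerate spikes and for those close to the threshold $n^{-1/3}+q$.
\end{itemize}

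\textbf{Non-outliers.} For \eqref{eq_nonspike}, Weyl-type interlacing for rank-$(r+s)$ perturbations combined with \eqref{eq:rigidity} gives the lower bound. The absence of further eigenvalues above $\lambda_++n^{-2/3+\epsilon}+q^2$ follows from the non-singularity of the determinant away from the $\theta$'s.
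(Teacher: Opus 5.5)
The paper does not prove this statement itself; it quotes it from \cite{ding2020spikedseparablecovariancematrices}. Your outline is that argument: the determinant identity of Lemma \ref{lem_pertubation}, the outside-spectrum law of Theorem \ref{lem_localout}, perturbation around the zeros of the diagonal matrix \eqref{eq:deterministic_part}, counting by continuity in the spikes, and interlacing. However, your localization step has the scaling backwards. The $x$-derivative of the $i$-th diagonal entry $\frac{1+d_i^a}{d_i^a}-(1+m_{2c}(x)\sigma_i^a)^{-1}$ is $\sigma_i^a m_{2c}'(x)(1+m_{2c}(x)\sigma_i^a)^{-2}$. Since $m_{2c}'(x)\sim(x-\lambda_+)^{-1/2}$ and $\kappa\sim\Delta_1^4$ at $x=\theta_1(\wt\sigma_i^a)$, this derivative grows like $\Delta_1^{-2}$ (it blows up at the threshold); it is not of size $\Delta_1^{2}$. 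The displacement is therefore the error \emph{multiplied} by $\Delta_1^2$, namely $(q+n^{-1/2}\Delta_1^{-1})\Delta_1^2=q\Delta_1^2+n^{-1/2}\Delta_1$, which is \eqref{eq_spike}. Dividing by $\Delta_1^2$, as you wrote, gives $q\Delta_1^{-2}+n^{-1/2}\Delta_1^{-3}$ instead.

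The more serious gap is the regime you flag as delicate. Linearizing is only legitimate if the displacement is small compared with $\theta_1(\wt\sigma_i^a)-\lambda_+\sim\Delta_1^4$, so that $m_{2c}'$ is essentially constant on the window and the window lies in the domain \eqref{eq_paraout}. That requires $\Delta_1^2\gg n^{-1/3}+q$, which is exactly $i\le r^+$.

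For $r^+<i\le r$, no continuity or Rouch\'e refinement will produce \eqref{eq_spike}, because the bound is false there. Take $r=1$, $s=0$, $X$ Gaussian, $A=I_p$, $B=I_n$, and $\wt\sigma_1^a=-m_{2c}^{-1}(\lambda_+)+n^{-1}$, which Assumption \ref{ass:spike} allows. Then $\Delta_1=n^{-1/2}$, and \eqref{eq_spike} would force $\wt\lambda_1\le\lambda_++n^{-1+\epsilon}$ with high probability. On the other hand, \eqref{assm_gap} gives $d_i^a,d_\mu^b>0$, so $\wt A\ge A$, $\wt B\ge B$ and hence $\wt\lambda_1\ge\lambda_1(\mathcal Q_1)$. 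By Tracy--Widom, $\lambda_1(\mathcal Q_1)>\lambda_++n^{-2/3}$ with probability bounded below, a contradiction.

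This is what $r^+$ and $s^+$ are for: \eqref{eq_spike}--\eqref{eq_spike2} can only be proved for $i\le r^+$ and $\mu\le s^+$. For the remaining spikes your determinant argument yields edge sticking instead, $|\wt\lambda_{\alpha(i)}-\lambda_+|\prec n^{-2/3}+q^2$. The reason is as follows.
\begin{itemize}
\item For $x-\lambda_+\ge n^{\epsilon}(n^{-2/3}+q^2)$, the near-critical diagonal entries have size $\gtrsim(x-\lambda_+)^{1/2}$, which dominates the error $q+n^{-1/2}(x-\lambda_+)^{-1/4}$.
\item Hence only the $r^++s^+$ well-separated $\theta$'s produce eigenvalues in that region.
\item The lower bound comes from $\wt\lambda_k\ge\lambda_k(\mathcal Q_1)$ together with rigidity.
\end{itemize}
The borderline case $\Delta_1^2\asymp n^{-1/3}+q$ is covered by this, since there the two bounds agree up to $n^{\epsilon}$.
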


To state the asymptotic distribution of the outliers, we assume that the spikes $\wt\sigma_i^a$ and $\wt\sigma_\mu^b$ are well separated from the phase transition points \smash{\(- m_{2c}^{-1}(\lambda_+)\)} and \smash{\(- m_{1c}^{-1}(\lambda_+)\)} (recall \eqref{spikes}). For simplicity of presentation, we then relabel $\wt\sigma_i^a$ with $r^+<i\le r$ and $\wt\sigma_\mu^b$ with $s^+<\mu\le s$ as $\sigma_i^a$ and $\sigma_\mu^b$, respectively. Under this convention, we have $r^+=r$ and $s^+=s$, and we impose the following assumption.

\begin{assumption}\label{ass:awayfrombulk}
We assume that
\be\label{eq:supercritical}
 \tsig_{i}^a > - m_{2c}^{-1}(\lambda_+)+\tau  \quad \text{ and } \quad  \tsig_{\mu}^b > - m_{1c}^{-1}(\lambda_+)+\tau,
 \ee
 holds for all $1\leq i\leq r$, $1\leq \mu\leq s$ and some constant $\tau>0$.
\end{assumption}

By Theorem \ref{thm_outlier}, this assumption ensures that the outliers remain at distances of order 1 from the bulk. We further require that the outliers be well separated from one another, which is guaranteed by the following assumption together with Theorem \ref{thm_outlier}. 

\begin{assumption}\label{ass:nonoverlapping}
Assume that the distances between any two of the $r+s$ classical locations of the outlier eigenvalues
\be\nonumber
\theta_1(\tsig_{i}^a),\ \ \ 1\leq i\leq r \quad \text{and} \quad \theta_2(\tsig_{\mu}^b), \ \ \ 1\leq \mu\leq s
\ee
are bounded below by some constant $\tau>0$.
\end{assumption}

We now state the asymptotic distribution of the outliers.
\begin{theorem}\label{distribution of outliers}
Suppose that Assumptions \ref{assm_big1'}, \ref{ass:unper}, \ref{ass:awayfrombulk} and \ref{ass:nonoverlapping} hold. Then
\begin{equation}
\label{eq:main_asymptotic_outlier}
\sqrt{n}\pb{\wt\lambda_{\alpha(i)}-\theta_i^a}=\Phi_i^a+\OO_{\prec}(n^{-\e}),\quad \sqrt{n}\pb{\wt\lambda_{\beta(\mu)}-\theta_\mu^b}=\Phi_\mu^b+\OO_{\prec}(n^{-\e}),
\end{equation}
where we abbreviate $\theta_i^a\equiv\theta_1(\wt\sigma_i^a)$ and $\theta_\mu^b\equiv\theta_2(\wt\sigma_\mu^b)$, and $\Phi_i^a$ and $\Phi_\mu^b$ are random variables that are asymptotically equal in distribution (recall Definition \ref{defn_asymptotic}) to centered Gaussian variables with variances
\begin{equation}\label{eq:asymp_var}
  \pbb{\frac{1}{m'_{2c}(\theta_i^a)}}^2\pB{2m_{2a}(\theta_i^a)+k_4b(\theta_i^a)s_4(\vv_i^a)},\quad \pbb{\frac{1}{m'_{1c}(\theta_\mu^b)}}^2\pB{2m_{1b}(\theta_\mu^b)+k_4a(\theta_\mu^b)s_4(\vv_\mu^b)},\end{equation}
respectively. Here, we denote $k_4:=n^2\kappa_4$ and define the following functions of $z\in \C$: 
\begin{equation}\label{def:alpha(z) beta(z) a(z) b(z) s4}
\begin{aligned}
m_{2a}(z):=\frac{\gamma_2(z)}{1-z^2\gamma_1(z)\gamma_2(z)},&\quad m_{1b}(z):=\frac{\gamma_1(z)}{1-z^2\gamma_1(z)\gamma_2(z)},\\
\gamma_1(z):=d_n\int\frac{x^2}{z^2(1+m_{2c}(z)x)^2}\pi_A(\dd x),&\quad \gamma_2(z):=\int\frac{x^2}{z^2(1+m_{1c}(z)x)^2}\pi_B(\dd x),\\
a(z):=\frac{1}{n}\sum_i \pB{z^{-1}A\pb{1+m_{2c}(z)A}^{-1}}_{ii}^2,&\quad b(z):=\frac{1}{n}\sum_\mu \pB{z^{-1}B\pb{1+m_{1c}(z)B}^{-1}}_{\mu\mu}^2,\\
s_4(\vv_i^a):=\sum_j \vv_i^a(j)^4,&\quad s_4(\vv_\mu^b):=\sum_\nu \vv_\mu^b(\nu)^4.
\end{aligned}
\end{equation} 
\end{theorem}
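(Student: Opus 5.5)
We follow the strategy of \cite{bao2020statisticalinferenceprincipalcomponents,Bao2021AOS}: first reduce each outlier to a low-rank determinant equation, then linearize that equation around the classical location, and finally compute the law of the resulting resolvent functional. We treat an $A$-spike $\wt\sigma_i^a$; a $B$-spike is symmetric after exchanging the roles of $(A,m_{2c})$ and $(B,m_{1c})$.

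\textbf{Step 1: determinant equation.} Using the factorization \eqref{AOBO}, we write $\wt{\mathcal Q}_1$ as a finite-rank perturbation of $\mathcal Q_1$ through the linearization $H$. By the standard Woodbury/Schur argument (as in \cite{ding2020spikedseparablecovariancematrices}), $x>\lambda_+$ is an eigenvalue of $\wt{\mathcal Q}_1$ if and only if
\[
\det\bigl(\mathcal D^{-1}+1+ z\,\mathbf U^* G(x)\mathbf U\bigr)=0 ,
\]
up to the exact normalization. Here $\mathbf U$ collects the $r+s$ vectors $\bv_i^a,\bv_\mu^b$ (embedded in $\C^{p+n}$), and $\mathcal D=\mathrm{diag}(D^a,D^b)$. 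Because the eigenvectors of $A$ are the $\bv_j^a$, the relevant entries of $\Pi$ are explicit: $\langle \bv_i^a,\Pi_1\bv_i^a\rangle=-x^{-1}(1+m_{2c}\sigma_i^a)^{-1}$, and $\Pi$ is diagonal in this basis. The deterministic equation therefore reduces to $1+m_{2c}(x)\wt\sigma_i^a=0$, which gives $\theta_1(\wt\sigma_i^a)$.

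By Theorem \ref{local_law}, extended outside the spectrum at distance order one (where the error is $n^{-1/2}$ with $q=n^{-1/2}$), together with Theorem \ref{thm_outlier} and Assumptions \ref{ass:awayfrombulk}--\ref{ass:nonoverlapping}, the outlier $\wt\lambda_{\alpha(i)}$ lies in a disc of radius $n^{-1/2+\e}$ around $\theta_i^a$. In that region the $(r+s)\times(r+s)$ matrix is, up to $\OO_\prec(n^{-1/2})$, diagonal with a single small diagonal entry, namely the $i$-th one. A Schur complement expansion then shows that the outlier is determined by the $i$-th entry alone, with corrections of size $\OO_\prec(n^{-1})$: the off-diagonal entries are $\OO_\prec(n^{-1/2})$ and the other diagonal entries are of order one.

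\textbf{Step 2: linearization.} We expand $G(\wt\lambda)-G(\theta)$ using the derivative bound $\partial_z(G-\Pi)=\OO_\prec(n^{-1/2})$, obtained from Cauchy's integral formula on a contour at distance order one. This yields
\[
\sqrt n\,(\wt\lambda_{\alpha(i)}-\theta_i^a)=\frac{c(\theta_i^a)}{m_{2c}'(\theta_i^a)}\,\sqrt n\,\bigl\langle \bv_i^a,(G_1-\Pi_1)(\theta_i^a)\bv_i^a\bigr\rangle+\OO_\prec(n^{-1/2+2\e}).
\]
The constant $c$ is computed from $\partial_x[x^{-1}(1+m_{2c}(x)\sigma)^{-1}]$ evaluated at the root, and the $x^{-1}$ factor cancels. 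We must check that the prefactor reproduces the factor $(m'_{2c})^{-1}$ in \eqref{eq:asymp_var}. We set $\Phi_i^a$ equal to the right-hand main term.

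\textbf{Step 3: CLT for an isotropic resolvent entry (main obstacle).} We must show that $\mathcal X:=\sqrt n\langle \bv,(G-\Pi)(z)\bv\rangle$, at real $z=\theta$ outside the spectrum, is asymptotically Gaussian with the stated variance. To do so we compute the characteristic function $\expect e^{\ii t\mathcal X}$, or equivalently moments via $\expect[\mathcal X^k]$, by a cumulant expansion in the entries $x_{j\nu}$. Writing $z\,G_{\bv\bv}$ through $G H$ and expanding $\expect[x_{j\nu}F(X)]$ gives the following contributions.

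\emph{Second-order terms.} These generate the self-consistent structure and give $\expect[\mathcal X\,e^{\ii t\mathcal X}]\approx \ii t\,V\,\expect e^{\ii t\mathcal X}$. Here $V$ is a sum of products like $\frac1n\sum (G^2)$-type traces. Using the averaged local law and the resolvent identities, including the self-consistent relation for the derivative $\frac1n\tr(BG_2BG_2)$ (which is how the factor $(1-z^2\gamma_1\gamma_2)^{-1}$ arises), $V$ evaluates to $2m_{2a}$.

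\emph{Third-cumulant terms.} These carry a factor $\kappa_3 n^{-1/2}\cdot\sqrt n$ and involve sums $\sum_{j,\nu}G_{\bv j}G_{jj}G_{\nu\nu}\ldots$ that contain an off-diagonal factor. Their expectation is small by the anisotropic law and a further expansion. This is where care is needed, since no vanishing third moment is assumed, and we expect $\OO(n^{-1/2})$ after one more expansion step.

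\emph{Fourth-cumulant terms.} These give $k_4\sum_{j,\nu}\bv(j)^2\ldots$; replacing $G_{jj}$ and $G_{\nu\nu}$ by the corresponding $\Pi$ entries produces $k_4\,b(\theta)\,s_4(\bv)$.

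\emph{Higher cumulants and centering.} Higher cumulants are negligible. The expectation $\sqrt n\,\expect\langle\bv,(G-\Pi)\bv\rangle$ is shown to be $\oo(1)$ by the same expansion.

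With the variance and the centering in hand, Gaussianity follows by the standard ODE argument for $\expect e^{\ii t\mathcal X}$, and tightness follows from $\mathcal X\prec1$.
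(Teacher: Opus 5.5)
Your Steps 1 and 2 match the paper's reduction: the master equation $\det(\mathcal D^{-1}+x\mathbf U^*G(x)\mathbf U)=0$, the Schur complement at the $i$-th entry with $\|\mathsf C^{-1}\|=\OO(1)$, and a Taylor expansion with a Cauchy-formula bound on $\partial_x(G-\Pi)$. Rewriting $z\,\vv_i^*G\vv_i$ through $GH$ before the cumulant expansion is also the paper's route.

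\textbf{The gap is the second-order (variance) term of Step 3.} Write $\mathcal P_i^a=z\mathcal X$. When the $\kappa_2$-derivative falls on $\mathcal P_i^a$, the identity $\partial_{j\mu}\mathcal P_i^a=-2\sqrt n z^{3/2}(G\AA)_{\vv_i j}(\BB G)_{\mu\vv_i}$ gives
\[
\frac1n\sum_{j,\mu}\vv_i(j)(\BB G)_{\mu\vv_i}\,\partial_{j\mu}\mathcal P_i^a=-\frac{2z^{3/2}}{\sqrt n}\,(G\AA)_{\vv_i\vv_i}\,(G\BB^2G)_{\vv_i\vv_i}.
\]
So $V$ is not a normalized trace. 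It is an isotropic entry of the two-resolvent product $G\BB^2G$, and nothing you invoke determines its limit:
\begin{itemize}
\item the averaged and anisotropic laws control only single resolvents and their $z$-derivatives;
\item because $B\neq I$ sits between the two resolvents, no resolvent identity reduces $G\BB^2G$ to $G$ or $\partial_zG$;
\item the same obstruction applies to $\frac1n\tr(B\mathcal G_2B\mathcal G_2)$, which is not $\partial_z m_2$.
\end{itemize}
You correctly guess that the linearized system producing $(1-z^2\gamma_1\gamma_2)^{-1}$ is the answer. However, you give no mechanism proving that $G\BB^2G$ concentrates at its solution.

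The paper's device is the deformed resolvent $G_\omega=(H-\omega\BB^2-z)^{-1}$, for which $\partial_\omega G_\omega|_{\omega=0}=G\BB^2G$. Outside the spectrum, $\|G_\omega\|=\OO(1)$ with high probability for $|\omega|\le\e$. Hence the stochastic and deterministic steps rerun in simplified form and give $G_\omega-\Pi_\omega\prec n^{-1/2}$ uniformly in $\omega$, where $\Pi_\omega$ is built from the perturbed system \eqref{m_12c,omega}. Cauchy's formula in $\omega$ then gives
\[
(G\BB^2G)_{\vv_i\vv_i}=\frac{m_{2a}\sigma_i^a}{z(1+m_{2c}\sigma_i^a)^2}+\OO_\prec(n^{-1/2}).
\]
Differentiating \eqref{m_12c,omega} at $\omega=0$ yields $m_{1a}=z\gamma_1m_{2a}$ and $m_{2a}=z\gamma_2m_{1a}+\gamma_2$. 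You need an argument of this kind, or a separate cumulant expansion that closes an equation for the product, before Step 3 can output $2m_{2a}$.

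\textbf{Three smaller points.}
\begin{enumerate}
\item You expand at the real point $\theta_i^a$, where $\|G\|$ has no deterministic bound. Then $\expect[\mathcal X^k]$ and the cumulant remainder, which contains $\sup_{\xi_{j\mu}\in\R}$ of derivatives of $G$, need not be finite. The paper expands at $z=\theta_i^a+\ii n^{-2}$, where $\|G\|\lesssim n^2$ makes Lemma \ref{lem_stodomin}(iii) applicable. It then returns to $\theta_i^a$ using $\|G(z)-G(\theta_i^a)\|=\OO(n^{-2})$ with high probability.
\item The first-order cancellation leaves $\sqrt n\,(m_{2c}-m_2)G_{\vv_i\vv_i}(\mathcal P_i^a)^{\ell-1}$. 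This needs $m_2-m_{2c}=\oo(n^{-1/2})$ at that $z$, not just the $n^{-1/2}$ precision you quote. The paper obtains $n^{-2/3}$ from \eqref{eq:averaged_outside} at $\eta_0=n^{-2/3}$ plus Lipschitz continuity.
\item The $\kappa_3$-terms need no further expansion, but naive bounds do not make them negligible. By Cauchy--Schwarz, $\sum_\mu(\BB G\BB)_{\mu\mu}(\BB G)_{\mu\vv_i}$ is only $\OO_\prec(\sqrt n)$, which after the $\sqrt n$ prefactor bounds several of these contributions merely by $\OO_\prec(1)$. The paper replaces $(\BB G\BB)_{\mu\mu}$ by $(\BB\Pi\BB)_{\mu\mu}$. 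It then applies the anisotropic law along the deterministic vector $\mathbf b$ with $\|\mathbf b\|\sim\sqrt n$, whose $\Pi$-entry vanishes by the block structure, to get $\OO_\prec(1)$.
\end{enumerate}
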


In the special case $A=I_p$ and $B=I_n$, the functions $m_{1c}$ and $m_{2c}$ admit explicit expressions, and Theorem \ref{distribution of outliers} reduces to part of Theorem 2.9 in \cite{bao2020statisticalinferenceprincipalcomponents}.
    
We conduct numerical simulations to validate Theorem \ref{distribution of outliers}; the results are displayed in Figure \ref{Simulation Results}. In our experiments, we take $p=500,\ n=1000,$ and assume that $\textup{Spec}(A),\ \textup{Spec}(B)\subset[1,2]$, with $r=s=1$. We consider the following four parameter settings:
$$ \text{(a)}:\theta_1^a=10,\ \theta_1^b=20,\quad \text{(b)}:\theta_1^a=10,\ \theta_1^b=80,\quad \text{(c)}:\theta_1^a=20,\ \theta_1^b=80,\quad \text{(d)}:\theta_1^a=40,\ \theta_1^b=80,$$
corresponding to subfigures (\ref{fig:10-20}), (\ref{fig:10-80}), (\ref{fig:20-80}), and (\ref{fig:40-80}), respectively.
We observe that the theoretical predictions agree well with the simulations in (\ref{fig:10-20}) and (\ref{fig:10-80}), exhibit mild deviations in (\ref{fig:20-80}), and break down in (\ref{fig:40-80}). In fact, the empirical distribution in (\ref{fig:40-80}) is no longer approximately Gaussian. This discrepancy is mainly due to the relatively small sample size $n$. More precisely, as will become clear in the proof of Lemma \ref{reduction to Green functions} below, a key ingredient in treating the outlier eigenvalues individually is that $\norm{\mathsf C^{-1}}$ remains bounded, where $\mathsf C$ is a matrix to be defined later in the proof. However, in the setting of (\ref{fig:40-80}), this quantity is already numerically of order $\sqrt{n}$ (approximately 45), violating the required condition. Nevertheless, we point out that, when there is only one outlier eigenvalue arising from either $A$ or $B$, the agreement between theory and simulation in setting (d) is even better than in (\ref{fig:10-20}) and (\ref{fig:10-80}).

\begin{figure}[htbp]
    \centering
    \begin{subfigure}[b]{1\textwidth}
        \centering
        \includegraphics[width=\textwidth]{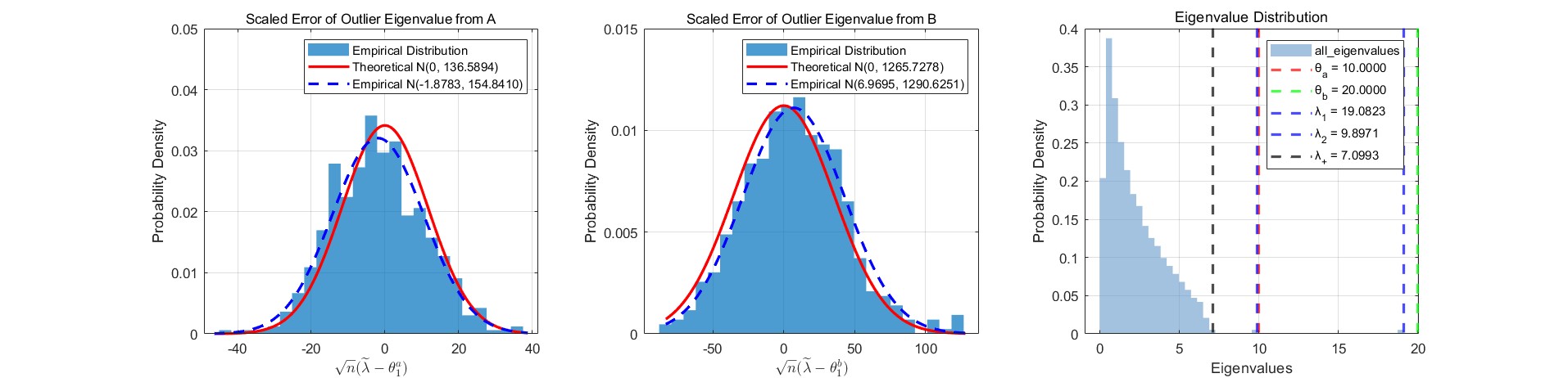}
        \caption{$\theta_1^a=10,\ \theta_1^b=20.$}
        \label{fig:10-20}
    \end{subfigure}
    \begin{subfigure}[b]{1\textwidth}
        \centering
        \includegraphics[width=\textwidth]{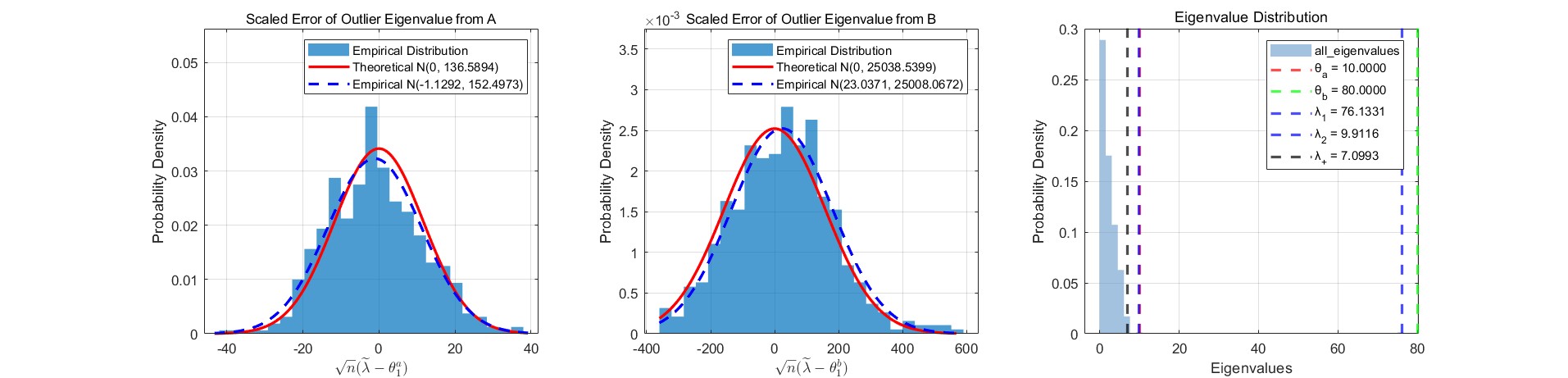}
        \caption{$\theta_1^a=10,\ \theta_1^b=80.$}
        \label{fig:10-80}
    \end{subfigure}
    \begin{subfigure}[b]{1\textwidth}
        \centering
        \includegraphics[width=\textwidth]{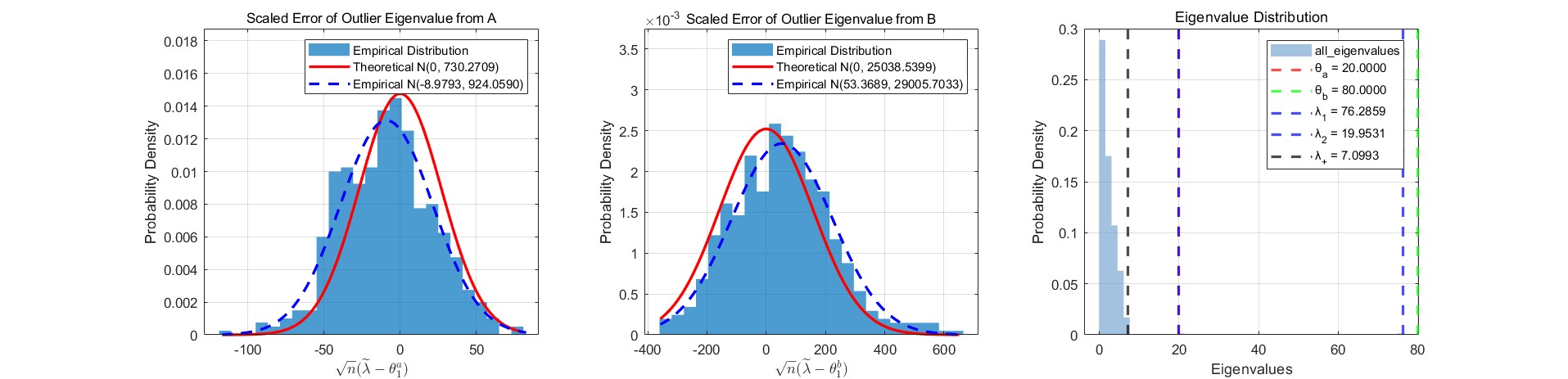}
        \caption{$\theta_1^a=20,\ \theta_1^b=80.$}
        \label{fig:20-80}
    \end{subfigure}
    \begin{subfigure}[b]{1\textwidth}
        \centering
        \includegraphics[width=\textwidth]{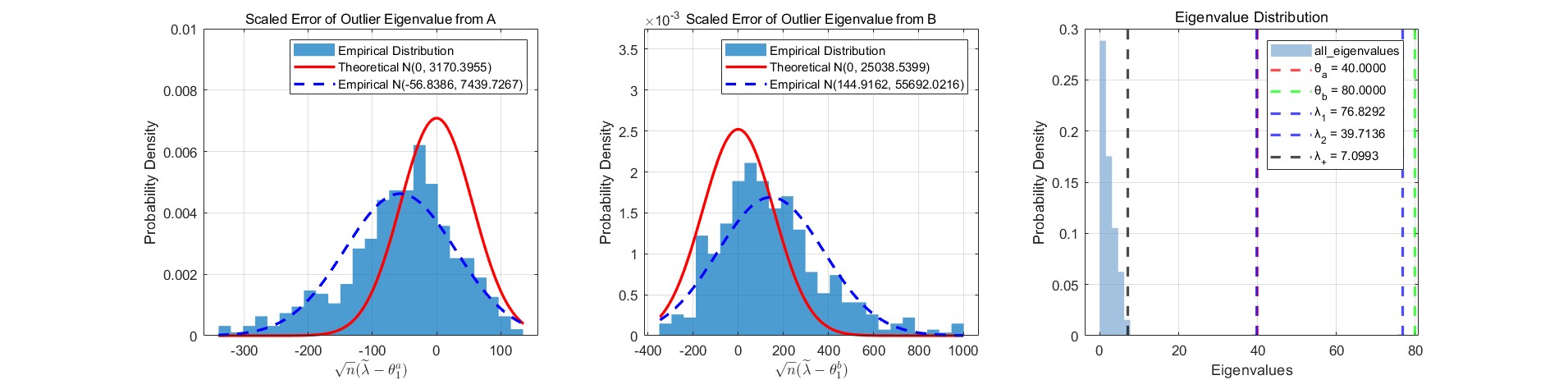}
        \caption{$\theta_1^a=40,\ \theta_1^b=80.$}
        \label{fig:40-80}
    \end{subfigure}
    \caption{Simulation results for the asymptotic distributions of outlier eigenvalues.}
    \label{Simulation Results}
\end{figure}

\section{Proof of Theorem \ref{maintheorem}}\label{sec:stoch_step}
\subsection{Main tools}

We first establish an a priori bound on the spectral norms of $\mathcal Q_{1}$ and $\mathcal Q_{2}$.

\begin{lemma}\label{priori}
Under the assumption of Theorem \ref{local_law}, we have $\norm{\cal Q_1}=\norm{\cal Q_2}\prec 1.$
\end{lemma}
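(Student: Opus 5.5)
The plan is to reduce the bound to the case $A=I_p$, $B=I_n$ and then control $\norm{XX^*}$ by a moment (trace) method that uses only the bounded support condition \eqref{eq_support}.

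\textbf{Reduction.} Since $\norm{A},\norm{B}\le\tau^{-1}$ by \eqref{assm3},
\[
\norm{\cal Q_1}=\norm{A^{1/2}XBX^*A^{1/2}}\le \norm{A}\,\norm{B}\,\norm{X}^2\le \tau^{-2}\norm{XX^*}.
\]
Also $\norm{\cal Q_1}=\norm{\cal Q_2}$, because $\cal Q_1=YY^*$ and $\cal Q_2=Y^*Y$ have the same nonzero eigenvalues. It therefore suffices to prove $\norm{XX^*}\prec 1$ for a $p\times n$ matrix with independent centered entries of variance $n^{-1}$ and $\max|x_{i\mu}|\prec q\le n^{-\tau}$.

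\textbf{Truncation.} Fix small $\delta>0$ and set $\hat x_{i\mu}:=x_{i\mu}\mathbbm 1_{|x_{i\mu}|\le n^{\delta}q}$. By \eqref{eq_support}, $\hat X=X$ with high probability. Moreover $\abs{\expect\hat x_{i\mu}}\le \expect\bigl[|x_{i\mu}|\mathbbm 1_{|x_{i\mu}|>n^\delta q}\bigr]\le n^{-D}$ for any $D$, by Cauchy--Schwarz, the fourth-moment bound in \eqref{eq_124moment}, and $\prob(|x_{i\mu}|>n^\delta q)\le n^{-2D-2}$. Hence recentring costs a deterministic matrix of norm $\le n^{-D+1}$, and the variance changes only by $O(n^{-D})$.

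\textbf{Moment bound.} For the centred, truncated matrix $\bar X$ with entries bounded by $n^{\delta}q\le n^{\delta-\tau}$, estimate $\expect\tr(\bar X\bar X^*)^k$ for $k\sim\log n$ by the standard path-counting argument of Bai--Yin/Erd\H{o}s--Knowles--Yau--Yin. Each closed walk of length $2k$ on the bipartite graph $\cal I_1\sqcup\cal I_2$ contributes the product of moments $\expect|\bar x|^{m}\le n^{-1}(n^{\delta}q)^{m-2}$ over the edges it uses, where each edge is used with multiplicity $m\ge2$. Walks whose edges all have multiplicity exactly $2$ give the leading term $\lesssim n(1+\sqrt{d_n})^{2k}$. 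Walks with a higher multiplicity lose vertices, and each lost vertex costs a factor $n^{-1}$ while gaining at most $(n^{\delta}q)^{2}n\le n^{2\delta}$ overall per excess. Choosing $\delta<\tau/4$, these extra factors are summable as long as $k\le n^{c}$. This gives
\[
\expect\tr(\bar X\bar X^*)^k\le n\,(C+o(1))^{2k}.
\]
Markov's inequality with $k=\lceil C'\log n\rceil$, large in terms of $D$ and $\eps$, then yields $\prob(\norm{\bar X\bar X^*}>C n^{\eps})\le n^{-D}$, which is $\norm{XX^*}\prec1$.

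\textbf{Main obstacle.} The hardest step is the combinatorial counting under bounded support with $q$ possibly as large as $n^{-\tau}$, i.e.\ checking that high-multiplicity edges do not blow up the count. The point is that $q^2n$ may be as large as $n^{1-2\tau}$, but each such gain comes with lost free indices. The simplest safe route is to cite the known result (e.g.\ Lemma 3.11 of \cite{yang2019edge} or \cite{principal}), which gives $\norm{XX^*}\le(1+\sqrt{d_n})^2+\OO_\prec(q+n^{-2/3})$ under exactly this support condition. Combined with the reduction step, this completes the proof.
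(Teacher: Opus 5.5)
Your proof is correct and follows the paper's route: bound $\norm{\cal Q_1}=\norm{\cal Q_2}\le\norm{A}\norm{B}\norm{XX^*}$, then invoke a known largest-eigenvalue/rigidity bound for $XX^*$ under the bounded support condition (the paper cites \cite[Lemma 3.12]{DY_AAP}). Your optional self-contained moment sketch has a slip, though it does not affect the argument since you ultimately rely on the citation: $(n^{\delta}q)^2 n\le n^{2\delta}$ is false once $q\gg n^{-1/2}$, and the correct bookkeeping is that the gain $n(n^{\delta}q)^2$ from a higher moment cancels the $n^{-1}$ from the lost vertex, leaving a net factor $(n^{\delta}q)^2\le n^{2\delta-2\tau}$ per unit of excess multiplicity, which is what makes the sum converge for $k\le n^{c}$.
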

\begin{proof}
With $\norm{\cal Q_1}=\norm{\cal Q_2}\leq\norm{A}\norm{B}\norm{XX^*},$ it suffices to show that $\norm{XX^*}\prec 1$. This follows from the rigidity of the eigenvalues of $XX^*$; see \cite[Lemma 3.12]{DY_AAP}.
\end{proof}

Second, we will apply the cumulant expansion formulas introduced in \cite{Cumulant1,Cumulant2,He:2017wm}, which have proved useful in various studies of random matrix theory. We adopt the formulation stated in \cite[Lemma C.11]{bao2020statisticalinferenceprincipalcomponents}. 
Recall that, for a real random variable $\xi$ whose moments are all finite, the $k$-th cumulant of $\xi$ is 
\begin{equation*}
	\kappa_k(\xi)\deq(-\mathrm{i})^k\bigg(\frac{\dd^{k}}{\dd t^k}\log\mathbb{E}\qb{e^{\mathrm{i}t\xi}}\bigg)\Bigg|_{t=0}.
\end{equation*}

\begin{lemma}\label{lem:cumulant_expansion}
(Cumulant expansion formula, Lemma C.11 of \cite{bao2020statisticalinferenceprincipalcomponents}) For a fixed $\ell\in \mathbb{N}$, let $f\in C^{\ell+1}(\mathbb{R})$. Suppose that $\xi$ is a centered random variable with finite moments up to order $\ell+2$. Then we have  
\begin{equation}\label{eq:cumulant_expansion}
\expect\qb{\xi f(\xi)}=\sum_{k=1}^{\ell}\frac{\kappa_{k+1}(\xi)}{k!}\expect\qb{f^{(k)}(\xi)}+\cal R_{\ell+1},
\end{equation}
where the error term $\cal R_{\ell+1}$ satisfies
\begin{equation}\label{R_l+1}
\abs{\cal R_{\ell+1}}\leq C_\ell\expect\qb{|\xi|^{\ell+2}}\sup_{|t|\leq s}|f^{(\ell+1)}(t)|+C_\ell\expect\qb{|\xi|^{\ell+2}\mathds{1}(|\xi|>s)}\sup_{t\in \mathbb{R}}|f^{(\ell+1)}(t)|
\end{equation}
for any $s>0$ and $C_\ell:=(C\ell)^{\ell}/\ell!$ for some constant $C>0$.
\end{lemma}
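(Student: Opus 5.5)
The plan is the standard Taylor-with-integral-remainder argument. Fix $\ell$ and $f\in C^{\ell+1}(\mathbb R)$, and write the Taylor expansion of $f$ around $0$ to order $\ell$:
\[
f(\xi)=\sum_{j=0}^{\ell}\frac{f^{(j)}(0)}{j!}\xi^j+r_\ell(\xi),\qquad |r_\ell(\xi)|\le \frac{|\xi|^{\ell+1}}{(\ell+1)!}\sup_{|t|\le|\xi|}|f^{(\ell+1)}(t)|.
\]
Then $\expect[\xi f(\xi)]=\sum_{j}\frac{f^{(j)}(0)}{j!}\expect[\xi^{j+1}]+\expect[\xi r_\ell(\xi)]$. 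I would do the same expansion for each $f^{(k)}$, $1\le k\le \ell$, to order $\ell-k$, so that $\expect[f^{(k)}(\xi)]=\sum_{j\le \ell-k}\frac{f^{(k+j)}(0)}{j!}\expect[\xi^j]+\expect[r^{(k)}_{\ell-k}(\xi)]$. Here the remainder is bounded by $|\xi|^{\ell-k+1}\sup_{|t|\le|\xi|}|f^{(\ell+1)}|$ up to constants.

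The algebraic core is the moment--cumulant recursion $\expect[\xi^{m+1}]=\sum_{k=1}^{m}\binom{m}{k}\kappa_{k+1}\expect[\xi^{m-k}]$. This uses $\kappa_1=0$, and it follows by differentiating $\phi'(t)=\phi(t)(\log\phi)'(t)$ for the characteristic function $\phi$ at $t=0$. Substituting it into the polynomial part, the coefficient of $f^{(m)}(0)/m!$ on the left, namely $\expect[\xi^{m+1}]$, matches exactly the coefficient on the right, namely $\sum_{k}\frac{\kappa_{k+1}}{k!}\frac{m!}{(m-k)!}\frac{\expect\xi^{m-k}}{m!}$. Hence the polynomial parts cancel identically. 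This step only needs moments up to order $\ell+1$.

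The leftover $\cal R_{\ell+1}$ is a combination of $\expect[\xi r_\ell(\xi)]$ and the terms $\frac{\kappa_{k+1}}{k!}\expect[r^{(k)}_{\ell-k}(\xi)]$. I would bound each of them by splitting on $\{|\xi|\le s\}$ versus $\{|\xi|>s\}$. On the first event the sup is over $|t|\le s$; on the second, use $\sup_{\mathbb R}$ and keep the indicator.
\begin{itemize}
\item The term $\expect[\xi r_\ell]$ directly gives $\expect|\xi|^{\ell+2}$ times the appropriate sup.
\item The mixed terms give $|\kappa_{k+1}|\,\expect[|\xi|^{\ell-k+1}\mathds 1(\cdot)]$. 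Hölder's inequality gives $|\kappa_{k+1}|\le (Ck)^{k}\expect|\xi|^{k+1}$ (from the moment--cumulant relation, with $k!$-type combinatorial growth). Lyapunov/Hölder gives $\expect|\xi|^{k+1}\,\expect[|\xi|^{\ell-k+1}\mathds 1(|\xi|>s)]\le \expect[|\xi|^{\ell+2}\mathds 1(|\xi|>s)]$ up to a tail correction, and similarly $\le\expect|\xi|^{\ell+2}$ on the bounded part.
\end{itemize}

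The main obstacle is this last product inequality when the indicator is present. Since $\xi$ is not independent of itself, $\expect|\xi|^{a}\,\expect[|\xi|^{b}\mathds 1(|\xi|>s)]\le \expect[|\xi|^{a+b}\mathds 1(|\xi|>s)]$ is a Chebyshev-type association inequality. It holds because $|x|^a$ and $|x|^b\mathds 1(|x|>s)$ are both nondecreasing in $|x|$, and I would use exactly that to conclude. Collecting the combinatorial constants gives $C_\ell=(C\ell)^\ell/\ell!$.
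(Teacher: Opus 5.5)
Your proposal is correct. It is the standard proof of this lemma: Taylor expand $f$ and each $f^{(k)}$ about $0$, cancel the polynomial parts via $\mathbb{E}[\xi^{m+1}]=\sum_{k=1}^{m}\binom{m}{k}\kappa_{k+1}\mathbb{E}[\xi^{m-k}]$, and split the remainders on $\{|\xi|\le s\}$, using $|\kappa_{k+1}|\lesssim_k \mathbb{E}|\xi|^{k+1}$ together with Chebyshev's association inequality for the tail products. The paper gives no proof of its own; it simply imports the statement from Lemma C.11 of \cite{bao2020statisticalinferenceprincipalcomponents}, with the expansion attributed to \cite{Cumulant1,Cumulant2,He:2017wm}, and the argument in that literature follows this standard route.
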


The following result gives bounds on the cumulants of the entries of $X$ under the bounded support condition \eqref{eq_support}. If the entries of $X$ satisfy \eqref{eq_124moment} and \eqref{eq_support}, then $\kappa_{1}(x_{i\mu})=0$, $\kappa_2(x_{i\mu})=n^{-1}$, $\max_{i,\mu}\abs{\kappa_3(x_{i\mu})}=\OO(n^{-3/2})$, and for any fixed $k\ge 3$,
	\begin{equation}\label{eq:kappa_k+1}
		\max_{i,\mu}\abs{\kappa_{k+1}(x_{i\mu})}\prec n^{-2}q^{k-3}.
	\end{equation}
This follows readily from the homogeneity of cumulants. Next, we record a resolvent bound that is a direct consequence of Ward's identity.   

 \begin{lemma}
 \label{wardlemma}
 Under the assumptions of Theorem \ref{maintheorem}, for any deterministic matrices $\Gamma_1,\Gamma_2\in\C^{(p+n)\times(p+n)}$ satisfying $\norm{\Gamma_1},\norm{\Gamma_2}=\OO(1)$ and any deterministic unit vector $\uu \in \mathbf{S}$, we have
 \begin{equation}
\label{ward2}
\sum_{a}\left|(\Gamma_1G\Gamma_2)_{\uu a}\right|^{2}\prec n\Theta^2.
\end{equation}
where we recall that $\Theta$ is defined in (\ref{defn_Phi}).
\end{lemma}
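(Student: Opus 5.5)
We would reduce \eqref{ward2} to the standard Ward identity for the resolvent of a self-adjoint matrix. First we remove $\Gamma_1$ and $\Gamma_2$. Summing over $a$ gives
\[
\sum_a |(\Gamma_1 G\Gamma_2)_{\uu a}|^2=\|\Gamma_2^*G^*\Gamma_1^*\uu\|^2\le \|\Gamma_2\|^2\,\|G^*\mathbf w\|^2 ,\qquad \mathbf w:=\Gamma_1^*\uu .
\]
Here $\mathbf w$ is deterministic with $\|\mathbf w\|=\OO(1)$. Hence it suffices to show $\|G^*\mathbf w\|^2=(GG^*)_{\mathbf w\mathbf w}\prec n\Theta^2$. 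Recall that $n\Theta^2=(\|\Im\Pi\|+\phi+\eta)/\eta$.

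The difficulty is that $H(X,z)$ carries the factor $z^{1/2}$, so $G$ is not literally the resolvent of a Hermitian matrix. We use the block form \eqref{green2} instead. Write $\mathbf w=(\mathbf w_1,\mathbf w_2)$ and let $U\Lambda U^*$ be the SVD of $Y$.

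For the upper-left block, $\mathcal G_1$ is the resolvent of the Hermitian matrix $\mathcal Q_1$, so the usual Ward identity gives $\mathcal G_1\mathcal G_1^*=\eta^{-1}\Im\mathcal G_1$. The same holds for $\mathcal G_2$.

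For the off-diagonal blocks we need terms such as $\|z^{-1/2}Y\mathcal G_2\mathbf w_2\|^2=|z|^{-1}\,\mathbf w_2^*\mathcal G_2^*\mathcal Q_2\mathcal G_2\mathbf w_2$. Diagonalize $\mathcal Q_2=\sum_k\lambda_k\xi_k\xi_k^*$. The $k$-th spectral weight is $\lambda_k/|\lambda_k-z|^2$, while that of $\Im(\mathcal G_2)/\eta$ is $1/|\lambda_k-z|^2$. Since $\lambda_k\le\|\mathcal Q_2\|\prec1$ by Lemma \ref{priori}, and $|z|\gtrsim 1$ on $S(c_0,C_0,\epsilon)$ (recall $\lambda_+\sim1$), we get
\[
\|z^{-1/2}Y\mathcal G_2\mathbf w_2\|^2\prec \eta^{-1}\,\Im(\mathcal G_2)_{\mathbf w_2\mathbf w_2}.
\]
The block with $\mathcal G_1Y$ is handled in the same way. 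Combining the blocks with Cauchy--Schwarz, we obtain
\[
(GG^*)_{\mathbf w\mathbf w}\prec \eta^{-1}\bigl(\Im(\mathcal G_1)_{\mathbf w_1\mathbf w_1}+\Im(\mathcal G_2)_{\mathbf w_2\mathbf w_2}\bigr).
\]
This is the step requiring the most care, since one must verify that the $z^{1/2}$ and $z^{-1/2}$ factors produce no loss. It suffices to note that $|z|^{\pm1/2}=\OO(1)$ on the domain.

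Finally, $\Im(\mathcal G_\alpha)_{\mathbf w_\alpha\mathbf w_\alpha}=\Im G_{\hat{\mathbf w}\hat{\mathbf w}}$ or $\Im G_{\tilde{\mathbf w}\tilde{\mathbf w}}$ (recall \eqref{eq_projection}), which is bounded by $|\Im \Pi_{\mathbf w'\mathbf w'}|+|(G-\Pi)_{\mathbf w'\mathbf w'}|$. The first term is at most $\|\Im\Pi\|\,\|\mathbf w'\|^2$. The second is $\prec\phi$ by the hypothesis $G-\Pi=\OO_\prec(\phi)$ of Theorem \ref{maintheorem}, applied to the deterministic vectors $\hat{\mathbf w},\tilde{\mathbf w}$. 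Hence $(GG^*)_{\mathbf w\mathbf w}\prec(\|\Im\Pi\|+\phi)/\eta\le n\Theta^2$, which proves \eqref{ward2}.

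Uniformity in $\uu$ (over sets of cardinality $n^{\OO(1)}$) is inherited from the uniformity of the hypothesis and of Lemma \ref{priori}. The $+\eta$ in $\Theta$ is not needed for this argument; it is merely harmless slack.
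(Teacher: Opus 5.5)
Your proof is correct and follows essentially the same route as the paper. Both arguments use Ward's identity for $\mathcal G_1,\mathcal G_2$, absorb the off-diagonal $YY^*$ (resp.\ $Y^*Y$) contributions via $\|\mathcal Q_\alpha\|\prec 1$ and $|z|^{-1}=\OO(1)$, and then bound $\Im G$ by $\|\Im\Pi\|+\phi$ through the hypothesis $G-\Pi=\OO_\prec(\phi)$. The differences are only cosmetic: you remove $\Gamma_1,\Gamma_2$ first, treat both blocks of $\mathbf w$ at once, and justify the off-diagonal domination spectrally. The paper instead handles $\uh$ and $\ut$ separately and passes to general $\Gamma_1,\Gamma_2$ at the end.
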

\begin{proof}
We first consider the special case $\Gamma_1=\Gamma_2=I_{p+n}$. By (\ref{def_green}), we have Ward's identity
\begin{equation}\label{eq_Ward_G1}
\Im \mathcal G_1 = \frac{\mathcal G_1-\mathcal G_1^{*}} {2\ii} = \eta\mathcal G_1\mathcal G_1^{*}.\end{equation}
Using the notation in \eqref{eq_projection}, we then deduce that
\begin{equation}
\nonumber
\begin{aligned} 
\sum_{a}\left|G_{\uh a}\right|^{2}
&=(GG^{*})_{\uh\uh} = (\mathcal G_1\mathcal G_1^{*}+\abs{z}^{-1}\mathcal G_1 YY^{*} \mathcal G_1^{*})_{\uh\uh}\prec (\mathcal G_1\mathcal G_1^{*})_{\uh\uh} \\
& 
= \frac{\Im (\mathcal G_{1})_{\uh\uh}}{\eta}= \frac{\Im (\Pi_1)_{\uh\uh}+\Im (\mathcal G_{1}-\Pi_1)_{\uh\uh}}{\eta}
\leq \frac{\norm{\Im \Pi}+\phi}{\eta} \leq n\Theta^2.
\end{aligned}
\end{equation}
where in the third step we used the bound from Lemma \ref{priori}, in the fourth step we used the identity \eqref{eq_Ward_G1}, and in the sixth step we used (\ref{green2}), \eqref{defn_pi}, together with the assumption $G-\Pi=\OO_\prec(\phi)$.
A similar argument gives $\sum_{a}\left|G_{\ut a}\right|^{2}\prec n\Theta^2$. Therefore, we conclude that
\begin{equation}\label{eq:L2G}\sum_{a}\left|G_{\uu a}\right|^{2}=\norm{G^*\uu}^2\leq2(\norm{G^*\uh}^2+\norm{G^*\ut}^2)\prec n\Theta^2.\end{equation}
For general $\Ga_1$ and $\Ga_2$, set $\uu'=\Ga_1^*\uu$ and $\uu'_0=\uu'/\norm{\uu'}\in\mathbf{S}$.\footnote{Here, we assume that $\Ga_1^*\uu\neq 0$; otherwise, the conclusion is trivial} Then we have 
$$\sum_{a}\left|(\Ga_1G\Ga_2)_{\uu a}\right|^{2}=\norm{\Ga_2^*G^*\Ga_1^*\uu}^2\leq\norm{\Ga_2}^2\norm{\uu'}^2
\norm{G^*\uu'_0}^2\prec n\Theta^2,$$
where we used $\norm{\Ga_1},\norm{\Ga_2}=\OO(1)$ and \eqref{eq:L2G}. 
\end{proof}
The above lemma provides the main tool for obtaining smallness in the proof, improving the trivial bound $n(1+\phi)^2$ for the left-hand side of \eqref{ward2} to $n\Theta^2$. With a slight abuse of notation, in the following proof, when we refer to ``Ward's identity", we will mean either the ``true" Ward's identity \eqref{eq_Ward_G1} or Lemma \ref{wardlemma}. We then state some useful corollaries of Lemma \ref{wardlemma}.

\begin{lemma}\label{multipleG}
 Under the assumptions of Theorem \ref{maintheorem}, for any deterministic matrices $\Gamma_1,\Gamma_2,\Ga_3,\Ga_4\in\C^{(p+n)\times(p+n)}$ with $\OO(1)$ operator norms and any deterministic unit vectors $\xx,\yy\in\mathbf{S}$, we have
 $$\abs{(\Ga_1G\Ga_2)_{\xx\yy}}\prec 1+\phi,\quad \abs{(\Ga_1G\Ga_2G\Ga_3)_{\xx\yy}}\prec n\Theta^2,\quad \abs{(\Ga_1G\Ga_2G\Ga_3G\Ga_4)_{\xx\yy}}\prec (1+\phi)n^2\Theta^2.$$
 These bounds also hold if some of the factors $G$ are replaced by $\overline{G}$.
\end{lemma}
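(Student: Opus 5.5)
The three bounds follow from the assumed entrywise control $G-\Pi=\OO_\prec(\phi)$ together with Lemma \ref{wardlemma}. The key device is to insert a resolution of the identity $\sum_a e_ae_a^*$ between consecutive resolvent factors and then apply Cauchy--Schwarz.

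\emph{Single resolvent.} We write $(\Ga_1G\Ga_2)_{\xx\yy}=\norm{\Ga_1^*\xx}\norm{\Ga_2\yy}\,G_{\xx'\yy'}$, where $\xx'$ and $\yy'$ are the normalized vectors $\Ga_1^*\xx/\norm{\Ga_1^*\xx}$ and $\Ga_2\yy/\norm{\Ga_2\yy}$; if either vector vanishes, the bound is trivial. These are again deterministic unit vectors, and they range over a family of cardinality $n^{\OO(1)}$. The assumption $G-\Pi=\OO_\prec(\phi)$ together with $\norm{\Pi}=\OO(1)$ from \eqref{psi12} then gives $\abs{G_{\xx'\yy'}}\le\abs{\Pi_{\xx'\yy'}}+\OO_\prec(\phi)\prec1+\phi$.

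\emph{Two resolvents.} We expand
\[
(\Ga_1G\Ga_2G\Ga_3)_{\xx\yy}=\sum_a(\Ga_1G)_{\xx a}(\Ga_2G\Ga_3)_{a\yy}.
\]
Cauchy--Schwarz bounds the modulus by $\big(\sum_a\abs{(\Ga_1G)_{\xx a}}^2\big)^{1/2}\big(\sum_a\abs{(\Ga_2G\Ga_3)_{a\yy}}^2\big)^{1/2}$. The first factor is $\prec(n\Theta^2)^{1/2}$ by Lemma \ref{wardlemma}. The second factor equals $\norm{\Ga_2G\Ga_3\yy}$, which is $\sum_a\abs{(\Ga_3^*G^*\Ga_2^*)_{\yy a}}^2$ up to conjugation. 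Lemma \ref{wardlemma} applies to it because its proof only uses $\norm{G^*\uu}^2=(GG^*)_{\uu\uu}$; for $G^*$ one instead needs $(G^*G)_{\uu\uu}$. This is handled the same way, since $G^*G$ has diagonal blocks $\mathcal G_1^*\mathcal G_1=\mathcal G_1\mathcal G_1^*$ (the blocks are normal) plus the off-diagonal-induced terms controlled by Lemma \ref{priori}. So the second factor is also $\prec(n\Theta^2)^{1/2}$, and the product is $\prec n\Theta^2$. Although Lemma \ref{lem_stodomin} requires index sets of polynomial size, here the sums over $a$ are deterministic identities rather than unions, so no issue arises.

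\emph{Three resolvents.} We insert the identity twice:
\[
(\Ga_1G\Ga_2G\Ga_3G\Ga_4)_{\xx\yy}=\sum_{a,b}(\Ga_1G)_{\xx a}(\Ga_2G\Ga_3)_{ab}(G\Ga_4)_{b\yy}.
\]
Viewing the middle factor as a matrix $K=\Ga_2G\Ga_3$, the sum is $\langle w_1,Kw_2\rangle$ with vectors $w_1=((\Ga_1G)_{\xx a})_a^*$ and $w_2=((G\Ga_4)_{b\yy})_b$. This is bounded by $\norm{w_1}\norm{K}\norm{w_2}$. Ward gives $\norm{w_1},\norm{w_2}\prec(n\Theta^2)^{1/2}$, while $\norm{K}\le C\norm{G}$. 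The trivial bound $\norm{G}\le C/\eta$ is too weak, so we instead control the norm of $K$ via a weak-operator estimate. The cleanest route is: $\norm{G}$ is $\OO(1)$ times the inverse distance of $z$ to the spectrum of $H$. Alternatively we avoid operator norms entirely: write the sum as $\sum_b(\Ga_1G\Ga_2G\Ga_3)_{\xx b}(G\Ga_4)_{b\yy}$, apply Cauchy--Schwarz, and bound $\sum_b\abs{(\Ga_1G\Ga_2G\Ga_3)_{\xx b}}^2=\norm{\Ga_3^*G^*\Ga_2^*G^*\Ga_1^*\xx}^2\le C\norm{G}^2 n\Theta^2$. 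Both routes still need $\norm{G}$.

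This control of $\norm{G}$ is the main obstacle. I would use the crude estimate $\norm{G}\le C\eta^{-1}\le n$ together with $\Theta^2\ge(n\eta)^{-1}$ times $\norm{\Im\Pi}+\phi+\eta$, which gives roughly $\eta^{-1}\lesssim n\Theta^2/\phi$-type bounds. Since the target $(1+\phi)n^2\Theta^2$ is generous (the required bound has $n^2$, compared with $n\Theta^2\cdot\eta^{-1}\le n\Theta^2\cdot n$), the simple bound $\norm{\Ga_2G\Ga_3}\le C/\eta\le Cn$ already gives $\prec n\cdot n\Theta^2\le(1+\phi)n^2\Theta^2$, because $\eta\ge n^{-1+\e}$ on $S(c_0,C_0,\e)$. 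Replacing some $G$ by $\overline G$ changes nothing: $\overline G$ has the same norm bound, its entries satisfy the same estimates by conjugation, and $\sum_a\abs{\overline G_{\uu a}}^2=\sum_a\abs{G_{\bar\uu a}}^2$ with $\bar\uu$ deterministic.
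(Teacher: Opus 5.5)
Your proposal is correct. For the first two bounds it matches the paper's proof. The paper applies Lemma~\ref{wardlemma} to the column sum $\sum_a\abs{(G\Gamma_3)_{a\yy}}^2$ without comment; this follows at once from $G=G^T$, which is quicker than your block computation of $G^*G$, though yours is also valid.

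For the three-resolvent bound your route is genuinely different. You control the middle factor in operator norm, $\norm{\Gamma_2G\Gamma_3}\le C\norm{G}$. This holds deterministically, since $\norm{G}\le |z|^{-1/2}/\Im z^{1/2}=2\Re z^{1/2}/(|z|^{1/2}\eta)\le 2/\eta$. You control the two outer vectors in $\ell^2$ via Ward, which gives $\prec n\Theta^2/\eta\le n^2\Theta^2$ because $\eta\ge n^{-1+\epsilon}$.

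The paper instead uses the a priori entrywise bound $(\Gamma_2G\Gamma_3)_{ab}\prec 1+\phi$, uniformly in the $(p+n)^2$ pairs $(a,b)$. It then loses a factor $\sqrt{p+n}$ on each outer vector through $\sum_a\abs{(\Gamma_1G)_{\xx a}}\le\sqrt{p+n}\,(\sum_a\abs{(\Gamma_1G)_{\xx a}}^2)^{1/2}$.

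What each buys:
\begin{itemize}
\item Your version is slightly sharper: it has no $(1+\phi)$ factor, and $1/\eta\le n^{1-\epsilon}$. It also needs no union bound over index pairs.
\item It does rely on the lower bound on $\eta$ in $S(c_0,C_0,\epsilon)$. The paper's version uses only the hypothesis $G-\Pi\prec\phi$ and Lemma~\ref{wardlemma}, neither of which needs a lower bound on $\eta$.
\end{itemize}

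Delete the muddled aside in your last paragraph: the remark about ``$\eta^{-1}\lesssim n\Theta^2/\phi$-type bounds'' and the one about the ``inverse distance of $z$ to the spectrum of $H$''. The final argument needs only $\norm{G}\le 2/\eta$.
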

\begin{proof}
Using $G=\Pi+(G-\Pi)=\OO_\prec(1+\phi)$, we immediately obtain $\abs{(\Ga_1G\Ga_2)_{\xx\yy}}\prec 1+\phi$. By Cauchy's inequality and Lemma \ref{wardlemma}, we have
\begin{equation}
\nonumber
\begin{aligned} 
\abs{(\Ga_1G\Ga_2G\Ga_3)_{\xx\yy}}&=\left|\sum_a(\Ga_1G\Ga_2)_{\xx a}(G\Ga_3)_{a\yy}\right| \leq\sqrt{\sum_a\abs{(\Ga_1G\Ga_2)_{\xx a}}^2\sum_a\abs{(G\Ga_3)_{a\yy}}^2}\\
&\prec\sqrt{n\Theta^2\cdot n\Theta^2}=n\Theta^2,\\
\abs{(\Ga_1G\Ga_2G\Ga_3G\Ga_4)_{\xx\yy}}=&\left|\sum_{a,b}(\Ga_1G)_{\xx a}(\Ga_2G\Ga_3)_{ab}(G\Ga_4)_{b\yy}\right| \prec(1+\phi)\sum_a\abs{(\Ga_1G)_{\xx a}}\sum_b\abs{(G\Ga_4)_{b\yy}}\\
&\leq(1+\phi)\sqrt{(p+n)\sum_a\abs{(\Ga_1G)_{\xx a}}^2}\sqrt{(p+n)\sum_b\abs{(G\Ga_4)_{b\yy}}^2}\\
&\prec(1+\phi)\sqrt{n\cdot n\Theta^2}\sqrt{n\cdot n\Theta^2}=(1+\phi)n^2\Theta^2.
\end{aligned}
\end{equation}
Since complex conjugation does not affect the use of Cauchy's inequality, the same argument applies if some of the factors $G$ are replaced by $\overline{G}$.
\end{proof}

\subsection{Anisotropic estimation}

In this subsection, we prove the first assertion of Theorem \ref{maintheorem}, namely, $M(G)_{\uu\vv}\prec\Phi$ for any deterministic vectors $\uu,\vv\in\C^{p+n}$ with $\norm{\uu},\norm{\vv}\leq 1$. By separating the real and imaginary parts, we may assume that $\uu,\vv\in\R^{p+n}$.
Since $M(G)_{\uu\vv} = M(G)_{\uu\vh} + M(G)_{\uu\vt}$, it suffices to prove $M(G)_{\uu\vh}\prec\Phi$; the proof of $M(G)_{\uu\vt}\prec\Phi$ is analogous by symmetry.
Without loss of generality, we may assume that $\norm{\uu}=\norm{\vh}=1$, since replacing $\uu$ by $\uu/\norm{\uu}$ and $\vh$ by $\vh/\norm{\vh}$ can only increase the  value of $|M(G)_{\uu\vh}|$.

We start with some basic calculations. We abbreviate $\partial_{i\mu}={\partial}/{\partial x_{i\mu}}$ and introduce the following block-diagonal matrices:
\begin{equation}
\AA := \begin{pmatrix}
  A^{1/2}& 0\\
  0&0
\end{pmatrix},\quad  \BB :=  \begin{pmatrix}
  0& 0\\
  0&B^{1/2}
\end{pmatrix}.\label{defn_AB}
\end{equation} 
Note that
$$H(z) = z^{1/2}\begin{pmatrix}
  0& A^{1/2}XB^{1/2}\\
  B^{1/2}X^*A^{1/2}&0
\end{pmatrix}=z^{1/2}\begin{pmatrix}
  A^{1/2}&0 \\
  0&B^{1/2}
\end{pmatrix}\begin{pmatrix}
  0&X \\
  X^*&0
\end{pmatrix}\begin{pmatrix}
  A^{1/2}&0 \\
  0&B^{1/2}
\end{pmatrix}.$$
Thus, we obtain
\begin{equation}\label{Gderivative}
\partial_{i\mu}G_{\xx\yy}=-\pb{G(\partial_{i\mu}H)G}_{\xx\yy}=-z^{1/2}\pb{(G\AA)_{\xx i}(\BB G)_{\mu\yy}+(G\BB)_{\xx\mu}(\AA G)_{i\yy}}.
\end{equation}
We now derive explicit formulas for $M(G)_{\uu\vh}$ and its derivative. Consider an arbitrary $(p+n)\times (p+n)$ matrix $\Lambda$ with the block structure
$$\Lambda = \begin{pmatrix}
  P&Q\\
  R&S
\end{pmatrix}.$$
A direct calculation gives
\begin{equation}\label{S(Lambda)}
\begin{aligned}
\cal S(\Lambda)&=\expect[H\Lambda H]
= \frac{z}{n}\begin{pmatrix}
  \Tr(BS)\cdot A& AR^TB\\
  BQ^TA&\Tr(AP)\cdot B
\end{pmatrix}\\
&=\frac{z}{n}\pb{\Tr(\Lambda\BB^2)\cdot\AA^2+\Tr(\Lambda\AA^2)\cdot\BB^2+\BB^2\Lambda^T\AA^2+\AA^2\Lambda^T\BB^2}.
\end{aligned}
\end{equation}
Substituting $\Lambda=G(=G^T)$ and using $\BB\vh=0$, together with the notation in \eqref{defn_m1m2}, we obtain
\begin{equation}
\label{M(G)expansion}
\begin{aligned}
M(G)_{\uu\vh}&=\left<\uu,\vh\right>+zG_{\uu\vh}+\pb{G\cal S(G)}_{\uu\vh}\\
&=\left<\uu,\vh\right>+zG_{\uu\vh}+zm_{2}(z)(G\AA^2)_{\uu\vh}+\frac{z}{n}(G\BB^2G\AA^2)_{\uu\vh}\\
&=(GH)_{\uu\vh}+zm_{2}(z)(G\AA^2)_{\uu\vh}+\frac{z}{n}(G\BB^2G\AA^2)_{\uu\vh}\\
&=z^{1/2}\sum_{i,\mu}(G\BB)_{\uu\mu}\AA_{i\vh}x_{i\mu}+zm_{2}(z)(G\AA^2)_{\uu\vh}+\frac{z}{n}(G\BB^2G\AA^2)_{\uu\vh},
\end{aligned}
\end{equation}
and
\begin{equation}
\label{M(G)'}
\begin{aligned}
\partial_{i\mu}M(G)_{\uu\vh}=&~\partial_{i\mu}\pB{\left<\uu,\vh\right>+zG_{\uu\vh}+zm_2(z)(G\AA^2)_{\uu\vh}+\frac{z}{n}(G\BB^2G\AA^2)_{\uu\vh}}\\
=&-z^{3/2}\Bigl((G\AA)_{\uu i}(\BB G)_{\mu\vh}+(G\BB)_{\uu\mu}(\AA G)_{i\vh}\\
&~+m_2(z)(G\AA)_{\uu i}(\BB G\AA^2)_{\mu\vh}+m_2(z)(G\BB)_{\uu\mu}(\AA G\AA^2)_{i\vh}\\
&~+\frac{1}{n}(\BB G\BB^2G\AA)_{\mu i}(G\AA^2)_{\uu\vh}+\frac{1}{n}(\AA G\BB^2G\BB)_{i\mu}(G\AA^2)_{\uu\vh}\\
&~+\frac{1}{n}(G\AA)_{\uu i}(\BB G\BB^2G\AA^2)_{\mu\vh}+\frac{1}{n}(G\BB)_{\uu\mu}(\AA G\BB^2G\AA^2)_{i\vh}\\
&~+\frac{1}{n}(G\BB^2 G\AA)_{\uu i}(\BB G\AA^2)_{\mu\vh}+\frac{1}{n}(G\BB^2 G\BB)_{\uu\mu}(\AA G\AA^2)_{i\vh}\Bigl).
\end{aligned}
\end{equation}
We next record a preliminary estimate on the derivatives of $M(G)_{\uu\vh}$.
\begin{lemma}\label{lem:roughbound}
For each fixed $r\in \mathbb{N}$, we have
\begin{equation}
\label{roughbound}
\begin{aligned}
\abs{\partial_{i\mu}^{r}M(G)_{\uu\vh}}\prec(1+\phi)^{r+2}.
\end{aligned}
\end{equation}
\end{lemma}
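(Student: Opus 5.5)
We argue by induction on $r$, using the derivative formula \eqref{Gderivative} repeatedly together with the a priori entrywise bound $G=\Pi+(G-\Pi)=\OO_\prec(1+\phi)$ from Lemma \ref{multipleG}. For $r=0$ we start from the first line of \eqref{M(G)expansion}: $M(G)_{\uu\vh}=\langle\uu,\vh\rangle+zG_{\uu\vh}+zm_2(G\AA^2)_{\uu\vh}+\frac zn(G\BB^2G\AA^2)_{\uu\vh}$. The first three terms are $\OO_\prec(1+\phi)^2$, since $|m_2|\le|m_{2c}|+\OO_\prec(\phi)$ and $\|\AA\|,\|\BB\|=\OO(1)$. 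For the last term we use Lemma \ref{multipleG}, which gives $\frac1n|(G\BB^2G\AA^2)_{\uu\vh}|\prec \Theta^2\lesssim 1+\phi$. A cruder bound also suffices: by Cauchy--Schwarz and Lemma \ref{priori}, $\frac1n\|G\|^2$ combined with $\sum_a|G_{\uu a}|^2\le n(1+\phi)^2$ gives $(1+\phi)^2$.

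For $r\ge1$, the structure is visible from \eqref{M(G)'}. Each term of $\partial_{i\mu}M(G)_{\uu\vh}$ is a product of at most three factors, each of one of the following types: a generalized entry $(\Gamma_1G\Gamma_2)_{\xx\yy}$ with $\xx,\yy\in\{\uu,\vh,e_i,e_\mu\}$, a normalized trace like $m_2$, or a factor $\frac1n(\Gamma_1G\Gamma_2G\Gamma_3)_{\xx\yy}$. We first set up a closed class. Let $\mathcal W$ be the class of monomials of the form $c\,\prod_k F_k$, where each $F_k$ is either $(\Gamma G\Gamma')_{\xx\yy}$, or $n^{-1}(\Gamma G\Gamma' G\Gamma'')_{\xx\yy}$, or $m_2=\frac1n\Tr(\BB^2G)$, with deterministic bounded $\Gamma$'s and $\xx,\yy$ among $\uu,\vh$ and the standard basis vectors $e_i,e_\mu$. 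Applying $\partial_{i\mu}$ via \eqref{Gderivative} to any such factor produces a finite sum of products of the same type. Differentiating $(\Gamma G\Gamma')_{\xx\yy}$ splits one $G$ into two, giving a product of two entries. Differentiating $n^{-1}(\Gamma G\Gamma'G\Gamma'')_{\xx\yy}$ gives either an entry times $n^{-1}(\cdots GG\cdots)$, or $n^{-1}$ times a product of two entries of type $(\ldots G\ldots G\ldots)_{\xx e_i}$. Differentiating $m_2$ gives $n^{-1}(\AA G\BB^2G\BB)_{i\mu}$-type terms.

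We track the number of $G$-factors. Each derivative increases the total number of $G$'s by exactly one, and $M(G)$ has at most $2$ $G$'s together with one $m_2$. So after $r$ derivatives every monomial has at most $r+2$ resolvent "slots". The key estimate is that each factor is bounded by $(1+\phi)$ raised to the number of $G$'s it contains (up to $\OO_\prec$):
\begin{itemize}
\item single entries are $\prec 1+\phi$;
\item $m_2\prec 1+\phi$;
\item $n^{-1}(\Gamma G\Gamma'G\Gamma'')_{\xx\yy}\prec n^{-1}\sqrt{\sum_a|(\Gamma G\Gamma')_{\xx a}|^2\sum_a|(G\Gamma'')_{a\yy}|^2}\le (1+\phi)^2$, using the trivial $\ell^2$ bound $\|G^*\xx\|\le\sqrt{p+n}(1+\phi)$ (or Lemma \ref{wardlemma}).
\end{itemize}
Here the vectors $e_i,e_\mu$ belong to a set of cardinality $n^{\OO(1)}$, so the $\prec$ bounds are uniform. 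Multiplying the factors gives $(1+\phi)^{r+2}$ per monomial. Since the number of monomials depends only on $r$, Lemma \ref{lem_stodomin}(i) sums them.

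The main obstacle is bookkeeping rather than analysis. One must make sure the class of monomials is genuinely closed under $\partial_{i\mu}$, in particular that no factor with three or more $G$'s and only a single $n^{-1}$ prefactor appears. Differentiating $n^{-1}(GG)$ produces $n^{-1}(G\,e\,e^*G\,G)$, which factors as an entry times $n^{-1}(GG)$ or as $n^{-1}\,(G)_{\xx e}\,(GG)_{e\yy}$. In the latter case we must bound $(GG)_{e\yy}$ without an accompanying $n^{-1}$. To handle it we redistribute the $1/n$: we write it as $n^{-1}\sum_a G_{ea}G_{a\yy}$ and use Cauchy--Schwarz with $\ell^2$ bounds, obtaining $(1+\phi)^2$. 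So the counting of exactly one power of $(1+\phi)$ per $G$ persists, and the induction closes.
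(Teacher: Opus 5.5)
Your argument is correct and is essentially the paper's: both rest on \eqref{Gderivative} and the entrywise bound, which give one factor $(1+\phi)$ per resolvent. The only difference is bookkeeping. The paper writes $m_2$ and $\frac1n(G\BB^2G\AA^2)_{\uu\vh}$ as averages over $\nu$ of products of single entries and applies the Leibniz rule, so it only ever uses the entrywise bound. You instead keep $n^{-1}(\Gamma G\Gamma'G\Gamma'')$ factors in a closed class and control them by Cauchy--Schwarz.

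One small slip is in your ``cruder bound'' aside: $\frac1n\|G\|^2$ is not controlled by Lemma \ref{priori}, which bounds $\|\mathcal Q_1\|$ and not $\|G\|$. What actually works there is $\frac1n\|G^*\uu\|\,\|\BB^2\|\,\|G\AA^2\vh\|$ together with $\sum_a|G_{\uu a}|^2\prec n(1+\phi)^2$.
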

\begin{proof}
By (\ref{Gderivative}), for any deterministic matrices $\Gamma_1,\Gamma_2\in\C^{(p+n)\times(p+n)}$ with $\OO(1)$ operator norms, $\partial_{i\mu}^{r} (\Ga_1G\Ga_2)_{\uu\vh}$ is a scalar multiple of a sum of $2^{r}$ terms, each given by a product of $r+1$ entries of the form $(\Ga_1'G\Ga_2')_{\xx\yy}$, where
$$\Ga_1',\Ga_2'\in\{\Ga_1,\Ga_2,\AA,\BB\},\quad \xx,\yy\in\{\uu,\vh,i,\mu\}.$$
This immediately yields
\[\abs{\partial_{i\mu}^{r} (\Ga_1G\Ga_2)_{\uu\vh}}\prec(1+\phi)^{r+1}.\]
Consequently,
\begin{equation}
\begin{aligned}
\absa{\partial_{i\mu}^{r}m_2(z)}&=\frac{1}{n}\left|\sum_{\nu}\partial_{i\mu}^{r}(G\BB^2)_{\nu\nu}\right|\prec(1+\phi)^{r+1},\\
\left|\partial_{i\mu}^{r} \frac{1}{n}(G\BB^2G\AA^2)_{\uu\vh}\right|
&= \frac{1}{n}\left|\partial_{i\mu}^{r}\pB{\sum_{\nu}(G\BB^2)_{\uu\nu}(G\AA^2)_{\nu\vh}}\right|
= \frac{1}{n}\left|\sum_{\nu}\sum_{s=0}^{r}\partial_{i\mu}^{s}(G\BB^2)_{\uu \nu}\partial_{i\mu}^{r-s}(G\AA^2)_{\nu\vh}\right|\\
&\prec \frac{1}{n}\sum_{\nu}\sum_{s=0}^{r} (1+\phi)^{s+1}(1+\phi)^{r-s+1} \prec(1+\phi)^{r+2}.
\end{aligned}
\nonumber
\end{equation}
Combining these estimates with the second expression on the right-hand side of (\ref{M(G)expansion}), we obtain $\abs{\partial_{i\mu}^{r}M(G)_{\uu\vh}}\prec(1+\phi)^{r+2}$.
\end{proof}

We now return to the proof of the estimate $M(G)_{\uu\vh}\prec\Phi$. By Lemma \ref{momentmethod}, it suffices to prove that
$$\mathcal{M}:=\expect\qB{|M(G)_{\uu\vh}|^{2e}}^{\frac{1}{2e}}\prec\Phi$$
for any fixed $e\in\N_+$ with $e\ge 4$. Using \eqref{M(G)expansion} and applying the cumulant expansion (Lemma \ref{lem:cumulant_expansion}) to $x_{i\mu}$, we expand $\mathcal{M}^{2e}$ as
\begin{align}
\mathcal{M}^{2e}&=\expect\qB{|M(G)_{\uu\vh}|^{2e}}=\expect\qB{M(G)_{\uu\vh}^{e}\overline{M(G)}_{\uu\vh}^{e}}\nonumber\\
&=z^{1/2}\sum_{i,\mu}\expect\qB{(G\BB)_{\uu\mu}\AA_{i\vh}x_{i\mu}M(G)_{\uu\vh}^{e-1}\overline{M(G)}_{\uu\vh}^{e}}\nonumber\\
&\quad +\expect\qB{\pB{zm_{2}(z)(G\AA^2)_{\uu\vh}+\frac{z}{n}(G\BB^2G\AA^2)_{\uu\vh}}M(G)_{\uu\vh}^{e-1}\overline{M(G)}_{\uu\vh}^{e}} \nonumber\\
&=\expect\qB{\pB{zm_{2}(z)(G\AA^2)_{\uu\vh}+\frac{z}{n}(G\BB^2G\AA^2)_{\uu\vh}}M(G)_{\uu\vh}^{e-1}\overline{M(G)}_{\uu\vh}^{e}}+\sum_{k=1}^\ell X_k+\sum_{i,\mu}\mathcal{R}_{\ell+1}^{i\mu},\label{aims}
\end{align}
where
$$X_k:=z^{1/2}\sum_{i,\mu}\frac{\kappa_{k+1}(x_{i\mu})}{k!}\expect\qB{\AA_{i\vh}\partial_{i\mu}^{k} \left((G\BB)_{\uu\mu}M(G)_{\uu\vh}^{e-1}\overline{M(G)}_{\uu\vh}^{e}\right)}$$
for $k\in\N$, $\ell$ is a positive integer to be chosen later, and the remainder terms $\cal R_{\ell+1}^{i\mu}$ satisfy
\begin{equation}\label{remainder^imu}
\begin{aligned}
\abs{\cal R_{\ell+1}^{i\mu}}\leq& C_\ell\expect\qb{|x_{i\mu}|^{\ell+2}}\expect\qbb{\sup_{|\xi_{i\mu}|\leq s}\left|\partial_{i\mu}^{\ell+1} \left((\wt G\BB)_{\uu\mu}M(\wt G)_{\uu\vh}^{e-1}\overline{M(\wt G)}_{\uu\vh}^{e}\right)\right|}\\
&+C_\ell\expect\qb{|x_{i\mu}|^{\ell+2}\mathds{1}(|x_{i\mu}|>s)}\expect\qbb{\sup_{\xi_{i\mu}\in \mathbb{R}}\left|\partial_{i\mu}^{\ell+1} \left((\wt G\BB)_{\uu\mu}M(\wt G)_{\uu\vh}^{e-1}\overline{M(\wt G)}_{\uu\vh}^{e}\right)\right|}
\end{aligned}
\end{equation}
for any $s>0$. Here $\xi_{i\mu}$ is a real {\it deterministic} parameter, and $\wt G$ is obtained from $G$ by replacing $x_{i\mu}$ with $\xi_{i\mu}$. 
The proof of $\cal M\prec\Phi$ is divided into several parts, summarized in the following lemma.
\begin{lemma}\label{task1}
We have the following estimates.
\begin{enumerate}
    \item $\expect\qB{\pB{zm_{2}(z)(G\AA^2)_{\uu\vh}+\frac{z}{n}(G\BB^2G\AA^2)_{\uu\vh}}M(G)_{\uu\vh}^{e-1}\overline{M(G)}_{\uu\vh}^{e}}+X_1\prec\Phi^2\cal M^{2e-2}$.
    \item For $k \geq 2$, $X_k\prec\sum_{s=1}^{2e}\Phi^s\cal M^{2e-s}$.
    \item For any fixed $D>0$, there exists a constant $\ell \equiv \ell(D) \geq 1$  such that $\cal R_{\ell+1}^{i\mu}=\OO(n^{-D})$ uniformly in $i,\mu$.
\end{enumerate}
\end{lemma}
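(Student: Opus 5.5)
We treat the three items of Lemma \ref{task1} separately, always expanding the derivative $\partial_{i\mu}^k$ by the Leibniz rule into pieces where $\partial_{i\mu}^{k_0}$ hits $(G\BB)_{\uu\mu}$ and the remaining derivatives fall on the $2e-1$ factors $M$, $\overline M$. Each derivative of $M$ or $\overline M$ is given by \eqref{M(G)'} and its iterates.

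\textbf{Item 1.} In $X_1$ (with $\kappa_2=n^{-1}$), the piece where $\partial_{i\mu}$ hits $(G\BB)_{\uu\mu}$ is computed from \eqref{Gderivative}:
\[
-\frac{z}{n}\sum_{i,\mu}\AA_{i\vh}\bigl((G\BB)_{\uu i}(\BB G\BB)_{\mu\mu}+(G\BB^2)_{\uu\mu}(\AA G)_{i\mu}\bigr)
=-zm_2(G\AA^2)_{\uu\vh}-\frac{z}{n}(G\BB^2G\AA^2)_{\uu\vh},
\]
up to checking the index bookkeeping. This cancels the first term exactly. What remains in item 1 are the pieces where one derivative hits some $M$ or $\overline M$. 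Each such piece has the form
\[
\frac1n\sum_{i,\mu}\AA_{i\vh}(G\BB)_{\uu\mu}\,\partial_{i\mu}M(G)_{\uu\vh}\,M^{e-2}\overline M^{e}.
\]
We insert \eqref{M(G)'}; every term is a product of two resolvent entries carrying the free indices $i$ and $\mu$. Summing over $\mu$ with Cauchy--Schwarz and Lemma \ref{wardlemma}, the $\mu$-sum of two such entries is $\prec n\Theta^2$; equivalently, it collapses into a product of the form $(G\BB\cdot\BB G)_{\uu\cdot}$, which is bounded by Lemma \ref{multipleG}. The $i$-sum closes against $\AA_{i\vh}$ into a vector entry. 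This yields $\frac1n\cdot n\Theta^2(1+\phi)^{O(1)}\lesssim\Phi^2$, and Hölder's inequality then gives $\Phi^2\mathcal M^{2e-2}$. The $\frac1n(\cdots)(\cdots)$ terms in \eqref{M(G)'} carry an extra $1/n$ and are handled the same way via Lemma \ref{multipleG}.

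\textbf{Item 2} (the main obstacle, especially $k=2$ with $\kappa_3\sim n^{-3/2}$). A naive bound gives $n^{-3/2}\cdot np$ times $O(1)$ entries, which is too large. The mechanism for gaining is that at least two off-diagonal factors of the form $(G\cdot)_{\uu\mu}$ or $(\cdot G)_{i\vh}$ must appear. For each $i$-sum or $\mu$-sum containing one such factor, Cauchy--Schwarz with Lemma \ref{wardlemma} produces $\sqrt{n}\,\Theta$ instead of $n$.

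Concretely, for $k=2$ we classify the terms by how many derivatives fall on the $M$ factors:
\begin{itemize}
\item Zero derivatives on $M$: we get $\partial^2_{i\mu}(G\BB)_{\uu\mu}$, which contains at least one factor $(G\cdot)_{\uu i}$ or $(G\cdot)_{\uu\mu}$. Summing against $\AA_{i\vh}$ gives at least one more such factor, so the bound is $n^{-3/2}(\sqrt n\Theta)^2\,n^{1/2}\lesssim\Theta^2\sqrt{\cdot}$, which suffices. If the naive count is off by $\sqrt n$, we will instead use the trick of \cite{isotropic}: write the diagonal entries $G_{\mu\mu}$, $G_{ii}$ as $\Pi$ plus an error of size $\phi$, and exploit the resulting deterministic structure $\sum_{i}\AA_{i\vh}(\cdots)_{ii}$ as an inner product with a vector of norm $\le\sqrt n$.
\item One or two derivatives on $M$: each $\partial M$ supplies, by \eqref{M(G)'}, an off-diagonal entry with a free $i$ or $\mu$ index, which again yields Ward gains. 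Each such term is bounded by $\Phi^s\mathcal M^{2e-s}$, where $s$ is the number of $M$ factors consumed.
\end{itemize}
For $k\ge3$, we use $|\kappa_{k+1}|\prec n^{-2}q^{k-3}$. We need only one off-diagonal factor for the $\mu$-sum and one for the $i$-sum, giving $n^{-2}\cdot n\Theta\cdot n\Theta\lesssim\Theta^2$, with the factor $q$ available when needed.

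\textbf{Item 3.} We bound $\widetilde G$ deterministically. By the resolvent expansion in the single entry $\xi_{i\mu}$, together with Lemma \ref{lem:roughbound}, $\widetilde G$ satisfies the same $\prec(1+\phi)$ bounds for $|\xi|\le s=n^{-1/2+\epsilon}$ (after truncating at $q$). For arbitrary $\xi$, we use $\|\widetilde G\|\le C\eta^{-1}\le n$. Then $\mathbb E|x|^{\ell+2}\prec q^{\ell}n^{-2}$ and $q\le n^{-\tau}$ make the first term $O(n^{-D})$ for $\ell$ large. The bounded support condition makes $\mathbb P(|x_{i\mu}|>s)$ smaller than any power of $n$, which kills the second term.
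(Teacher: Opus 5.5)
Your route is the paper's. Item 1 is its exact cancellation followed by Ward's identity on the $\partial_{i\mu}M$ piece; the correct form is $\partial_{i\mu}(G\BB)_{\uu\mu}=-z^{1/2}[(G\AA)_{\uu i}(\BB G\BB)_{\mu\mu}+(G\BB)_{\uu\mu}(\AA G\BB)_{i\mu}]$, and your index slips there are harmless. Item 3 is the same resolvent expansion. The genuine gap is the counting in item 2.

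After H\"older, take a term in which $r$ derivatives hit $(G\BB)_{\uu\mu}$ and the remaining ones hit $w$ distinct factors $M$. It leaves only $\mathcal M^{2e-1-w}$, so you must show
\[ n^{-3/2}q^{k-2}\sum_{i,\mu}\Bigl|\AA_{i\vh}\,\partial_{i\mu}^r(G\BB)_{\uu\mu}\prod_{m=1}^{w}\partial_{i\mu}^{l_m}M(G)_{\uu\vh}\Bigr|\prec\Phi^{w+1}. \]
The cumulant supplies only $k-2$ small factors. Since $\AA_{i\vh}$ already occupies one $\ell^2$ slot in the $i$-sum, Cauchy--Schwarz with \eqref{ward2} extracts at most one factor $\Theta$ from the $i$-sum and two from the $\mu$-sum. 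You need at least $w-k+3$ such factors, i.e.\ all three when $w=k$. Your $k\ge3$ paragraph says one off-diagonal factor per sum suffices, and this fails at $(r,w)=(0,k)$. For example, take $k=3$, $q=n^{-1/2}$, $\eta\sim1$, so $\Theta\asymp\Phi\asymp n^{-1/2}$. The target is $\Phi^4\asymp n^{-2}$, but two Ward factors give at best $|\kappa_4|\cdot\sqrt n\Theta\cdot n\Theta\asymp n^{-3/2}$.

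The missing idea is the paper's case split. For $w\le k-2$, bound every $\partial^{l}_{i\mu}M$ by Lemma \ref{lem:roughbound} and use only the single Ward factor carried by $\partial^r_{i\mu}(G\BB)_{\uu\mu}$. In the borderline cases $(r,w)=(0,k),(1,k-1),(0,k-1)$, bound all but one differentiated factor roughly and expand the last one. Use \eqref{M(G)'} for $\partial_{i\mu}M$, or the structural bound \eqref{eq_second_derivative} for $\partial^2_{i\mu}M$. This works because each of the first four terms of \eqref{M(G)'} contains an off-diagonal entry with free index $i$ \emph{and} one with free index $\mu$; the other six terms are $\OO_\prec((1+\phi)\Theta^2)$. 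So both sums receive their extra $\ell^2$ factor, which yields $n^{3/2}\Theta^3$. Your phrase ``an off-diagonal entry with a free $i$ or $\mu$ index'' does not secure this, because a third $\ell^2$ factor landing in the same sum gains nothing.

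Two smaller issues. For $k=2$, $w=0$, it is false that two off-diagonal factors always appear: $\partial^2_{i\mu}(G\BB)_{\uu\mu}$ contains $(G\BB)_{\uu\mu}(\AA G\AA)_{ii}(\BB G\BB)_{\mu\mu}$. One factor suffices there, since the target is only $\Phi$. Your fallback of writing diagonal entries as $\Pi$ plus an error $\phi$ would not help anyway, since $\phi$ may be $\gg1$. In item 3, the cutoff $s=n^{-1/2+\epsilon}$ does not make $\mathbb P(|x_{i\mu}|>s)$ negligible when $q\gg n^{-1/2+\epsilon}$. Take $s=q^{1/2}$ as in the paper: it is $\ge n^{\tau/2}q$, yet still satisfies $s(1+\phi)\ll1$, so the expansions \eqref{GtohatG}--\eqref{hatGtowtG} converge.
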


Combining Lemma \ref{task1} with (\ref{aims}), with $\ell\equiv\ell(e+2)$, we obtain
\begin{equation}
\mathcal{M}^{2e}\prec \sum_{s=1}^{2e} \Phi^{s}\mathcal{M}^{2e-s} +\OO(n^{-e}).\label{eq_pre_young}
\end{equation}
For any fixed $\epsilon>0$, Young's inequality gives
\begin{align*}
 \Phi^{s}\mathcal{M}^{2e-s} \leq 
 \frac{2e-s}{2e}(n^{-\epsilon}\mathcal{M}^{2e-s})^{\frac{2e}{2e-s}}+\frac{s}{2e}(n^{\epsilon}\Phi^i)^{\frac{2e}{s}}
 \prec n^{-\epsilon}\mathcal{M}^{2e} + n^{2e\epsilon}\Phi^{2e}.
\end{align*}
Since $\Phi\ge q+\Theta\ge n^{-1/2}$, we have $n^{-e}\leq\Phi^{2e}$. Substituting these estimates into \eqref{eq_pre_young} yields
$$\mathcal{M}^{2e} \prec n^{-\epsilon}\mathcal{M}^{2e} + n^{2e\epsilon}\Phi^{2e} \implies \mathcal{M}^{2e} \prec n^{2e\epsilon}\Phi^{2e}.$$
By Markov's inequality, this implies $\mathcal{M}\prec n^\epsilon \Phi$. Since $\epsilon>0$ is arbitrary, we conclude that $\mathcal{M}\prec \Phi$, which completes the proof of the first estimate in \eqref{eq_anisotropic_average}. 

The rest of this subsection is devoted to the proof of Lemma \ref{task1}.

\begin{proof}[Proof of Lemma \ref{task1} (i)]
We compute
\begin{equation}\nonumber
\begin{aligned}
X_1&=z^{1/2}\sum_{i,\mu}\kappa_2(x_{i\mu})\expect\qB{\AA_{i\vh}\partial_{i\mu} \left((G\BB)_{\uu\mu}M(G)_{\uu\vh}^{e-1}\overline{M(G)}_{\uu\vh}^{e}\right)}\\
&=\frac{z^{1/2}}{n}\sum_{i,\mu}\expect\qB{\AA_{i\vh}\partial_{i\mu}(G\BB)_{\uu\mu} \left(M(G)_{\uu\vh}^{e-1}\overline{M(G)}_{\uu\vh}^{e}\right)} +\frac{z^{1/2}}{n}\sum_{i,\mu}\expect\qB{\AA_{i\vh}(G\BB)_{\uu\mu}\partial_{i\mu} \left(M(G)_{\uu\vh}^{e-1}\overline{M(G)}_{\uu\vh}^{e}\right)}.
\end{aligned}
\end{equation}
We emphasize that a crucial cancellation occurs here. A direct computation shows that
$$\frac{z^{1/2}}{n}\sum_{i,\mu}\partial_{i\mu}(G\BB)_{\uu\mu}\AA_{i\vh}+zm_{2}(z)(G\AA^2)_{\uu\vh}+\frac{z}{n}(G\BB^2G\AA^2)_{\uu\vh}=0,$$
which yields
\begin{equation}\label{X_1aim}
\begin{aligned}
&~\expect\qB{\pB{zm_{2}(z)(G\AA^2)_{\uu\vh}+\frac{z}{n}(G\BB^2G\AA^2)_{\uu\vh}}M(G)_{\uu\vh}^{e-1}\overline{M(G)}_{\uu\vh}^{e}}+X_1\\
=&~\frac{z^{1/2}}{n}\sum_{i,\mu}\expect\qB{\AA_{i\vh}(G\BB)_{\uu\mu}\partial_{i\mu} \left(M(G)_{\uu\vh}^{e-1}\overline{M(G)}_{\uu\vh}^{e}\right)}.
\end{aligned}
\end{equation}
The partial derivative on the right-hand side of (\ref{X_1aim}) may act on either $M(G)_{\uu\vh}^{e-1}$ or $\overline{M(G)}_{\uu\vh}^{e}$. We first consider the former case. By (\ref{M(G)'}), we have
\begin{equation}
\nonumber
\begin{aligned}
&\frac{z^{1/2}}{n}\sum_{i,\mu}(e-1)\expect\qB{(G\BB)_{\uu\mu}\AA_{i\vh}\partial_{i\mu}\pb{M(G)_{\uu\vh}}M(G)_{\uu\vh}^{e-2}\overline{M(G)}_{\uu\vh}^{e}}\\
=& -\frac{z^2}{n}(e-1)\expect\Bigl[M(G)_{\uu\vh}^{e-2}\overline{M(G)}_{\uu\vh}^{e}\Bigl((G\AA^2)_{\uu\vh}(G\BB^2 G)_{\uu\vh}+(G\BB^2 G)_{\uu\uu}(\AA^2 G)_{\vh\vh}\\
&\quad +m_2(z)(G\AA^2)_{\uu\vh}(G\BB^2 G\AA^2)_{\uu\vh}+m_2(z)(G\BB^2 G)_{\uu\uu}(\AA^2 G\AA^2)_{\vh\vh}\\
&\quad +\frac{1}{n}(G\BB^2 G\BB^2G\AA^2)_{\uu\vh}(G\AA^2)_{\uu\vh}+\frac{1}{n}(\AA^2 G\BB^2G\BB^2 G)_{\vh\uu}(G\AA^2)_{\uu\vh}\\
&\quad +\frac{1}{n}(G\AA^2)_{\uu\vh}(G\BB^2 G\BB^2G\AA^2)_{\uu\vh}+\frac{1}{n}(G\BB^2 G)_{\uu\uu}(\AA^2 G\BB^2G\AA^2)_{\vh\vh}\\
&\quad +\frac{1}{n}(G\BB^2 G\AA^2)_{\uu\vh}(G\BB^2 G\AA^2)_{\uu\vh}+\frac{1}{n}(G\BB^2 G\BB^2 G)_{\uu\uu}(\AA^2 G\AA^2)_{\vh\vh}\Bigl)\Big]\\
\prec&(1+\phi)^2\Theta^2\expect[\abs{M(G)_{\uu\vh}}^{2e-2}]\leq\Phi^2\mathcal{M}^{2e-2},
\end{aligned}
\end{equation}
where we used \(m_2(z)=\frac{1}{n}\sum_\nu(G\BB^2)_{\nu\nu}\prec1+\phi\) and Lemma \ref{multipleG} for the stochastic bound, and Hölder's inequality in the final step. The case where the partial derivative acts on \smash{$\overline{M(G)}_{\uu\vh}^{e}$} is handled in the same way, except that some factors $G$ are replaced by $\overline{G}$; this does not affect the proof since Lemma \ref{multipleG} still applies.
\end{proof}

\begin{proof}[Proof of Lemma \ref{task1} (ii)]
For $k\ge 2$, we write
\begin{align}
\label{xk}
X_{k} &= \frac{z^{1/2}}{k!}\sum_{i,\mu} \sum_{\substack{r,s,t\ge0\\ r+s+t=k}}{\kappa_{k+1}(x_{i\mu})} \AA_{i\vh}\expect\qB{\partial_{i\mu}^{r} (G\BB)_{\uu\mu}\cdot\partial_{i\mu}^{s}M(G)_{\uu\vh}^{e-1}\cdot\partial_{i\mu}^{t}\overline{M(G)}_{\uu\vh}^{e}}.
\end{align}
It suffices to control each term with fixed $r,s,t$. Since the complex conjugates play no role in the following analysis, we again drop them to simplify notation and estimate the quantities
\begin{equation}
\sum_{i,\mu}\kappa_{k+1}(x_{i\mu})\AA_{i\vh} \expect\qB{\partial_{i\mu}^{r} (G\BB)_{\uu\mu}\cdot\partial_{i\mu}^{k-r}M(G)_{\uu\vh}^{2e-1}}
\nonumber
\end{equation}
for $r=0,1,\cdots,k$. Each such quantity can be expanded further into a finite sum of terms
\begin{equation}\nonumber
\begin{aligned}
\sum_{i,\mu}\kappa_{k+1}(x_{i\mu})\AA_{i\vh} \expect\qB{\partial_{i\mu}^{r} (G\BB)_{\uu\mu}\cdot\pB{\prod_{m=1}^{w} \partial_{i\mu}^{l_m}M(G)_{\uu\vh}}M(G)_{\uu\vh}^{2e-1-w}},
\end{aligned}
\end{equation}
where the sum ranges over $w = 0,1,\cdots,(k-r) \wedge (2e-1)$ and integers $ l_{1},\cdots, l_{w}\ge 1$ with $l_{1} + \cdots + l_{w} = k-r$. We aim to bound each such term by $\Phi^{w+1}\mathcal{M}^{2e-w-1}$. By \eqref{eq:kappa_k+1} and Hölder's inequality, it is enough to prove
\begin{equation}\label{xkaim}
 n^{-3/2}q^{k-2} \sum_{i,\mu}\left |\AA_{i\vh}\partial_{i\mu}^{r} (G\BB)_{\uu\mu}\cdot\pB{\prod_{m=1}^{w} \partial_{i\mu}^{l_m}M(G)_{\uu\vh}}\right |\prec\Phi^{w+1}.
\end{equation}
Note that by (\ref{Gderivative}), $\partial_{i\mu}^{r}(G\BB)_{\uu\mu}$ is a scalar multiple of a sum of $2^{r}$ terms, each of which is a product of $r+1$ entries of the form $(\Ga_1G\Ga_2)_{\xx\yy}$, where $\Ga_1,\Ga_2\in\{\AA,\BB,I_{p+n}\},\ \xx,\yy\in\{\uu,i,\mu\}$. Moreover, at least one entry is of the form $(G\AA)_{\uu i}$ or $(G\BB)_{\uu\mu}$. 
For any $r\ge 0$, Lemma \ref{multipleG}, the Cauchy-Schwarz inequality, and \eqref{ward2} give
\begin{equation}
\begin{aligned}
\sum_{i,\mu}\left |\AA_{i\vh}\partial_{i\mu}^{r}(G\BB)_{\uu\mu}\right|\prec (1+\phi)^{r}\sum_{i,\mu}\Bigl|\AA_{i\vh}\pb{\left|(G\AA)_{\uu i}\right|+\left|(G\BB)_{\uu\mu}\right|}\Bigr|
\prec(1+\phi)^{r}n^{3/2}\Theta.
\end{aligned}
\nonumber
\end{equation}
Combining this estimate with (\ref{roughbound}), we obtain, for $w\leq k-2$, that
\begin{equation}
\begin{aligned}
&\quad  n^{-3/2}q^{k-2} \sum_{i,\mu}\left |\AA_{i\vh}\partial_{i\mu}^{r} (G\BB)_{\uu\mu}\cdot\pB{\prod_{m=1}^{w} \partial_{i\mu}^{l_m}M(G)_{\uu\vh}}\right|\\ &\prec n^{-3/2}q^{k-2}\cdot (1+\phi)^{r}n^{3/2}\Theta\cdot (1+\phi)^{k-r+2w}=q^{k-2}(1+\phi)^{k+2w}\Theta\\
&\prec\pb{(1+\phi)^{3}q}^{k-2}\pb{(1+\phi)^{3}\Theta} \prec\pb{(1+\phi)^{3}(q+\Theta)}^{w+1} = \Phi^{w+1},
\end{aligned}
\nonumber
\end{equation}
where we also used $(1+\phi)^3q\leq 1$ in the penultimate step. It remains to estimate the left-hand side of (\ref{xkaim}) for $w \ge k-1$, which we assume from now on. Since $w\le k-r$, we must have $r = 0 $ or $1$. Thus, it remains to consider the three cases $(r,w) = (0,k)$, $(r,w) = (1,k-1)$, and $(r,w) = (0, k-1)$, which we treat separately.

\medskip
\noindent {\bf Case 1}: $(r,w) = (0,k)$. In this case, we must have $l_{1} = l_{2} = \cdots = l_{k} = 1$. We estimate the left-hand side of (\ref{xkaim}) as follows:
\begin{equation}
\label{M(G)case1}
\begin{aligned}
&~ n^{-3/2}q^{k-2} \sum_{i,\mu}\left |\AA_{i\vh}(G\BB)_{\uu\mu}\pb{ \partial_{i\mu}M(G)_{\uu\vh}}^k\right|\\
\prec&~ n^{-3/2}q^{k-2}(1+\phi)^{3k-3}\sum_{i,\mu}\abs{\AA_{i\vh}(G\BB)_{\uu\mu}}\Bigl(|(G\AA)_{\uu i}(\BB G)_{\mu\vh}|+|(G\BB)_{\uu\mu}(\AA G)_{i\vh}|\\
&~+|m_2(z)(G\AA)_{\uu i}(\BB G\AA^2)_{\mu\vh}|+|m_2(z)(G\BB)_{\uu\mu}(\AA G\AA^2)_{i\vh}|+(1+\phi)\Theta^2\Bigr).
\end{aligned}
\end{equation}
Here, we applied the trivial bound $|\partial_{i\mu}M(G)_{\uu\vh}|\prec(1+\phi)^3$ from \eqref{roughbound} to $k-1$ copies of $\partial_{i\mu}M(G)_{\uu\vh}$; for the remaining copy of $\partial_{i\mu}M(G)_{\uu\vh}$, we use the expansion in (\ref{M(G)'}) and control the last six terms on the right-hand side by $(1+\phi)\Theta^2$ using Lemma \ref{multipleG}. 
The first four terms on the right-hand side of (\ref{M(G)case1}) can be bounded by analogous arguments; we illustrate this by estimating the third term. By the Cauchy-Schwarz inequality, Lemma \ref{multipleG}, and \eqref{ward2}, we have
\begin{equation}
\nonumber
\begin{aligned}
&~\sum_{i,\mu}\abs{\AA_{i\vh}(G\BB)_{\uu\mu}}\cdot|m_2(z)(G\AA)_{\uu i}(\BB G\AA^2)_{\mu\vh}|\\
\prec&~(1+\phi)\pB{\sum_i|\AA_{i\vh}(G\AA)_{\uu i}|}\pB{\sum_\mu|(G\BB)_{\uu\mu}(\BB G\AA^2)_{\mu\vh}|} \prec(1+\phi)n^{3/2}\Theta^3.
\end{aligned}
\end{equation}
Substituting the above bounds into \eqref{M(G)case1}, we obtain
\begin{equation}
\begin{aligned}
& n^{-3/2}q^{k-2} \sum_{i,\mu}\left |\AA_{i\vh}(G\BB)_{\uu\mu}\pb{ \partial_{i\mu}M(G)_{\uu\vh}}^k\right|\\
\prec &~n^{-3/2}q^{k-2}(1+\phi)^{3k-3}\cdot(1+\phi)n^{3/2}\Theta^3
= (1+\phi)^{3k-2}q^{k-2}\Theta^3\\
\prec &~\pb{(1+\phi)^{3}(q+\Theta)}^{k+1} = \Phi^{w+1}.
\end{aligned}
\nonumber
\end{equation}

\medskip
\noindent {\bf Case 2}: $(r,w) = (1, k-1)$. In this case, we must have $l_{1} = l_{2} = \cdots = l_{k-1} = 1$. We estimate the left-hand side of (\ref{xkaim}) as follows:
\begin{equation}
\begin{aligned}
&~ n^{-3/2}q^{k-2}\sum_{i,\mu}\left |\AA_{i\vh}\partial_{i\mu}(G\BB)_{\uu\mu}\pb{ \partial_{i\mu}M(G)_{\uu\vh}}^{k-1}\right|\\
\prec&~ n^{-3/2}q^{k-2}(1+\phi)^{3k-6}\sum_{i,\mu}\left|\AA_{i\vh}\partial_{i\mu}(G\BB)_{\uu\mu}\right|\Bigl(|(G\AA)_{\uu i}(\BB G)_{\mu\vh}|+|(G\BB)_{\uu\mu}(\AA G)_{i\vh}|\\
&~+|m_2(z)(G\AA)_{\uu i}(\BB G\AA^2)_{\mu\vh}|+|m_2(z)(G\BB)_{\uu\mu}(\AA G\AA^2)_{i\vh}|+(1+\phi)\Theta^2\Bigr)\\
\prec&~ n^{-3/2}q^{k-2}(1+\phi)^{3k-6}\cdot(1+\phi)^3n^{3/2}\Theta^2
\prec \pb{(1+\phi)^{3}(q+\Theta)}^{k} = \Phi^{w+1}.
\end{aligned}
\nonumber
\end{equation}
In the first step, as in Case 1, we applied the trivial bound $|\partial_{i\mu}M(G)_{\uu\vh}|\prec(1+\phi)^3$ to $k-2$ copies of $\partial_{i\mu}M(G)_{\uu\vh}$, and expanded the remaining copy using (\ref{M(G)'}).
For the second step, we illustrate the argument with one representative term. Applying \eqref{Gderivative} to $\partial_{i\mu}(G\BB)_{\uu\mu}$ yields
\begin{equation}
\nonumber
\begin{aligned}
&\sum_{i,\mu}\abs{\AA_{i\vh}\partial_{i\mu}(G\BB)_{\uu\mu}}\cdot|m_2(z)(G\AA)_{\uu i}(\BB G\AA^2)_{\mu\vh}|\\
\prec&\sum_{i,\mu}\abs{\AA_{i\vh}(G\AA)_{\uu i}(\BB G\BB)_{\mu\mu}}\cdot|m_2(z)(G\AA)_{\uu i}(\BB G\AA^2)_{\mu\vh}| +\sum_{i,\mu}\abs{\AA_{i\vh}(G\BB)_{\uu\mu}(\AA G\BB)_{i\mu}}\cdot|m_2(z)(G\AA)_{\uu i}(\BB G\AA^2)_{\mu\vh}|\\
\prec&(1+\phi)^3\sum_{i,\mu}\abs{\AA_{i\vh}(G\AA)_{\uu i}(\BB G\AA)_{\mu\vh}}+(1+\phi)^3\sum_{i,\mu}\abs{\AA_{i\vh}(G\BB)_{\uu\mu}(G\AA)_{\uu i}} \prec (1+\phi)^3n^{3/2}\Theta^2.
\end{aligned}
\end{equation}
Here we again used the Cauchy-Schwarz inequality, Lemma \ref{multipleG}, and \eqref{ward2} in the second step above. 

\medskip
\noindent {\bf Case 3}: $(r,w) = (0, k-1)$. 
In this case, without loss of generality, we assume that $l_{1} = l_{2} = \cdots = l_{k-2} = 1$ and $ l_{k-1}=2$. We estimate the left-hand side of (\ref{xkaim}) as follows:
\begin{equation}
\begin{aligned}
&~ n^{-3/2}q^{k-2} \sum_{i,\mu}\left |\AA_{i\vh}(G\BB)_{\uu\mu}\partial_{i\mu}^2M(G)_{\uu\vh}\pb{\partial_{i\mu}M(G)_{\uu\vh}}^{k-2}\right|\\
\prec&~ n^{-3/2}q^{k-2}(1+\phi)^{3k-6}\sum_{i,\mu}\left |\AA_{i\vh}(G\BB)_{\uu\mu}\partial_{i\mu}^2M(G)_{\uu\vh}\right|\\
\prec&~ n^{-3/2}q^{k-2}(1+\phi)^{3k-6}\cdot(1+\phi)^3n^{3/2}\Theta^2
\prec\pb{(1+\phi)^{3}(q+\Theta)}^{k} = \Phi^{w+1}.
\end{aligned}
\nonumber
\end{equation}
In the first step, we applied the trivial bound $|\partial_{i\mu}M(G)_{\uu\vh}|\prec(1+\phi)^3$ to all $k-2$ copies of $\partial_{i\mu}M(G)_{\uu\vh}$. For the second step, we claim that
\begin{equation}
  \begin{split}    
\abs{\partial_{i\mu}^{2}M(G)_{\uu\vh}}&\prec (1+\phi)^3\left(\left|(G\AA)_{\uu i}\right|+\left|(G\AA)_{\vh i}\right|+|(\AA^2 G\AA)_{\vh i}|\right)\\
&+(1+\phi)^3\left(\left|(\BB G)_{\mu\vh}\right|+\left|(\BB G)_{\mu\uu}\right|+|(\BB G\AA^2)_{\mu\vh}|+\Theta^{2}\right).
  \end{split}\label{eq_second_derivative}
\end{equation}
Combining \eqref{eq_second_derivative} with the Cauchy-Schwarz inequality and \eqref{ward2} gives the second step. 

To prove \eqref{eq_second_derivative}, recall from (\ref{M(G)'}) that $\partial_{i\mu}M(G)$ consists of ten terms. We illustrate the estimates by considering two representative derivatives. First, the derivative of the fifth term
$$\partial_{i\mu}\pB{\frac{1}{n}(\BB G\BB^2 G\AA^2)_{\mu i}(G\AA^2)_{\uu\vh}}$$
can be written as a sum of six terms, each of the form $$\frac{1}{n}(\Ga_1G\Ga_2G\Ga_3)_{\xx_1\yy_1}(\Ga_4G\Ga_5)_{\xx_2\yy_2}(\Ga_6G\Ga_7)_{\xx_3\yy_3},$$
where $\Ga_1,\cdots,\Ga_7\in\{\AA^2,\BB^2,\AA,\BB,I_{p+n}\},$ and $\xx_1,\cdots,\yy_3\in\{\uu,\vh,i,\mu\}.$ Thus, using Lemma \ref{multipleG}, we get
$$\left|\partial_{i\mu}\pB{\frac{1}{n}(\BB G\BB^2 G\AA^2)_{\mu i}(G\AA^2)_{\uu\vh}}\right|\prec(1+\phi)^2\Theta^2.$$
The derivatives of the sixth to tenth terms in (\ref{M(G)'}) are estimated similarly. Second, the derivative of the third term in (\ref{M(G)'}) can be estimated as follows:
\begin{equation}
\begin{aligned}
&~ \partial_{i\mu}\pB{m_2(z)(G\AA)_{\uu i}(\BB G\AA^2)_{\mu\vh}} \\
=&~ \partial_{i\mu}m_2(z)\pB{(G\AA)_{\uu i}(\BB G\AA^2)_{\mu\vh}}+\partial_{i\mu}(G\AA)_{\uu i}\pB{m_2(z)(\BB G\AA^2)_{\mu\vh}}+\partial_{i\mu}(\BB G\AA^2)_{\mu\vh}\pB{m_2(z)(G\AA)_{\uu i}} \\
\prec&~(1+\phi)^2|\partial_{i\mu}m_2(z)|+(1+\phi)^3|(\BB G\AA^2)_{\mu\vh}|+(1+\phi)^3|(G\AA)_{\uu i}| \prec (1+\phi)^3\pb{|(G\AA)_{\uu i}|+|(\BB G\AA^2)_{\mu\vh}|+\Theta^2},
\end{aligned}
\nonumber
\end{equation}
where we used \eqref{Gderivative} and Lemma \ref{multipleG} in the second step, together with
\begin{equation}
    \partial_{i\mu}m_2(z)=-\frac{z^{1/2}}{n}\pb{(\BB G\BB^2G\AA)_{\mu i}+(\AA G\BB^2G\BB)_{i\mu}}\prec\Theta^2 \label{eq:partial_m2}
\end{equation}
from \eqref{ward2} in the third step. The derivatives of the first three terms in (\ref{M(G)'}) are estimated similarly. This concludes the proof of Case 3.
\end{proof}

\begin{proof}[Proof of Lemma \ref{task1} (iii)]
For simplicity, we introduce the notation
\begin{equation}
\Delta^{i\mu}:=\partial_{i\mu}\begin{pmatrix}
  0&X \\
  X^*&0
\end{pmatrix},\quad \DD:=\partial_{i\mu}H=z^{1/2}\begin{pmatrix}
  A^{1/2}&0 \\
  0&B^{1/2}
\end{pmatrix}\Delta^{i\mu}\begin{pmatrix}
  A^{1/2}&0 \\
  0&B^{1/2}
\end{pmatrix}.\label{defn_DD}
\end{equation}
We then define the following resolvent, obtained from $G$ by setting the entry $x_{i\mu}$ in $X$ to 0:
$$ \hat G:=\pb{H-x_{i\mu}\DD-z}^{-1}$$
By the definition of $\wt G$, we can write
$$\wt G\equiv \wt G(\xi_{i\mu})=\pb{H-(x_{i\mu}-\xi_{i\mu})\DD-z}^{-1}.$$ 
For any $K\in\N_+$, we have the resolvent expansions
\begin{align}
\label{GtohatG}
&\hat G=\sum_{k=0}^{K-1}(x_{i\mu}G\DD)^kG+(x_{i\mu}G\DD)^K\hat G,\\
\label{hatGtowtG}
&\wt G=\sum_{k=0}^{K-1}(-\xi_{i\mu}\hat G\DD)^k\hat G+(-\xi_{i\mu}\hat G\DD)^K\wt G.
\end{align}
Note that for any deterministic unit vectors $\xx,\yy\in \C^{p+n}$, $\Delta^{i\mu}$ has only two nonzero components, so $\DD$ has rank at most 2. Hence, $\p{(x_{i\mu}G\DD)^kG}_{\xx\yy}$ can be written as $z^{k/2}x_{i\mu}^k$ times a sum of $2^k$ terms, each of which is a product of $k+1$ entries of the form $(\Ga_1G\Ga_2)_{\xx'\yy'}$, where
$\Ga_1,\Ga_2\in\{\AA,\BB,I_{p+n}\}$ and $\xx',\yy'\in\{\xx,\yy,i,\mu\}.$ Thus, by the bounded support condition \eqref{eq_support} and Lemma \ref{multipleG}, we obtain
$$\pb{(x_{i\mu}G\DD)^kG}_{\xx\yy}\prec q^k(1+\phi)^{k+1}\prec 1+\phi.$$ 
Similarly, we have $$\pb{(x_{i\mu}G\DD)^K\hat G}_{\xx\yy}\prec q^K(1+\phi)^K\norm{\hat G}.$$ 
Using the trivial bound $\norm{\hat G}\leq (\Im z^{1/2})^{-1}=\OO(n)$ and choosing $K$ sufficiently large so that $\p{q(1+\phi)}^K\leq n^{-1}$, we obtain $\pb{(x_{i\mu}G\DD)^K\hat G}_{\xx\yy}\prec 1.$ It then follows from (\ref{GtohatG}) that
\begin{equation}
\hat G_{\xx\yy} \prec 1+\phi.\label{eq_hatG}
\end{equation}
Using \eqref{eq_hatG} and the same argument, we derive from \eqref{hatGtowtG} that
\begin{equation}
\sup_{|\xi_{i\mu}|\leq q^{1/2}} \wt G_{\xx\yy}(\xi_{i\mu}) \prec 1+\phi.\label{eq_wtG}
\end{equation}

Following the same argument as in the proof of Lemma \ref{lem:roughbound}, and using \eqref{eq_wtG} together with the trivial bound $\norm{\wt G}=\OO(n)$, we obtain, for any $r\in\N$,
$$\sup_{|\xi_{i\mu}|\leq q^{1/2}}\left|\partial_{i\mu}^{r}M(\wt G)_{\uu\vh}\right|\prec(1+\phi)^{r+2},\quad
\sup_{\xi_{i\mu}\in\R}\left|\partial_{i\mu}^{r}M(\wt G)_{\uu\vh}\right|=\OO(n^{r+2}).$$
Consequently, taking $s=q^{1/2}$ in \eqref{remainder^imu}, we obtain
\begin{equation}\nonumber
\begin{aligned}
\sup_{|\xi_{i\mu}|\leq q^{1/2}}\left|\partial_{i\mu}^{\ell+1} \left((\wt G\BB)_{\uu\mu}M(\wt G)_{\uu\vh}^{e-1}\overline{M(\wt G)}_{\uu\vh}^{e}\right)\right|&\prec (1+\phi)^{\ell+4e},\\
\sup_{\xi_{i\mu}\in \mathbb{R}}\left|\partial_{i\mu}^{\ell+1} \left((\wt G\BB)_{\uu\mu}M(\wt G)_{\uu\vh}^{e-1}\overline{M(\wt G)}_{\uu\vh}^{e}\right)\right|&=\OO(n^{\ell+4e}).
\end{aligned}
\end{equation}
Substituting these bounds into (\ref{remainder^imu}), we get
\begin{equation}\nonumber
\begin{aligned}
\abs{\cal R_{\ell+1}^{i\mu}}&\prec \expect\qb{|x_{i\mu}|^{\ell+2}}(1+\phi)^{\ell+4e}+\expect\qb{|x_{i\mu}|^{\ell+2}\mathds{1}(|x_{i\mu}|>q^{1/2})}n^{\ell+4e}\\
&\prec q^{\ell+2}(1+\phi)^{\ell+4e}+q^{\ell+2}n^{\ell+4e}\prob\qb{|x_{i\mu}|>q^{1/2}}^{1/2}\prec n^{-D}
\end{aligned}
\end{equation}
uniformly in $i,\mu$, where the last step follows by choosing $\ell$ sufficiently large (depending on $D$) and using $\max_{i,\mu}|x_{i\mu}|\prec q$.
\end{proof}

\subsection{Averaged Estimation}

In this subsection, we prove the second assertion of Theorem \ref{maintheorem}, namely, \smash{$\abs{\underline{\Gamma M(G)}}\prec\Phi^{2}$} for any deterministic matrix $\Gamma\in\C^{(p+n)\times (p+n)}$ with $\norm{\Gamma}=\OO(1)$. We decompose $\Ga$ into a $p\times (p+n)$ upper block and an $n\times (p+n)$ lower block. By symmetry, it suffices to consider the case where $\Ga$ has only an upper block, that is, $\ut\Ga=0$ for all $\uu\in\C^{p+n}$ (recall \eqref{eq_projection}). The case where $\Ga$ has only a lower block is completely analogous. Then, by (\ref{M(G)expansion}), we can write
\begin{equation}
\label{expansion3}
\begin{aligned}
\underline{\Ga M(G)}=\frac{1}{p+n}\sum_{j,a}\Ga_{ja}M(G)_{aj}
=\frac{z^{1/2}}{p+n}\sum_{i,\mu}(\AA\Ga G\BB)_{i\mu}x_{i\mu}+zm_{2}(z)\underline{\Ga G\AA^2}+\frac{z}{n}\underline{\Ga G\BB^2 G\AA^2}.
\end{aligned}
\end{equation}
For any deterministic matrix $\Ga_1\in\C^{(p+n)\times (p+n)}$, we have
\begin{equation}\label{TrGderrule}
\begin{aligned}
\partial_{i\mu}\Tr(\Ga_1G)=\partial_{i\mu}\Tr(\Ga_1^TG)=-z^{1/2}\pb{(\BB G\Ga_1 G\AA)_{\mu i}+(\AA G\Ga_1 G\BB)_{i\mu}}.
\end{aligned}
\end{equation}
Therefore, differentiating the expression in (\ref{expansion3}) and using \eqref{eq:partial_m2} and \eqref{TrGderrule} yields
\begin{equation}
\label{partialbmg}
\begin{aligned}
\partial_{i\mu}\underline{\Ga M(G)}
=&\frac{z^{1/2}}{p+n}(\AA \Ga G\BB)_{i\mu}-\frac{z}{p+n}\sum_{j,\nu}\pb{(\AA\Ga G\AA)_{ji}(\BB G\BB)_{\mu\nu}+(\AA\Ga G\BB)_{j\mu}(\AA G\BB)_{i\nu}}x_{j\nu}\\
&+\partial_{i\mu}\pB{zm_{2}(z)\underline{\Ga G\AA^2}+\frac{z}{n}\underline{\Ga G\BB^2 G\AA^2}}\\
=&\frac{z^{1/2}}{p+n}(\AA\Ga G\BB)_{i\mu}-z^{1/2}\pb{(\BB GFG\AA)_{\mu i}+(\AA GFG\BB)_{i\mu}},
\end{aligned}
\end{equation}
where
\begin{align}
F_{\uu\vv}:=&\frac{1}{p+n}\pB{z^{1/2}\sum_{j,\nu}\BB_{\uu\nu}(\AA\Ga)_{j\vv}x_{j\nu}+
zm_2(z)(\AA^2\Ga)_{\uu\vv}+\frac{z}{n}\pb{\Tr(\Ga G\AA^2)\BB^2+\BB^2G\AA^2\Ga+(\AA^2\Ga G\BB^2)^T}_{\uu\vv}} \nonumber\\
=& \frac{1}{p+n}\pB{H\Ga+
zm_2(z)\AA^2\Ga+\frac{z}{n}\pb{\Tr(\Ga G\AA^2)\BB^2+\BB^2G\AA^2\Ga+\BB^2G\Ga^T\AA^2}}_{\uu\vv}.\label{f}
\end{align}
We remark that, on the right-hand side of (\ref{f}), each term is interchangeable with its transpose due to the structure of (\ref{TrGderrule}). The current form of $F_{\uu\vv}$ is chosen carefully to produce a crucial cancellation that will be identified later; see the proof of Lemma \ref{qlemma} (ii) below.

We now return to the proof of the estimate $\abs{\underline{\Gamma M(G)}}\prec\Phi^{2}$. By Lemma \ref{momentmethod}, it suffices to prove that
$$\mathcal{N}:=\expect\qb{|\underline{\Ga M(G)}|^{2e}}^{\frac{1}{2e}}\prec\Phi^2$$
for any fixed $e\in\N_+$ with $e\ge 4$. Using \eqref{expansion3} and applying the cumulant expansion (Lemma \ref{lem:cumulant_expansion}) to $x_{i\mu}$, we expand $\mathcal{N}^{2e}$ as
\begin{equation}
\label{aims2}
\begin{aligned}
\mathcal{N}^{2e}=&\expect\qB{\pB{zm_2(z)\underline{\Ga G\AA^2}+\frac{z}{n}\underline{\Ga G\BB^2 G\AA^2}}\underline{\Ga M(G)}^{e-1}\overline{\underline{\Ga M(G)}}^{e}}+\sum_{k=1}^\ell Y_k+\sum_{i,\mu}\cal R'^{i\mu}_{\ell+1}.
\end{aligned}
\end{equation}
where, for any fixed $k\in\N$,
$$Y_k:=\frac{z^{1/2}}{p+n}\sum_{i,\mu}\frac{\kappa_{k+1}(x_{i\mu})}{k!}\expect\qB{\partial_{i\mu}^{k} \left((\AA\Ga G\BB)_{i\mu}\underline{\Ga M(G)}^{e-1}\overline{\underline{\Ga M(G)}}^{e}\right)},$$
$\ell$ is a positive integer to be chosen later in the proof, and the remainder terms \smash{$\cal R'^{i\mu}_{\ell+1}$} are defined analogously to $\cal R^{i\mu}_{\ell+1}$ in (\ref{remainder^imu}).
The proof of $\cal N\prec\Phi^2$ is then reduced to the following lemma, in the same way that the proof of $\cal M\prec\Phi$ is reduced to Lemma \ref{task1}; see the argument around \eqref{eq_pre_young}.

\begin{lemma}\label{task2}
We have the following estimates.
\begin{enumerate}
    \item $\expect\qB{\pB{zm_2(z)\underline{\Ga G\AA^2}+\frac{z}{n}\underline{\Ga G\BB^2 G\AA^2}}\underline{\Ga M(G)}^{e-1}\overline{\underline{\Ga M(G)}}^{e}}+Y_1\prec\Phi^4\cal N^{2e-2}$.
    \item For $k \geq 2$, $Y_k\prec\sum_{s=1}^{2e}\Phi^{2s}\cal M^{2e-s}$.
    \item For any fixed $D>0$, there exists a constant $\ell \equiv \ell(D) \geq 1$  such that $\cal R'^{i\mu}_{\ell+1}=\OO(n^{-D})$ uniformly in $i,\mu$.
\end{enumerate}
\end{lemma}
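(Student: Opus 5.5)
The proof follows the template of Lemma \ref{task1}, with the Ward-type bounds now exploited twice because of the normalized trace. The averaged quantity gains an extra factor $\Theta$ from every entry that carries a summation index $i$ or $\mu$ paired with a free summation.

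\textbf{Part (i).} Expanding $Y_1$ with $\kappa_2=n^{-1}$, I split according to whether $\partial_{i\mu}$ hits $(\AA\Ga G\BB)_{i\mu}$ or the powers of $\underline{\Ga M(G)}$.

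For the first piece I will verify, via \eqref{Gderivative}, the algebraic identity
\[
\frac{z^{1/2}}{n(p+n)}\sum_{i,\mu}\partial_{i\mu}(\AA\Ga G\BB)_{i\mu}+zm_2\underline{\Ga G\AA^2}+\frac zn\underline{\Ga G\BB^2G\AA^2}=0 .
\]
This is the trace analogue of the cancellation in the proof of Lemma \ref{task1} (i): the diagonal term produces $-zm_2\underline{\Ga G\AA^2}$, and the off-diagonal term produces $-\frac zn\underline{\Ga G\BB^2G\AA^2}$.

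For the second piece I use \eqref{partialbmg} and must bound
\[
\frac{1}{n(p+n)}\sum_{i,\mu}(\AA\Ga G\BB)_{i\mu}\,\partial_{i\mu}\underline{\Ga M(G)} .
\]
The term $\frac{z^{1/2}}{p+n}(\AA\Ga G\BB)_{i\mu}$ in \eqref{partialbmg} gives
\[
n^{-3}\Tr\pb{\AA\Ga G\BB(\AA\Ga G\BB)^T}\prec n^{-3}\cdot n\cdot n\Theta^2\le\Theta^4
\]
by Lemma \ref{wardlemma}. The terms $-z^{1/2}(\BB GFG\AA)_{\mu i}$ produce traces of the form $n^{-2}\Tr(\AA\Ga G\BB^2GFG\AA)$, i.e.\ normalized traces of products of four resolvents. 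To bound these by $\Phi^4$ I need $F$ to be small.

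\textbf{Smallness of $F$.} The key observation is that $(p+n)F$ is, up to the $\frac zn$-terms, $H\Ga+zm_2\AA^2\Ga$. Using $GH=I+zG$, the combination $G(p+n)FG$ becomes $G\,M(G)$-type expressions plus terms with an explicit $1/n$, which is presumably the cancellation referred to in the text (the ``Lemma \ref{qlemma} (ii)''). I therefore aim to show that $\|G F G\|$ restricted to test vectors is $\OO_\prec(n^{-1}(\Phi+\ldots))$, so that each such trace is bounded by $\Phi^2\cdot\Theta^2\lesssim\Phi^4$. Combined with H\"older, this gives $\Phi^4\cal N^{2e-2}$.

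\textbf{Part (ii).} For $k\ge2$ I follow the case analysis of Lemma \ref{task1} (ii). Using $\kappa_{k+1}\prec n^{-2}q^{k-3}$ (and $n^{-3/2}$ for $k=2$), I need a rough bound on $\partial^l_{i\mu}\underline{\Ga M(G)}$. For $l\ge1$ I expect
\[
\partial^l_{i\mu}\underline{\Ga M(G)}\prec n^{-1}(1+\phi)^{C}
\]
together with a Ward-gainful bound of the form $n^{-1}$ times entries such as $|(G\AA)_{\xx i}|$. Each derivative factor then contributes $\Phi^2$ after summing over $i,\mu$ via Cauchy--Schwarz and \eqref{ward2}, while the prefactor $(p+n)^{-1}\sum_{i,\mu}|(\AA\Ga G\BB)_{i\mu}|\prec n\Theta$ supplies the remaining smallness. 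The cases with few derivatives on the $\underline{\Ga M}$ factors, namely the analogues of $(r,w)=(0,k),(1,k-1),(0,k-1)$, are handled by expanding one factor exactly, as before. (I read the $\cal M$ in the statement as $\cal N$.)

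\textbf{Part (iii).} This is identical to Lemma \ref{task1} (iii). I use the resolvent expansions \eqref{GtohatG}--\eqref{hatGtowtG} with $s=q^{1/2}$, the bounded support condition, and the trivial bound $\|\wt G\|=\OO(n)$.

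The main obstacle is part (i), specifically exhibiting the cancellation in $F$ that makes the $GFG$ terms of size $\Phi^4$ rather than $\Theta^2$. This is where the precise choice of the transposed terms in \eqref{f} must be used.
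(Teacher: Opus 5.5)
The step you flag as the main obstacle does not work as proposed, and it is where the real proof lies. Your cancellation identity in part (i), your bound on the $\frac{z^{1/2}}{p+n}(\AA\Ga G\BB)_{i\mu}$ part of \eqref{partialbmg}, and part (iii) are fine. Using $GH=I+zG$, and taking $\Ga$ with only an upper block so that $\BB^2\Ga=0$, one finds exactly
\[
(p+n)\,GFG=M(G)\Ga G+\frac zn\Tr(\Ga G\AA^2)\,G\BB^2G+\frac zn\,G\BB^2G\Ga^T\AA^2G .
\]
Neither piece is small with the tools available. The anisotropic bound $M(G)=\OO_\prec(\Phi)$ holds only for deterministic test vectors, whereas here $M(G)$ is tested against the random vector $\Ga G\vv$. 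The ``explicit $1/n$'' terms carry the factor $\Tr(\Ga G\AA^2)=\OO(n)$, so Lemma \ref{multipleG} only gives $\OO_\prec((1+\phi)\Theta^2)$ for their contribution to $Q=GFG$. Your route therefore stops at $Q\prec(1+\phi)\Theta^2$. The $Q$-part of (i) is then only $n^{-2}\Theta^2\sum_{i,\mu}|(\AA\Ga G\BB)_{i\mu}|\prec\Theta^3$, one power of $\Phi$ short of $\Phi^4$. Your target $GFG=\OO_\prec(n^{-1}\Phi)$ is also too strong. Heuristically, $Q_{\uu\vv}$ is the centred fluctuation of $\frac{1}{p+n}\sum_{i,\mu}x_{i\mu}(G\BB)_{\uu\mu}(\AA\Ga G)_{i\vv}$, which is of order $\Theta^2/\sqrt n$. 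This exceeds $\Phi/n$ in the bulk when $q=n^{-1/2}$ and $\eta\ll1$.

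The missing ingredient is Lemma \ref{qlemma} (ii), $Q\prec(1+\phi)^4(q+\Theta)^3$, and the cancellation behind it is stochastic, not algebraic. The paper runs a separate high-moment cumulant expansion of $\expect|Q_{\uu\vv}|^{2e}$ in the variables $x_{i\mu}$ appearing in $\frac{1}{p+n}(GH\Ga G)_{\uu\vv}$. There the second-cumulant term cancels the remaining part $\mathcal L_0$ of $Q_{\uu\vv}$ exactly; this is what the transposed terms in \eqref{f} are designed for. The other terms (Lemma \ref{task3}) give the self-improving implication $Q\prec\lambda\Rightarrow Q\prec(\Xi\lambda)^{1/2}$ with $\Xi=(1+\phi)^4(q+\Theta)^3$, iterated from $Q\prec(1+\phi)\Theta^2$. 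With this, the $Q$-part of (i) is $\prec\Phi^3\Theta\le\Phi^4$.

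Part (ii) has a related problem. Your expected bound $\partial^t_{i\mu}\underline{\Ga M(G)}\prec n^{-1}(1+\phi)^C$ is too optimistic for small $\eta$. What can actually be proved is $\partial^t_{i\mu}\underline{\Ga M(G)}\prec(1+\phi)^{t+1}\Theta^2$, via $\partial^l_{i\mu}Q\prec(1+\phi)^{l+1}\Theta^2$ from Lemma \ref{qlemma} (i). With that bound the $k\ge3$ terms close because of the extra factors of $q$. The paper's treatment of the $k=2$ subcases, however, again relies on Lemma \ref{qlemma} (ii).
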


Set $Q:=GFG$, where $F$ is defined in \eqref{f}. The following bounds on $Q$ and its derivatives are the key ingredients in the proof of Lemma \ref{task2}.
\begin{lemma}
\label{qlemma}
We have the following estimates.
\begin{enumerate}
    \item For any fixed integer $l\ge 0$, we have $\partial_{i \mu}^{l}Q \prec (1+\phi)^{l+1}\Theta^{2}$.
    \item $Q\prec(1+\phi)^{4}(q+\Theta)^{3}.$
\end{enumerate}
\end{lemma}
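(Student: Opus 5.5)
The plan is to expand $F$ into its individual terms and estimate $Q=GFG$ entrywise, i.e.\ $Q_{\xx\yy}$ for deterministic unit $\xx,\yy$, as in Definition \ref{stoch_domination}(ii). Part (i) should follow by bookkeeping with Lemma \ref{multipleG}. Part (ii) needs a cumulant expansion in which the particular form of $F$ in \eqref{f} produces a cancellation. Throughout I use $\Gamma=\AA^2\Gamma$ up to the harmless normalization $A^{1/2}A^{-1/2}$; more precisely I only use that $\Ga$ has only an upper block, so $\BB\Ga=0$.

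\textbf{Step 1 (part (i), $l=0$).} The term $\frac{1}{p+n}GH\Ga G$ has no small prefactor, so I first eliminate $H$ with the resolvent identity $GH=I+zG$. This gives $GH\Ga G=\Ga G+zG\Ga G$. By Lemma \ref{multipleG}, its $(\xx,\yy)$ entry is $\prec(1+\phi)+n\Theta^2$.

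The other terms of $Q$ are bounded as follows, again by Lemma \ref{multipleG} and $|m_2|\prec 1+\phi$:
\begin{itemize}
\item $z m_2 G\AA^2\Ga G$ is $\prec (1+\phi)n\Theta^2$;
\item $\frac{z}{n}\Tr(\Ga G\AA^2)\,G\BB^2G$ is $\prec (1+\phi)n\Theta^2$, since $\frac1n|\Tr(\Ga G\AA^2)|\prec 1+\phi$;
\item $\frac zn G\BB^2G\AA^2\Ga G$ and $\frac zn G\BB^2G\Ga^T\AA^2G$ are $\prec \frac1n(1+\phi)n^2\Theta^2$.
\end{itemize}
Dividing by $p+n$ and using $\Theta^2\ge 1/n$ gives $Q\prec(1+\phi)\Theta^2$.

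\textbf{Step 1 (part (i), $l\ge1$).} I differentiate using \eqref{Gderivative}, \eqref{eq:partial_m2} and \eqref{TrGderrule}, always after rewriting $GH\Ga G=\Ga G+zG\Ga G$, so that $H$ never appears explicitly. Each derivative replaces a chain $\cdots G\cdots$ by $\cdots G\AA e_i\, e_\mu^*\BB G\cdots$ (or the symmetric term). This splits a chain into two chains and adds one factor $G$. The chain containing $\xx$ and $\yy$ keeps at least two $G$'s joined through a sum, or the split creates two Ward-type factors. Counting as in Lemma \ref{lem:roughbound}, each derivative costs at most $(1+\phi)$, which gives $\partial^l_{i\mu}Q\prec(1+\phi)^{l+1}\Theta^2$.

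\textbf{Step 2 (part (ii)).} I apply the moment method (Lemma \ref{momentmethod}) to $\mathcal Q:=\expect[|Q_{\xx\yy}|^{2e}]^{1/(2e)}$, mirroring the proof of Lemma \ref{task1}. Instead of eliminating $H$, I keep the linear term $\frac{z^{1/2}}{p+n}\sum_{i,\mu}(G\BB)_{\xx\mu}x_{i\mu}(\AA\Ga G)_{i\yy}$ of $(p+n)Q$ and cumulant-expand it in $x_{i\mu}$.

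In the second-order ($\kappa_2$) term the derivative acts on three places:
\begin{itemize}
\item on $(G\BB)_{\xx\mu}$, giving exactly $-zm_2 G\AA^2\Ga G-\frac zn G\BB^2G\AA^2\Ga G$, which cancels the corresponding terms of $F$ as in Lemma \ref{task1}(i);
\item on $(\AA\Ga G)_{i\yy}$, giving $-\frac zn\Tr(\Ga G\AA^2)G\BB^2G-\frac zn G\BB^2G\Ga^T\AA^2G$, which cancels the remaining two terms of $F$. This is the designed cancellation, and I would verify it first by direct computation with \eqref{Gderivative};
\item on $Q^{e-1}\bar Q^{e}$, giving $\frac1n\sum(G\BB)_{\xx\mu}(\AA\Ga G)_{i\yy}\partial_{i\mu}Q$. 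Using part (i) and Cauchy--Schwarz with \eqref{ward2}, this should be bounded by $\frac{1}{n^2}\cdot n\Theta^2\cdot$ (an extra Ward gain from $\partial_{i\mu}Q$, which carries a factor $(\cdots)_{\cdot i}$ or $(\cdots)_{\mu\cdot}$) times $\mathcal Q^{2e-2}$. The target is $[(1+\phi)^4(q+\Theta)^3]^2\mathcal Q^{2e-2}$.
\end{itemize}

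The higher cumulants $k\ge2$ carry $n^{-3/2}q^{k-2}$. They are treated case by case as in Lemma \ref{task1}(ii), with the $w\ge k-1$ cases needing the refined form of $\partial_{i\mu}Q$, namely $Q$ plus terms with an $i$- or $\mu$-entry. Each gains at least $q$ or $\Theta$ per extra factor. The remainder is handled exactly as in Lemma \ref{task1}(iii). Young's inequality then closes $\mathcal Q\prec(1+\phi)^4(q+\Theta)^3$.

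\textbf{Main obstacle.} I expect the hardest step to be the term where $\partial_{i\mu}$ hits $Q^{e-1}$. One must show that $\sum_{i,\mu}|(G\BB)_{\xx\mu}(\AA\Ga G)_{i\yy}\,\partial_{i\mu}Q_{\xx\yy}|$ gains a full extra $\Theta$ beyond the crude bound. For this I would expand $\partial_{i\mu}Q$ via \eqref{Gderivative} into products of the form $(G\AA)_{\xx i}(\BB Q)_{\mu\yy}$, and apply Ward's identity in both $i$ and $\mu$ separately.
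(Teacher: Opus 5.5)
Your part (i) is sound and differs only mildly from the paper. You remove $H$ via $GH=I+zG$ and count factors directly, using that every term retains a two-$G$ chain worth $n\Theta^2$. The paper instead bounds $\frac{1}{p+n}(GH\Ga G)_{\uu\vv}$ using $\|H\|\prec 1$ and Ward, and then inducts on $l$ through the structure \eqref{partialq}. Your identification of the $\kappa_2$ cancellation, with the derivative on $(G\BB)_{\xx\mu}$ and on $(\AA\Ga G)_{i\yy}$ together producing $-\mathcal L_0$, is exactly the paper's.

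\textbf{The gap is in closing part (ii) in one pass.} When $\partial_{i\mu}$ hits $Q^{e-1}\overline{Q}^{e}$, \eqref{partialq} produces $n^{-2}\sum_{i,\mu}(G\BB)_{\xx\mu}(\AA\Ga G)_{i\yy}(G\AA)_{\xx i}(\BB Q)_{\mu\yy}$ and three analogous terms, so $Q$ itself reappears.

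\begin{enumerate}
\item Ward in $i$ together with Cauchy--Schwarz in $\mu$ gives $\Theta^3\cdot\max_\mu|(\BB Q)_{\mu\yy}|$. The extra Ward gain you describe is a single factor $\Theta$, multiplied by whatever bound on $Q$ you already have.
\item Feeding in part (i), this is $(1+\phi)\Theta^5$. Set $\Xi:=(1+\phi)^4(q+\Theta)^3$. Then $(1+\phi)\Theta^5$ is not $\lesssim\Xi^2$ (e.g.\ when $q\lesssim\Theta$), so your Young step yields at best $Q\prec(1+\phi)^{1/2}\Theta^{5/2}$ rather than $Q\prec\Xi$.
\item The higher-cumulant cases with $w\ge k-1$ also pass through $\partial_{i\mu}Q$, and they fall short in the same way.
\end{enumerate}

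The missing ingredient is the self-improving bootstrap used in the paper.

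\begin{enumerate}
\item Assume $Q\prec\lambda$ for a deterministic $\lambda\in[\Xi,n^C]$.
\item Then the term above is $\prec\Theta^3\lambda\le\Xi\lambda$, and the $R^{i\mu}$ part of $\partial_{i\mu}Q$ contributes $\prec\Theta^6$.
\item Each higher-order term is $\prec(\Xi\lambda)^{(w+1)/2}$, so the moment argument gives $Q\prec(\Xi\lambda)^{1/2}$.
\item Iterate $k$ times, for fixed but arbitrarily large $k$, starting from $\lambda_0=(1+\phi)(q+\Theta)^2$, which is implied by part (i) and dominates $\Xi$. This gives $Q\prec\lambda_0^{2^{-k}}\Xi^{1-2^{-k}}$, and hence $Q\prec\Xi$.
\end{enumerate}

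Without this iteration, or an equivalent device that treats the $Q$ inside $\partial_{i\mu}Q$ as an additional moment factor, your argument does not reach the stated bound $(1+\phi)^4(q+\Theta)^3$.
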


We emphasize that the bound in (ii) for $l=0$ is stronger than the $l=0$ case of the bound in (i). As observed in \cite{isotropic}, its proof relies on a crucial cancellation that is not present for $l>0$. In fact, the special form of the derivatives of $Q$ allows us to derive self-improving bounds on $Q$, which can then be iterated to obtain the optimal bound stated in Lemma \ref{qlemma} (ii). We postpone the proof of Lemma \ref{qlemma} to the next subsection. 
The rest of this subsection is devoted to proving Lemma \ref{task2} based on Lemma \ref{qlemma}.

\begin{proof}[Proof of Lemma \ref{task2} (i)]
As in the proof of Lemma \ref{task1} (i), a crucial cancellation occurs here. A direct computation shows that
$$\frac{z^{1/2}}{n(p+n)}\sum_{i,\mu}\partial_{i\mu}(\AA\Ga G\BB)_{i\mu}+zm_2(z)\underline{\Ga G\AA^2}+\frac{z}{n}\underline{\Ga G\BB^2 G\AA^2}=0,$$
which yields
\begin{equation}\label{Y_1aim}
\begin{aligned}
&~\expect\qB{\pB{zm_2(z)\underline{\Ga G\AA^2}+\frac{z}{n}\underline{\Ga G\BB^2 G\AA^2}}\underline{\Ga M(G)}^{e-1}\overline{\underline{\Ga M(G)}}^{e}}+Y_1\\
=&~\frac{z^{1/2}}{n(p+n)}\sum_{i,\mu}\expect\qB{(\AA\Ga G\BB)_{i\mu}\partial_{i\mu}\left(\underline{\Ga M(G)}^{e-1}\overline{\underline{\Ga M(G)}}^{e}\right)}.
\end{aligned}
\end{equation}
By Hölder's inequality, it suffices to prove that
$$\frac{z^{1/2}}{n(p+n)}\sum_{i,\mu}\left|(\AA\Ga G\BB)_{i\mu}\partial_{i\mu}\underline{\Ga M(G)}\right|\prec\Phi^4.$$
By (\ref{partialbmg}), this estimate follows from
\begin{align*}
  n^{-3}\sum_{i,\mu}|(\AA\Ga G\BB)_{i\mu}|^2\prec n^{-1}\Theta^2\prec\Theta^4  ,
\end{align*}
where we applied \eqref{ward2} in the first step, and from
$$n^{-2}\sum_{i,\mu}\left|(\AA\Ga G\BB)_{i\mu}\pb{(\BB Q\AA)_{\mu i}+(\AA Q\BB)_{i\mu}}\right|\prec n^{-2}\Phi^3\sum_{i,\mu}|(\AA\Ga G\BB)_{i\mu}|\prec\Phi^3\Theta\prec\Phi^4,$$
where we used Lemma \ref{qlemma} (ii) in the first step, and the Cauchy-Schwarz inequality together with \eqref{ward2} in the second step.
\end{proof}

\begin{proof}[Proof of Lemma \ref{task2} (ii)]
For $k\ge 2$, we have
\begin{equation}
\label{yk}
\begin{aligned}
Y_{k}\lesssim n^{-5/2}q^{k-2} \sum_{i,\mu}\sum_{r,s,t\ge 0,r+s+t=k}\expect\left|\partial_{i\mu}^{r}(\AA\Ga G\BB)_{i\mu}\cdot \partial_{i\mu}^{s}\underline{\Ga M(G)}^{e-1}\cdot \partial_{i\mu}^{t}\overline{\underline{\Ga M(G)}}^{e}\right|.
\end{aligned}
\end{equation}
It suffices to control each term with fixed $r,s,t$. Since the complex conjugates play no role in the following analysis, we again drop them to simplify notation and estimate
\begin{equation}
\begin{aligned}
n^{-5/2}q^{k-2}\sum_{i,\mu}\expect\left |\partial_{i\mu}^{r} (\AA\Ga G\BB)_{i\mu} \cdot  \partial_{i\mu}^{k-r}\underline{\Ga M(G)}^{2e-1}\right |
\end{aligned}
\nonumber
\end{equation}
for $r=0,1,\cdots,k$. Each such quantity can be further expanded into a finite sum of terms
\begin{equation}\nonumber
\begin{aligned}
n^{-5/2}q^{k-2}\sum_{i,\mu}\expect\left |\partial_{i\mu}^{r} (\AA\Ga G\BB)_{i\mu} \pB{\prod_{m=1}^{w} \partial_{i\mu}^{l_m}\underline{\Ga M(G)}}\underline{\Ga M(G)}^{2e-1-w}\right |,
\end{aligned}
\end{equation}
where the sum ranges over $w = 0,1,\cdots,(k-r) \wedge (2e-1)$ and integers $ l_{1},\cdots, l_{w} > 0$ with $l_{1} + \cdots + l_{w} = k-r$.
We aim to bound each such term by $\Phi^{2w+2}\mathcal{N}^{2e-w-1}.$ By Hölder's inequality, it is enough to show
\begin{equation}
\label{ykaim}
\begin{aligned}
 n^{-5/2}q^{k-2} \sum_{i,\mu}\expect\left |\partial_{i\mu}^{r} (\AA\Ga G\BB)_{i\mu} \pB{\prod_{m=1}^{w} \partial_{i\mu}^{l_m}\underline{\Ga M(G)}}\right |\prec\Phi^{2w+2}.\\
\end{aligned}
\end{equation}

For any fixed $t \ge 1$, by (\ref{partialbmg}) we have
$$\partial_{i\mu}^{t}\underline{\Ga M(G)}
=\partial_{i\mu}^{t-1}\qa{\frac{z^{1/2}}{p+n}(\AA\Ga G\BB)_{i\mu}-z^{1/2}\pb{(\BB Q\AA)_{\mu i}+(\AA Q\BB)_{i\mu}}}.$$
Bounding the partial derivatives of $G$ using \eqref{Gderivative} and Lemma \ref{multipleG}, and controlling the derivatives of $Q$ using Lemma \ref{qlemma} (i), we obtain
\begin{equation}
\label{TrGaMGder0}\left|\partial_{i\mu}^{t-1}(\AA\Ga G\BB)_{i\mu}\right|\prec(1+\phi)^{t},\quad \left|\partial_{i\mu}^{t-1}\pb{(\BB Q\AA)_{\mu i}+(\AA Q\BB)_{i\mu}}\right|\prec (1+\phi)^{t+1}\Theta^{2}.\end{equation}
Together with $(p+n)^{-1}\prec \Theta^2$, these estimates yield
\begin{equation}
\label{TrGaMGder}
\begin{aligned}
\left|\partial_{i\mu}^{t}\underline{\Ga M(G)}\right|\prec (1+\phi)^{t+1}\Theta^{2}.
\end{aligned}
\end{equation}
Substituting \eqref{TrGaMGder} and the first estimate in \eqref{TrGaMGder0} (with $t-1$ replaced by $r$) into (\ref{ykaim}), we conclude  
\begin{equation}
\nonumber
\begin{aligned}
 n^{-5/2}q^{k-2} \sum_{i,\mu}\expect\left |\partial_{i\mu}^{r} (\AA\Ga G\BB)_{i\mu} \pB{\prod_{m=1}^{w} \partial_{i\mu}^{l_m}\underline{\Ga M(G)}}\right | 
\prec&~ n^{-5/2}q^{k-2}n^2(1+\phi)^{r+1}\cdot (1+\phi)^{k-r+w}\Theta^{2w}\\
\prec&~ \pb{(1+\phi)^{3}q}^{k-1}\pb{(1+\phi)^3\Theta}^{2w}\prec\Phi^{2w+2}
\end{aligned}
\end{equation}
when $k \ge 3$, where in the second step we used $n^{-1/2}\leq q$, and in the last step we used  $(1+\phi)^3q\leq 1$. It remains to prove \eqref{ykaim} for $k=2$. Since $w\le k-r$, $w$ can only be 0, 1, or 2. We treat these cases separately.

\medskip
\noindent{\bf Case 1}: $w=0$. 
In this case, it remains to control
$n^{-5/2}\sum_{i,\mu}\left |\partial_{i\mu}^{2} (\AA\Ga G\BB)_{i\mu}\right|.$ 
By (\ref{Gderivative}), $\partial_{i\mu}^{2} (\AA\Ga G\BB)_{i\mu}$ can be written as a sum of eight terms, each of which is a product of three entries of the form $(\Ga_1G\Ga_2)_{\xx\yy}$, where
$\Ga_1,\Ga_2\in\{\AA\Ga,\AA,\BB\}$ and $ \xx,\yy\in\{i,\mu\}.$
Moreover, at least one of the three entries must satisfy $\{\xx,\yy\}=\{i,\mu\}$. Applying the Cauchy-Schwarz inequality and \eqref{ward2} to this entry, and applying Lemma \ref{multipleG} to the remaining two entries, we obtain
$$ n^{-5/2} \sum_{i,\mu}\left |\partial_{i\mu}^{2} (\AA\Ga G\BB)_{i\mu}\right|\prec n^{-1/2}(1+\phi)^2\Theta\prec\Phi^2.$$

\medskip
\noindent{\bf Case 2}: $w=1$. 
This case can be divided further into two subcases: $l_{1}=1$ and $l_{1}=2$. When $l_{1}=1$, using \eqref{partialbmg}, we estimate the left-hand side of (\ref{ykaim}) as follows:
\begin{equation}
\nonumber
\begin{aligned}
 n^{-5/2} \sum_{i,\mu}\left |\partial_{i\mu}(\AA\Ga G\BB)_{i\mu}\partial_{i\mu}\underline{\Ga M(G)}\right| 
&\prec n^{-5/2}(1+\phi)^2\sum_{i,\mu}\left |\frac{z^{1/2}}{p+n}(\AA\Ga G\BB)_{i\mu}-z^{1/2}\pb{(\BB Q\AA)_{\mu i}+(\AA Q\BB)_{i\mu}}\right|\\
&\prec n^{-5/2}(1+\phi)^2\pb{n\Theta+n^2(1+\phi)^4(q+\Theta)^3}\prec\Phi^4.
\end{aligned}
\end{equation}
Here, in the first step, we also used the rough bound  $\partial_{i\mu}(\AA\Ga G\BB)_{i\mu}\prec (1+\phi)^2$ from \eqref{Gderivative} and Lemma \ref{multipleG}; in the second step, we used \eqref{ward2} and Lemma \ref{qlemma} (ii). 
When $l_{1}=2$, using \eqref{TrGaMGder}, we estimate the left-hand side of (\ref{ykaim}) as follows:
\begin{equation}
\nonumber
\begin{aligned}
 n^{-5/2} \sum_{i,\mu}\left |(\AA\Ga G\BB)_{i\mu}\partial_{i\mu}^2\underline{\Ga M(G)}\right| 
&\prec n^{-5/2}(1+\phi)^3\Theta^2\sum_{i,\mu}|(\AA\Ga G\BB)_{i\mu}|\\
&\prec n^{-5/2}(1+\phi)^3\Theta^2\cdot n^{2}\Theta
\prec \Phi^{4}.
\end{aligned}
\end{equation}

\medskip
\noindent{\bf Case 3}: $w=2$. 
In this case, we must have $l_1=l_2=1$. Using \eqref{partialbmg}, we estimate the left-hand side of (\ref{ykaim}) as follows:
\begin{align*}
 n^{-5/2} \sum_{i,\mu}\left |(\AA\Ga G\BB)_{i\mu}\pb{\partial_{i\mu}\underline{\Ga M(G)}}^2\right| 
&\prec n^{-5/2}\sum_{i,\mu}|(\AA\Ga G\BB)_{i\mu}|\pB{n^{-2}|(\AA\Ga G\BB)_{i\mu}|^2+\left|\pb{(\BB Q\AA)_{\mu i}+(\AA Q\BB)_{i\mu}}\right|^2}\\
&\prec n^{-9/2}\sum_{i,\mu}|(\AA\Ga G\BB)_{i\mu}|^3+n^{-5/2}(1+\phi)^8(q+\Theta)^6\sum_{i,\mu}|(\AA\Ga G\BB)_{i\mu}|\\
&\prec n^{-5/2}(1+\phi)^2\Theta+n^{-1/2}(1+\phi)^9(q+\Theta)^6
\prec\Phi^6,
\end{align*}
where in the first step we used Lemma \ref{qlemma} (ii), and in the third step we used the Cauchy-Schwarz inequality, \eqref{ward2}, and Lemma \ref{multipleG}.
\end{proof}

\begin{proof}[Proof of Lemma \ref{task2} (iii)]
Using the definition (\ref{M(G)expansion}) and the identity (\ref{TrGderrule}), we derive the following alternative form of $\partial_{i\mu}\underline{\Ga M(G)}$:
\begin{equation}\label{partialbmg'}
\begin{aligned}
\partial_{i\mu}\underline{\Ga M(G)}=&\partial_{i\mu}\pB{\underline{\Ga}+z\underline{\Ga G}+zm_2(z)\underline{\Ga G\AA^2}+\frac{z}{n}\underline{\Ga G\BB^2 G\AA^2}}\\
=&-\frac{z^{3/2}}{p+n}\pb{(\BB G\hat F G\AA)_{\mu i}+(\AA G\hat F G\BB)_{i\mu}},
\end{aligned}
\end{equation}
where
$$\hat F:=\Ga+m_2(z)\AA^2\Ga+\frac{1}{n}\pb{\Tr(\Ga G\AA^2)\BB^2+\BB^2G\AA^2\Ga+\AA^2\Ga G\BB^2}.$$
Compared with (\ref{partialbmg}), the advantage of (\ref{partialbmg'}) is that it involves only entries of $G$, and no entries of $H$. This allows us to estimate \smash{$\partial_{i\mu}^r\underline{\Ga M(\wt G)}$}, and hence $\cal R'^{i\mu}_{\ell+1}$, in a way completely analogous to the proof of Lemma \ref{task1} (iii). We therefore omit the details.
\end{proof}

\subsection{Estimation of \texorpdfstring{$Q$}{Q}: Proof of Lemma \ref{qlemma}}

We fix arbitrary deterministic unit vectors $\uu,\vv\in\mathbf{S}$. 
We first collect several relevant quantities. By (\ref{f}), we can write
\begin{equation}\label{q}
\begin{aligned}
Q_{\uu\vv}=&(GFG)_{\uu\vv}=\frac{z^{1/2}}{p+n}\sum_{i,\mu}(G\BB)_{\uu\mu}(\AA\Ga G)_{i\vv}x_{i\mu}\\
&+\frac{z}{p+n} \pB{m_2(z)(G\AA^2\Ga G)+\frac{1}{n}\pb{\Tr(\Ga G\AA^2)G\BB^2 G+G\BB^2G\AA^2\Ga G+G\BB^2G\Ga^T\AA^2G}}_{\uu\vv},
\end{aligned}
\end{equation}
where the first term on the right-hand side can be rewritten as $\frac{1}{p+n}\pa{GH\Ga G}_{\uu\vv}$.
The derivative of $Q_{\uu\vv}$ is given by
\begin{equation}\label{partialq}
\begin{aligned}
\partial_{i\mu} Q_{\uu\vv} = -z^{1/2}\pb{(G\AA)_{\uu i}(\BB Q)_{\mu\vv}+(G\BB)_{\uu\mu}(\AA Q)_{i\vv}+(Q\AA)_{\uu i}(\BB G)_{\mu\vv}+(Q\BB)_{\uu\mu}(\AA G)_{i\vv}}+R_{\uu\vv}^{i\mu},
\end{aligned}
\end{equation}
where the term $R_{\uu\vv}^{i\mu}:=\pb{G(\partial_{i\mu}F)G}_{\uu\vv}$ is computed from the expression (\ref{f}) as follows:
\begin{align}
\partial_{i\mu} F_{\uu\vv}=&~\frac{z^{1/2}}{p+n}\BB_{\uu\mu}(\AA\Ga)_{i\vv}+\frac{1}{p+n}\partial_{i\mu}\pB{zm_2(z)(\AA^2\Ga)+\frac{z}{n}\pb{\Tr(\Ga G\AA^2)\BB^2+\BB^2G\AA^2\Ga+\BB^2G\Ga^T\AA^2}}_{\uu\vv}\nonumber\\
=&~\frac{z^{1/2}}{p+n}\BB_{\uu\mu}(\AA\Ga)_{i\vv}-\frac{z^{3/2}}{n(p+n)}\Bigl((\BB G\BB^2 G\AA)_{\mu i}(\AA^2\Ga)_{\uu\vv}+(\AA G\BB^2 G\BB)_{i\mu}(\AA^2\Ga)_{\uu\vv}\nonumber\\
&~+(\BB G\AA^2\Ga G\AA)_{\mu i}(\BB^2)_{\uu\vv}+(\AA G\AA^2\Ga G\BB)_{i\mu}(\BB^2)_{\uu\vv} +(\BB^2 G\BB)_{\uu\mu}(\AA G\AA^2\Ga)_{i\vv}\nonumber\\
&~+(\BB^2 G\AA)_{\uu i}(\BB G\AA^2\Ga)_{\mu\vv}+(\BB^2 G\BB)_{\uu\mu}(\AA G\Ga^T\AA^2)_{i\vv}+(\BB^2 G\AA)_{\uu i}(\BB G\Ga^T\AA^2)_{\mu\vv}\Bigr).\label{partialf}
\end{align}
Hence, $R_{\uu\vv}^{i\mu}$ can be expressed as
\begin{align}
&R_{\uu\vv}^{i\mu}= \frac{z^{1/2}}{p+n}(G\BB)_{\uu\mu}(\AA\Ga G)_{i\vv}-\frac{z^{3/2}}{n(p+n)}\Bigl((\BB G\BB^2 G\AA)_{\mu i}(G\AA^2\Ga G)_{\uu\vv}+(\AA G\BB^2 G\BB)_{i\mu}(G\AA^2\Ga G)_{\uu\vv}\nonumber\\
&\quad+(\BB G\AA^2\Ga G\AA)_{\mu i}(G\BB^2G)_{\uu\vv}+(\AA G\AA^2\Ga G\BB)_{i\mu}(G\BB^2G)_{\uu\vv} +(G\BB^2 G\BB)_{\uu\mu}(\AA G\AA^2\Ga G)_{i\vv}\nonumber\\
&\quad+(G\BB^2 G\AA)_{\uu i}(\BB G\AA^2\Ga G)_{\mu\vv}+(G\BB^2 G\BB)_{\uu\mu}(\AA G\Ga^T\AA^2G)_{i\vv}+(G\BB^2 G\AA)_{\uu i}(\BB G\Ga^T\AA^2G)_{\mu\vv}\Bigr).\label{r}
\end{align}

\begin{proof}[Proof of Lemma \ref{qlemma} (i)]
We prove the claim by induction on $l$, starting with the case $l=0$. The last four terms in \eqref{q} can be bounded directly by $\OO_\prec\pb{(1+\phi)\Theta^2}$ using Lemma \ref{multipleG}. 
For the first term in \eqref{q}, we have 
$$(p+n)^{-1}|(GH\Ga G)_{\uu\vv}|\prec n^{-1}\norm{H}\norm{G^*\uu}\norm{\Ga}\norm{G\vv}\prec  \Theta^2,$$
where in the second step we used \eqref{ward2} to control $\norm{G^*\uu}$ and $\norm{G\vv}$, and Lemma \ref{priori} to bound $\norm{H}\prec 1$. Therefore, we conclude that $Q\prec(1+\phi)\Theta^{2}$. 

The inductive step proceeds as follows. Suppose that $l\geq 1$ and $\partial_{i\mu}^{m}Q\prec(1+\phi)^{m+1}\Theta^{2}$ for $0\leq m\leq l-1$. By (\ref{partialq}), we have
$$\partial_{i\mu}^{l} Q_{\uu\vv} = -z^{1/2}\partial_{i\mu}^{l-1}\pb{(G\AA)_{\uu i}(\BB Q)_{\mu\vv}+(G\BB)_{\uu\mu}(\AA Q)_{i\vv}+(Q\AA)_{\uu i}(\BB G)_{\mu\vv}+(Q\BB)_{\uu\mu}(\AA G)_{i\vv}}+\partial_{i\mu}^{l-1}R_{\uu\vv}^{i\mu}.$$
The first term $\partial_{i\mu}^{l-1}((G\AA)_{\uu i}(\BB Q)_{\mu\vv})$ is estimated using the Leibniz rule and the inductive hypothesis:
\begin{equation}
\begin{aligned}
|\partial_{i\mu}^{l-1}((G\AA)_{\uu i}(\BB Q)_{\mu\vv})|&\leq\sum_{m=0}^{l-1}\binom{l-1}{m}|\partial_{i\mu}^{m}(G\AA)_{\uu i}||\partial_{i\mu}^{l-1-m}(\BB Q)_{\mu\vv}|\\
&\prec\sum_{m=0}^{l-1} (1+\phi)^{m+1}\cdot (1+\phi)^{l-m}\Theta^{2}\prec(1+\phi)^{l+1}\Theta^{2}.
\nonumber
\end{aligned}
\end{equation}
The second, third, and fourth terms are estimated analogously. It remains to show that
\begin{equation}
\partial_{i\mu}^{l-1}R_{\uu\vv}^{i\mu}\prec(1+\phi)^{l+1}\Theta^{2}.\label{eq:der_R_l}   
\end{equation}
According to (\ref{r}), we need to estimate the derivatives of nine terms. For the first term in \eqref{r}, \eqref{Gderivative} and Lemma \ref{multipleG} give
$$\frac{z^{1/2}}{p+n}\partial_{i\mu}^{l-1}\pb{(G\BB)_{\uu\mu}(\AA\Ga G)_{i\vv}}\prec n^{-1}(1+\phi)^{l+1}\prec(1+\phi)^{l+1}\Theta^{2}.$$
The second term 
$$-\frac{z^{3/2}}{n(p+n)}\partial_{i\mu}^{l-1}\pb{(\BB G\BB^2 G\AA)_{\mu i}(G\AA^2\Ga G)_{\uu\vv}}$$
can be written as a scalar multiple of a sum of $\frac{2^{l-2}}{3}(l+2)!$ terms, each of the form $$\frac{1}{n(p+n)}(\Ga_1G\Ga_2G\Ga_3)_{\xx_1\yy_1}(\Ga_4G\Ga_5G\Ga_6)_{\xx_2\yy_2}(\Ga_7G\Ga_8)_{\xx_3\yy_3}\cdots(\Ga_{2l+3}G\Ga_{2l+4})_{\xx_{l+1}\yy_{l+1}},$$
where $\Ga_1,\cdots,\Ga_{2l+4}\in\{\AA^2,\BB^2,\AA,\BB,I_{p+n}\}$ and $\xx_1,\cdots,\yy_{l+1}\in\{\uu,\vv,i,\mu\}.$
Thus, applying Lemma \ref{multipleG} again, we obtain
$$\left|\frac{z^{3/2}}{n(p+n)}\partial_{i\mu}^{l-1}\pb{\BB G\BB^2 G\AA)_{\mu i}(G\AA^2\Ga G)_{\uu\vv}}\right|\prec(1+\phi)^{l-1}\Theta^4\le (1+\phi)^{l+1}\Theta^2$$
The derivatives of the remaining seven terms in \eqref{r} are estimated analogously. This proves \eqref{eq:der_R_l} and completes the inductive step.
\end{proof}

\begin{proof}[Proof of Lemma \ref{qlemma} (ii)]
For fixed $e\in \N_+$ with $e\ge 4$ and deterministic unit vectors $\uu,\vv\in \mathbf S$, set
$$\mathcal{L}:=\expect[|Q_{\uu\vv}|^{2e}]^{\frac{1}{2e}}.$$
Using (\ref{q}) and applying the cumulant expansion from Lemma \ref{lem:cumulant_expansion}, we may write
\begin{align}
\mathcal{L}^{2e}&= \expect  \pa{\cal L_0Q_{\uu\vv}^{e-1}\overline{Q}_{\uu\vv}^{e}} +\sum_{k=1}^\ell Z_k+\sum_{i,\mu}\cal R''^{i\mu}_{\ell+1}.\label{aims3}
\end{align}
where $\cal L_0$ is defined by
\[\cal L_0:= {\frac{z}{p+n}\pB{m_2(z)(G\AA^2\Ga G)+\frac{1}{n}\pb{\Tr(\Ga G\AA^2)G\BB^2 G+G\BB^2G\AA^2\Ga G+G\BB^2G\Ga^T\AA^2G}}_{\uu\vv}}, \]
$Z_k$ is defined for each $k\in \N_+$ by
\begin{equation}
\nonumber
\begin{aligned}
 Z_{k}:=\frac{z^{1/2}}{p+n}\sum_{i,\mu}\frac{\kappa_{k+1}(x_{i\mu})}{k!}\expect\qB{\partial_{i\mu}^{k}\left( (G\BB)_{\uu\mu}(\AA\Ga G)_{i\vv}Q_{\uu\vv}^{e-1}\overline{Q}_{\uu\vv}^{e} \right)},
 \end{aligned}
 \end{equation}
$\ell$ is a positive integer to be chosen later in the proof, and the remainder terms $\cal R''^{i\mu}_{\ell+1}$ are defined analogously to \smash{$\cal R^{i\mu}_{\ell+1}$} in (\ref{remainder^imu}). The proof of $Q_{\uu\vv}\prec(1+\phi)^4(q+\Theta)^3$ is reduced to the following lemma.

\begin{lemma}\label{task3}
Let $\lambda\geq (1+\phi)^4(q+\Theta)^3=:\Xi$ be any deterministic parameter satisfying $Q\prec\lambda$ and $\lambda\le n^C$ for a constant $C>0$. We have the following estimates.
\begin{enumerate}
    \item 
    $\expect  \pb{\cal L_0Q_{\uu\vv}^{e-1}\overline{Q}_{\uu\vv}^{e}}+Z_1\prec\Xi\lambda\cal L^{2e-2}.$
    \item For any fixed $k \geq 2$, $Z_k\prec\sum_{s=1}^{2e}(\Xi\lambda)^{s/2}\cal 
    L^{2e-s}$.
    \item For any fixed $D>0$, there exists a constant $\ell \equiv \ell(D) \geq 1$  such that $\cal R''^{i\mu}_{\ell+1}=\OO(n^{-D})$ uniformly in $i,\mu$.
\end{enumerate}
\end{lemma}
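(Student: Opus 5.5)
The proof runs in parallel with Lemmas \ref{task1} and \ref{task2}, with $Q_{\uu\vv}$ now playing the role of $M(G)_{\uu\vh}$. The new ingredient is the a priori parameter $\lambda$, which is used to gain smallness whenever an undifferentiated $Q$ factor appears.

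\textbf{Part (i): the Gaussian term.} Expanding $Z_1$ by the Leibniz rule, the terms where $\partial_{i\mu}$ hits $(G\BB)_{\uu\mu}(\AA\Ga G)_{i\vv}$ should cancel $\mathcal L_0$ exactly. This is the same type of identity as in Lemma \ref{task1} (i): summing $\frac{z^{1/2}}{n(p+n)}\sum_{i,\mu}\partial_{i\mu}\bigl[(G\BB)_{\uu\mu}(\AA\Ga G)_{i\vv}\bigr]$ with \eqref{Gderivative} produces the terms
\[
m_2(G\AA^2\Ga G),\quad \tfrac1n\Tr(\Ga G\AA^2)\,G\BB^2G,\quad \tfrac1n G\BB^2G\AA^2\Ga G,\quad \tfrac1n G\BB^2G\Ga^T\AA^2G,
\]
each with a minus sign. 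This is precisely why $F$ was chosen in the form \eqref{f}, and I will verify the identity directly. What remains is
\[
\frac{z^{1/2}}{n(p+n)}\sum_{i,\mu}\expect\Bigl[(G\BB)_{\uu\mu}(\AA\Ga G)_{i\vv}\,\partial_{i\mu}\bigl(Q^{e-1}\overline Q^{e}\bigr)\Bigr].
\]
Substituting \eqref{partialq}, the four main terms contain one factor of $Q$, bounded by $\lambda$, and one extra $G$ entry with index $i$ or $\mu$. For instance,
\[
n^{-2}\sum_{i,\mu}\bigl|(G\BB)_{\uu\mu}(\AA\Ga G)_{i\vv}(G\AA)_{\uu i}(\BB Q)_{\mu\vv}\bigr|\prec n^{-2}\cdot n\Theta^2\cdot n^{1/2}\Theta\cdot n^{1/2}\lambda=\Theta^3\lambda.
\]
This uses Cauchy--Schwarz with \eqref{ward2} on the $\mu$-sum; since $(\BB Q)_{\mu\vv}=(\BB G F G)_{\mu\vv}$, Ward can also be applied to $\sum_\mu|(\BB Q)_{\mu\vv}|^2$. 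The $R^{i\mu}$ terms from \eqref{r} are bounded via Lemma \ref{multipleG} by roughly $\Theta^2\cdot n^{-1}\cdot n\Theta^2$, which is $\le\Xi\lambda$ because $\lambda\ge\Xi$. Hölder's inequality then gives $\Xi\lambda\mathcal L^{2e-2}$.

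\textbf{Part (ii): higher cumulants.} As in Lemma \ref{task1} (ii), write the term as a sum over $r+\sum l_m=k$ with $w$ factors $\partial^{l_m}Q$, using $|\kappa_{k+1}|\prec n^{-3/2}q^{k-2}$. The goal is
\[
n^{-5/2}q^{k-2}\sum_{i,\mu}\Bigl|\partial^r\bigl[(G\BB)_{\uu\mu}(\AA\Ga G)_{i\vv}\bigr]\prod_m\partial^{l_m}Q\Bigr|\prec(\Xi\lambda)^{(w+1)/2}.
\]
Each derivative factor is bounded by Lemma \ref{qlemma} (i), giving $(1+\phi)^{l+1}\Theta^2$. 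By \eqref{partialq}, a single derivative $\partial Q$ is refined to (an entry with an $i$ or $\mu$ index)$\times\lambda$ plus $R$. The prefactor sum is bounded by Ward, giving $n^{2}\Theta^2$ when the product of two entries survives undifferentiated. When $k\ge3$, the powers $q^{k-2}\Theta^{2w}$ combined with $n^{-1/2}$ already suffice, since $\Theta^2\le\Xi^{2/3}$ and $\sqrt{\Xi\lambda}\ge\Xi\ge$ these quantities. For $k=2$, the cases $w\in\{0,1,2\}$ are handled by hand as in the earlier lemmas. In these cases I use the geometric-mean form $(\Xi\lambda)^{1/2}$: a factor $\lambda^{1/2}$ is extracted whenever a $Q$ appears in $\partial Q$, and $\Theta^2\lesssim\Xi^{1/2}\cdot(\ldots)$ is used otherwise.

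\textbf{Part (iii) and the main obstacle.} Part (iii) repeats Lemma \ref{task1} (iii). To remove the $H$ in $F$, I use the form $Q=(p+n)^{-1}G(H\Ga+\dots)G$ together with $GH=I+zG$, so that only $G$ entries appear, and then apply the same resolvent expansion and truncation at $s=q^{1/2}$. The main obstacle I expect is the bookkeeping in (ii) for $k=2$: verifying in every case that the extracted gain is at least $(\Xi\lambda)^{(w+1)/2}$ and not merely $\Theta^{2}$-type. If a case fails, I would fall back on bounding one $Q$ factor by $\lambda$ and the other pieces by $\Xi$.

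\textbf{Concluding the self-improvement.} Once (i)--(iii) are established, Young's inequality gives $\mathcal L\prec\sqrt{\Xi\lambda}$, hence $Q\prec\sqrt{\Xi\lambda}$. Starting from $\lambda=(1+\phi)\Theta^2$ (Lemma \ref{qlemma} (i)) and iterating finitely many times brings $\lambda$ down to $n^{\epsilon}\Xi$, which proves Lemma \ref{qlemma} (ii).
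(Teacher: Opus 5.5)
Parts (i) and (iii), and the final iteration, follow the paper: the cancellation of $\mathcal L_0$ against the Leibniz terms of $Z_1$, then Hölder, and the $H$-free rewriting via $GH=I+zG$.

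One step in (i) needs tightening. As you wrote it, the $R^{i\mu}$ contribution is of size $\Theta^2\cdot n^{-1}\cdot n\Theta^2=\Theta^4$. That is not enough, because $\Theta^4\le\Xi\lambda$ fails once $\lambda$ is close to $\Xi\approx\Theta^3$, and this is exactly the regime the iteration must reach. You need $\OO_\prec(\Theta^6)$. To get it, use \eqref{Rrough}. Handle the first term of \eqref{r} by square-summing, $n^{-3}\sum_{i,\mu}|(G\BB)_{\uu\mu}|^2|(\AA\Ga G)_{i\vv}|^2\prec n^{-1}\Theta^4\le\Theta^6$. Multiply the $\Theta^4$ part by the $\Theta^2$ from the Ward-summed prefactor.

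The genuine gap is the case split in (ii). You split by $k$, as in Lemma \ref{task2}(ii), and claim the crude bounds suffice for $k\ge3$. These crude bounds are $\partial^l_{i\mu}Q\prec(1+\phi)^{l+1}\Theta^2$ and $\sum_{i,\mu}|\partial^r_{i\mu}((G\BB)_{\uu\mu}(\AA\Ga G)_{i\vv})|\prec n^2(1+\phi)^r\Theta^2$. Up to $(1+\phi)$ factors they give $n^{-1/2}q^{k-2}\Theta^{2w+2}\le(q+\Theta)^{k+2w+1}$. For $\lambda$ near $\Xi$ the target $(\Xi\lambda)^{(w+1)/2}$ is of order $(q+\Theta)^{3w+3}$, so the crude bounds work only when $w\le k-2$. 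For example, with $q=\Theta=n^{-1/2}$ and $k=w=3$, the crude bound is $n^{-5}$ against a target of $n^{-6}$.

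The reason is that Lemma \ref{task2} and this lemma differ in an essential way. In Lemma \ref{task2}, each derivative factor $\Theta^2$ is already below its per-factor target $\Phi^2$. Here, each crude $\partial^lQ\sim\Theta^2$ exceeds its per-factor target $\sqrt{\Xi\lambda}$, which can be as small as $\Theta^3$, by one power of $q+\Theta$. The surplus $n^{-1/2}q^{k-2}$ can absorb only $k-2$ such losses.

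So, as in Lemma \ref{task1}(ii), you must treat $w\ge k-1$ (which forces $r\in\{0,1\}$) separately for every $k$ with $k-1\le 2e-1$, not only for $k=2$. In the three cases $(r,w)=(0,k),(1,k-1),(0,k-1)$, bound all but one derivative factor crudely. Bound the remaining one, together with the prefactor, by a refined summed estimate:
\begin{itemize}
\item for $(0,k)$, use \eqref{partialqbound1};
\item for the other two cases, use $\sum_{i,\mu}|\partial_{i\mu}((G\BB)_{\uu\mu}(\AA\Ga G)_{i\vv})\,\partial_{i\mu}Q_{\uu\vv}|\prec n^{5/2}(1+\phi)^2\Theta^3\lambda$ and the same bound for $\sum_{i,\mu}|(G\BB)_{\uu\mu}(\AA\Ga G)_{i\vv}\,\partial^2_{i\mu}Q_{\uu\vv}|$.
\end{itemize}
The last estimate needs a refined bound on $\partial^2_{i\mu}Q$, which your sketch does not supply, since it only refines a single $\partial Q$. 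To get it, differentiate \eqref{partialq} once more and use $Q\prec\lambda$, $\partial_{i\mu}Q\prec(1+\phi)^2\Theta^2$, and $\partial_{i\mu}R^{i\mu}_{\uu\vv}\prec n^{-1}(1+\phi)^2(|(G\BB)_{\uu\mu}|+|(\AA\Ga G)_{i\vv}|)+(1+\phi)\Theta^4$. Then every term carries a $G$-entry joining $\uu$ or $\vv$ to $i$ or $\mu$, which can be Ward-summed.

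With these estimates, case $(0,k)$ gives $n^{-5/2}q^{k-2}((1+\phi)^2\Theta^2)^{k-1}n^2\Xi\lambda\le\Xi^{k}\lambda\le(\Xi\lambda)^{(k+1)/2}$, and the other two cases go the same way. Your fallback of bounding ``the other pieces by $\Xi$'' does not close the gap, since those pieces are only $\OO_\prec(\Theta^2)$.
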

Indeed, Lemma \ref{task3} implies $\cal L\prec(\Xi\lambda)^{1/2}$, by the same argument used around \eqref{eq_pre_young} to derive $\cal M\prec\Phi$ from Lemma \ref{task1}. This gives the self-improving bound
$$Q \prec \lambda \Longrightarrow  Q\prec (\Xi\lambda)^{1/2}$$
for any $\lambda \ge\Xi$. Iterating this implication $k$ times, starting from the initial bound $Q\prec(1+\phi)(q+\Theta)^2$ given by Lemma \ref{qlemma} (i), yields
$$Q \prec \pb{(1+\phi)(q+\Theta)^2}^{\frac{1}{2^k}} \Xi^{1 - \frac{1}{2^k}}.$$
Since $k$ is arbitrary, we obtain $Q\prec \Xi = (1+\phi)^4(q+\Theta)^3$.
\end{proof}

The rest of this subsection is devoted to the proof of Lemma \ref{task3}.

\begin{proof}[Proof of Lemma \ref{task3} (i)]
As in the proof of Lemma \ref{task1} (i), a crucial cancellation occurs here. A direct computation shows that
\begin{equation}\nonumber
\begin{aligned}
&\frac{z^{1/2}}{n(p+n)}\sum_{i,\mu}\partial_{i\mu}\pb{(G\BB)_{\uu\mu}(\AA\Ga G)_{i\vv}}+ \cal L_0=0,
\end{aligned}
\end{equation}
which yields
\begin{equation}\label{Z_1aim}
\begin{aligned}
&\expect\pB{\cal L_0 Q_{\uu\vv}^{e-1}\overline{Q}_{\uu\vv}^{e}}+Z_1 
=\frac{z^{1/2}}{n(p+n)}\sum_{i,\mu}\expect\qB{(G\BB)_{\uu\mu}(\AA\Ga G)_{i\vv}\partial_{i\mu}\left( Q_{\uu\vv}^{e-1}\overline{Q}_{\uu\vv}^{e} \right)}.
\end{aligned}
\end{equation}
By Hölder's inequality, it suffices to prove that
\begin{equation}\label{partialqbound1}
\begin{aligned}
n^{-2}\sum_{i,\mu}\left|(G\BB)_{\uu\mu}(\AA\Ga G)_{i\vv}(\partial_{i\mu}Q_{\uu\vv})\right|\prec\Xi\lambda.
\end{aligned}
\end{equation}
According to (\ref{partialq}), there are five terms to estimate. The contribution from the first term in \eqref{partialq} can be estimated using the assumption $Q\prec \lambda$, the Cauchy-Schwarz inequality, and \eqref{ward2}:
$$n^{-2}\sum_{i,\mu}\left|(G\BB)_{\uu\mu}(\AA\Ga G)_{i\vv}(G\AA)_{\uu i}(\BB Q)_{\mu\vv}\right|
\prec n^{-2}\lambda\pB{\sum_i\left|(\AA\Ga G)_{i\vv}(G\AA)_{\uu i}\right|}\pB{\sum_\mu\left|(G\BB)_{\uu\mu}\right|}
\prec\Theta^{3}\lambda\prec\Xi\lambda.$$
The contributions from the second, third, and fourth terms in \eqref{partialq} can be estimated analogously. It remains to control $n^{-2}\sum_{i,\mu}\left|(G\BB)_{\uu\mu}(\AA\Ga G)_{i\vv}R_{\uu\vv}^{i\mu}\right|$.
By (\ref{r}) and Lemma \ref{multipleG}, we have
\begin{equation}\label{Rrough}
\begin{aligned}
\left|R_{\uu\vv}^{i\mu}\right|\prec n^{-1}\left|(G\BB)_{\uu\mu}(\AA\Ga G)_{i\vv}\right|+\Theta^4.
\end{aligned}
\end{equation}
Combining this bound with the Cauchy-Schwarz inequality and \eqref{ward2}, we obtain
\begin{align*}
 n^{-2}\sum_{i,\mu}\left|(G\BB)_{\uu\mu}(\AA\Ga G)_{i\vv}R_{\uu\vv}^{i\mu}\right|
&\prec n^{-3}\sum_{i,\mu}\left|(G\BB)_{\uu\mu}^2(\AA\Ga G)_{i\vv}^2\right|+n^{-2}\Theta^4\sum_{i,\mu}\left|(G\BB)_{\uu\mu}(\AA\Ga G)_{i\vv}\right|\\
&\prec\Theta^6\prec\Xi\lambda.
\end{align*}
This completes the proof of \eqref{partialqbound1}.
\end{proof}

\begin{proof}[Proof of Lemma \ref{task3} (ii)]
As in the estimates of (\ref{xk}) and (\ref{yk}), we drop the complex conjugates, expand the partial derivatives, and use
$\kappa_{k+1}(x_{i\mu}) = \OO(n^{-3/2}q^{k-2})$. Thus, we only need to estimate 
$$ n^{-5/2}q^{k-2} \sum_{i,\mu} \expect\left|\partial_{i\mu}^{r}\pb{(G\BB)_{\uu\mu}(\AA\Ga G)_{i\vv}}\partial_{i\mu}^{k-r}Q_{\uu\vv}^{2e-1}\right|$$ 
for $r=0,1,\cdots,k$. Each such quantity can be further expanded into a sum of terms
\begin{equation}
\label{aimlast}
\begin{aligned}
 n^{-5/2}q^{k-2} \sum_{i,\mu} \expect\left|\partial_{i\mu}^{r}\pb{(G\BB)_{\uu\mu}(\AA\Ga G)_{i\vv}}\pB{\prod_{m=1}^{w} \partial_{i\mu}^{l_m}Q_{\uu\vv}}Q_{\uu\vv}^{2e-1-w}\right|
\end{aligned}
\end{equation}
where the sum ranges over $w = 0,1,\cdots,(k-r) \wedge (2e-1)$ and integers $ l_{1},\cdots, l_{w} > 0$ with $l_{1} + \cdots + l_{w} = k-r$. 
We aim to bound (\ref{aimlast}) by $(\Xi\lambda)^{\frac{w+1}{2}}\mathcal{L}^{2e-w-1}.$  By Hölder's inequality, it is enough to show
\begin{equation}
\label{zkaim}
\begin{aligned}
n^{-5/2}q^{k-2} \sum_{i,\mu} \expect\left|\partial_{i\mu}^{r}\pb{(G\BB)_{\uu\mu}(\AA\Ga G)_{i\vv}}\pB{\prod_{m=1}^{w} \partial_{i\mu}^{l_m}Q_{\uu\vv}}\right|\prec(\Xi\lambda)^{\frac{w+1}{2}}.\\
\end{aligned}
\end{equation}

By Lemma \ref{qlemma} (i), established above, we have
$$\left|\partial_{i\mu}^{l_m}Q_{\uu\vv}\right| \prec (1+\phi)^{l_{m}+1}\Theta^2.$$
Moreover, by (\ref{Gderivative}), $\partial_{i\mu}^{r}\pb{(G\BB)_{\uu\mu}(\AA\Ga G)_{i\vv}}$ can be written as a scalar multiple of a sum of $2^r(r+1)!$ terms, each of which is a product of $r+2$ entries of the form $(\Ga_1G\Ga_2)_{\xx\yy}$, where $\Ga_1,\Ga_2\in\{\AA\Ga,\AA,\BB,I_{p+n}\}$ and $\xx,\yy\in\{\uu,\vv,i,\mu\}$.
Among these entries, one is either $(G\BB)_{\uu\mu}$ or $(G\AA)_{\uu i}$, and another is one of $(\AA\Ga G)_{i\vv}$, $(\AA G)_{i\vv}$, or $(\BB G)_{\mu\vv}$. Applying the Cauchy-Schwarz inequality and \eqref{ward2} to these two entries, and Lemma \ref{multipleG} to the remaining ones, we obtain
$$\sum_{i,\mu}\left|\partial_{i\mu}^{r}\pb{(G\BB)_{\uu\mu}(\AA\Ga G)_{i\vv}}\right|\prec n^2(1+\phi)^r\Theta^2.$$
Substituting the two estimates above into (\ref{zkaim}), we find that, when $w \le k-2$,
\begin{equation}
\nonumber
\begin{aligned}
&~n^{-5/2}q^{k-2} \sum_{i,\mu} \expect\left|\partial_{i\mu}^{r}\pb{(G\BB)_{\uu\mu}(\AA\Ga G)_{i\vv}}\pB{\prod_{m=1}^{w} \partial_{i\mu}^{l_m}Q_{\uu\vv}}\right|\\
\prec&~ n^{-5/2}q^{k-2}(1+\phi)^{k-r+w}\Theta^{2w}\cdot n^2(1+\phi)^r\Theta^2 \prec (1+\phi)^{1+w}\pb{(1+\phi)q}^{k-1}\Theta^{2w+2}\\
\prec&~ \pb{(1+\phi)(q+\Theta)}^{k+2w+1}
\leq \Xi^{w+1} \le (\Xi\lambda)^{\frac{w+1}{2}}.
\end{aligned}
\end{equation}
It remains to estimate the left-hand side of (\ref{zkaim}) for $w \ge k-1$, which we assume from now on. Since $w\le k-r$, we must have $r = 0 $ or $1$.
Thus, we are left with the three cases $(r,w) = (0,k)$, $(r,w) = (1,k-1)$, and $(r,w) = (0, k-1)$, which we treat separately.

\medskip
\noindent{\bf Case 1}: $(r,w) = (0,k)$. 
In this case, $l_{1} =l_{2} = ... = l_{k} = 1$. We estimate the left-hand side of (\ref{zkaim}) as follows:
\begin{equation}
\nonumber
\begin{aligned}
&~  n^{-5/2}q^{k-2} \sum_{i,\mu}\left|(G\BB)_{\uu\mu}(\AA\Ga G)_{i\vv}(\partial_{i\mu}Q_{\uu\vv})^k\right|\\
\prec&~ n^{-5/2}q^{k-2}(1+\phi)^{2k-2}\Theta^{2k-2} \sum_{i,\mu}\left|(G\BB)_{\uu\mu}(\AA\Ga G)_{i\vv}(\partial_{i\mu}Q_{\uu\vv})\right|\\
\prec&~ n^{-5/2}q^{k-2}(1+\phi)^{2k-2}\Theta^{2k-2}\cdot n^2\Xi\lambda \le (1+\phi)^{2k-2}(q+\Theta)^{3k-3}\Xi\lambda \le \Xi^{w}\lambda \le (\Xi\lambda)^{\frac{w+1}{2}},
\end{aligned}
\end{equation}
Here, in the first step we applied the rough bound $\partial_{i\mu}Q_{\uu\vv}\prec(1+\phi)^2\Theta^2$ from Lemma \ref{qlemma} (i) to $k-1$ copies of $\partial_{i\mu}Q_{\uu\vv}$, while in the second step we used (\ref{partialqbound1}).

\medskip
\noindent{\bf Case 2}: $(r,w) = (1,k-1)$. 
In this case, $l_{1} =l_{2} = \cdots= l_{k-1} = 1$. As in Case 1, we estimate the left-hand side of (\ref{zkaim}) as follows:
\begin{equation}
\nonumber
\begin{aligned}
& ~ n^{-5/2}q^{k-2}  \sum_{i,\mu}\left|\partial_{i\mu}\pb{(G\BB)_{\uu\mu}(\AA\Ga G)_{i\vv}}(\partial_{i\mu}Q_{\uu\vv})^{k-1}\right|\\
\prec&~ n^{-5/2}q^{k-2}(1+\phi)^{2k-4}\Theta^{2k-4}\sum_{i,\mu}\left|\partial_{i\mu}\pb{(G\BB)_{\uu\mu}(\AA\Ga G)_{i\vv}}(\partial_{i\mu}Q_{\uu\vv})\right|\\
\prec&~ n^{-5/2}q^{k-2}(1+\phi)^{2k-4}\Theta^{2k-4}\cdot n^{5/2}(1+\phi)^2\Theta^3\lambda \le  (1+\phi)^{2k-2}(q+\Theta)^{3k-3}\lambda \le \Xi^{w}\lambda \le (\Xi\lambda)^{\frac{w+1}{2}},
\end{aligned}
\end{equation} 
where in the second step we used the bound
\begin{equation}\label{eq:derivative_GBGAG_bound}
\sum_{i,\mu}\left|\partial_{i\mu}\pb{(G\BB)_{\uu\mu}(\AA\Ga G)_{i\vv}}(\partial_{i\mu}Q_{\uu\vv})\right| \prec n^{5/2}(1+\phi)^2\Theta^3\lambda.
\end{equation}
It remains to prove this bound. By (\ref{partialq}), (\ref{Rrough}), and Lemma \ref{multipleG}, we have
$$\left|\partial_{i\mu}Q_{\uu\vv}\right|\prec (1+\phi)\lambda+n^{-1}\left|(G\BB)_{\uu\mu}(\AA\Ga G)_{i\vv}\right|.$$
Therefore, \eqref{eq:derivative_GBGAG_bound} follows from the two estimates below:
\begin{align*}
\nonumber
& ~ (1+\phi)\lambda\sum_{i,\mu}\left|\partial_{i\mu}\pb{(G\BB)_{\uu\mu}(\AA\Ga G)_{i\vv}}\right|\\
\prec& ~  (1+\phi)^2\lambda\sum_{i,\mu}\left(|(G\BB)_{\uu\mu}(\AA\Ga G)_{i\vv}|+|(G\AA)_{\uu i}(\AA\Ga G)_{i\vv}|+|(G\BB)_{\uu\mu}(\AA G)_{i\vv}|+|(G\BB)_{\uu\mu}(\BB G)_{\mu\vv}|\right)\\
\prec& ~  (1+\phi)^2\lambda\cdot n^2\Theta^2\le n^{5/2}(1+\phi)^2\Theta^3\lambda,\end{align*}
and
\begin{align*}
& ~  n^{-1}\sum_{i,\mu}\left|\partial_{i\mu}\pb{(G\BB)_{\uu\mu}(\AA\Ga G)_{i\vv}}(G\BB)_{\uu\mu}(\AA\Ga G)_{i\vv}\right|\\
\prec& ~  n^{-1}(1+\phi)^2\sum_{i,\mu}\left(|(G\BB)_{\uu\mu}^2(\AA\Ga G)_{i\vv}|+|(G\BB)_{\uu\mu}(\AA\Ga G)_{i\vv}^2| \right)\prec  n(1+\phi)^2\Theta^3\prec n^{5/2}(1+\phi)^2\Theta^3\lambda,
\end{align*}
where we again used Lemma \ref{multipleG} and \eqref{ward2} to control the sums over $i$ and $\mu$.

\medskip
\noindent{\bf Case 3}: $(r,w) = (0, k-1)$. 
In this case, we may assume without loss of generality that $l_{1} = l_{2} = \cdots = l_{k-2} = 1$ and $l_{k-1}=2$. We estimate the left-hand side of (\ref{zkaim}) as follows:
\begin{equation}
\nonumber
\begin{aligned}
&~  n^{-5/2}q^{k-2}  \sum_{i,\mu}\left|(G\BB)_{\uu\mu}(\AA\Ga G)_{i\vv}(\partial_{i\mu}^2Q_{\uu\vv})(\partial_{i\mu}Q_{\uu\vv})^{k-2}\right|\\
\prec&~ n^{-5/2}q^{k-2}(1+\phi)^{2k-4}\Theta^{2k-4}\sum_{i,\mu}\left|(G\BB)_{\uu\mu}(\AA\Ga G)_{i\vv}\partial_{i\mu}^2Q_{\uu\vv}\right|\\
\prec&~ n^{-5/2}q^{k-2}(1+\phi)^{2k-4}\Theta^{2k-4}\cdot n^{5/2}(1+\phi)^2\Theta^3\lambda 
\le (1+\phi)^{2k-2}(q+\Theta)^{3k-3}\lambda \le \Xi^{w}\lambda \le (\Xi\lambda)^{\frac{w+1}{2}},
\end{aligned}
\end{equation}
Here, in the first step we applied the rough bound $\partial_{i\mu}Q_{\uu\vv}\prec(1+\phi)^2\Theta^2$ from Lemma \ref{qlemma} (i) to the $k-2$ copies of $\partial_{i\mu}Q_{\uu\vv}$, while in the second step we used the bound
\begin{equation}\label{eq:derivative_GBGAG_bound2}
\sum_{i,\mu}\left|(G\BB)_{\uu\mu}(\AA\Ga G)_{i\vv}\partial_{i\mu}^2Q_{\uu\vv}\right| \prec n^{5/2}(1+\phi)^2\Theta^3\lambda.
\end{equation}
It remains to prove this bound. By (\ref{partialq}), it suffices to estimate the derivatives of the five terms appearing there. For the derivative of the first term, the assumption $Q\prec \lambda$ and Lemma \ref{qlemma} (i) with $l=1$ give
$$\left|\partial_{i\mu}\pb{(G\AA)_{\uu i}(\BB Q)_{\mu\vv}}\right|\prec(1+\phi)\lambda\pb{|(G\BB)_{\uu\mu}|+|(G\AA)_{\uu i}|}+(1+\phi)^2\Theta^2|(G\AA)_{\uu i}|.$$
Combining this bound with the Cauchy-Schwarz inequality and \eqref{ward2}, we obtain
$$\sum_{i,\mu}\left|(G\BB)_{\uu\mu}(\AA\Ga G)_{i\vv}\partial_{i\mu}\pb{(G\AA)_{\uu i}(\BB Q)_{\mu\vv}}\right| \prec n^2\Theta^3\pb{(1+\phi)\lambda+(1+\phi)^2\Theta^2} \lesssim n^{5/2}(1+\phi)^2\Theta^3\lambda.$$
The derivatives of the second to fourth terms in \eqref{partialq} can be estimated analogously. It remains to show that 
$$\sum_{i,\mu}\left|(G\BB)_{\uu\mu}(\AA\Ga G)_{i\vv}\partial_{i\mu}R_{\uu\vv}^{i\mu}\right| \prec n^{5/2}(1+\phi)^2\Theta^3\lambda.$$
Taking the derivatives of (\ref{r}) and bounding the resulting terms with Lemma \ref{multipleG}, we obtain
\begin{equation}\nonumber
\begin{aligned}
\left|\partial_{i\mu}R_{\uu\vv}^{i\mu}\right|\prec n^{-1}(1+\phi)^2\pb{|(G\BB)_{\uu\mu}|+|(\AA\Ga G)_{i\vv}|}+(1+\phi)\Theta^4.
\end{aligned}
\end{equation}
Combining this bound with the Cauchy-Schwarz inequality and \eqref{ward2}, we obtain
$$\sum_{i,\mu}\left|(G\BB)_{\uu\mu}(\AA\Ga G)_{i\vv}\partial_{i\mu}R_{\uu\vv}^{i\mu}\right| \prec n(1+\phi)^2\Theta^3+n^2(1+\phi)\Theta^6 \prec n^{5/2}(1+\phi)^2\Theta^3\lambda,$$
which completes the proof of \eqref{eq:derivative_GBGAG_bound2}.
\end{proof}

\begin{proof}[Proof of Lemma \ref{task3} (iii)]
By rewriting the first term in (\ref{q}) as \smash{$\p{p+n}^{-1}\pa{GH\Ga G}_{\uu\vv}$}, we obtain an alternative form of $Q_{\uu\vv}$ that involves only entries of $G$, and no entries of $H$. This allows us to estimate $\cal R''^{i\mu}_{\ell+1}$ in a way completely analogous to the proof of Lemma \ref{task1} (iii). We therefore omit the details.
\end{proof}

\section{Proof of Theorem \ref{local_law}}\label{sec:det_step}

In this section, we prove Theorem \ref{local_law} using the estimates established in Theorem \ref{maintheorem}.
\subsection{Preliminaries}
Recall from (\ref{s-ceq}) that $M(G) = I_{p+n}+zG+G\mathcal{S}(G)$, while (\ref{S(Lambda)}) gives (recall \eqref{defn_m1m2} and \eqref{defn_AB})
$$\mathcal{S}(G)=zm_2(z)\AA^2+zm_1(z)\BB^2+\frac{z}{n}(\BB^2 G\AA^2+\AA^2 G\BB^2).$$
We introduce the slightly modified quantity
\begin{equation}
\begin{aligned}
\label{tildemg}
\wt M(G) = I_{p+n}+zG+G
    \begin{pmatrix}
   {zm_2(z)A} & {0}   \\
   {0} & {zm_1(z)B}  \\
   \end{pmatrix}.
\end{aligned}
\end{equation}
The following lemma shows that this quantity is close to $M(G)$.
\begin{lemma}
\label{tildeMG}
Let $z \in S(c_0,C_0,\epsilon)$ and suppose that $G-\Pi \prec \phi$ at $z$, where $\phi$ is a deterministic continuous function $\phi: S(c_0,C_0,\epsilon) \to \left[ n^{-1}, n^{\delta}\right]$ with $\delta:=(\tau\wedge\epsilon)/10$. Define
$$h(\phi):=\sqrt{\frac{\Im m_{2c}(z)+\phi}{n\eta}}.$$
Then
\begin{enumerate}
    \item[(1)] $\wt M(G) \prec (1+\phi)^{3}\pb{q+h(\phi)}.$
    \item[(2)] For any deterministic matrix $\Ga$ satisfying $\norm{\Ga} = \OO(1),$ $\abs{\underline{\Ga\wt M(G)}}\prec(1+\phi)^6\pb{q+h(\phi)}^2.$
\end{enumerate}
\end{lemma}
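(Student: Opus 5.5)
The plan is to compare $\wt M(G)$ with $M(G)$ directly and then invoke Theorem \ref{maintheorem}. By the formula for $\mathcal S(G)$ recalled just above,
\[
M(G)-\wt M(G)=\frac{z}{n}\,G\pb{\BB^2G\AA^2+\AA^2G\BB^2},
\]
because $zm_2\AA^2+zm_1\BB^2$ coincides with the block-diagonal matrix $\mathrm{diag}(zm_2A,\,zm_1B)$. The comparison therefore reduces to a single explicit error term. This term is a product of two resolvents sandwiched between bounded deterministic matrices, divided by $n$.

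\textbf{Step 1: bound the error matrix.} By Lemma \ref{multipleG} (the two-$G$ bound), for deterministic unit $\uu,\vv$,
\[
\Big|\frac{z}{n}\pb{G\BB^2G\AA^2}_{\uu\vv}\Big|\prec \Theta^2 .
\]
Likewise, for the averaged version, $|\underline{\Ga G\BB^2G\AA^2}|/n\prec n^{-1}\Theta^2\cdot(\text{trace normalization})$. This can be checked by Cauchy--Schwarz together with Ward's identity (Lemma \ref{wardlemma}) applied row by row, which gives $\prec \Theta^2/n\cdot n=\Theta^2$ after normalizing by $p+n$. In either case the difference is $\OO_\prec(\Theta^2)$ in both the anisotropic and the averaged sense.

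\textbf{Step 2: apply Theorem \ref{maintheorem}.} The hypotheses needed are $\phi\in[n^{-1},n^{\delta}]$ with $\delta\le\tau/10$ and $\tau<\epsilon/10$; these may require reducing $\tau$, which is allowed. Under them, Theorem \ref{maintheorem} gives $M(G)\prec\Phi$ and $|\underline{\Ga M(G)}|\prec\Phi^2$, where $\Phi=(1+\phi)^3(q+\Theta)$.

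\textbf{Step 3: relate $\Theta$ to $h(\phi)$.} We have $\Theta^2=(\|\Im\Pi\|+\phi+\eta)/(n\eta)$. The main point is to show $\|\Im \Pi\|+\eta\lesssim \Im m_{2c}+\eta$ and then absorb the $\eta$ contribution. Using $\Pi_1=-z^{-1}(1+m_{2c}A)^{-1}$ and Lemma \ref{lem_mbehavior}(3) (so $|1+m_{2c}\sigma_i|\ge\tau'$), we get
\[
\Im\Pi_1\lesssim |\Im z^{-1}|+\Im m_{2c}\lesssim \eta+\Im m_{2c},
\]
since $|z|\sim1$ on $S$; the analogous bound holds for $\Pi_2$, using $\Im m_{1c}\sim\Im m_{2c}$ from \eqref{Immc}. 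The remaining term $\eta/(n\eta)=n^{-1}$ is at most $q^2\le (q+h)^2$ because $q\ge n^{-1/2}$. Hence $\Theta\lesssim q+h(\phi)$, and so $\Phi\lesssim(1+\phi)^3(q+h(\phi))$.

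\textbf{Step 4: combine.} From Steps 1--3, $\wt M(G)\prec \Phi+\Theta^2\lesssim (1+\phi)^3(q+h)$, since $\Theta\ll1$. Similarly $|\underline{\Ga\wt M(G)}|\prec \Phi^2+\Theta^2\lesssim(1+\phi)^6(q+h)^2$.

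The step most likely to need care is the averaged bound on the error term in Step 1. Writing $\underline{\Ga G\BB^2G\AA^2}=(p+n)^{-1}\sum_a(\Ga G\BB^2G\AA^2)_{aa}$ and using Cauchy--Schwarz with Ward's identity gives $\sum_a|(\cdot)_{aa}|\le\big(\sum_a\|G^*\Ga^*e_a\|^2\big)^{1/2}\big(\sum_a\|G\AA^2 e_a\|^2\big)^{1/2}\prec n\cdot n\Theta^2$. Dividing by $n(p+n)$ leaves $\Theta^2$, which is sufficient.
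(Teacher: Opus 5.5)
Your proposal is correct and follows the paper's proof. You bound $M(G)-\wt M(G)=\frac{z}{n}(G\BB^2G\AA^2+G\AA^2G\BB^2)$ by $\Theta^2$ using Lemma \ref{multipleG}, and then apply Theorem \ref{maintheorem} after showing $\Theta\lesssim q+h(\phi)$ from $\|\Im\Pi\|\lesssim\eta+\Im m_{2c}(z)$; the differences are cosmetic (you absorb $\eta/(n\eta)=n^{-1}$ into $q^2$ where the paper uses $\eta\lesssim\Im m_{2c}(z)$ from \eqref{Immc}, and you use Cauchy--Schwarz for the averaged error where entrywise use of Lemma \ref{multipleG} already suffices).
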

\begin{proof}
We first observe that Theorem \ref{maintheorem} implies that $M(G)$ satisfies the estimates in (1) and (2), since $\Theta\prec h(\phi)$. Indeed, \eqref{Immc} gives $\eta\lesssim \Im m_{2c}(z)$. 
It remains to prove that $\norm{\Im \Pi}\lesssim\Im m_{2c}(z)$. By (\ref{defn_pi}), we have
\begin{equation}\label{ImPi}
\Im \Pi=\Pi\begin{pmatrix}
   \Im \qb{z\pb{1+m_{2c}(z)A}} & 0   \\
   0 & \Im \qb{z\pb{1+m_{1c}(z)B}}  \\
   \end{pmatrix}\Pi^{*}.
\end{equation}
The estimate \eqref{Piii} implies that $\norm{\Pi}=\norm{\Pi^*}=\OO(1)$. Moreover, for $z=E+\ii \eta$, we have
$$|\Im[zm_{2c}(z)]|=|\eta\Re m_{2c}(z)+E\Im m_{2c}(z)|\lesssim {\eta+\Im m_{2c}(z)}\lesssim \Im m_{2c}(z)$$
by \eqref{Immc}. Hence
$$\norm{\Im[z\p{1+m_{2c}(z)A}]}\lesssim \eta+\abs{\Im[zm_{2c}(z)]}\norm{A}\lesssim\Im m_{2c}(z).$$
Similarly, we have 
$$\norm{\Im[z\p{1+m_{1c}(z)B}]}\lesssim \Im m_{1c}(z)\sim\Im m_{2c}(z).$$ 
Substituting these estimates into \eqref{ImPi}, we obtain $\norm{\Im \Pi}\prec\Im m_{2c}(z)$, which shows that $\Theta\prec h(\phi)$.

It remains to show that the difference between $M(G)$ and $\wt M(G)$ is negligible, which will conclude the proof of Lemma \ref{tildeMG}. By definition, we have
\begin{equation}\nonumber
\begin{aligned}
M(G)-\wt M(G)&=\frac{z}{n}G\BB^2 G\AA^2+\frac{z}{n}G\AA^2 G\BB^2,\ \
\underline{\Ga M(G)}-\underline{\Ga\wt M(G)}=\frac{z}{n}\underline{\Ga G\BB^2 G\AA^2}+\frac{z}{n}\underline{\Ga G\AA^2 G\BB^2}.
\end{aligned}
\end{equation}
By Lemma \ref{multipleG}, these quantities are stochastically dominated by $\Theta^2\prec h(\phi)^2$.
\end{proof}

To simplify notation, we introduce
$$R_{w_1,w_2}:=\begin{pmatrix}
   z(1 + w_2A) & 0   \\
   0 & z(1 + w_1B)  \\
   \end{pmatrix}^{-1},\quad w_1,w_2\in\mathbb{C}.$$
Then (\ref{defn_pi}) can be rewritten as $\Pi=-R_{m_{1c},m_{2c}}$, and equation (\ref{tildemg}) can be rearranged as
\begin{equation}
\label{GandMG}
\begin{aligned}
G = (\wt M(G)-I_{p+n})\begin{pmatrix}
   z(1 + m_2(z)A) & 0   \\
   0 & z(1 + m_1(z)B)  \\
   \end{pmatrix}^{-1}=(\wt M(G)-I_{p+n})R_{m_1,m_2}.
\end{aligned}
\end{equation}
Taking the difference between these equations yields
\begin{equation}
\label{GminusPi}
\begin{aligned}
G-\Pi = \wt M(G)R_{m_1,m_2}-R_{m_1,m_2}+R_{m_{1c},m_{2c}}
\end{aligned}
\end{equation}
The first term will be treated as an error term and estimated by the following lemma.
\begin{lemma}
\label{tildeMGRgS}
Under the assumptions of Lemma \ref{tildeMG}, assume that $\norm{R_{m_1,m_2}}=\OO(1)$ with high probability. Then the conclusions of Lemma \ref{tildeMG} hold with $\wt M(G)$ replaced by $\wt M(G)R_{m_1,m_2}$.
\end{lemma}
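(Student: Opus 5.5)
Part (1) is immediate from Lemma \ref{tildeMG} (1) and the weak-operator meaning of $\OO_\prec$. For deterministic unit vectors $\uu,\vv$ we write $R_{m_1,m_2}\vv$ as a vector whose norm is $\OO(1)$ with high probability. The difficulty is that this vector is random, since $m_1,m_2$ are random. However, $R_{m_1,m_2}$ is block diagonal, and each block is a function of $A$ (resp.\ $B$) and a scalar. So we expand in the deterministic eigenbasis: $R_{m_1,m_2}\vv=\sum_{i}(z(1+m_2\si_i^a))^{-1}\langle \bv_i^a,\hat\vv\rangle\bv_i^a+\sum_\mu(z(1+m_1\si_\mu^b))^{-1}\langle\bv_\mu^b,\tilde\vv\rangle\bv_\mu^b$. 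Then
\[
\big(\wt M(G)R_{m_1,m_2}\big)_{\uu\vv}=\sum_i c_i\,\wt M(G)_{\uu\bv_i^a}\langle \bv_i^a,\hat\vv\rangle+\sum_\mu c_\mu\,\wt M(G)_{\uu\bv_\mu^b}\langle\bv_\mu^b,\tilde\vv\rangle,
\]
with random coefficients satisfying $|c_i|,|c_\mu|=\OO(1)$ with high probability.

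A naive sum over $i$ would lose a factor $\sqrt n$ through $\sum_i|\langle\bv_i^a,\hat\vv\rangle|$. To avoid this, we instead discretize the random scalars. On a high-probability event we have $|m_\alpha|\le n^{C}$, and $\wt M(G)$ is Lipschitz in no relevant way here. The key observation is that $R_{w_1,w_2}$ depends analytically and Lipschitz-continuously on $(w_1,w_2)$ in the region where $\norm{R_{w_1,w_2}}=\OO(1)$. We therefore take an $n^{-10}$-net $\mathcal N$ of the relevant bounded region of $(w_1,w_2)\in\C^2$, which has cardinality $n^{\OO(1)}$. For each fixed $(w_1,w_2)\in\mathcal N$ with $\norm{R_{w_1,w_2}}\le C$, the vector $R_{w_1,w_2}\vv$ is deterministic with norm $\OO(1)$. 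Lemma \ref{tildeMG} (1), applied uniformly over this polynomial family of deterministic vectors (allowed by the convention in Definition \ref{stoch_domination}), gives $(\wt M(G)R_{w_1,w_2})_{\uu\vv}\prec(1+\phi)^3(q+h(\phi))$ uniformly over the net. We then replace $(m_1,m_2)$ by its nearest net point $(w_1,w_2)$. The difference is bounded by $\norm{\wt M(G)}\,\norm{R_{m_1,m_2}-R_{w_1,w_2}}$. Here $\norm{\wt M(G)}\le n^{C}$ trivially, since $\norm G\le \eta^{-1}\cdot\OO(1)$ and $|m_\alpha|\le \eta^{-1}$. Also $\norm{R_{m_1,m_2}-R_{w_1,w_2}}\le C\norm{R_{m_1,m_2}}\norm{R_{w_1,w_2}}n^{-10}$ by the resolvent identity, which is negligible provided $\norm{R_{w_1,w_2}}=\OO(1)$. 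That last condition follows from $\norm{R_{m_1,m_2}}=\OO(1)$ and perturbation (the Neumann series) for a fine enough net.

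Part (2) follows the same way. For deterministic $\Ga$, we have $\underline{\Ga\wt M(G)R_{w_1,w_2}}=\underline{(R_{w_1,w_2}\Ga)\wt M(G)}$ by cyclicity. For net points with $\norm{R_{w_1,w_2}}\le C$, the matrix $R_{w_1,w_2}\Ga$ is deterministic with bounded norm, so Lemma \ref{tildeMG} (2) applies uniformly over the net. The discretization error is again $n^{C}\cdot n^{-10}$, which is negligible because $h(\phi)\ge n^{-1/2}$.

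The main technical point is ensuring that the uniformity over the net is legitimate, together with the trivial polynomial bounds on $\wt M(G)$. Both are routine, and I expect no serious obstacle beyond bookkeeping the exponents so that the net mesh beats $n^{C}$.
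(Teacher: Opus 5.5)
Your proposal is correct and follows essentially the paper's argument: a polynomial-size net over the possible values of $(m_1,m_2)$, Lemma~\ref{tildeMG} applied uniformly to the deterministic matrices $R_{\hat m_1,\hat m_2}$, and the resolvent identity for $R$, combined with the trivial bound $\norm{\wt M(G)}=\OO(n^2)$ and $h(\phi)\gtrsim n^{-1/2}$, to absorb the discretization error. The differences are cosmetic. The paper takes its $n^{-3}$-net inside the realized range $\mathcal U$ of $(m_1,m_2)$ on the good event rather than in a deterministic region of $\C^2$, and your cyclicity remark $\underline{\Ga\wt M(G)R_{w_1,w_2}}=\underline{(R_{w_1,w_2}\Ga)\wt M(G)}$ spells out the averaged case that the paper omits.
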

If $R_{m_1,m_2}$ were a deterministic matrix with $\OO(1)$ norm, Lemma \ref{tildeMGRgS} would follow immediately from Lemma \ref{tildeMG}. To handle the randomness of $m_1$ and $m_2$, we use a standard $n^{-3}$-net argument, as explained below. 
\begin{proof}[Proof of Lemma \ref{tildeMGRgS}]
Let $\Omega$ be an event of high probability such that $\mathbbm{1}_{\Omega} \norm{R_{m_1,m_2}} = \OO(1)$, and define
 $\cal{U} \deq \h{(m_1 \equiv m_1(z,X), m_2 \equiv m_2(z,X)) \col X \in \Omega} \subset \C^{2}$.
 Since $\abs{m_{1,2}} =\OO(\norm{G})=\OO(n)$, there exists an $n^{-3}$-net of $\cal U$ of size $\OO(n^{10})$, i.e., a subset $\hat {\cal U} \subset \cal U$ such that (1) $\abs{\hat {\cal U}} = \OO(n^{10})$, and (2) for each $ u  \in \cal U$ there exists a $ \hat u  \in \hat {\cal U}$ such that $\abs{ u - \hat u } \leq n^{-3}$. Combining Lemma \ref{tildeMG} with a union bound, we obtain, for any deterministic $\f v , \f w \in \bf S$,
\begin{equation} \label{sup_s_est}
\sup_{ \hat u = (\hat m_1,\hat m_2)  \in \hat {\cal U}} \abs{( \wt M(G)R_{ \hat m_1, \hat m_2 })_{\f v \f w}} \prec (1 + \phi)^3 \pb{q+h(\phi)}\,.
\end{equation}
Moreover, the resolvent identity
\begin{equation}
\label{residofR}
R_{\hat m_1,\hat m_2}=R_{m_1,m_2}+R_{\hat m_1,\hat m_2}
\begin{pmatrix}
   z(m_2 - \hat m_2)A & 0   \\
   0 & z(m_1 - \hat m_1)B  \\
   \end{pmatrix}
R_{m_1,m_2},
\end{equation}
together with $\norm{R_{m_1,m_2}}=\OO(1)$ and $|m_1-\hat m_1|+|m_2-\hat m_2|=\OO(n^{-3})$ implies that $\norm{R_{\hat m_1,\hat m_2}}=\OO(1)$. We then write
\begin{equation*}
\abs{(\wt M(G)R_{m_1,m_2})_{\f v \f w}} \leq \abs{\wt M(G)(R_{\hat m_1,\hat m_2})_{\f v \f w}} + \absb{\pb{\wt M(G)(R_{\hat m_1, \hat m_2} - R_{m_1,m_2})}_{\f v \f w}}.
\end{equation*}
The first term is bounded by $\OO_\prec\p{(1 + \phi)^3\pb{q+h(\phi)}}$ by \eqref{sup_s_est}, while the second term is bounded by
$$\norm{\wt M(G)} \norm{R_{\hat m_1, \hat m_2} - R_{m_1,m_2}} \lesssim (\abs{m_1 - \hat m_1}+\abs{m_2 - \hat m_2})\norm{R_{\hat m_1, \hat m_2}}\norm{R_{m_1,m_2}}\norm{\wt M(G)}= \OO(n^{-1}),$$
where in the last step we used the trivial bound $\norm{\wt M(G)} = \OO(n^2)$. Hence
$$\wt M(G)R_{m_1,m_2}  \prec {(1 + \phi)^3\pb{q+h(\phi)}},$$ 
where we also used $n^{-1/2} \prec {h(\phi)}$, which follows from $\eta\lesssim \Im m_{2c}(z)$ by \eqref{Immc}. 
The averaged quantity $\ul{\Ga\wt M(G)R_{m_1,m_2}}$ can be estimated in the same way, and we omit the details. 
\end{proof}

Next, we state a monotonicity estimate for the resolvent, which provides an a priori bound on $\norm{G(E+\ii\eta)}$ as $\eta$ decreases.
\begin{lemma}
\label{monocity}
For any $z=E+\ii\eta\in\C_+$ and $M>1$, one has
$$\norm{G(E+\ii\eta/M)}\leq M^{3/2}\norm{G(E+\ii\eta)}.$$
\end{lemma}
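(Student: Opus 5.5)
The plan is to diagonalize $G$ explicitly through the singular value decomposition of $Y=A^{1/2}XB^{1/2}$ and then compare the resulting scalar quantities at $z=E+\ii\eta$ and at $z'=E+\ii\eta/M$. Write $Y=\sum_k \sqrt{\lambda_k}\,\mathbf a_k\mathbf b_k^*$, where $\lambda_k$ are the nonzero eigenvalues of $\mathcal Q_1$ (equivalently $\mathcal Q_2$), and $\mathbf a_k\in\C^p$, $\mathbf b_k\in\C^n$ are orthonormal singular vectors. By \eqref{green2}, $\C^{p+n}$ then splits into the $2$-dimensional invariant subspaces $\mathrm{span}\{(\mathbf a_k,0),(0,\mathbf b_k)\}$ together with $\ker Y^*\oplus\ker Y$. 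On the kernel part, $G$ acts as $-z^{-1}$.

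On the $k$-th two-dimensional block, $G$ is represented by
\[
\frac{1}{\lambda_k-z}\begin{pmatrix} 1 & s_k\\ s_k & 1\end{pmatrix},\qquad s_k:=\sqrt{\lambda_k}\,z^{-1/2}.
\]
The matrix $I+s_k\sigma_x$ is normal, with eigenvalues $1\pm s_k$. Hence $\|G(z)\|$ is the maximum of $|z|^{-1}$ and of
\[
\frac{|1\pm s_k|}{|\lambda_k-z|}=\frac{1}{|z^{1/2}|\,|z^{1/2}\mp\sqrt{\lambda_k}|}.
\]
Here I used the factorization $\lambda_k-z=(\sqrt{\lambda_k}-z^{1/2})(\sqrt{\lambda_k}+z^{1/2})$. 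So $\|G(z)\|=\max\{|z|^{-1},\max_{k,\pm}(|w|\,|w\mp\sqrt{\lambda_k}|)^{-1}\}$ with $w:=z^{1/2}$. It therefore suffices to prove, for every real $x\ge0$ and $w'=(z')^{1/2}$, the two inequalities
\[
|z'|\ge M^{-1}|z|,\qquad |w'|\,|w'\pm x|\ge M^{-3/2}|w|\,|w\pm x|.
\]

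The first inequality, and the bound $|w'|\ge M^{-1/2}|w|$, are immediate, since $|E+\ii\eta/M|\ge M^{-1}|E+\ii\eta|$. The remaining claim is
\[
|w'\pm x|\ge M^{-1}|w\pm x|,\qquad x\ge0,
\]
where $w,w'$ are the square roots with positive imaginary part. For the product of the two signs this is clear, because $|(w'-x)(w'+x)|=|z'-x^2|\ge M^{-1}|z-x^2|$. To handle each factor separately, I would write $w=a+\ii b$ with $b>0$ and $w'=a'+\ii b'$. Then $a'^2-b'^2=a^2-b^2=E$, and $a'b'=ab/M$. So $w'$ moves along the hyperbola $\Re(w^2)=E$ toward the real axis.

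I would then check monotonicity of $|w'\pm x|^2=(a'\pm x)^2+b'^2$ along this hyperbola as $\eta$ decreases, comparing with the scaling factor $M^{-2}$. If the direct calculus is awkward, a fallback is available. Use $|w'-x|\,|w'+x|\ge M^{-1}|w-x|\,|w+x|$ together with the simple bounds $|w'+x|\le|w+x|$ for the appropriate sign (which holds when $a\ge0$, since $|w'|\le|w|$ and $a'$ has the same sign as $a$). This transfers the full factor $M^{-1}$ onto the other factor, and the sign $a<0$ is treated symmetrically. I expect this per-factor comparison to be the only genuinely delicate step. Everything else is bookkeeping with the explicit $2\times2$ normal blocks.
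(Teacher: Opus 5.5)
Your route differs from the paper's. The paper never diagonalizes $G$. It shows that $F(\eta)=\norm{G(E+\ii\eta)}$ is locally Lipschitz with $\abs{\partial_\eta F}\le 3F/(2\eta)$, using the resolvent identity for $z^{1/2}G(z)$ (the resolvent of the Hermitian matrix $\begin{pmatrix}0&Y\\ Y^*&0\end{pmatrix}$ at spectral parameter $z^{1/2}$) and Ward's identity. Integrating then shows that $\eta^{3/2}F$ is nondecreasing.

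You instead use that $G(z)$ is normal with a $z$-independent orthonormal eigenbasis (your $2\times 2$ blocks plus the kernel). This reduces the lemma to comparing $\abs{z}^{-1}$ and $(\abs{w}\abs{w\mp x})^{-1}$ at $w=z^{1/2}$ and $w'=(z')^{1/2}$, eigenvalue by eigenvalue. That is more explicit and elementary than the paper's differential inequality, while the paper's argument avoids all spectral bookkeeping. Several of your steps are correct as stated:
\begin{itemize}
\item the reduction to eigenvalue moduli;
\item the bound $\abs{w'}\ge M^{-1/2}\abs{w}$;
\item the product trick giving $\abs{w'-x}\ge M^{-1}\abs{w-x}$ from $\abs{w'+x}\le\abs{w+x}$. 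The latter holds because on the hyperbola $\abs{w}^2=2a^2-E$, so $\abs{w'}\le\abs{w}$ forces $a'\le a$.
\end{itemize}

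The gap is the other factor. You also need $\abs{w'+x}\ge M^{-1}\abs{w+x}$, since each $x=\sqrt{\lambda_k}$ contributes both moduli $(\abs{w}\abs{w\mp x})^{-1}$. Neither your monotonicity plan (not carried out) nor the fallback supplies this. The remark that ``$a<0$ is treated symmetrically'' does not help. Since $2ab=\eta>0$ and $b>0$, you always have $a>0$, so that case never arises. And for a fixed $w$ the product trick controls only one of the two factors anyway.

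The fix is short. From $b^2=(\abs{z}-E)/2$ and $\abs{z'}\le\abs{z}$ you get $b'\le b$. Combined with $a'b'=ab/M$, this gives $a'\ge a/M$, hence
\[
\abs{w'+x}^2=\abs{w'}^2+2a'x+x^2\ge M^{-1}\bigl(\abs{w}^2+2ax+x^2\bigr)=M^{-1}\abs{w+x}^2 .
\]
With this, the $+$ branch loses at most a factor $M$, the $-$ branch at most $M^{3/2}$, and the kernel eigenvalue $-z^{-1}$ at most $M$. Normality then gives the lemma.
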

\begin{proof} 
By (\ref{eq_gz}), we may write
\begin{equation}\label{Gz}
z^{1/2}G(z)=\pbb{\begin{pmatrix}
  0&Y\\
  Y^*&0
\end{pmatrix}-z^{1/2}}^{-1},\quad \forall z\in\C\backslash\R_{<0}.\end{equation}
Thus, we have the resolvent identity
\begin{equation}\label{Gresolv}
\begin{aligned}
z_1^{1/2}G(z_1)-z_2^{1/2}G(z_2)=(z_1^{1/2}-z_2^{1/2})z_1^{1/2}z_2^{1/2}G(z_1)G(z_2)
\end{aligned}
\end{equation}
for every $z_1,z_2\in\C\backslash\R_{<0}$. For any deterministic unit vectors $\uu,\vv\in\bf S$, we obtain
\begin{equation}\nonumber
\begin{aligned}
\left|\pb{z_1^{1/2}z_2^{1/2}G(z_1)G(z_2)}_{\uu\vv}\right|&=\left|z_1^{1/2}z_2^{1/2}\sum_aG(z_1)_{\uu a}G(z_2)_{a\vv}\right| \leq\sqrt{\sum_a|z_1^{1/2}G(z_1)_{\uu a}|^2}\sqrt{\sum_a|z_2^{1/2}G(z_2)_{a\vv}|^2}\\
&=\sqrt{\frac{\Im (z_1^{1/2}G(z_1))_{\uu\uu}}{\Im z_1^{1/2}}}\sqrt{\frac{\Im (z_2^{1/2}G(z_2))_{\vv\vv}}{\Im z_2^{1/2}}}
\leq\sqrt{\frac{\norm{z_1^{1/2}G(z_1)}\norm{z_2^{1/2}G(z_2)}}{\abs{\Im z_1^{1/2}}\abs{\Im z_2^{1/2}}}},
\end{aligned}
\end{equation}
where in the third step we applied Ward's identity to \eqref{Gz} with spectral parameter $z^{1/2}$. Therefore,
$$\normBB{\frac{z_1^{1/2}G(z_1)-z_2^{1/2}G(z_2)}{z_1^{1/2}-z_2^{1/2}}}\leq\sqrt{\frac{\norm{z_1^{1/2}G(z_1)}\norm{z_2^{1/2}G(z_2)}}{\abs{\Im z_1^{1/2}}\abs{\Im z_2^{1/2}}}}.$$
Set $F(z):=\norm{G(z)}$. Then
\begin{equation}\nonumber
\begin{aligned}
\abs{F(z_1)-F(z_2)}&\leq\norm{G(z_1)-G(z_2)} \leq\norma{z_1^{-1/2}(z_1^{1/2}G(z_1)-z_2^{1/2}G(z_2))}+\norma{(z_1^{-1/2}-z_2^{-1/2})z_2^{1/2}G(z_2)}\\
&\leq\abs{z_1-z_2}\pBB{\frac{\abs{z_1^{-1/2}}}{\abs{z_1^{1/2}+z_2^{1/2}}}\sqrt{\frac{\norm{z_1^{1/2}G(z_1)}\norm{z_2^{1/2}G(z_2)}}{\abs{\Im z_1^{1/2}}\abs{\Im z_2^{1/2}}}}+\frac{\norm{z_2^{1/2}G(z_2)}}{\abs{z_1^{1/2}z_2^{1/2}(z_1^{1/2}+z_2^{1/2})}}}.
\end{aligned}
\end{equation}
It follows that $F$ is locally Lipschitz continuous on $\C_+$, and its almost everywhere defined partial derivative with respect to $\eta$ satisfies
\begin{equation}\nonumber
\begin{aligned}
\left|\frac{\dd F}{\dd\eta}(z)\right|\leq \frac{1}{2\abs{z^{1/2}}}\frac{F(z)}{\abs{\Im z^{1/2}}}+\frac{F(z)}{2\abs{z}}\leq\frac{3F(z)}{2\eta},
\end{aligned}
\end{equation}
where in the second step we used $\abs{z}\geq\eta$ and $ 2\abs{z^{1/2}}\abs{\Im z^{1/2}}\geq 2\abs{\Re z^{1/2}}\abs{\Im z^{1/2}}=\eta$. This implies
$$\frac{\dd}{\dd\eta}\pb{\eta^{3/2} F(z)}\geq 0$$ 
and hence $F(E+\ii\eta/M)\leq M^{3/2}F(E+\ii\eta)$, as claimed.
\end{proof}

Another key ingredient in the proof of the local law is a stability result for the equation $f(z,\alpha)=0$. It states, roughly speaking, that if the self-consistent equation \eqref{separable_MP} is approximately satisfied, i.e., $f(z, m_{2}(z))$ is small, and $m_2(\wt z)-m_{2c}(\wt z)$ is small for $\Im\, \wt z \ge \Im\, z$, then $m_{2}(z)-m_{2c}(z)$ is also small. For an arbitrary $z\in S(c_0,C_0, \e)$, define the discrete set
\begin{align}\label{eqn_def_L}
L(z):=\{z\}\cup \{z'\in S(c_0,C_0, \e): \text{Re}\, z' = \text{Re}\, z, \text{Im}\, z'\in [\text{Im}\, z, 1]\cap (n^{-10}\mathbb N)\} .
\end{align}
Thus, if $\text{Im}\, z \ge 1$, then $L(z)=\{z\}$; if $\text{Im}\, z<1$, then $L(z)$ is a 1-dimensional lattice with spacing $n^{-10}$, together with the point $z$. In particular, $|L(z)|\le n^{10}$.

\begin{lemma}[Stability, Lemma 5.11 in \cite{yang2019edge}]
\label{stability}
Let $c_0>0$ be a sufficiently small constant and fix constants $C_0,\epsilon>0$. The self-consistent equation $f(z,\al)=0$ is stable on $S(c_0,C_0, \epsilon)$ in the following sense. Suppose that the $z$-dependent function $\delta$ satisfies $n^{-2} \le \delta(z) \le (\log n)^{-1}$ for $z\in S(c_0,C_0, \epsilon)$ and is Lipschitz continuous with Lipschitz constant $\lesssim n^2$. Suppose moreover that, for each fixed $E$, the function $\eta \mapsto \delta(E+\ii\eta)$ is non-increasing for $\eta>0$. Suppose that $u_2: S(c_0,C_0,\epsilon)\to \mathbb C$ is the Stieltjes transform of a probability measure. Let $z\in S(c_0,C_0,\epsilon)$ and assume that for all $z'\in L(z)$ we have
\begin{equation}\label{Stability0}
\left| f(z', u_2)\right| \le \delta(z').
\end{equation}
Then
\begin{equation}
\left|u_2(z)-m_{2c}(z)\right|\le \frac{C\delta}{\sqrt{\kappa+\eta+\delta}},\label{Stability1}
\end{equation}
for some constant $C>0$ independent of $z$ and $n$, where $\kappa$ is defined in (\ref{KAPPA}). 
\end{lemma}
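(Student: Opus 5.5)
The plan is the standard perturbative analysis of a scalar self-consistent equation near a regular square-root edge. It has two parts: a local quadratic expansion of $f$ around $m_{2c}$, and a continuity (bootstrapping) argument in $\eta$ along the lattice $L(z)$ that selects the correct root.

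\textbf{Step 1: deterministic input.} I first establish that, uniformly for $z\in S(c_0,C_0,\epsilon)$ and $|\alpha-m_{2c}(z)|\le c$ with $c$ small,
\[
f(z,\alpha)=\partial_m f(z,m_{2c})\,w+\tfrac12\partial_m^2 f(z,m_{2c})\,w^2+\OO(|w|^3),\qquad w:=\alpha-m_{2c}(z).
\]
The needed bounds on the coefficients are
\[
|\partial_m f(z,m_{2c}(z))|\sim\sqrt{\kappa+\eta},\qquad |\partial_m^2 f(z,m_{2c}(z))|\sim 1 .
\]
\begin{itemize}
\item The expansion and the uniform $\OO(|w|^3)$ remainder hold because the integrands in \eqref{separable_MP} are analytic in $m$ and bounded away from their poles: this follows from \eqref{Piii} together with the gap condition \eqref{assm_gap}.
\item For the first-derivative bound: at $(\lambda_+,m_{2c}(\lambda_+))$ we have $\partial_m f=0$ by \eqref{equationEm2}. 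Expanding in $z$ and using the square-root behaviour \eqref{SQUAREROOT}--\eqref{Immc} (equivalently, $m_{2c}(z)-m_{2c}(\lambda_+)\sim\sqrt{z-\lambda_+}$) gives the bound, after differentiating $f(z,m_{2c}(z))=0$ in $z$.
\item The second-derivative bound is the nondegeneracy of the edge, which is exactly what produces the square root in Lemma \ref{lem_mbehavior}.
\end{itemize}
I would quote these from the analysis of $f$ in \cite{yang2019edge} and \cite{Separable_solution} rather than recompute them.

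\textbf{Step 2: solving the quadratic.} Suppose $|f(z,u_2)|\le\delta$ and $|w|\le c$. The expansion gives $aw^2+bw=\varepsilon+\OO(|w|^3)$, with $|a|\sim1$, $|b|\sim\sqrt{\kappa+\eta}$ and $|\varepsilon|\le\delta$. Solving this perturbed quadratic shows that $w$ lies near one of two roots:
\begin{itemize}
\item a ``good'' root with $|w|\lesssim \delta/\sqrt{\kappa+\eta+\delta}$;
\item a ``bad'' root with $|w+b/a|\lesssim \delta/\sqrt{\kappa+\eta+\delta}$, hence $|w|\gtrsim\sqrt{\kappa+\eta}$.
\end{itemize}
If $\kappa+\eta\le C\delta$, both roots satisfy $|w|\lesssim\sqrt\delta\sim\delta/\sqrt{\kappa+\eta+\delta}$, so \eqref{Stability1} holds with no further work. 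If $\kappa+\eta\gg\delta$, the two admissible regions are separated by a gap of order $\sqrt{\kappa+\eta}$. The a priori condition $|w|\le c$ is also handled by this dichotomy: since $u_2$ and $m_{2c}$ are Stieltjes transforms, the region $|w|>c$ can be excluded using the same continuity argument as in Step 3.

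\textbf{Step 3: continuity argument, the main obstacle.} It remains to show that $u_2$ always sits on the good root when $\kappa+\eta\gg\delta$.
\begin{itemize}
\item Initialization: at the top point $z'$ of $L(z)$, with $\Im z'\sim1$, both $u_2$ and $m_{2c}$ are bounded. Moreover $|\partial_m f|\sim1$ there, and the bad root is far, so only the good root is compatible.
\item Descent: I then step down in $\eta$ along $L(z)$ with spacing $n^{-10}$. As Stieltjes transforms, $u_2$ and $m_{2c}$ are Lipschitz with constant $\le\eta^{-2}\le n^2$, so $w$ moves by at most $\OO(n^{-8})$ per step. This is far smaller than the gap $\sqrt{\kappa+\eta}\gtrsim n^{-1/2}$ in the separated regime, so $w$ cannot jump from the good root to the bad one.
\item Monotonicity of $\delta$: because $\eta\mapsto\delta(E+\ii\eta)$ is non-increasing, the hypothesis \eqref{Stability0} at the smaller $\eta$ is not weakened when comparing across steps. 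The Lipschitz bound on $\delta$ keeps the regime boundaries stable between lattice points.
\item Crossing into the regime $\kappa+\eta\lesssim\delta$: this causes no difficulty, since the bound there was unconditional. Once $\kappa+\eta\gg\delta$ holds at a lattice point it holds for all larger $\eta$, so the separated regime is an upper interval of $L(z)$ and the induction runs cleanly down to $z$.
\end{itemize}

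The delicate point is the bookkeeping of constants, so that the good region and its neighbourhood of size $\OO(n^{-8})$ never overlap the bad region. I would also need a separate but routine check that, for $E\ge\lambda_+$ far from the edge (large $\kappa$), the bad root is excluded for all $\eta$. There $|\partial_m f|\sim1$, so the bad root lies at distance $\sim1$ and is incompatible with $|w|\le c$.
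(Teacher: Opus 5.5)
Your overall scheme is the standard one, and it is what the cited source uses; the paper itself only quotes Lemma 5.11 of \cite{yang2019edge}. That scheme is a quadratic expansion at $m_{2c}$ with $|\partial_m f|\sim\sqrt{\kappa+\eta}$ and $|\partial_m^2 f|\sim1$, followed by a descent along $L(z)$. Steps 1--2 and the descent are fine. As long as $|w|\le c$, writing the expansion as $w(a'w+b)=\varepsilon$ with $a'=a+\OO(|w|)$ forces $|w|\lesssim\delta/\sqrt{\kappa+\eta}$ or $|w|\gtrsim\sqrt{\kappa+\eta}$. Monotonicity of $\delta$ makes the separated regime an upper interval of $L(z)$, and steps of size $\OO(n^{-8})$ cannot cross the gap. (The cubic remainder only locates the bad root up to $\OO(\kappa+\eta)$, not $\delta/\sqrt{\kappa+\eta+\delta}$, but you only use the dichotomy.)

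\textbf{The gap is the initialization.} At the top point $z'$ of $L(z)$ you must prove $|u_2(z')-m_{2c}(z')|\le C\delta$ with no a priori closeness. The same holds at $z$ itself when $\Im z\ge1$, where $L(z)=\{z\}$ and no continuity is available at all.
\begin{itemize}
\item Boundedness of $u_2$ and $m_{2c}$, together with $|\partial_m f|\sim1$, only gives uniqueness in a ball of radius $c$ around $m_{2c}$.
\item The bound $|u_2|\le1/\eta\le1$ does not keep $u_2(z')$ away from the ``bad'' root, which at $\eta\sim1$ sits at distance of order one. Nor does it keep $u_2(z')$ away from other zeros of the meromorphic function $f(z',\cdot)$.
\item Your remark that the region $|w|>c$ is excluded ``by the same continuity argument'' is circular at the starting point.
\end{itemize}

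\textbf{What is missing} is a global argument that genuinely uses the hypothesis that $u_2$ is a Stieltjes transform. Here is one way. Set $u:=u_2(z')$, $m:=m_{2c}(z')$ and $\varepsilon:=f(z',u)$. Write $f(z',\alpha)=F(\alpha)-\alpha$, where
\[
D_x(\alpha):=-z'+xd_n\int t(1+t\alpha)^{-1}\pi_A(\dd t),\qquad F(\alpha):=\int xD_x(\alpha)^{-1}\pi_B(\dd x).
\]
For $\Im\alpha>0$ one has $\Im D_x(\alpha)\le-\eta$. Define
\[
\Lambda(\alpha):=d_n\iint x^2t^2|1+t\alpha|^{-2}|D_x(\alpha)|^{-2}\pi_A(\dd t)\pi_B(\dd x).
\]
Cauchy--Schwarz gives $|F(u)-F(m)|\le|u-m|\sqrt{\Lambda(u)\Lambda(m)}$. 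Taking imaginary parts of $m=F(m)$ and $u=F(u)-\varepsilon$ gives
\[
(1-\Lambda(m))\Im m=\eta\int\frac{x\,\pi_B(\dd x)}{|D_x(m)|^2},\qquad (1-\Lambda(u))\Im u=\eta\int\frac{x\,\pi_B(\dd x)}{|D_x(u)|^2}-\Im\varepsilon .
\]
\begin{itemize}
\item At $\eta\sim1$, the first identity yields $1-\Lambda(m)\ge c$, since $|D_x(m)|=|z'||1+xm_{1c}|\le C$.
\item For $u$, the Stieltjes property gives $|u|^2\le\Im u/\eta$. Also $|f(z',\alpha)|\ge c$ for $|\alpha|$ small, because $\Im F(0)\ge c$. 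Hence $|u|\ge c$, and so $\Im u\ge c$.
\item Then $|\int t(1+tu)^{-1}\pi_A(\dd t)|\le1/\Im u$ gives $|D_x(u)|\le C$, and the second identity yields $\Lambda(u)\le1-c$.
\end{itemize}
Therefore $|u-m|\le\delta/(1-\sqrt{\Lambda(u)\Lambda(m)})\le C\delta$. With this initialization in place, the rest of your argument goes through.
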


\subsection{Proof of the local laws}

We begin by recording some useful bounds from Lemma \ref{tildeMGRgS}. Denote
\begin{equation}
\nonumber
\begin{aligned}
\mathcal{E}_{12}&:=\mathcal{E}_{12}(z):=(m_{1}(z)-m_{1c}(z))-\frac{d_n}{z}(m_{2}(z)-m_{2c}(z))\int\frac{x^{2}\pi_{A}(\dd x)}{(1+xm_{2}(z))(1+xm_{2c}(z))},\\
\mathcal{E}_{21}&:=\mathcal{E}_{21}(z):=(m_{2}(z)-m_{2c}(z))-\frac{1}{z}(m_{1}(z)-m_{1c}(z))\int\frac{x^{2}\pi_{B}(\dd x)}{(1+xm_{1}(z))(1+xm_{1c}(z))},\\
\mathcal{E}_{01}&:=\mathcal{E}_{01}(z):=(m(z)-m_{c}(z))-\frac{1}{z}(m_1(z)-m_{1c}(z))\int\frac{x\pi_{A}(\dd x)}{(1+xm_{2}(z))(1+xm_{2c}(z))},\\
\mathcal{E}_1&:=\mathcal{E}_1(z):=zm_1(z)+d_n\int\frac{t\pi_A(\dd t)}{1+tm_2(z)},\quad
\mathcal{E}_2:=\mathcal{E}_2(z):=m_2(z)+z^{-1}\int\frac{x\pi_B(\dd x)}{1+xm_1(z)}.
\end{aligned}
\end{equation}
\begin{lemma}
\label{appliof4.2}
Under the assumptions of Lemma \ref{tildeMGRgS}, we have
$$|\mathcal{E}_{12}|+|\mathcal{E}_{21}|+|\mathcal{E}_{01}|+|\mathcal{E}_1|+|\mathcal{E}_2|\prec (1+\phi)^6(q+h(\phi))^2.$$
\end{lemma}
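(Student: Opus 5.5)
The plan is to extract each of $\mathcal E_1,\mathcal E_2,\mathcal E_{12},\mathcal E_{21},\mathcal E_{01}$ from a normalized trace of $G$ against a suitable deterministic matrix, and then apply Lemma \ref{tildeMGRgS}(2) to that trace. The starting point is the identity \eqref{GandMG}, $G=(\wt M(G)-I)R_{m_1,m_2}$. Write $\epsilon:=(1+\phi)^6(q+h(\phi))^2$ for the target error. For any deterministic $\Gamma$ with $\|\Gamma\|=\OO(1)$, Lemma \ref{tildeMGRgS}(2) gives
\[
\underline{\Gamma G}=-\underline{\Gamma R_{m_1,m_2}}+\OO_\prec(\epsilon).
\]
Here $R_{m_1,m_2}$ is block diagonal, with blocks $z^{-1}(1+m_2A)^{-1}$ and $z^{-1}(1+m_1B)^{-1}$.

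\emph{The $\mathcal E_1$ and $\mathcal E_2$ bounds.} Take $\Gamma$ equal to $\AA^2$ in the upper block, normalized by $(p+n)/n$. The left side becomes $m_1$ and the right side becomes $-\frac1{nz}\Tr A(1+m_2A)^{-1}=-\frac{d_n}{z}\int\frac{t\,\pi_A(\dd t)}{1+tm_2}$. Multiplying by $z$ (note $|z|\sim1$) gives $\mathcal E_1\prec\epsilon$. Taking instead $\Gamma=\BB^2$ in the lower block gives $\mathcal E_2\prec\epsilon$ in the same way. The normalized trace of the upper block against the identity gives $m=-\frac1z\int\frac{\pi_A(\dd t)}{1+tm_2}+\OO_\prec(\epsilon)$. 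For all this we need the hypothesis $\|R_{m_1,m_2}\|=\OO(1)$ with high probability, which is exactly the standing assumption of Lemma \ref{tildeMGRgS}.

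\emph{The difference bounds.} Subtract from these the exact deterministic equations \eqref{separa_m12} and \eqref{def_mc}, which say that $m_{1c},m_{2c},m_c$ satisfy the same relations with zero error (equivalently, \eqref{mcPi} with $\Pi=-R_{m_{1c},m_{2c}}$). Then use the resolvent identity
\[
\frac{t}{1+tm_2}-\frac{t}{1+tm_{2c}}=-\frac{t^2(m_2-m_{2c})}{(1+tm_2)(1+tm_{2c})}.
\]
For example, $m_1-m_{1c}=-\frac{d_n}{z}\int\bigl(\frac{t}{1+tm_2}-\frac{t}{1+tm_{2c}}\bigr)\pi_A(\dd t)+\OO_\prec(\epsilon)$, which is exactly $\mathcal E_{12}=\OO_\prec(\epsilon)$. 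The same computation with $B$ gives $\mathcal E_{21}$, and with weight $1$ in place of $t$ gives $\mathcal E_{01}$. If the sign convention in the paper's definition of $\mathcal E_{01}$ couples $m-m_c$ to $m_1-m_{1c}$ rather than to $m_2-m_{2c}$, I expect that to come from rewriting via the first equation; I would check the identity directly, and if needed use the trace of the upper block against $(1+m_{2c}A)^{-1}$ to relate the quantities.

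\emph{Main obstacle.} The delicate point is that $\Gamma R_{m_1,m_2}$ is random, so the traces above are not directly covered by Lemma \ref{tildeMG}. Lemma \ref{tildeMGRgS} handles this through its net argument: we apply it with $\Gamma$ deterministic and the random factor $R_{m_1,m_2}$ on the right, which is precisely the combination that lemma controls. A secondary point is uniform boundedness of the integrands $1/(1+tm_2)$. This follows from $\|R_{m_1,m_2}\|=\OO(1)$ with high probability, together with \eqref{Piii} for the deterministic factors.
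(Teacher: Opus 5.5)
Your proposal is correct and is essentially the paper's proof. The paper takes normalized traces of \eqref{GandMG} against $\AA^2$ and $\BB^2$ to get $\mathcal E_1,\mathcal E_2$, and of \eqref{GminusPi} against $\AA^2$, $\BB^2$, $\tilde I_p$ to get $\mathcal E_{12},\mathcal E_{21},\mathcal E_{01}$, invoking Lemma \ref{tildeMGRgS} each time. Since \eqref{GminusPi} is just \eqref{GandMG} minus the exact identity $\Pi=-R_{m_{1c},m_{2c}}$, this is the same as your step of subtracting the deterministic equations.

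Your computation of $\mathcal E_{01}$ is also the right one. The factor $m_1-m_{1c}$ in the paper's displayed definition is a typo for $m_2-m_{2c}$, and the trace against $\tilde I_p$ yields exactly your version. So drop the hedge about ``rewriting via the first equation'': the two versions differ by $z^{-1}\bigl[(m_2-m_{2c})-(m_1-m_{1c})\bigr]\int\frac{x\,\pi_A(\dd x)}{(1+xm_2)(1+xm_{2c})}$, which the lemma's hypotheses do not make small.
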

\begin{proof}
Multiplying both sides of (\ref{GminusPi}) by $\AA^2$ (resp. $\BB^2, \tilde{I}_{p}$) and taking traces, we obtain
$$|\mathcal{E}_{12}|+|\mathcal{E}_{21}|+|\mathcal{E}_{01}|\prec (1+\phi)^6(q+h(\phi))^2$$
by Lemma \ref{tildeMGRgS}. Similarly, multiplying both sides of (\ref{GandMG}) by $\AA^2$ (resp. $\BB^2$) and taking traces yields
$$|\mathcal{E}_{1}|+|\mathcal{E}_{2}|\prec (1+\phi)^6(q+h(\phi))^2$$
by Lemma \ref{tildeMGRgS}.
\end{proof}
Next, fix $E \in [\lambda_+-c_0,C_0 \lambda_+]$. We first establish the local law on the ``global scale" for $\eta$.

\begin{lemma}
\label{eta>>1}
Under the assumptions of Theorem \ref{local_law}, there exists a sufficiently large constant $C_1>0$ such that for all $\eta\ge C_1$, the following estimates hold:
$$|m_1(z)-m_{1c}(z)|+|m_2(z)-m_{2c}(z)|\prec q^2+(n\eta)^{-1}.$$
\end{lemma}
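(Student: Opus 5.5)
For $\eta\ge C_1$ we can bootstrap directly from trivial a priori bounds; no continuity argument is needed. First, $\|G\|\lesssim 1$ deterministically. Indeed, $\mathcal Q_\al\ge0$, so $\|\mathcal G_\al\|\le \eta^{-1}$. The off-diagonal blocks satisfy $z^{-1/2}\mathcal G_1 Y=\OO(\eta^{-1/2}\cdot\|Y\|\cdot|z|^{-1/2})$, and on the event $\|\mathcal Q_1\|\prec1$ of Lemma \ref{priori} this is $\OO(1)$ with high probability. Since also $\|\Pi\|=\OO(1)$ and $\Im m_{2c}\lesssim\eta^{-1}$ for $\eta\ge C_1$, the hypothesis $G-\Pi\prec\phi$ of Lemma \ref{tildeMG} holds with $\phi=C$. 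Then $h(\phi)\lesssim (n\eta)^{-1/2}$ and $\Theta\lesssim (n\eta)^{-1/2}$.

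We also need $\|R_{m_1,m_2}\|=\OO(1)$ with high probability, which is required in Lemma \ref{tildeMGRgS}. For $\eta\ge C_1$ we have $|m_1|,|m_2|\le C\eta^{-1}$, so $|1+m_2\sigma_i^a|\ge 1/2$ and $|1+m_1\sigma^b_\mu|\ge1/2$ once $C_1$ is large. Hence $\|R_{m_1,m_2}\|\le 2/|z|=\OO(1)$ deterministically. Lemma \ref{appliof4.2} then gives
$$|\mathcal E_{12}|+|\mathcal E_{21}|\prec q^2+(n\eta)^{-1}.$$

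Next, we solve the linear system encoded by $\mathcal E_{12},\mathcal E_{21}$. Write $\Delta_\al:=m_\al-m_{\al c}$. By definition,
$$\Delta_1=\frac{d_n}{z}I_A\,\Delta_2+\mathcal E_{12},\qquad \Delta_2=\frac1z I_B\,\Delta_1+\mathcal E_{21},$$
where $I_A=\int x^2(1+xm_2)^{-1}(1+xm_{2c})^{-1}\pi_A(\dd x)$ and $I_B$ is defined analogously. Both integrals are $\OO(1)$ by the lower bounds above and the same bounds for $m_{\al c}$. Substituting one equation into the other gives
$$\Delta_1\pb{1-d_nz^{-2}I_AI_B}=\mathcal E_{12}+d_nz^{-1}I_A\mathcal E_{21}.$$
For $|z|\ge\eta\ge C_1$ large, $|d_nz^{-2}I_AI_B|\le C C_1^{-2}\le 1/2$. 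Hence $|\Delta_1|\le 2(|\mathcal E_{12}|+C|\mathcal E_{21}|)$, and then $|\Delta_2|\le C|\Delta_1|+|\mathcal E_{21}|$. Both are $\prec q^2+(n\eta)^{-1}$.

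The only delicate point is that Lemma \ref{tildeMG} (through Theorem \ref{maintheorem}) was formulated for $z\in S(c_0,C_0,\e)$, which restricts $\eta\le C_0$. Since $C_0$ is fixed but arbitrary, we take $C_0\ge C_1$ so that the relevant $\eta$ lie in the domain. Alternatively, we note that the stochastic step never used an upper bound on $\eta$; it only used $\|\Pi\|=\OO(1)$ and Ward's identity, which both hold for large $\eta$. With this remark the argument above applies verbatim.
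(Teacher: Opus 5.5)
Your proposal is correct and follows the paper's proof. The steps match: the trivial large-$\eta$ a priori bounds $G-\Pi=\OO_\prec(1)$ and $\|R_{m_1,m_2}\|=\OO(1)$, then Lemma \ref{appliof4.2} with $\phi=\OO(1)$ giving $\mathcal E_{12},\mathcal E_{21}\prec q^2+(n\eta)^{-1}$, then solving the $2\times 2$ linear system whose coupling coefficients are $\OO(|z|^{-1})$; your handling of the domain restriction by taking $C_0\ge C_1$ is exactly what the paper does immediately after the lemma.
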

\begin{proof}
For $\eta\ge C_1$, we have the trivial bounds $G-\Pi=\OO_{\prec}(1)$ and $\|R_{m_1,m_2}\|=\OO(1)$. Hence, taking $\phi=1$ in Lemma \ref{appliof4.2} gives
\begin{equation}
\mathcal{E}_{12}=\OO_\prec(q^2+(n\eta)^{-1}),\quad  \mathcal{E}_{21}=\OO_\prec( q^2+(n\eta)^{-1}).\label{error12}
\end{equation} 
As $\eta\to+\infty$, we have $|1/z|\to 0$. Moreover, we claim that the integrals
$$\int\frac{x^{2}\pi_{A}(\dd x)}{(1+xm_{2}(z))(1+xm_{2c}(z))},\quad \int\frac{x^{2}\pi_{B}(\dd x)}{(1+xm_{1}(z))(1+xm_{1c}(z))},$$
which appear in $\mathcal{E}_{12},\mathcal{E}_{21}$, are bounded uniformly in $\eta$ and $C_1$. Indeed, for $x\in\supp\pi_{A}\cup\supp\pi_{B}$,  
\begin{itemize}
\item $x,x^{2}=\OO(1)$ by assumption (\ref{assm3});
\item $|1+xm_{1,2c}(z)|^{-1}=\OO(1)$ by \eqref{Piii};
\item $|1+xm_{1,2}(z)|^{-1}=\OO(1)$ since $|m_{1,2}(z)|=\OO(\eta^{-1})\to 0$ as $\eta\to+\infty$.
\end{itemize}
Therefore, the equations in \eqref{error12} can be written as
\begin{align*}
& (m_{1}(z)-m_{1c}(z))+a(z) (m_{2}(z)-m_{2c}(z)) =\OO_\prec(q^2+(n\eta)^{-1}),\\
& (m_{2}(z)-m_{2c}(z))+b(z) (m_{1}(z)-m_{1c}(z))  =\OO_\prec( q^2+(n\eta)^{-1}),
\end{align*} 
where $|a(z)|,|b(z)|\le C|z|^{-1}$ for some constant $C>0$ independent of $z$ and $C_1$. Solving this system of equations yields the desired bound on $|m_{1}(z)-m_{1c}(z)|+|m_{2}(z)-m_{2c}(z)|$.
\end{proof}

The following lemma provides a bound on $G-\Pi$ from a bound on $|m_{1}(z)-m_{1c}(z)|+|m_{2}(z)-m_{2c}(z)|$ and a rough estimate of $G-\Pi$.

\begin{lemma}
\label{avrtoani} 
At $z\in S(c_0,C_0,\epsilon)$, suppose that $\norm{\Pi}=\OO(1)$, $G-\Pi=\OO_{\prec}(n^{\delta})$ for $\delta=(\tau\wedge\e)/10$, and
$$|m_{1}(z)-m_{1c}(z)|+|m_{2}(z)-m_{2c}(z)|\prec\theta$$
for some control parameter $0<\theta\leq n^{-\delta}$. Then, at this $z$, we have
$$G-\Pi \prec \theta+q+\Psi ,$$ 
where $\Psi$ is defined in (\ref{eq_defpsi}).
\end{lemma}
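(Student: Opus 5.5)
The plan is to use the identity \eqref{GminusPi}, $G-\Pi=\wt M(G)R_{m_1,m_2}-R_{m_1,m_2}+R_{m_{1c},m_{2c}}$, and to treat its two parts separately: the first term is a stochastic error, the last two form a deterministic-looking difference controlled by $|m_{1,2}-m_{1c,2c}|$.

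First, $R_{m_1,m_2}$ is bounded. Since $\Pi=-R_{m_{1c},m_{2c}}$ and $\|\Pi\|=\OO(1)$, \eqref{Piii} gives $\min_i|1+m_{2c}\sigma_i^a|\ge\tau'$ and $\min_\mu|1+m_{1c}\sigma_\mu^b|\ge\tau'$. On $S(c_0,C_0,\epsilon)$ we have $|z|\sim1$, and $|m_\al-m_{\al c}|\prec\theta\le n^{-\delta}$. Hence, with high probability, $|1+m_2\sigma_i^a|\ge\tau'/2$ and $|1+m_1\sigma_\mu^b|\ge\tau'/2$, so $\|R_{m_1,m_2}\|=\OO(1)$ with high probability. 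The same resolvent identity as \eqref{residofR} then gives
\[
R_{m_{1c},m_{2c}}-R_{m_1,m_2}=R_{m_{1c},m_{2c}}\begin{pmatrix} z(m_2-m_{2c})A&0\\0&z(m_1-m_{1c})B\end{pmatrix}R_{m_1,m_2},
\]
whose operator norm is $\prec\theta$. So the deterministic part of \eqref{GminusPi} contributes $\OO_\prec(\theta)$ in the weak operator sense.

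Second, for the error term $\wt M(G)R_{m_1,m_2}$, apply Lemma \ref{tildeMGRgS} with $\phi=n^{\delta}$; the hypothesis $G-\Pi\prec n^\delta$ is given, and the boundedness of $R_{m_1,m_2}$ was just shown. This yields the crude bound $G-\Pi\prec\theta+(1+n^\delta)^3(q+h(n^\delta))$. Because $\delta$ is small relative to $\epsilon$ and $\tau$, this is at most $n^{-c}$ for some constant $c>0$: we have $n\eta\ge n^{\epsilon}$, $q\le n^{-\tau}$, and $h(n^\delta)^2\le (1+n^{\delta})/(n\eta)$.

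Third, bootstrap. Feed the improved bound back in with $\phi_1:=\theta+(1+n^\delta)^3(q+h(n^\delta))\le 1$. Now $(1+\phi_1)^3=\OO(1)$, so Lemma \ref{tildeMGRgS} gives
\[
G-\Pi\prec\theta+q+h(\phi_1),\qquad h(\phi_1)\le\sqrt{\frac{\Im m_{2c}}{n\eta}}+\sqrt{\frac{\phi_1}{n\eta}}\le\Psi+\sqrt{\frac{\phi_1}{n\eta}}.
\]
Using $\sqrt{\phi/(n\eta)}\le n^{-\epsilon/2}\sqrt{\phi}$, each iteration $\phi_{k+1}=\theta+q+\Psi+n^{-\epsilon/2}\sqrt{\phi_k}$ (up to $n^{o(1)}$ factors absorbed by $\prec$) reduces the excess. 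More precisely, $\sqrt{\phi/(n\eta)}\le \phi/2+1/(2n\eta)\le\phi/2+\Psi$, and iterating a bounded number of times (depending only on $\epsilon$) drives $\phi_k$ down to $C(\theta+q+\Psi)$. Alternatively, one can use $\sqrt{\phi_k/(n\eta)}\le n^{-\epsilon/2}\phi_k^{1/2}$ repeatedly until $\phi_k\lesssim\theta+q+\Psi$. Since only finitely many steps are needed, stochastic domination is preserved at each step.

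The main obstacle is the justification of applying Lemma \ref{tildeMGRgS} with a random $R$ and a deterministic but iterated $\phi$. That is handled by the lemma itself together with the high-probability bound on $\|R_{m_1,m_2}\|$. Care is also needed that the required $\phi$ ranges in $[n^{-1},n^\delta]$; if $\phi_k$ falls below $n^{-1}$, simply replace it by $n^{-1}\le\Psi$.
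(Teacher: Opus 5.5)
Your overall scheme is the paper's. You bound $\|R_{m_1,m_2}\|$ with high probability (your direct argument through \eqref{Piii} and $|m_\alpha-m_{\alpha c}|\prec n^{-\delta}$ is fine) and control $R_{m_{1c},m_{2c}}-R_{m_1,m_2}$ by $\OO_\prec(\theta)$. You then combine \eqref{GminusPi} with Lemma \ref{tildeMGRgS} to get the self-improving implication $G-\Pi\prec\phi\Rightarrow G-\Pi\prec\theta+(1+\phi)^3(q+h(\phi))$, started at $\phi_0=n^{\delta}$.

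The gap is in the final bootstrap. Neither iteration you describe reaches $\wt\Psi:=\theta+q+\Psi$ in a number of steps independent of $n$. You need that, because each $\prec$-implication hides an $n^{\varepsilon}$ loss and an $n_0(\varepsilon,D)$ threshold, so such implications can only be chained a fixed number of times.

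\textbf{The AM--GM iteration.} After your step $\sqrt{\phi/(n\eta)}\le\phi/2+\Psi$, the recursion is linear with a contraction factor of order one, roughly $\phi_{k+1}\le C\wt\Psi+\phi_k/2$. Contracting by a fixed factor from $\phi_1\le n^{-c}$ down to $\wt\Psi$ takes about $\log n$ steps, not a number depending only on $\epsilon$. This matters because $\wt\Psi$ can be of order $n^{-1/2}$, e.g.\ when $\eta\sim1$, $q=n^{-1/2}$ and $\theta\le n^{-1/2}$, which occurs in the paper's own use of the lemma.

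\textbf{The alternative iteration.} Replacing $(n\eta)^{-1/2}$ by $n^{-\epsilon/2}$ discards the relation $(n\eta)^{-1}\le\Psi$. The map $\phi\mapsto\wt\Psi+n^{-\epsilon/2}\sqrt{\phi}$ has its fixed point at order $\wt\Psi+n^{-\epsilon}$. So this iteration stalls at $n^{-\epsilon}$ whenever $\wt\Psi\ll n^{-\epsilon}$.

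\textbf{The repair.} The paper keeps the factor $(n\eta)^{-1/2}$ and iterates the square root. Since $\wt\Psi\ge\Psi\ge(n\eta)^{-1}$, you have $\sqrt{\wt\Psi/(n\eta)}\le\wt\Psi$. Hence $\phi_{k+1}\lesssim\wt\Psi+\sqrt{\phi_k/(n\eta)}$ gives, for any fixed $t$ and once $\phi_k\lesssim1$, $\phi_{k+t}\lesssim\wt\Psi+\phi_k^{2^{-t}}(n\eta)^{-1+2^{-t}}\lesssim\wt\Psi+n^{2^{-t}}(n\eta)^{-1}$. Choosing $t$ with $2^{-t}$ below a prescribed $\varepsilon'$ yields $G-\Pi\prec n^{\varepsilon'}\wt\Psi$ for every $\varepsilon'>0$, that is, $G-\Pi\prec\wt\Psi$.

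Equivalently, you can use the weighted AM--GM bound $\sqrt{\phi/(n\eta)}\le n^{-c}\phi+n^{c}(n\eta)^{-1}$. Each step then contracts by $n^{-c}$, so $\lceil 1/(2c)\rceil$ steps suffice, using $\wt\Psi\ge q\ge n^{-1/2}$. The extra factor $n^{c}$ is absorbed by $\prec$ because $c$ is arbitrary.
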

\begin{proof}
As in (\ref{residofR}), we have the resolvent identity
\begin{equation}
\label{residofRc}
R_{m_1,m_2}=R_{m_{1c},m_{2c}}+R_{m_{1},m_{2}}
\begin{pmatrix}
   z(m_{2c} - m_{2})A & 0   \\
   0 & z(m_{1c} - m_{1})B  \\
   \end{pmatrix}
R_{m_{1c},m_{2c}}.
\end{equation}
Combining this identity with $\norm{R_{m_{1c},m_{2c}}}=\norm{\Pi}=\OO(1)$ and $|m_{1}(z)-m_{1c}(z)|+|m_{2}(z)-m_{2c}(z)|\prec\theta\leq n^{-\delta}$, we deduce that $\norm{R_{m_1,m_2}}=\OO(1)$ with high probability. 
Applying Lemma \ref{tildeMGRgS}, we then obtain from (\ref{GminusPi}) the self-improving bound
\begin{equation}
G-\Pi \prec \phi \quad\Longrightarrow\quad G-\Pi \prec \theta+(1+\phi)^{3}(q+h(\phi)) .\label{eq:self_improve}
\end{equation}
Let $\phi_0=n^{\delta}$ and define a sequence of parameters iteratively by $\phi_{k+1}=\theta+(1+\phi_{k})^{3}(q+h(\phi_{k})).$ Note that $G-\Pi=\OO_\prec(\phi_k)$ for any fixed $k\geq 1$ by \eqref{eq:self_improve}.
For $z=E+\mathrm i\eta\in S(c_0,C_0,\e)$, we have $\phi_1\lesssim 1+n^{3\delta}\sqrt{n^{\delta}/(n\eta)}\lesssim 1$. Hence, for all $k\geq 1$, we have $\phi_k\lesssim 1$ and
\begin{equation}
\nonumber
\begin{aligned}
\phi_{k+1}&=\theta+(1+\phi_k)^{3}\left(q+\sqrt{\frac{\Im m_{2c}(z)+\phi_k}{n\eta}}\right) \lesssim \theta+q+\sqrt{\frac{\Im m_{2c}(z)}{n\eta}}+\sqrt{\frac{\phi_k}{n\eta}}.
\end{aligned}
\end{equation}
Set $\wt\Psi:=\theta+q+\Psi.$ Then $\phi_{k+1}\lesssim \wt\Psi+\sqrt{{\phi_k}/\p{n\eta}}$, and
\begin{equation}
\nonumber
\begin{aligned}
\phi_{k+2}&\lesssim \wt\Psi+\sqrt{\frac{\phi_{k+1}}{n\eta}}\lesssim \wt\Psi+\sqrt{\frac{\wt\Psi+\sqrt{{\phi_k}/\p{n\eta}}}{n\eta}} \lesssim 
\wt\Psi+\frac{\phi_{k}^{1/4}}{(n\eta)^{3/4}}.
\end{aligned}
\end{equation}
Continuing this iteration, we deduce that for any fixed $t\geq 1$,
$$\phi_{k+t}\lesssim \wt\Psi+\frac{\phi_{k}^{2^{-t}}}{(n\eta)^{1-2^{-t}}}.$$
Since $t$ can be arbitrarily large, we conclude that $G-\Pi \prec \wt\Psi+\p{n\eta}^{-1} \lesssim \wt\Psi.$
\end{proof}

Next, fix $E \in [\lambda_+-c_0,C_0 \lambda_+]$. We prove the anisotropic local law (\ref{anisotropic}) and the averaged local law (\ref{average}) for $z \in \cal L:=\{z=E+\ii \eta:\eta>0\} \cap S(c_0,C_0,\epsilon)$. By Lemma \ref{eta>>1}, there exists a constant $C_1>1$ such that (\ref{average}) holds for $\eta\ge C_1$. Enlarging $C_0$ if necessary, we may assume without loss of generality that $C_0>C_1$. Define $\mathcal{L}_0\equiv \cal L_0(E):=\{z=E+\ii \eta:\eta\ge C_1\}\cap S(c_0,C_0,\epsilon)$ and
$$\mathcal{L}_k\equiv \mathcal{L}_k(E):=\{z=E+\ii \eta:C_1n^{-2\delta k/3}\le \eta\le C_1n^{-2\delta(k-1)/3}\}  \cap S(c_0,C_0,\epsilon),\quad 1\leq k\leq K:=\lceil3/2\delta\rceil.$$
Clearly, $\cal L\subset \cup_{k=0}^{K}\mathcal{L}_{k}$. 
We proceed by induction on $k$. For $z\in\mathcal{L}_0$, (\ref{average}) follows from Lemma \ref{eta>>1}, while (\ref{anisotropic}) follows from Lemma \ref{avrtoani} with $\theta=q^2+(n\eta)^{-1}$, since trivially $G-\Pi=\OO(1)$ for $z\in \cal L_0$. The induction step is given by the following lemma, which completes the proof of the local laws \eqref{anisotropic} and \eqref{average} for all $z\in S(c_0,C_0,\e)$.
\begin{lemma}\label{lemma:inductive_local}
Fix $1\leq k\leq K$. If (\ref{anisotropic}) and (\ref{average}) hold for all $z\in\mathcal{L}_{k-1}$, then (\ref{anisotropic}) and (\ref{average}) hold uniformly for all $z\in\mathcal{L}_{k}$.
\end{lemma}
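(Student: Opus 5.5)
Fix $z=E+\ii\eta\in\mathcal L_k$ and let $z'=E+\ii\eta'$ with $\eta'=\eta\, n^{2\delta/3}\wedge C_1 n^{-2\delta(k-2)/3}$, so that $z'\in\mathcal L_{k-1}$ and $\eta'/\eta\le n^{2\delta/3}$.

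\textbf{Step 1: a priori bound on $\mathcal L_k$.} The induction hypothesis \eqref{anisotropic} at $z'$ gives $\norm{G(z')}\prec 1$. This holds in operator norm, since $\norm{G(z')-\Pi(z')}$ can be controlled by $\norm{\Pi}=\OO(1)$ together with \eqref{anisotropic} on a net and the trivial Lipschitz bound. A rougher route is to note that $\norm{G}\le C\max(\norm{\mathcal G_1},\dots)$ and apply Lemma \ref{monocity} entrywise through the spectral decomposition. In either case, Lemma \ref{monocity} then yields
\[
\norm{G(z)}\le (\eta'/\eta)^{3/2}\norm{G(z')}\prec n^{\delta},
\]
so $G-\Pi=\OO_\prec(n^\delta)$ at $z$. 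All of this is uniform over a lattice of $z$ with spacing $n^{-10}$, and Lipschitz continuity of $G$ in $z$ (with constant $\OO(n^2)$) extends it to all of $\mathcal L_k$.

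\textbf{Step 2: the averaged law.} Take $\phi=n^\delta$. The difficulty is that Lemma \ref{appliof4.2} requires $\norm{R_{m_1,m_2}}=\OO(1)$ with high probability, which is not yet known at $z$. I would get it by continuity. Write
\[
|m_\alpha(z)-m_\alpha(z')|\le \eta'\sup|\partial_\eta m_\alpha|,
\]
and use $\partial_\eta m_\alpha = \OO(\Im m_\alpha/\eta)$, which makes $m_\alpha$ move by a bounded factor but not necessarily by a small amount. Instead, I will not run a continuity argument directly. I would use the $\mathcal E_1,\mathcal E_2$ bounds in the form that does not need $R$. Equation \eqref{GandMG} is exact, and Lemma \ref{tildeMG} only needs the $\phi$-bound. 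So $\mathcal E_2$, after substituting $\mathcal E_1$, gives
\[
|f(z,m_2)|\prec n^{6\delta}\bigl(q+h(n^\delta)\bigr)^2 ,
\]
provided $|1+xm_1|$ stays bounded below. That lower bound I would obtain from the $\OO(1)$ lower bound on $\Im$ and continuity in $\eta$ along $L(z)$, since for a Stieltjes-transform-type quantity moving $\eta$ by a factor $n^{2\delta/3}$ changes it in a controlled way.

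Next I would feed $\delta(z)=n^{c\delta}\bigl(q^2+ (n\eta)^{-1}\bigr)$, modified to be monotone in $\eta$, into Lemma \ref{stability}. The hypothesis \eqref{Stability0} holds on all of $L(z)$: on the part lying in $\mathcal L_{k-1}$ it follows from the induction hypothesis, and at $z$ from the previous display. This gives $|m_2-m_{2c}|\prec n^{c\delta}\bigl(q+(n\eta)^{-1/2}\bigr)\le n^{-\delta}$, using $q\le n^{-\tau}$ and $\eta\ge n^{-1+\epsilon}$. In particular $\norm{R_{m_1,m_2}}=\OO(1)$ now follows from \eqref{residofRc}.

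\textbf{Step 3: bootstrap and conclude.} Lemma \ref{avrtoani} with $\theta=n^{c\delta}(\dots)$ improves the bound to $G-\Pi\prec q+\Psi+\theta\lesssim 1$. Setting $\phi=q+\Psi+\theta$ and rerunning Lemma \ref{appliof4.2} gives error $(q+h(\phi))^2\lesssim q^2+\frac{\Im m_{2c}}{n\eta}+\frac{\phi}{n\eta}$. Lemma \ref{stability} then produces $\frac{\delta}{\sqrt{\kappa+\eta+\delta}}$, which is bounded by $\frac{q^2}{\sqrt{\kappa+\eta}}\wedge q+\frac{1}{n\eta}$ via the standard computation using $\Im m_{2c}\lesssim\sqrt{\kappa+\eta}$. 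Iterating this finitely many times removes the $n^{c\delta}$ losses and yields \eqref{average} for $m_1,m_2$. The bounds for $m$ follow from $\mathcal E_{12}$ and $\mathcal E_{01}$, and \eqref{anisotropic} from Lemma \ref{avrtoani}. Uniformity in $z$ comes from a union bound over an $n^{-10}$-net together with Lipschitz continuity.

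The main obstacle is Step 2: establishing, at the new scale, the lower bound on $|1+xm_{1,2}|$, equivalently $\norm{R_{m_1,m_2}}=\OO(1)$, before closeness of $m_\alpha$ to $m_{\alpha c}$ is known. I would first try to derive it from the induction hypothesis at $z'$ together with the monotonicity of $\eta\Im m_\alpha(E+\ii\eta)$ and Lemma \ref{monocity}.
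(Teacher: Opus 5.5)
Your Steps 1 and 3 mirror the paper: the a priori bound \eqref{roughG}, then the self-improving loop through Lemma \ref{appliof4.2}, Lemma \ref{stability} and Lemma \ref{avrtoani}. Step 2, however, has a genuine gap, exactly at the point you flag as the main obstacle.

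To bound $f(z,m_2)$ via \eqref{f(z,m_2(z))} and Lemma \ref{appliof4.2}, you need $\norm{R_{m_1,m_2}}=\OO(1)$. That is, $|1+xm_1|$ and $|1+xm_2|$ must be bounded below on the spectral supports, at every lattice point of $\mathcal L_k$, \emph{before} you know $m_\alpha\approx m_{\alpha c}$ there. No form of $\mathcal E_1,\mathcal E_2$ avoids this. They come from \eqref{GandMG}, i.e.\ after multiplying $\wt M(G)-I$ by $R_{m_1,m_2}$, and the integral in \eqref{f(z,m_2(z))} contains $(1+xm_1)^{-1}$. Your proviso ``provided $|1+xm_1|$ stays bounded below'' is the same condition.

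The tools you propose cannot deliver it:
\begin{itemize}
\item Monotonicity of $\eta\Im m_\alpha$ and Lemma \ref{monocity} give only upper bounds. Combined with Cauchy--Schwarz they yield $|m_1(z)-m_1(z')|\le(\eta'/\eta)\Im m_1(z')$, which is not small, e.g.\ in the bulk where $\Im m_{1c}\sim\sqrt{\kappa+\eta}$.
\item There is no $\OO(1)$ lower bound on $\Im m_\alpha$ either. For $E\ge\lambda_+$ one has $\Im m_{\alpha c}\sim\eta/\sqrt{\kappa+\eta}$, and near the edge the lower bound on $|1+xm_{1c}|$ comes from the real part via \eqref{assm_gap}.
\end{itemize}
Eigenvalue configurations invisible at scale $\eta'$ (clustering or depletion near $E$ at intermediate scales) genuinely move $m_\alpha(z)$ by non-small amounts. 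So nothing based only on information at scale $\eta'$ can work, and your conclusion $|m_2-m_{2c}|\le n^{-\delta}$ in Step 2 is circular.

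The missing idea is a bootstrap along the fine lattice, which is what the paper does. You discarded continuity because you compared $z$ with $z'$ directly. Instead, proceed as follows.
\begin{itemize}
\item Let $z_1,\dots,z_l$ be the $n^{-10}$-lattice points of $\mathcal L_k$ in decreasing order of $\eta$, and set $z_0:=z_1+\ii n^{-10}\in\mathcal L_{k-1}$.
\item Let $A_j$ be the event $|m_1(z_j)-m_{1c}(z_j)|+|m_2(z_j)-m_{2c}(z_j)|\le n^{\fc}\alpha_j/\sqrt{\kappa+\eta_j+\alpha_j}$, with $\alpha_j=q+(n\eta_j)^{-1/2}$. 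The induction hypothesis gives $A_0$.
\item On $A_{j-1}$, the $\OO(n^2)$-Lipschitz bounds move $m_\alpha$ and $m_{\alpha c}$ by only $\OO(n^{-8})$, so $|m_\alpha(z_j)-m_{\alpha c}(z_j)|\le n^{-\delta}$. This gives $\norm{R_{m_1,m_2}(z_j)}=\OO(1)$ and bounded integrals in \eqref{f(z,m_2(z))}.
\item The net argument of Lemma \ref{tildeMGRgS} works on any event where $\norm{R_{m_1,m_2}}=\OO(1)$. Hence a single event of probability at least $1-n^{-D}$ controls $\mathcal E_1,\mathcal E_2,\mathcal E_{12}$ (with $\phi=n^\delta$) at all such $z_i$.
\item Lemma \ref{stability}, whose hypothesis is now available at $z_1,\dots,z_j$ and above, then returns the strictly better bound $A_j$.
\end{itemize}
Because $l\sim n^{10}$, this induction must be run deterministically on events with $\fc$ and $D$ fixed, not by iterating $\prec$. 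The output is the weak law \eqref{weaklocal}, which is precisely the input your Step 3 needs.

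A smaller point in Step 1: $\norm{G(z')}\prec1$ in operator norm is false in the bulk, where $\norm{G(z')}\gtrsim1/\eta'$. A net of the unit sphere has exponential cardinality, so \eqref{anisotropic} cannot be upgraded that way. What you actually need is $G_{\uu\vv}\prec n^{\delta}$ on $\mathcal L_k$ for deterministic $\uu,\vv$. This follows from the entrywise version of the monotonicity argument, applied to the maximum of $|G_{\uu\vv}|$ over a polynomial-size family of vectors containing the diagonal pairs. That is how the paper's appeal to Lemma \ref{monocity} should be read.
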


\begin{proof} 
Let $b_k:=Cn^{-2\delta(k-1)/3}$ denote the upper edge of $\mathcal{L}_k$. By the inductive hypothesis, we have $G\prec 1$ at $z=E+\ii b_k$. Together with Lemma \ref{monocity}, this implies that, for any deterministic unit vectors $\uu,\vv\in\bf S$, 
\begin{equation}
\sup_{z\in \mathcal{L}_k} |G_{\uu\vv}(z)| \prec n^{\delta}. \label{roughG}
\end{equation}
We first establish the weak local law
\begin{equation}
|m_{1}(z)-m_{1c}(z)|+|m_{2}(z)-m_{2c}(z)|\prec q^{1/2}+(n\eta)^{-1/4},\quad \forall z\in \cal L_k.\label{weaklocal}
\end{equation}
Recall the discrete set defined in \eqref{eqn_def_L}. Let $L(E+\ii b_{k+1})\cap\mathcal{L}_k=\{z_1,\cdots,z_l\}$ with $z_j=E+\ii\eta_j$, where $\eta_1>\cdots>\eta_l$. Since $m_1,m_{1c},m_2,m_{2c},G_{ab},\Pi_{ab} $ are all Lipschitz continuous with Lipschitz constant $\OO(n^2)$, it suffices to prove the weak local law for $z\in \{z_1,\cdots,z_l\}$.
We then use a standard continuity argument to propagate the local law from $z_1$ to $z_l$. Since $l$ grows with $n$, this argument must be carried out deterministically, rather than through an iteration in the sense of stochastic domination. Thus, we suppress the stochastic domination notation, fix the constants $\eps,D>0$ in its definition, and keep track of the probability loss at each downward step. Since $l\leq Cn^{10}$, the total probability loss is controlled by a union bound and is negligible.

Set $z_0:=E+\ii\eta_0:=z_1+n^{-10}\ii \in\mathcal{L}_{k-1}$. Fix a small constant $\fc \in (0,  \p{\tau\wedge\epsilon}/{10})$ and a large constant $D>0$. Define the events
$$A_j:=\left\{|m_{1}(z_j)-m_{1c}(z_j)|+|m_{2}(z_j)-m_{2c}(z_j)|\leq \frac{n^{\fc} \alpha_j}{\sqrt{\kappa+\eta_{j}+\alpha_{j}}}\right\}, j=0,\ldots, l,$$
where $\alpha_{j}:= q+(n\eta_j)^{-1/2}$. By the inductive hypothesis, the local law \eqref{average} holds at $z_0$, which gives $\prob\qb{A_0^{c}}\leq n^{-D}$ for sufficiently large $n$. 
By Lemma \ref{appliof4.2} with $\phi=n^\delta$ (due to the rough bound \eqref{roughG}), we have
\begin{equation}
\label{ToEstimatef}
|\mathcal{E}_1|+|\mathcal{E}_2|+|\mathcal{E}_{12}|\prec(1+\phi)^{6}(q+h(\phi))^2\prec q+(n\eta)^{-1/2}
\end{equation}
uniformly in $z\in\mathcal{L}_k$. Thus, by a union bound, 
$$\prob\qb{E^c}\leq n^{-D},\quad E:=\cap_{i=1}^{l}\{|\mathcal{E}_1(z_i)|+|\mathcal{E}_2(z_i)|+|\mathcal{E}_{12}(z_i)|\leq n^{\fc/4}(q+(n\eta_i)^{-1/2})\}$$
holds for sufficiently large $n$. We next prove that on the event $E$, $A_0$ implies $\cap_{i=0}^lA_i$.

Fix any $1\leq j\leq l$. We note that
\begin{align}
f(z,m_2(z))&=\int\frac{x\pi_B(\dd x)}{-z+xd_n\int\frac{t\pi_A(\dd t)}{1+tm_2(z)}}-m_2(z) =\int\frac{x\pi_B(\dd x)}{-z+x(\mathcal{E}_1-zm_1(z))}+z^{-1}\int\frac{x\pi_B(\dd x)}{1+xm_1(z)}-\mathcal{E}_2 \nonumber\\
&=-\mathcal{E}_1\int\frac{x^2\pi_B(\dd x)}{z^2(1+xm_1(z))(1+xm_1(z)-x\mathcal{E}_1/z)}-\mathcal{E}_2.
\label{f(z,m_2(z))}
\end{align}
The integral on the right-hand side of \eqref{f(z,m_2(z))} is bounded by $\OO(1)$ for $z_{1},\cdots,z_{j}$ on the event $(\cap_{i=0}^{j-1}A_i)\cap E$. Indeed, for $x\in\supp\pi_{B}$, we have:
\begin{itemize}
\item $x,x^{2}=\OO(1)$ by assumption (\ref{assm3}).
\item $|1+xm_{1c}(z)|\geq\tau'>0$ by \eqref{Piii}.
\item On $A_{j-1}$, Lipschitz continuity gives $|m_1(z_j)-m_{1c}(z_j)|+|m_2(z_j)-m_{2c}(z_j)|\leq n^{-\delta}$.  
\end{itemize}
Thus, on the event $(\cap_{i=0}^{j-1}A_i)\cap E$, we have
$$
  f(z,m_2(z))\leq n^{\fc/2}(q+(n\eta)^{-1/2}) ,\quad \forall z\in L(z_j).
$$
By Lemma \ref{stability} with $\delta(z)=n^{\fc/2}(q+(n\eta)^{-1/2})$, we obtain that
$$ |m_2(z_j)-m_{2c}(z_j)|\leq \frac{C n^{\fc/2}(q+(n\eta_j)^{-1/2}))}{\sqrt{\kappa+\eta_{j}+n^{\fc/2}(q+(n\eta_j)^{-1/2})}} \leq \frac{n^{2\fc/3}\alpha_j}{\sqrt{\kappa+\eta_{j}+\alpha_j}}.
$$
Together with the bound $|\mathcal{E}_{12}(z_j)|\le n^{\fc/4}\alpha_j$ on the event $E$, this implies that
$$
  |m_1(z_j)-m_{1c}(z_j)|\lesssim \frac{n^{2\fc/3}\alpha_j}{\sqrt{\kappa+\eta_{j}+\alpha_j}},
$$
because the integral appearing in $\mathcal{E}_{12}$ is bounded by $\OO(1)$ for the same reasons as above. Therefore, on the event $E$, $\cap_{i=0}^{j-1}A_i$ implies $\cap_{i=0}^jA_i$.
By induction, we conclude that 
\[\prob\qb{(\cap_{i=0}^lA_i)^c}\leq\prob\qb{(E\cap A_0)^c}\leq 2n^{-D}\]
for sufficiently large $n$. Since $\fc$ can be chosen arbitrarily small and $D$ arbitrarily large, we get \eqref{weaklocal}.

Finally, we upgrade the weak local law \eqref{weaklocal} to the strong one through a self-improving argument. Fix $z=E+\ii\eta \in \cal L_k$, and write
$$
\mathcal E\equiv \mathcal E(z):=\frac{q^2}{\sqrt{\kappa+\eta}}\wedge q+\frac{1}{n\eta}.
$$ 
Let $\mathcal E\leq\theta\leq  q^{1/2}+(n\eta)^{-1/4}$ be a deterministic control parameter such that $|m_{1}(z)-m_{1c}(z)|+|m_{2}(z)-m_{2c}(z)|\prec \theta$ for $z\in \mathcal{L}_k.$ 
Lemma \ref{avrtoani} gives $G-\Pi \prec \theta+q+\Psi $. Hence, by (\ref{f(z,m_2(z))}) and Lemma \ref{appliof4.2} with $\phi=\theta+q+\Psi\le 1$, we have
\begin{equation}
f(z,m_2(z))\prec q^2+[h(\theta+q+\Psi)]^2 \prec
q^2+\frac{\Im m_{2c}(z)}{n\eta}+\frac{\theta+q}{n\eta}+\frac{1}{(n\eta)^2} ,\quad \forall z\in \mathcal{L}_{k},\label{eq:self_f}
\end{equation}
Applying the AM-GM inequality to $q/(n\eta)$ and using $\Im m_{2c}(z)\lesssim\sqrt{\kappa+\eta}$ from \eqref{Immc}, we bound the right-hand side by $\OO(\delta(z))$, where
\[\delta(z):=q^2+\frac{\sqrt{\kappa+\eta}}{n\eta}+\frac{\theta}{n\eta}+\frac{1}{(n\eta)^2}.\] 
By the inductive hypothesis, the self-consistent equation \eqref{eq:self_f} also holds for $z\in\cup_{i=1}^{k-1}\mathcal{L}_i$. Applying Lemma \ref{stability} again with the above $\delta(z)$, we obtain
$$
 |m_2(z)-m_{2c}(z)|\prec \frac{\delta(z)}{\sqrt{\kappa+\eta+\delta(z)}}
 \lesssim \mathcal E+\sqrt{\frac{\theta}{n\eta}}.
$$
Moreover, since $\mathcal{E}_{12}\prec q^2+[h(\theta+q+\Psi)]^2\lesssim \mathcal E$ and the weak local law \eqref{weaklocal} ensures that the integral appearing in $\mathcal{E}_{12}$ is bounded by $\OO(1)$ with high probability (by the same argument as below \eqref{f(z,m_2(z))}), we also get
\[|m_1(z)-m_{1c}(z)|\prec \mathcal E+\sqrt{\frac{\theta}{n\eta}}.\]
Thus, we have obtained the self-improving bound
$$|m_{1}(z)-m_{1c}(z)|+|m_{2}(z)-m_{2c}(z)|\prec\theta\Longrightarrow|m_{1}(z)-m_{1c}(z)|+|m_{2}(z)-m_{2c}(z)|\prec \mathcal E+\sqrt{\frac{\theta}{n\eta}}.$$
Iterating this bound starting from the weak law, for an arbitrarily large fixed number of steps, yields
$$|m_{1}(z)-m_{1c}(z)|+|m_{2}(z)-m_{2c}(z)|\lesssim \mathcal E+\frac{1}{n\eta}\prec \mathcal E.$$
Since $\mathcal{E}_{01}\prec q^2+[h(q+\Psi)]^2\lesssim \mathcal E$, we also obtain $|m(z)-m_c(z)|\prec \mathcal E$, which proves the averaged local law (\ref{average}).
Finally, applying Lemma \ref{avrtoani} with $\theta=\mathcal E$, we conclude that $G-\Pi \prec q+\Psi $, which proves the anisotropic law (\ref{anisotropic}). 
\end{proof}

\section{Proof of Theorem \ref{distribution of outliers}}\label{sec:pf_outlier_distribution}

This section proves Theorem \ref{distribution of outliers}. Recall that the arbitrarily high moment condition \eqref{eq_highmoment} is assumed throughout this section, so by Lemma \ref{momentmethod} we may take $q=n^{-1/2}$.

\subsection{Reduction to functionals of resolvents}

As in \eqref{linearize_block}, we define the following linearization of the spiked separable covariance matrices $\ctQ_{1,2}$ in \eqref{eq_sepamodel}:
\begin{equation*}
\widetilde{H}(X,z)= z^{1/2}\begin{pmatrix} 
0 & \wt A^{1/2} X \wt B^{1/2} \\
\wt B^{1/2} X^* \wt A^{1/2} & 0 
\end{pmatrix}, \quad z\in \mathbb C_+ \cup \mathbb R.
\end{equation*}
The nonzero eigenvalues of $z^{-1/2}\wt H$ are given by 
$$\pm \sqrt{\lambda_1(\wt Q_1)}, \ \pm \sqrt{\lambda_2(\ctQ_1)}, \ \cdots \ , \ \pm \sqrt{\lambda_{p\wedge n}(\ctQ_1)}.$$
Hence, $x>0$ is an eigenvalue of $\ctQ_1$ if and only if
\be\label{detH}
\det\left(\wt H(X,x) - x\right)=0.
\ee
With the notation in (\ref{AOBO}) and (\ref{eq_sepamodel}), we can write
\begin{equation}\label{Pepsilon0}
\widetilde{H}(X,z) = P H(X,z)P, 
\end{equation}
where
$$P:=\begin{pmatrix} 
\left(1+ V_o^a D^a (V_o^a)^*\right)^{1/2}  & 0  \\
0 & \left(1+ V_o^b D^b (V_o^b)^*\right)^{1/2} 
\end{pmatrix}.$$
Introduce the $ (p+n) \times (r+s)$ matrix $\mathbf U$ and the  $(r+s) \times (r+s)$ diagonal matrix $\mathcal{D}$ by
\begin{equation}\label{Depsilon0}
\mathbf{U}=
\begin{pmatrix}
V_{o}^a & 0 \\
0 & V_{o}^b
\end{pmatrix}, \quad \mathcal{D}=
\begin{pmatrix}
D^a(D^a+1)^{-1} & 0 \\
0 & D^b(D^b+1)^{-1}
\end{pmatrix}.
\end{equation}
The next lemma gives the master equation for the locations of the outlier eigenvalues.

\begin{lemma}\label{lem_pertubation} 
If $x\ne 0$ is not an eigenvalue of $\mathcal Q_1$, then it is an eigenvalue of $\widetilde{\mathcal Q}_1$ if and only if
\begin{equation}\label{masterx}
\det\left(\mathcal{D}^{-1}+x \mathbf U^* G(x) \mathbf U\right)=0.
\end{equation} 
\end{lemma}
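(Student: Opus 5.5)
The plan is to rewrite the eigenvalue condition \eqref{detH} using the factorization \eqref{Pepsilon0} and then reduce the resulting $(p+n)$-dimensional determinant to an $(r+s)$-dimensional one with the Sylvester identity $\det(1+MN)=\det(1+NM)$.

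\textbf{Step 1: move $P$ onto the spectral parameter.} Since $x>0$, the matrix $P$ is invertible, because $1+d_i^a>0$ and $1+d_\mu^b>0$. Hence
\[
\det(\wt H(x)-x)=\det(P)^2\det\bigl(H(x)-xP^{-2}\bigr).
\]
Set
\[
P^{-2}=1-\mathbf U\mathcal D\mathbf U^*.
\]
To justify this, observe that $(1+V_o^aD^a(V_o^a)^*)^{-1}=1-V_o^aD^a(1+D^a)^{-1}(V_o^a)^*$, which holds because $V_o^a$ has orthonormal columns. The same identity holds for the $b$-block, and the two together give the display by \eqref{Depsilon0}. Therefore
\[
H(x)-xP^{-2}=(H(x)-x)+x\mathbf U\mathcal D\mathbf U^*.
\]

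\textbf{Step 2: factor out the unperturbed resolvent.} Suppose $x$ is not an eigenvalue of $\mathcal Q_1$. For $x>0$ the point $x$ is then not an eigenvalue of $H(x)$. Indeed, the eigenvalues of $x^{-1/2}H(x)$ are $\pm\sqrt{\lambda_j(\mathcal Q_1)}$ together with zeros, so $H(x)$ has eigenvalue $x$ if and only if $\sqrt{\lambda_j}=\sqrt x$ for some $j$. Consequently $G(x)=(H(x)-x)^{-1}$ exists. We may then write
\[
\det(H(x)-xP^{-2})=\det(H(x)-x)\,\det\bigl(1+xG(x)\mathbf U\mathcal D\mathbf U^*\bigr).
\]
By Sylvester's identity the second factor equals
\[
\det\bigl(1+x\mathcal D\mathbf U^*G(x)\mathbf U\bigr)=\det(\mathcal D)\det\bigl(\mathcal D^{-1}+x\mathbf U^*G(x)\mathbf U\bigr).
\]

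\textbf{Step 3: conclude and identify the obstacle.} The factors $\det P$ and $\det(H(x)-x)$ are nonzero. Hence $\wt{\mathcal Q}_1$ has eigenvalue $x$, that is, \eqref{detH} holds, if and only if \eqref{masterx} holds.

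The only delicate point is the invertibility of $\mathcal D$. This requires $d_i^a,d_\mu^b\neq0$ and $d>-1$. The convention behind \eqref{eq_defnsigmab} and the supercritical Assumption \ref{ass:spike} give $d>0$ for the genuine spikes. If some $d$ vanished, we would simply drop the corresponding column of $\mathbf U$.

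A secondary check is the case where $x$ is an eigenvalue of $\ctQ_1$ but lies among the zero eigenvalues when $p\neq n$. This is excluded because $x\neq0$, and the sign of $x$ is harmless since only $x>0$ is relevant.
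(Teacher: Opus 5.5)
Your proposal is correct and follows the paper's proof essentially line by line. You factor $\wt H(x)-x=P\bigl(H(x)-xP^{-2}\bigr)P$, use $1-P^{-2}=\mathbf U\mathcal D\mathbf U^*$, pull out $\det(H(x)-x)$, and apply $\det(1+MN)=\det(1+NM)$. Your extra checks fill in details the paper leaves implicit: the explicit inverse of $1+V_o^aD^a(V_o^a)^*$, the invertibility of $H(x)-x$ when $x>0$ is not an eigenvalue of $\mathcal Q_1$, and the invertibility of $\mathcal D$ via $d>0$ under the spike assumptions.
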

\begin{proof}
Since $P$ is invertible, \eqref{detH} implies that $x \ne 0$ is an eigenvalue of $\wt H=PHP$ if and only if
\begin{align*}
0&=\det(PHP-x)=\det\Big(P(H-P^{-2}x)P\Big)\\
&=\det(P^2) \det(G(x)^{-1})\det \left(1+xG(x)(1-P^{-2}) \right) \\
&=\det(P^2) \det(G(x)^{-1})\det \left(1+xG(x) \mathbf{U} \mathcal{D} \mathbf{U}^* \right) \\
&=\det(P^2) \det(G(x)^{-1}) \det \Big(1+x \mathbf{U}^* G(x) \mathbf{U} \mathcal{D}  \Big) \nonumber \\
& =\det(P^2) \det(G(x)^{-1})\det(\mathcal{D})\det(\mathcal{D}^{-1}+x \mathbf{U}^* G(x) \mathbf{U}),
\end{align*}
where in the penultimate step we used the identity $\det(1+AB)=\det(1+BA)$ for any matrices of conformable dimensions. The claim follows.
\end{proof}

Using the characterization \eqref{masterx}, we reduce the study of the asymptotic distribution of the outliers to that of the resolvent functional $\mathbf U^* G(x) \mathbf U$ for the non-spiked separable covariance model.

\begin{lemma}\label{reduction to Green functions}
Under the assumptions of Theorem \ref{distribution of outliers}, for each $1\le i\le r$ and $1\le \mu \le s$, we have 
\begin{align}\label{eq:outlier_expansion1}
\wt\lambda_{\alpha(i)}-\theta_i^a&=-\frac{1}{\sigma_i^am'_{2c}(\theta_i^a)}\pa{\frac{d_i^a}{d_i^a+1}}^2\cdot\theta_i^a\vv_i^*\pa{G(\theta_i^a)-\Pi(\theta_i^a)}\vv_i+\OO_{\prec}(n^{-1}),\\
\wt\lambda_{\beta(\mu)}-\theta_\mu^b&=-\frac{1}{\sigma_\mu^bm'_{1c}(\theta_\mu^b)}\pbb{\frac{d_\mu^b}{d_\mu^b+1}}^2\cdot\theta_\mu^b \uu_\mu^*\pa{G(\theta_\mu^b)-\Pi(\theta_\mu^b)}\uu_\mu+\OO_{\prec}(n^{-1}).\label{eq:outlier_expansion2}
\end{align}
where $\vv_i:=((\vv_i^a)^T, 0)^T\in\RR^{p+n}$ and $\uu_\mu:= (0^T, (\vv_\mu^b)^T)^T\in\RR^{p+n}$ are the canonical extensions of the population eigenvectors $\vv_i^a$ and $\vv_\mu^b$ to $\RR^{p+n}$, respectively.
\end{lemma}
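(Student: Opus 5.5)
Take the spatial outlier $\wt\lambda_{\alpha(i)}$; the temporal case is symmetric. The idea is to linearize the master equation of Lemma \ref{lem_pertubation} around the deterministic matrix $\mathcal{D}^{-1}+x\mathbf U^*\Pi(x)\mathbf U$.

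Since $\mathbf U^*\Pi\mathbf U$ is diagonal, with entries $-(1+m_{2c}(x)\sigma_j^a)^{-1}$ and $-(1+m_{1c}(x)\sigma_\nu^b)^{-1}$, the deterministic matrix $\mathsf C(x):=\mathcal{D}^{-1}+x\mathbf U^*\Pi(x)\mathbf U$ is diagonal with entries
\[
1+(d_j^a)^{-1}-(1+m_{2c}(x)\sigma_j^a)^{-1},\qquad 1+(d_\nu^b)^{-1}-(1+m_{1c}(x)\sigma_\nu^b)^{-1}.
\]
The $j$-th entry vanishes exactly when $m_{2c}(x)=-(\wt\sigma_j^a)^{-1}$, i.e.\ at $x=\theta_j^a$. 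By Assumptions \ref{ass:awayfrombulk} and \ref{ass:nonoverlapping}, on a neighborhood of $\theta_i^a$ of constant size all other diagonal entries of $\mathsf C$ are bounded below by a constant. The $(i,i)$ entry $c_i(x)$ has a simple zero at $\theta_i^a$, with derivative
\[
c_i'(\theta_i^a)=\sigma_i^a m_{2c}'(\theta_i^a)\,(1+m_{2c}\sigma_i^a)^{-2}=\sigma_i^a m'_{2c}(\theta_i^a)\pa{\frac{d_i^a+1}{d_i^a}}^2 .
\]
This uses $1+m_{2c}(\theta_i^a)\sigma_i^a=1-\sigma_i^a/\wt\sigma_i^a=d_i^a/(1+d_i^a)$.

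Next I localize and apply the local law. By Theorem \ref{thm_outlier} with $q=n^{-1/2}$, $\wt\lambda_{\alpha(i)}$ lies within $n^{-1/2+\epsilon}$ of $\theta_i^a$, and the other eigenvalues of $\ctQ_1$ stay at distance of order one. I use the anisotropic local law at real $x$ outside the spectrum, at distance of order one from $\lambda_+$. Theorem \ref{local_law} is stated on $S(c_0,C_0,\epsilon)$. I extend it to real $x$ in this region, where $\Psi\sim n^{-1/2}$, either by letting $\eta\downarrow0$ with Lipschitz continuity (using that $\mathcal Q_1$ has no eigenvalues near $x$ by rigidity) or by enlarging $E$ beyond $C_0\lambda_+$ if needed. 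This gives $\mathbf U^*(G(x)-\Pi(x))\mathbf U=\OO_\prec(n^{-1/2})$ uniformly in $x$ in that window, and the same bound for $\partial_x$ via Cauchy's formula on a small contour.

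I then write the master equation as $\det(\mathsf C(x)+\mathcal E(x))=0$ with $\mathcal E:=x\mathbf U^*(G-\Pi)\mathbf U=\OO_\prec(n^{-1/2})$. A Schur complement with respect to the $(i,i)$ entry reduces it to
\[
c_i(x)+\mathcal E_{ii}(x)-\mathcal E_{i\cdot}(\mathsf C'+\mathcal E')^{-1}\mathcal E_{\cdot i}=0 ,
\]
where primes denote the complementary block, which is invertible with bounded inverse. The correction term is $\OO_\prec(n^{-1})$. Taylor expanding $c_i(x)=c_i'(\theta_i^a)(x-\theta_i^a)+\OO(|x-\theta_i^a|^2)$ and replacing $\mathcal E_{ii}(x)$ by $\mathcal E_{ii}(\theta_i^a)$ (the error is $\OO_\prec(n^{-1/2}\cdot n^{-1/2})$ by the derivative bound) gives
\[
\wt\lambda_{\alpha(i)}-\theta_i^a=-\mathcal E_{ii}(\theta_i^a)/c_i'(\theta_i^a)+\OO_\prec(n^{-1}).
\]
Since $\mathcal E_{ii}(\theta_i^a)=\theta_i^a\vv_i^*(G-\Pi)\vv_i$, this is \eqref{eq:outlier_expansion1}.

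I must also confirm that the root found is exactly the $\alpha(i)$-th eigenvalue. Lemma \ref{lem_pertubation} together with the counting in Theorem \ref{thm_outlier} gives exactly one eigenvalue in this window. Existence of the root can be argued by continuity, interpolating $\mathcal E$ to zero, or simply taken from Theorem \ref{thm_outlier}.

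The main obstacle is justifying the local law and the derivative bound uniformly on the real window near $\theta_i^a$, possibly outside $S(c_0,C_0,\epsilon)$. I expect to handle it by a net argument plus the Lipschitz continuity of $G$ on the event that $\mathcal Q_1$ has no spectrum there.
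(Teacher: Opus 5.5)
Your proposal is correct and follows the paper's proof essentially step by step. Both arguments use the same ingredients:
\begin{itemize}
\item the diagonal deterministic part $\mathcal D^{-1}+x\mathbf U^*\Pi\mathbf U$, whose $(i,i)$ entry has a simple zero at $\theta_i^a$ while the other entries are bounded away from $0$;
\item the Schur complement at the $(i,i)$ position, with an $\OO_\prec(n^{-1})$ correction;
\item the $n^{-1/2}$ localization of $\wt\lambda_{\alpha(i)}$ from Theorem \ref{thm_outlier};
\item a Taylor expansion of both the deterministic entry and $x\vv_i^*(G-\Pi)\vv_i$ around $\theta_i^a$, with the derivative bound obtained from Cauchy's formula plus a net argument.
\end{itemize}
The only difference is in how you get the local law on the real axis near $\theta_i^a$. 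You derive it directly by comparing $G(x)$ with $G(x+\ii\eta)$ on the high-probability event that $\mathcal Q_1$ has no spectrum near $x$. The paper instead invokes Theorem \ref{lem_localout}, which rests on the same comparison.
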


To prove this lemma, we need the following strengthened anisotropic local law outside the spectrum. 

\begin{theorem}[Anisotropic local law outside the spectrum]\label{lem_localout}
Under the assumptions of Theorem \ref{local_law}, fix any constant $\epsilon>0$. Then, for any 
\begin{equation}\label{eq_paraout}
z\in S_{\mathrm{out}}(C_0,\epsilon):=\left\{z= E+ \ii\eta: \lambda_+ + n^{-2/3+\e} + n^{-1/3+\e}  q^2  \le E \le C_0 \lambda_+,  \eta\in [0,1]\right\},
\end{equation}
and any deterministic unit vectors $\uu, \vv \in \mathbb{C}^{p+n}$, the following anisotropic local law holds:
\begin{equation}\label{aniso_outstrong}
\begin{split}
\left| \langle \mathbf u, G(X,z) \mathbf v\rangle - \langle \mathbf u, \Pi (z)\mathbf v\rangle \right| & \prec q + \sqrt{\frac{\Im m_{2c}(z)}{n \eta}}  \asymp q + n^{-1/2}(\kappa+\eta)^{-1/4} .
\end{split}
\end{equation}
\end{theorem}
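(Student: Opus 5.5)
We will prove \eqref{aniso_outstrong} in two regimes of $\eta$ and glue them with a Lipschitz/net argument in $z$.

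\textbf{Regime $\eta\ge n^{-1+\epsilon}$.} Here $z\in S(c_0,C_0,\epsilon)$. On this set Theorem \ref{local_law}(1) already gives $G-\Pi\prec q+\Psi(z)$, where $\Psi=\sqrt{\Im m_{2c}/(n\eta)}+(n\eta)^{-1}$. The only issue is the extra term $(n\eta)^{-1}$. To remove it, we rerun the self-improving scheme of Lemma \ref{avrtoani}, but feed in the stronger averaged law \eqref{eq:averaged_outside}. On $S_{\mathrm{out}}$ that law gives
\[
\theta\lesssim \frac{1}{n(\kappa+\eta)}+\frac{1}{(n\eta)^2\sqrt{\kappa+\eta}},
\]
which is $\le n^{-1/2}(\kappa+\eta)^{-1/4}$ because $\kappa\ge n^{-2/3+\epsilon}$. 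For the anisotropic error, Theorem \ref{maintheorem} and Lemma \ref{tildeMGRgS} produce the quantity $q+h(\phi)$, and $h(\phi)$ contains no $(n\eta)^{-1}$ term. Iterating \eqref{eq:self_improve} therefore yields $G-\Pi\prec\theta+q+\sqrt{(\Im m_{2c}+\phi)/(n\eta)}$. Once $\phi\lesssim q+n^{-1/2}(\kappa+\eta)^{-1/4}$, we have $\phi/(n\eta)\ll \Im m_{2c}/(n\eta)$ whenever $\Im m_{2c}\sim\eta/\sqrt{\kappa+\eta}$ dominates. In the remaining case the bound still closes after a few iterations, since $\phi/(n\eta)\le \phi\, n^{-\epsilon}$. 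Finally, \eqref{Immc} gives $\sqrt{\Im m_{2c}/(n\eta)}\asymp n^{-1/2}(\kappa+\eta)^{-1/4}$ for $E\ge\lambda_+$, which is the second form of the bound in \eqref{aniso_outstrong}.

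\textbf{Regime $0\le\eta\le n^{-1+\epsilon}$ (including the real axis).} Rigidity \eqref{eq:rigidity} and the averaged law outside the spectrum imply that with high probability $\lambda_1(\mathcal Q_1)\le\lambda_++n^{-2/3+\epsilon/2}$ (and $+\,n^{-1/3+\epsilon/2}q^2$ under the support condition). Hence $\mathrm{dist}(E,\mathrm{spec})\gtrsim\kappa$, and on this event $G$ is analytic with bounded derivative in a neighbourhood of $S_{\mathrm{out}}$. Set $\eta_0:=n^{-1/2+\epsilon}\kappa^{1/2}$, which is $\ge n^{-1+\epsilon}$. We compare $G(E+\ii\eta)$ with $G(E+\ii\eta_0)$, using that $\partial_z G$ has entries bounded by $\|G^2\|$-type quantities. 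Via Ward and Cauchy–Schwarz as in Lemma \ref{multipleG}, and spectral decomposition away from the spectrum, $|\partial_z G_{\uu\vv}|\lesssim\kappa^{-1}(1+|G-\Pi|)$ up to the $\Pi'$ term. The change over a distance $\eta_0$ is then $\eta_0/\kappa\cdot O(1)=n^{-1/2+\epsilon}\kappa^{-1/2}$, which is too large. So we must instead use $\partial_z(G-\Pi)$. By Cauchy's integral formula on a disc of radius $\sim\kappa$ contained in the domain where the first regime already applies (with error bound $n^{-1/2}\kappa^{-1/4}$ on that disc), we get $|\partial_z(G-\Pi)_{\uu\vv}|\prec\kappa^{-1}(q+n^{-1/2}\kappa^{-1/4})$. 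Integrating this over vertical distance $\eta_0\le\kappa$ costs at most the target bound. Applying the Cauchy formula requires the bound uniformly on the circle, which follows from the first regime for points with imaginary part $\ge n^{-1+\epsilon}$ and from analyticity with a maximum-principle style argument otherwise.

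\textbf{Uniformity and the main obstacle.} Uniformity in $z$ follows from an $n^{-10}$-net and the Lipschitz continuity of $G$ and $\Pi$ on the high-probability event. The main obstacle is the small-$\eta$ regime: the Cauchy-formula step needs the good bound on a full circle of radius $c\kappa$, part of which has tiny or negative imaginary part. I would handle this by using the symmetry $G(\bar z)=\overline{G(z)}$ for real $X$ together with the real-analyticity of $G-\Pi$ off the spectrum, so that it suffices to control the upper half of the circle.
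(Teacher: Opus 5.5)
There is a genuine gap: neither regime reaches the bound for $\eta$ below $\eta_0:=n^{-1/2}\kappa^{1/4}$, and the mechanism that does handle this range is the one you discarded. In Regime 1, iterating \eqref{eq:self_improve} does not remove the $(n\eta)^{-1}$ term. The $\phi$ under the square root in $h(\phi)=\sqrt{(\Im m_{2c}+\phi)/(n\eta)}$ comes from bounding $\Im G_{\uu\uu}$ by $\Im\Pi_{\uu\uu}+\phi$ in Ward's identity. Because of it, the fixed point of $\phi\mapsto\theta+q+\Psi+\sqrt{\phi/(n\eta)}$ is of size $\theta+q+\Psi+(n\eta)^{-1}$, exactly as at the end of the proof of Lemma \ref{avrtoani}. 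With $T:=n^{-1/2}(\kappa+\eta)^{-1/4}$, you have $\sqrt{T/(n\eta)}\le T$ only when $\eta\gtrsim n^{-1/2}\kappa^{1/4}$. Below that threshold, your remark $\phi/(n\eta)\le n^{-\epsilon}\phi$ only yields $\phi\lesssim n^{-\epsilon}$, and ``once $\phi\lesssim q+T$'' presupposes the conclusion. Likewise, the term $(n\eta)^{-2}\kappa^{-1/2}$ in \eqref{eq:averaged_outside} is far larger than $T$ for $\eta$ near $n^{-1+\epsilon}$. So Regime 1 only covers $\eta\ge n^{-1/2}\kappa^{1/4}$, where Theorem \ref{local_law} already suffices because $(n\eta)^{-1}\le T$ there.

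In Regime 2, the Cauchy-formula/maximum-principle step is circular. Every closed contour around $z$ contains points with imaginary part below $\Im z<\eta_0$, which lie in the region you are trying to control. The reflection $G(\bar z)=\overline{G(z)}$ disposes only of the lower half-plane, not of the strip $0\le\Im w<\eta_0$. The missing idea is a sharper use of the spectral decomposition than $|\partial_z G_{\uu\vv}|\lesssim\kappa^{-1}$. By rigidity, $E-\lambda_k\gtrsim\kappa\ge\eta_0$ for all $k$ with high probability. Hence, for $0\le\eta\le\eta_0$ and $\uu,\vv$ supported on $\mathcal I_1$ (with $\xi_k$ the eigenvectors of $\mathcal Q_1$),
\[
\bigl|\langle\uu,(\mathcal G_1(E+\ii\eta)-\mathcal G_1(E+\ii\eta_0))\vv\rangle\bigr|\lesssim\eta_0\sum_k\frac{|\langle\uu,\xi_k\rangle|^2+|\langle\vv,\xi_k\rangle|^2}{(\lambda_k-E)^2+\eta_0^2}=\Im(\mathcal G_1)_{\uu\uu}(E+\ii\eta_0)+\Im(\mathcal G_1)_{\vv\vv}(E+\ii\eta_0).
\]
Theorem \ref{local_law} at $E+\ii\eta_0\in S(c_0,C_0,\epsilon)$ bounds the right-hand side by $\Im m_{2c}(E+\ii\eta_0)+q+\Psi(E+\ii\eta_0)\lesssim q+T$; the other blocks of $G$ are handled the same way. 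Combine this with the local law at $E+\ii\eta_0$ itself, and with the H\"older continuity of $m_{1c},m_{2c}$ for $\Pi(E+\ii\eta)-\Pi(E+\ii\eta_0)$. That is the paper's argument, following Theorem C.12 of \cite{ding2020spikedseparablecovariancematrices}; the paper also adds $n^{-1/2+\epsilon}q$ to $\eta_0$ to absorb the $q$-terms. Note also that your anchor choice $\eta_0=n^{-1/2+\epsilon}\kappa^{1/2}$ fails even at the anchor point, since $(n\eta_0)^{-1}=n^{-1/2-\epsilon}\kappa^{-1/2}\gg T$ once $\kappa\le n^{-4\epsilon}$.
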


\begin{proof}
The main idea is to compare $G(z)$ (resp. $\Pi(z)$) with $G(z+\ii\eta_0)$ (resp. $\Pi(z+\ii\eta_0)$) for $z$ satisfying $\Im z\leq \eta_0:=n^{-1/2}\kappa^{1/4}+n^{-1/2+\e}q$. Note that $z+\ii\eta_0$ belongs to $S(c_0,C_0,\e)$ even if $z$ itself does not. Hence Theorem \ref{local_law} applies to $G(z+\ii\eta_0)$ and $\Pi(z+\ii\eta_0)$, giving a bound on $G(z+\ii\eta_0)-\Pi(z+\ii\eta_0)$ that is stronger than the right-hand side of \eqref{aniso_outstrong}. Therefore, to prove \eqref{aniso_outstrong} at $z$, it suffices to control the differences $G(z)-G(z+\ii\eta_0)$ and $\Pi(z)-\Pi(z+\ii\eta_0)$.  
The deterministic quantity $\Pi(z)-\Pi(z+\ii\eta_0)$ can be controlled directly by the H{\"o}lder continuity of $m_{1c},m_{2c}$ (see Lemma C.7 of \cite{ding2020spikedseparablecovariancematrices}). For the random part $G(z)-G(z+\ii\eta_0)$, we use the spectral decomposition of $G$ together with eigenvalue rigidity \eqref{eq:rigidity}. 
We refer the reader to the proof of \cite[Theorem C.12]{ding2020spikedseparablecovariancematrices} for details. 
\end{proof}

\begin{proof}[Proof of Lemma \ref{reduction to Green functions}] 
We rewrite (\ref{masterx}) as
\begin{equation}\label{eq:D_VUV}
\det\left(\mathcal{D}^{-1}+x \mathbf U^*\Pi(x) \mathbf U+x \mathbf U^*\pb{G(x)-\Pi(x)} \mathbf U\right)=0,
\end{equation} 
where the deterministic part $\mathcal{D}^{-1}+x \mathbf U^*\Pi(x) \mathbf U$ is the diagonal matrix
\begin{equation}\label{eq:deterministic_part}
\textup{diag}\left\{\frac{1+d_i^a}{d_i^a}-\frac{1}{1+m_{2c}(x)\sigma_i^a}\right\}_{i=1}^r\oplus\textup{diag}\left\{\frac{1+d_\mu^b}{d_\mu^b}-\frac{1}{1+m_{1c}(x)\sigma_\mu^b}\right\}_{\mu=1}^s,
\end{equation} 
and the random part $x \mathbf U^*(G(x)-\Pi(x)) \mathbf U$ fluctuates on the scale $n^{-1/2}$ by Theorem \ref{lem_localout}, provided that $x$ stays at a distance of order 1 from the limiting spectrum. We will focus on the proof of \eqref{eq:outlier_expansion1}; the proof of the second expansion \eqref{eq:outlier_expansion2} is completely analogous. 

By the definition of $\theta_i^a$ in (\ref{g12c}), the $i$-th diagonal entry $\p{1+d_i^a}/{d_i^a}-\p{1+m_{2c}(x)\sigma_i^a}^{-1}$ vanishes at $x=\theta_i^a$. Moreover, by Assumption \ref{ass:nonoverlapping}, all other $r+s-1$ deterministic diagonal entries of $\mathcal{D}^{-1}+x \mathbf U^*\Pi(x) \mathbf U$ are bounded away from 0 by a constant. By Assumption \ref{ass:awayfrombulk}, $\theta_i^a$ is also separated from the limiting spectrum by a distance of order 1; that is, $\theta_i^a\ge \lambda_++c$ for some constant $c>0$ depending on $\tau$. 
Applying Theorem \ref{thm_outlier} with $q=n^{-1/2}$, we find that 
\begin{equation}\label{eq:outlier_i}
  \wt\lambda_{\al(i)}-\theta_i^a=\OO_\prec(n^{-1/2}),
\end{equation}
and that, with high probability, all other outlier eigenvalues remain at least a constant away from $\theta_i^a$. Applying Schur's complement formula to the $(i,i)$-th position of \eqref{eq:D_VUV} at \smash{$x=\wt\lambda_{\al(i)}$}, we obtain
\begin{equation}\label{outlier position}
\frac{1+d_i^a}{d_i^a}-\frac{1}{1+m_{2c}(x)\sigma_i^a}+x\vv_i^*\pb{G(x)-\Pi(x)}\vv_i+A^*\mathsf{C}^{-1}B=0,
\end{equation} 
where $A,B,\mathsf{C}$ are the complements of the $(i,i)$-position in the $(r+s)\times(r+s)$ matrix $\mathcal{D}^{-1}+x \mathbf U^* G(x) \mathbf U$.

By the local law \eqref{aniso_outstrong}, we have \smash{$\mathbf U^*\pb{G(x)-\Pi(x)}\mathbf U \prec n^{-1/2}$} for $x$ in a neighborhood of $\theta_i^a$. Hence
\begin{equation}\label{eq:vGv}
\vv_i^*\pb{G(x)-\Pi(x)}\vv_i\prec n^{-1/2},\quad \|A\|+\|B\|=\OO_{\prec}(n^{-1/2}),
\end{equation}
while $\mathsf{C}$ is an $\OO_{\prec}(n^{-1/2})$ random perturbation of the $(r+s-1)\times (r+s-1)$ deterministic diagonal matrix obtained by removing the $i$-th diagonal entry from \eqref{eq:deterministic_part}. 
Thus, with high probability, $\mathsf{C}$ is invertible with $\norm{\mathsf{C}^{-1}}=\OO(1)$, and $A^*\mathsf{C}^{-1}B=\OO_{\prec}(n^{-1})$. 
We may therefore rewrite \eqref{outlier position} as
\begin{equation}\label{outlier position2}
\pb{1+m_{2c}(\wt\lambda_{\al(i)})\sigma_i^a}^{-1}-\pb{1+m_{2c}(\theta_i^a)\sigma_i^a}^{-1}=\wt\lambda_{\al(i)}\vv_i^*\pb{G(\wt\lambda_{\al(i)})-\Pi(\wt\lambda_{\al(i)})}\vv_i+\OO_{\prec}(n^{-1}).
\end{equation} 
Taylor expanding the right-hand side around $x=\theta_i^a$ and using the local law estimate \eqref{eq:vGv} together with the eigenvalue estimate \eqref{eq:outlier_i}, we obtain, with high probability,
\begin{align}
&~\left|\wt\lambda_{\al(i)} \vv_i^*\pb{G(\wt\lambda_{\al(i)})-\Pi(\wt\lambda_{\al(i)})}\vv_i - \theta_i^a \vv_i^*\pb{G(\theta_i^a)-\Pi(\theta_i^a)}\vv_i\right| \nonumber\\
 \prec &~n^{-1}+ |\wt\lambda_{\al(i)}-\theta_i^a|\sup_{\xi \in E_n}\left|\vv_i^*\pb{G'(\xi)-\Pi'(\xi)}\vv_i\right| \label{eq:Taylor1}
\end{align} 
where $E_n:=\{\xi\in \R:|\xi-\theta_i^a|\le n^{-1/4}\}$. By Cauchy's integral formula,
\[ \vv_i^*\pb{G'(\xi)-\Pi'(\xi)}\vv_i = \frac{1}{2\pi \ii}\oint_{|w-\xi|=\epsilon} \frac{\vv_i^*\pb{G(w)-\Pi(w)}\vv_i}{(w-\xi)^{2}}\dd w, \]
where $\e>0$ is a small constant chosen so that the relevant contour does not contain any other outliers with high probability; this is possible by Assumption \ref{ass:nonoverlapping} and Theorem \ref{thm_outlier}. Combining this formula with the local law for $\vv_i^*\pb{G(w)-\Pi(w)}\vv_i$, we derive that, for each fixed $\xi\in E_n$,
\begin{equation}
\vv_i^*\pb{G'(\xi)-\Pi'(\xi)}\vv_i \prec n^{-1/2}\label{eq:control_G'}
\end{equation}
An $n^{-10}$-net and perturbation argument upgrades this estimate to a uniform one over $\xi\in E_n$; we omit the details since this argument is standard in the random matrix theory literature. Substituting \eqref{eq:control_G'} and \eqref{eq:outlier_i} into \eqref{eq:Taylor1}, we obtain
\begin{equation}
 \wt\lambda_{\al(i)}\vv_i^*\pb{G(\wt\lambda_{\al(i)})-\Pi(\wt\lambda_{\al(i)})}\vv_i =\theta_i^a \vv_i^*\pb{G(\theta_i^a)-\Pi(\theta_i^a)}\vv_i  +\OO_\prec (n^{-1}). \label{eq:Taylor2}
\end{equation}
On the other hand, Taylor expanding the left-hand side of \eqref{outlier position2} and using \eqref{Piii} and \eqref{eq:outlier_i}, we obtain that
\begin{equation}\label{eq:Taylor3}
\pb{1+m_{2c}(\wt{\lambda}_{\al(i)})\sigma_i^a}^{-1}-\pb{1+m_{2c}(\theta_i^a)\sigma_i^a}^{-1}=-\pa{\frac{d_i^a+1}{d_i^a}}^2\sigma_i^am'_{2c}(\theta_i^a)\pb{\wt{\lambda}_{\al(i)}-\theta_i^a}+\OO_{\prec}(n^{-1}).
\end{equation} 
Plugging \eqref{eq:Taylor2} and \eqref{eq:Taylor3} into \eqref{outlier position2}, we conclude \eqref{eq:outlier_expansion1}.
\end{proof}
\subsection{Distribution of Green functions}

By Lemma \ref{reduction to Green functions}, under the scaling $\sqrt{n}$, it suffices to prove that $\theta_i^a\cdot \sqrt{n}\vv_i^*\pb{G(\theta_i^a)-\Pi(\theta_i^a)}\vv_i$ and $\theta_\mu^b \cdot \sqrt{n} \uu_\mu^*\pb{G(\theta_\mu^b)-\Pi(\theta_\mu^b)}\uu_\mu$ are asymptotically Gaussian with desired variances. We focus on the former at $\theta_i^a$, since the latter can be treated in the same way. We prove the asymptotic Gaussianity of $\theta_i^a\cdot \sqrt{n}\vv_i^*\pb{G(\theta_i^a)-\Pi(\theta_i^a)}\vv_i$ by showing that its moments asymptotically match those of a Gaussian random variable. This moment matching follows from the recursive moment relation established in Lemma \ref{recursive moment estimate} below.
For technical reasons, we prove the recursive moment relation in Lemma \ref{recursive moment estimate} at a point $z$ with a small imaginary perturbation of $\theta_i^a$. A simple continuity argument will then transfer the asymptotic distribution from $z$ back to $\theta_i^a$. Set $z=\theta_i^a+\ii n^{-2}$ and define
$$\mathcal{P}_i^a(z):=\sqrt{n}\cdot z\vv_i^*\pb{G(z)-\Pi(z)}\vv_i.$$ 
By the local law \eqref{aniso_outstrong}, we have $\mathcal{P}_i^a(z)\prec 1$. Moreover, for this choice of $z$, the deterministic bound $\norm{G(z)}\lesssim (\Im z^{1/2})^{-1}\sim n^{2}$ holds. Hence, $|\mathcal{P}_i^a(z)| \lesssim n^{5/2}$ deterministically, and part (iii) of Lemma \ref{lem_stodomin} can be applied to give \( \E|\mathcal{P}_i^a(z)|^\ell \prec 1 \) for any fixed $\ell\in \N_+$. This is the reason for introducing a small imaginary part in $z$. 

\begin{lemma}\label{recursive moment estimate}
Under the assumptions of Theorem \ref{distribution of outliers}, set $z=\theta_i^a+\ii n^{-2}$ for some $1\le i \le r$. Then, for $\mathcal{P}_i^a\equiv \mathcal{P}_i^a(z)$ and any fixed $\ell\geq 2$, we have 
\begin{equation}\label{expansion result}
\begin{aligned}
\expect[(\mathcal{P}_i^a)^\ell]=&~(\ell-1)\pbb{\frac{d_i^a+1}{d_i^a}}^4(\sigma_i^a)^2\pB{2m_{2a}(\theta_i^a)+k_4b(\theta_i^a)s_4(\vv_i^a)}\expect[(\mathcal{P}_i^a)^{\ell-2}] +\OO_\prec(n^{-1/6}).
\end{aligned}
\end{equation} 
Moreover, for $\ell=1$, we have $\expect[\mathcal{P}_i^a]=\OO_\prec(n^{-\e})$ for some constant $\e>0$.
\end{lemma}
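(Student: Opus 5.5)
We will derive the recursive moment relation by a cumulant expansion, along the lines of \cite{bao2020statisticalinferenceprincipalcomponents}. The starting point is the analogue of \eqref{M(G)expansion} with $\Lambda=G$ and the vector pair $(\vv_i,\vv_i)$, which expresses $z G_{\vv_i\vv_i}$ through $\sum_{j,\mu}x_{j\mu}$ times resolvent entries plus deterministic-looking terms. The cleanest form comes from writing $G=\Pi-\Pi\,M(G)+\Pi\,(\Pi^{-1}+z+\mathcal S(G)-\ldots)$. Concretely, we use $\wt M(G)$ from \eqref{tildemg} and the identity \eqref{GandMG}:
\[
G-\Pi=\wt M(G)R_{m_1,m_2}+(R_{m_{1c},m_{2c}}-R_{m_1,m_2}).
\]
The second term is $\OO_\prec(n^{-1})$ at $z$ by the averaged law \eqref{eq:averaged_outside}, since we are at order-one distance from the spectrum. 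Replacing $R_{m_1,m_2}$ by $-\Pi$ costs the same order. We therefore get
\[
\mathcal P_i^a=-\sqrt n\, z\,(\wt M(G)\Pi)_{\vv_i\vv_i}+\OO_\prec(n^{-1/2}).
\]
By \eqref{M(G)expansion}, $\wt M(G)$ equals $z^{1/2}\sum_{j,\mu}(\ldots)x_{j\mu}$ plus the compensating $m_2$-terms, up to the $\frac{z}{n}G\BB^2G\AA^2$ corrections, which are $\OO_\prec(n^{-1})$ outside the spectrum. Since $\Pi\vv_i=\Pi_1(i)\vv_i$ with the scalar $-z^{-1}(1+m_{2c}\sigma_i^a)^{-1}=-z^{-1}\frac{d_i^a+1}{d_i^a}\cdot\ldots$ (up to the relation defining $\theta_i^a$), we reduce $\E[(\mathcal P_i^a)^\ell]$ to
\[
c\sqrt n\,z^{1/2}\sum_{j,\mu}\E\big[x_{j\mu}(G\BB)_{\vv_i\mu}\AA_{j\vv_i}(\mathcal P_i^a)^{\ell-1}\big]+\sqrt n\,\E\big[(\text{$m_2$-terms})(\mathcal P_i^a)^{\ell-1}\big],
\]
where $c$ is a constant.

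Next we apply Lemma \ref{lem:cumulant_expansion} to each $x_{j\mu}$. The second-cumulant term in which the derivative hits $(G\BB)_{\vv_i\mu}$ cancels the $m_2$-terms exactly, as in Lemma \ref{task1} (i), up to $\OO_\prec(n^{-1/2})$. The surviving second-cumulant term is the one in which $\partial_{j\mu}$ hits $(\mathcal P_i^a)^{\ell-1}$. By \eqref{Gderivative}, this gives
\[
(\ell-1)\,n^{-1}\sum_{j,\mu}\big[(G\BB)_{\vv_i\mu}(\AA G)_{j\vv_i}\cdots\big]\,\E[(\mathcal P_i^a)^{\ell-2}]\cdot n z^{3/2}.
\]
This contains quantities such as $\frac{1}{n}(G\BB^2G)_{\vv_i\vv_i}(\AA^2G)_{\vv_i\vv_i}$ and $\frac1n(G\AA^2G)_{\vv_i\vv_i}(G\BB^2 G)\ldots$. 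We replace each by its deterministic limit using the anisotropic law \eqref{aniso_outstrong} together with derivative versions: $(G\BB^2G)_{\vv\vv}=\partial$-type quantities computed by solving the linear "two-resolvent" equation obtained from the same self-consistent structure. This produces the factor $2m_{2a}=2\gamma_2/(1-z^2\gamma_1\gamma_2)$. The factor $2$ comes from the real symmetric case, and the geometric denominator comes from iterating the $A$–$B$ coupling. We expect this deterministic identification to be the main computational obstacle: we would derive a closed $2\times2$ linear system for the normalized traces $\frac1n\Tr(\AA^2G\BB^2G)$ and the isotropic entries $(G\BB^2G)_{\vv\vv}$ via the cumulant expansion of $G\BB^2 G$ at leading order. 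Its solution should give $\gamma_1,\gamma_2$ and the denominator.

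The third-cumulant terms contain an odd number of off-diagonal factors such as $(G\BB)_{\vv_i\mu}$ or $G_{j\mu}$ that cannot all pair up. Using the Ward bounds (Lemma \ref{wardlemma}) and the $\sqrt n$ scaling, they contribute $\OO_\prec(n^{-1/2})$ times powers of $\mathcal P$. One term needs care, namely $\sum_{j,\mu}\AA_{j\vv_i}(\AA G)_{j j}\ldots$. It is controlled by replacing diagonal entries by $\Pi$ and using $\sum_\mu(G\BB)_{\vv_i\mu}\prec n^{1/2}\cdot$small. This is where we expect the $n^{-1/6}$ loss, by splitting the index sum into large and small components of $\vv_i^a$ if necessary. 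The fourth-cumulant term with all derivatives producing diagonal entries $(\AA G\AA)_{jj}(\BB G\BB)_{\mu\mu}$ and two factors $\AA_{j\vv_i}^2$ gives $k_4\sum_j\vv_i^a(j)^4\cdot\frac1n\sum_\mu(\ldots)_{\mu\mu}^2$, i.e.\ $k_4 b(\theta_i^a)s_4(\vv_i^a)$ after replacing $G$ by $\Pi$. Strictly, $\AA_{j\vv_i}=(A^{1/2}\vv_i^a)_j=\sqrt{\sigma_i^a}\vv_i^a(j)$, giving the $(\sigma_i^a)^2$ prefactor. Higher cumulants are $\OO_\prec(n^{-1/2})$ and the remainder is handled as in Lemma \ref{task1} (iii). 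Collecting the prefactors gives \eqref{expansion result}. For $\ell=1$, the same expansion without the $(\ell-1)$ term leaves only the third-cumulant and correction terms, which are $\OO_\prec(n^{-\e})$.
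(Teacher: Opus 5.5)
Your overall plan is the paper's. You write $\mathcal P_i^a$ as $\sqrt n(1+m_{2c}\sigma_i^a)^{-1}$ times an $x$-linear term plus a compensating term. You then expand to the fourth cumulant, get the cancellation and the $(\ell-1)$ variance term from the second cumulant, discard the third cumulant, and extract $k_4 b s_4$ from the fourth. Your reduction through $\wt M(G)$ and $R_{m_1,m_2}$ is equivalent to the paper's use of $(H-z)G=I$. The only difference there is that your compensating term carries $m_2$ rather than $m_{2c}$, so the cancellation becomes exact and the error moves into the reduction step. Several specifics are wrong, however.

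\textbf{Source of the $n^{-1/6}$ error.} The estimate \eqref{eq:averaged_outside} does not reach $\eta=n^{-2}$, since it requires $n\eta\sqrt{\kappa+\eta}\ge n^{\epsilon}$. So it does not give $R_{m_{1c},m_{2c}}-R_{m_1,m_2}=\OO_\prec(n^{-1})$. The paper applies the averaged law at $z+\ii n^{-2/3}$ and uses Lipschitz continuity to get $m_2(z)-m_{2c}(z)\prec n^{-2/3}$. After the $\sqrt n$ factor this is exactly the $n^{-1/6}$ in the lemma (it enters through \eqref{cancel_h10}). The third cumulant is not the source of the loss: every $\kappa_3$ term is $\OO_\prec(n^{-1/2})$ after scaling. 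The delicate one, $\kappa_3\sum_{j,\mu}\vv_i(j)(\AA G\AA)_{jj}(\BB G\BB)_{\mu\mu}(\BB G)_{\mu\vv_i}$, factorizes. Its $\mu$-sum is $\prec 1$ once you replace $(\BB G\BB)_{\mu\mu}$ by $(\BB\Pi\BB)_{\mu\mu}$ and apply the isotropic law with the deterministic vector $\mathbf b(\mu)=\overline{(\BB\Pi\BB)_{\mu\mu}}$, which has norm $\OO(\sqrt n)$. No splitting of $\vv_i^a$ is needed.

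\textbf{Variance term.} The derivative on $\mathcal P^{\ell-1}$ produces only $(G\BB^2G)_{\vv_i\vv_i}G_{\vv_i\vv_i}$; there is no $1/n$ factor and no $G\AA^2G$. The paper obtains the limit of $(G\BB^2G)_{\vv_i\vv_i}$ without any two-resolvent expansion. It introduces $G_\omega=(H-\omega\BB^2-z)^{-1}$, for which $\partial_\omega G_\omega|_{\omega=0}=G\BB^2G$. A one-resolvent local law for $G_\omega$, uniform in $|\omega|\le\e$, combined with Cauchy's formula in $\omega$ gives a high-probability bound. Differentiating \eqref{m_12c,omega} then yields the linear system \eqref{dm_12c,omega_0}.

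Your direct route can also work, with two corrections. First, you need a high-probability (moment) bound, not just the expectation, because this entry multiplies $\mathcal P^{\ell-2}$ inside $\E$. Second, the closed system is in the two traces $\frac1n\Tr(\AA^2G\BB^2G)$ and $\frac1n\Tr(\BB^2G\BB^2G)$, i.e. in $m_{1a}$ and $m_{2a}$. The isotropic entry is an output of that system, equal to $\frac{m_{2a}\sigma_i^a}{z(1+m_{2c}\sigma_i^a)^2}$.

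\textbf{Fourth-cumulant term.} The leading $\kappa_4$ term comes from one derivative on $(\BB G)_{\mu\vv_i}$ and two derivatives on the same factor $\mathcal P$. That is what produces $(\ell-1)\E[(\mathcal P_i^a)^{\ell-2}]$; a $k_4$-term multiplying $\E[(\mathcal P_i^a)^{\ell-1}]$ would not fit the recursion. Its structure is $(\BB G\BB)_{\mu\mu}^2(\AA G)_{j\vv_i}^2(G\AA)_{\vv_i j}$. Here $s_4$ arises from the explicit $\vv_i(j)$ times three factors $(\AA\Pi)_{j\vv_i}\propto\vv_i(j)$, and $b$ arises from $(\BB\Pi\BB)_{\mu\mu}^2$. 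Terms containing $(\AA G\AA)_{jj}$, such as those with all three derivatives on $(\BB G)_{\mu\vv_i}$, are negligible.

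\textbf{Remainder.} Citing Lemma \ref{task1}(iii) is not enough. Here $\partial_{j\mu}^r\mathcal P_i^a$ is of size $\sqrt n$ for $r\ge 2$, so the naive bound at fourth order is $n^{1/2}$ after scaling. You need, as the paper does, the fact that every term contains a factor $(\BB G)_{\mu\vv_i}$ or $(\AA G)_{j\vv_i}$, which lets Cauchy--Schwarz recover the missing powers of $n$.
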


Combining this lemma with Lemma \ref{reduction to Green functions}, we now complete the proof of Theorem \ref{distribution of outliers}.

\begin{proof}[Proof of Theorem \ref{distribution of outliers}]
By Lemma \ref{recursive moment estimate}, the moments of $\mathcal{P}_i^a(z)$ asymptotically match those of a Gaussian random variable with mean 0 and variance
\begin{equation}
\pB{\frac{d_i^a+1}{d_i^a}}^4(\sigma_i^a)^2\pB{2m_{2a}(\theta_i^a)+k_4b(\theta_i^a)s_4(\vv_i^a)}.\label{eq:aymp_var}
\end{equation}
To compare $\mathcal{P}_i^a(z)$ with $\mathcal{P}_i^a(\theta_i^a)$, we use the rigidity estimate \eqref{eq:rigidity} and the supercritical assumption \eqref{eq:supercritical}, which give
\begin{equation}\label{eq:supercritical2}
\theta_i^a \ge \lambda_+ + c \ge  \lambda_{1}(\cal Q_1) + c+ \OO_\prec(n^{-2/3})
\end{equation}
for a sufficiently small constant $c>0$. Consequently, by the definition \eqref{def_green}, we get
\begin{equation}\nonumber
\sup_{\xi\in \C:|\xi-\theta_i^a|\le c/2}\left(\|\cal G_1 (\xi)\|+\|\cal G_2 (\xi)\|+ \|\cal G_1' (\xi)\|+\|\cal G_2' (\xi)\|\right) =\OO(1)\quad \text{with high probability},
\end{equation}
Together with the representation \eqref{green2} and Lemma \ref{priori}, this implies
\begin{equation}\label{eq:bound_G12}
\sup_{\xi\in \C:|\xi-\theta_i^a|\le c/2}\left(\|G (\xi)\| + \| G' (\xi)\|\right) =\OO(1)\quad \text{with high probability}.
\end{equation}
On the other hand, combining \eqref{eq:supercritical2} with the fact that $m_{\al c}$ is the Stieltjes transform of the density $\rho_{\al c}$ supported on $[0,\lambda_+]$, we get
\begin{equation}\label{eq:bound_m0}
  \sup_{\xi\in \C:|\xi-\theta_i^a|\le c/2}\left(|m_{1c} (\xi)|+|m_{2c} (\xi)|+ |m_{1c}' (\xi)|+|m_{2c}' (\xi)|\right) =\OO(1), 
\end{equation} 
Together with the representation \eqref{defn_pi} and the condition \eqref{Piii}, this implies
\begin{equation}\label{eq:bound_m12}
  \sup_{\xi\in \C:|\xi-\theta_i^a|\le c/2}\left(\|\Pi (\xi)\|+ \|\Pi' (\xi)\|\right) =\OO(1). 
\end{equation}
Using \eqref{eq:bound_G12} and \eqref{eq:bound_m12}, we readily obtain
\begin{equation}\label{eq:diff_G_m}
  \|G(z)-G(\theta_i^a)\|=\OO(n^{-2}) \ \ \text{with high probability}, \quad \text{and}\quad  \|\Pi(z)-\Pi(\theta_i^a)\|=\OO(n^{-2}). 
\end{equation}
These estimates imply that $\mathcal{P}_i^a(z)-\mathcal{P}_i^a(\theta_i^a)=\OO(n^{-3/2})$ with high probability, which is negligible under the $\sqrt{n}$ scaling. Hence, $\mathcal{P}_i^a(\theta_i^a)$ is also asymptotically Gaussian with mean 0 and variance \eqref{eq:aymp_var}. Together with \eqref{eq:outlier_expansion1}, this proves the first assertion of Theorem \ref{distribution of outliers} for \smash{$\wt\lambda_{\alpha(i)}$}. The second assertion for \smash{$\wt\lambda_{\beta(\mu)}$} follows in the same way, by establishing the analogous recursive moment relation as in Lemma \ref{recursive moment estimate}; we omit the details. 
\end{proof}

The rest of this section is devoted to the proof of Lemma \ref{recursive moment estimate}. We focus on deriving the recursive relation \eqref{expansion result} for $\ell\ge 2$. The case $\ell=1$ follows from the same, and in fact simpler, argument, since all leading contributions carrying the factor $(\ell-1)$ vanish. 

Following \cite{bao2020statisticalinferenceprincipalcomponents}, we prove Lemma \ref{recursive moment estimate} using the cumulant expansion formula in Lemma \ref{lem:cumulant_expansion}. In the following, unless otherwise specified, all spectral parameters of $\mathcal{P}_i^a$, $G$, and $\Pi$ are set to $z=\theta_i^a+\ii n^{-2}$. We rewrite $\cal P_i^a$ as
\begin{align}
  \mathcal{P}_i^a&=\sqrt{n}\cdot z\vv_i^*\p{G-\Pi}\vv_i =\sqrt{n} \pa{z\vv_i^*G \vv_i+\qa{1+m_{2c}(z)\sigma_i^a}^{-1}}  \nonumber\\
&=\sqrt{n}\vv_i^*\pa{z G+\qa{1+m_{2c}(z)\sigma_i^a}^{-1}\p{HG-zG}}\vv_i  =\sqrt{n}\vv_i^*\pa{\frac{HG}{1+m_{2c}(z)\sigma_i^a}+\frac{zm_{2c}(z)\sigma_i^a}{1+m_{2c}(z)\sigma_i^a}G}\vv_i  \nonumber\\
&=\sqrt{n} \frac{\sqrt{z\sigma_i^a}}{1+m_{2c}(z)\sigma_i^a}\sum_{j,\mu}\vv_i(j)x_{j\mu}(\BB G)_{\mu\vv_i}+\sqrt{n}\frac{zm_{2c}(z)\sigma_i^a}{1+m_{2c}(z)\sigma_i^a}G_{\vv_i\vv_i} ,\label{eq:expansion_P}
\end{align}
Applying the cumulant expansion formula in Lemma \ref{lem:cumulant_expansion} to the first term on the right-hand side of \eqref{eq:expansion_P}, we obtain
\begin{align}
&\expect[(\mathcal{P}_i^a)^\ell]=\sqrt{n}\frac{\sqrt{z\sigma_i^a}}{1+m_{2c}(z)\sigma_i^a}\sum_{j,\mu}\vv_i(j)\expect[x_{j\mu}(\BB G)_{\mu\vv_i}(\mathcal{P}_i^a)^{\ell-1}]+\sqrt{n}\frac{zm_{2c}(z)\sigma_i^a}{1+m_{2c}(z)\sigma_i^a}\expect[G_{\vv_i\vv_i}(\mathcal{P}_i^a)^{\ell-1}] \nonumber\\
&=\sqrt{n}\frac{\sqrt{z\sigma_i^a}}{1+m_{2c}(z)\sigma_i^a}\sum_{j,\mu}\vv_i(j)\left(\sum_{\alpha=1}^3\frac{\kappa_{\alpha+1}(x_{j\mu})}{\alpha!}\expect[\partial_{j\mu}^\alpha\pb{(\BB G)_{\mu\vv_i}(\mathcal{P}_i^a)^{\ell-1}}]+\mathcal{R}_i^{j\mu}\right) +\sqrt{n}\frac{zm_{2c}(z)\sigma_i^a}{1+m_{2c}(z)\sigma_i^a}\expect[G_{\vv_i\vv_i}(\mathcal{P}_i^a)^{\ell-1}]\nonumber\\
&=\sqrt{n}\frac{\sqrt{z\sigma_i^a}}{1+m_{2c}(z)\sigma_i^a}\pbb{\sum_{\substack{\alpha_1,\alpha_2\geq 0\\ 1\leq\alpha_1+\alpha_2\leq 3}}\expect[h_i(\alpha_1,\alpha_2)]+\mathcal{R}_i}+\sqrt{n}\frac{zm_{2c}(z)\sigma_i^a}{1+m_{2c}(z)\sigma_i^a}\expect[G_{\vv_i\vv_i}(\mathcal{P}_i^a)^{\ell-1}],\label{expansion}
\end{align}
where we use the abbreviations
$$h_i(\alpha_1,\alpha_2):=\sum_{j,\mu}\vv_i(j)\frac{\kappa_{\alpha_1+\alpha_2+1}(x_{j\mu})}{\alpha_1!\alpha_2!}\partial_{j\mu}^{\alpha_1}(\BB G)_{\mu\vv_i}\cdot \partial_{j\mu}^{\alpha_2}(\mathcal{P}_i^a)^{\ell-1},\quad \mathcal{R}_i:= \sum_{j,\mu}\vv_i(j)\mathcal{R}_i^{j\mu},$$
and the remainder term $\mathcal{R}_i^{j\mu}$ satisfies, for any constant $0<\e<1/2$,
\begin{equation}\label{remainder0}
\begin{aligned}
\mathcal{R}_i^{j\mu}\lesssim &~ \expect\qa{\abs{x_{j\mu}}^5}\E\pB{ \sup_{\abs{\xi_{j\mu}}\leq n^{-1/2+\e}}\absb{\partial_{j\mu}^4\pb{(\BB \wt G)_{\mu\vv_i}(\wt{\mathcal{P}}_i^a)^{\ell-1}}} }\\
&~+ \expect\qa{\abs{x_{j\mu}}^5\mathds{1}(|x_{i\mu}|>n^{-1/2+\e})}\E\pB{ \sup_{\xi_{j\mu}\in \R}\absb{\partial_{j\mu}^4\pb{(\BB \wt G)_{\mu\vv_i}(\wt{\mathcal{P}}_i^a)^{\ell-1}}} }.
\end{aligned}
\end{equation}
Here $\xi_{j\mu}$ is a real {\it deterministic} parameter, $\wt G$ is obtained from $G$ by replacing $x_{j\mu}$ with $\xi_{j\mu}$, and $\wt{\mathcal{P}}_i^a$ is obtained from $\mathcal{P}_i^a$ by replacing $G$ with $\wt G$. We next estimate the terms $h_i(\alpha_1,\alpha_2)$ and $\mathcal{R}_i$ one by one. 

\subsubsection{Estimate of $h_i(1,0)$}
Using \eqref{Gderivative}, we obtain
\begin{align}
h_i(1,0)&=\frac{1}{n}\sum_{j,\mu}\vv_i(j)\qa{\partial_{j\mu}(\BB G)_{\mu\vv_i}}(\mathcal{P}_i^a)^{\ell-1} \nonumber\\
&=-\frac{\sqrt{z}}{n}\sum_{j,\mu}\vv_i(j)\pb{(\BB G\AA)_{\mu j}(\BB G)_{\mu \vv_i}+(\BB G\BB)_{\mu\mu}(\AA G)_{j\vv_i}}(\mathcal{P}_i^a)^{\ell-1}\nonumber\\
&=-\frac{\sqrt{z}}{n}\pb{(G\BB^2 G\AA)_{\vv_i\vv_i}+\Tr(\BB G\BB)\cdot (\AA G)_{\vv_i\vv_i}}(\mathcal{P}_i^a)^{\ell-1}\nonumber\\
&=-\frac{\sqrt{z\sigma_i^a}}{n}\pa{ (G\BB^2 G)_{\vv_i\vv_i}+ \Tr(B\mathcal{G}_2)\cdot  G_{\vv_i\vv_i}}(\mathcal{P}_i^a)^{\ell-1}\nonumber\\
&=- \frac{\sqrt{z\sigma_i^a}}{n}(G\BB^2 G)_{\vv_i\vv_i}(\mathcal{P}_i^a)^{\ell-1}-\sqrt{z\sigma_i^a} \cdot m_2 G_{\vv_i\vv_i}(\mathcal{P}_i^a)^{\ell-1}.\label{h10}
\end{align}
By \eqref{eq:bound_G12}, the first term is of size $\OO_{\prec}(n^{-1})$. The second term cancels the second term on the right-hand side of (\ref{expansion}), leading to
\begin{align}
&~\sqrt{n}\frac{\sqrt{z\sigma_i^a}}{1+m_{2c}(z)\sigma_i^a}\expect[h_i(1,0)] + \sqrt{n}\frac{zm_{2c}(z)\sigma_i^a}{1+m_{2c}(z)\sigma_i^a}\expect[G_{\vv_i\vv_i}(\mathcal{P}_i^a)^{\ell-1}] \nonumber\\
=&~\sqrt{n}\frac{z\sigma_i^a}{1+m_{2c}(z)\sigma_i^a}\expect\qa{\pa{m_{2c}(z)-m_2(z)}G_{\vv_i\vv_i}(\mathcal{P}_i^a)^{\ell-1}}+ \OO_{\prec}(n^{-1/2}).\label{cancel_h10}
\end{align}
It remains to control the difference $m_2(z)-m_{2c}(z)$ in the first term. Take $\eta_0=n^{-2/3}$ and set $z_0=z+\ii\eta_0$. By the averaged local law \eqref{eq:averaged_outside} at $z_0$, we have
$$m_2(z_0)-m_{2c}(z_0) \prec {n}^{-1}+(n\eta_0)^{-2}\lesssim n^{-2/3}.$$ 
On the other hand, by the Lipschitz continuity of $m_2$ and $m_{2c}$ from \eqref{eq:bound_m0}, we have
$m_2(z_0)-m_{2}(z)\lesssim \eta_0$ with high probability and $m_{2c}(z_0)-m_{2c}(z)\lesssim \eta_0$. Together with the preceding bound, this yields
\begin{equation}\label{eq:good_aver_out}
m_2(z)-m_{2c}(z) \prec n^{-2/3} .
\end{equation}
Substituting this estimate into \eqref{cancel_h10}, we obtain an error of order $\OO_\prec(n^{-1/6})$:
\begin{align}
&~\sqrt{n}\frac{\sqrt{z\sigma_i^a}}{1+m_{2c}(z)\sigma_i^a}\expect[h_i(1,0)] + \sqrt{n}\frac{zm_{2c}(z)\sigma_i^a}{1+m_{2c}(z)\sigma_i^a}\expect[G_{\vv_i\vv_i}(\mathcal{P}_i^a)^{\ell-1}] \prec n^{-1/6}.\label{final_h10}
\end{align}

\subsubsection{Estimate of $h_i(0,1)$}

The derivative of $\mathcal{P}_i^a$ is given by
\begin{equation}\label{Pderivative}
\partial_{j\mu}\mathcal{P}_i^a=-\sqrt{n} z^{3/2}\pb{(G\AA)_{\vv_i j}(\BB G)_{\mu\vv_i}+(G\BB)_{\vv_i\mu}(\AA G)_{j\vv_i}} =-2\sqrt{n}z^{3/2}(G\AA)_{\vv_i j}(\BB G)_{\mu\vv_i}.
\end{equation} 
Using this identity, we compute $h_i(0,1)$ as
\begin{align}
h_i(0,1)&= \frac{1}{n}\sum_{j,\mu}\vv_i(j)(\BB G)_{\mu\vv_i}\partial_{j\mu}(\mathcal{P}_i^a)^{\ell-1} =-2(\ell-1) \frac{z^{3/2} }{\sqrt{n}}\sum_{j,\mu}\vv_i(j)(\BB G)_{\mu\vv_i}(G\AA)_{\vv_i j}(\BB G)_{\mu\vv_i}(\mathcal{P}_i^a)^{\ell-2}\nonumber\\
&=-2(\ell-1) \frac{z^{3/2}}{\sqrt{n}}(G\BB^2 G)_{\vv_i \vv_i}(G\AA)_{\vv_i \vv_i}(\mathcal{P}_i^a)^{\ell-2}\nonumber\\
&=-2(\ell-1) \frac{z^{3/2}\sqrt{\sigma_i^a}}{\sqrt{n}}(G\BB^2 G)_{\vv_i \vv_i}G_{\vv_i \vv_i}(\mathcal{P}_i^a)^{\ell-2}.\label{h01}
\end{align}
The entry $(G\BB^2 G)_{\vv_i \vv_i}G_{\vv_i \vv_i}$ has a nonzero limit, which contributes a nontrivial variance term. The deterministic limit of $G_{\vv_i \vv_i}$ follows from the local law \eqref{aniso_outstrong}:
\begin{equation}
\label{eq:Gvivi}
G_{\vv_i \vv_i}=\vv_i^*  \Pi(z)\vv_i +\OO_\prec(n^{-1/2}),\quad \text{with}\quad \vv_i^*  \Pi(z)\vv_i=-\frac{1}{z[1+m_{2c}(z)\sigma_i^a]}.
\end{equation}

The deterministic limit of $(G\BB^2 G)_{\vv_i \vv_i}$ is more involved. To compute it, we introduce a class of resolvents depending on a parameter $\omega\in\C$:
\begin{equation}\label{G_omega}
\begin{aligned}
G_\omega&=G_\omega(X,z):=\left(H(X,z)-\omega\BB^2-z\right)^{-1}\\
&=\qbb{z^{1/2}\begin{pmatrix}
  A^{1/2}&0 \\
  0&B^{1/2}
\end{pmatrix}\begin{pmatrix}
  0&X \\
  X^*&0
\end{pmatrix}\begin{pmatrix}
  A^{1/2}&0 \\
  0&B^{1/2}
\end{pmatrix}-\omega\begin{pmatrix}
  0&0 \\
  0&B
\end{pmatrix}-zI}^{-1}.
\end{aligned}
\end{equation} 
Note that $G_\omega$ is defined so that \( \left. \partial_\omega\right|_{\omega=0} G_\omega =G\BB^2G\). Moreover, since we work outside the limiting spectrum of $\cal Q_1$ and $\cal Q_2$, $G_\omega$ is a well-defined analytic (matrix-valued) function of $\omega\in\C$ for $\abs{\omega}$ sufficiently small. Hence, by Cauchy's integral formula with respect to $\omega$, to identify the deterministic limit of $\left. \partial_\omega \right|_{\omega=0}G_\omega $, it suffices to establish a local law for $G_\omega$ with $\abs{\omega}\le \e$, where $\e>0$ is a sufficiently small constant. 
We rewrite $G_\omega$ as \(G_\omega(z)= G(z)[1-\omega \BB^2 G(z)]^{-1}\) and recall from \eqref{eq:bound_G12} that $\norm{G(z)}\lesssim 1$ with high probability. Hence, we can choose $\e>0$ small enough so that $\e\norm{\BB^2 G(z)}\leq 1/2$ with high probability for all $\abs{\omega}\leq \e$. This gives the bound
\begin{equation}\label{eq:bound_Gomega}
\sup_{\omega\in \C:|\omega|\le \e}\|G_\omega(z)\| =\OO(1)\quad \text{with high probability}.
\end{equation}

Recall that $\Pi$ is characterized by the equation $M(\Pi)$, with $M$ defined in \eqref{s-ceq}. We claim that the deterministic limit of $G_\omega$ is given by the solution of the following equation:
\begin{equation}\nonumber
\begin{aligned}
M_\omega(\Pi):=I+z\Pi+\Pi\mathcal{S}(\Pi)+\omega\BB^2\Pi=0.
\end{aligned}
\end{equation}
Explicitly, the solution can be written as $\Pi_\omega=\textup{diag}\{\Pi_{1,\omega},\Pi_{2,\omega}\}$, where $\Pi_{1,\omega}$ and $\Pi_{2,\omega}$ are respectively the $p\times p$ and $n\times n$ matrices defined by
$$\Pi_{1,\omega}=-\pb{z+zm_{2c,\omega}(z)A}^{-1},\quad\Pi_{2,\omega}=-\pb{z+zm_{1c,\omega}(z)B+\omega B}^{-1}.$$
Here, the quantities $(m_{1c,\omega}(z),m_{2c,\omega}(z))$ are defined as the unique solution of the following system of self-consistent equations satisfying $|m_{\al c,\omega}(z)-m_{\al c}(z)|\le C|\omega|$, $\al\in\{1,2\}$, for a constant $C>0$ independent of $\omega$:
\begin{equation}\label{m_12c,omega}
\begin{aligned}
m_{1c,\omega}(z)&=d_n\int\frac{x}{-(z+zm_{2c,\omega}(z)x)}\pi_A(\dd x),\\ 
m_{2c,\omega}(z)&=\int\frac{x}{-(z+zm_{1c,\omega}(z)x+\omega x)}\pi_B(\dd x).
\end{aligned}
\end{equation}
Then, we can prove the following anisotropic local law uniformly in $|\omega|\le \e$:
\begin{equation}\label{eq:locallaw_Gomega}
  G_\omega-\Pi_\omega=\OO_\prec(n^{-1/2}).
\end{equation} 
This local law can be proved in the same way as Theorem \ref{local_law}. In fact, the argument is simpler thanks to the operator norm bound \eqref{eq:bound_Gomega}, which replaces the uses of \eqref{ward2} and Lemma \ref{multipleG}. More precisely, 
\begin{align*}
\sum_{a}\left|(\Gamma_1G_\omega\Gamma_2)_{\uu a}\right|^{2}\prec 1,\ \ \abs{(\Ga_1G_\omega\Ga_2)_{\xx\yy}}\prec 1,\ \ \abs{(\Ga_1G_\omega\Ga_2G_\omega\Ga_3)_{\xx\yy}}\prec 1,\ \ \abs{(\Ga_1G_\omega\Ga_2G_\omega\Ga_3G_\omega\Ga_4)_{\xx\yy}}\prec 1.
\end{align*}
Moreover, the assumption $G_\omega-\Pi_\omega=\OO_\prec(\phi)$ in Theorem \ref{maintheorem} holds trivially with $\phi=1$. With these estimates, the proof of Theorem \ref{maintheorem} in Section \ref{sec:stoch_step} gives
\begin{equation}\nonumber
  M_\omega(G_\omega)=\OO_{\prec}(n^{-1/2}),\qquad |\underline{\Gamma M_\omega(G_\omega)}|\prec n^{-1}.\end{equation}
Finally, repeating the argument in Section \ref{sec:det_step} yields the local law \eqref{eq:locallaw_Gomega}. We omit the details. 

Combining the local law \eqref{eq:locallaw_Gomega} with the Cauchy integral formula, we obtain
\begin{equation}
\left.\partial_\omega\right|_{\omega=0}(G_\omega-\Pi_\omega)_{\vv_i\vv_i}=\frac{1}{2\pi\textup{i}}\oint_{\cal C}\frac{(G_\zeta-\Pi_\zeta)_{\vv_i\vv_i}}{\zeta^2}\dd\zeta \prec n^{-1/2},\label{eq:diff_G_Pi_derv}
\end{equation}
where the contour $\cal C$ is chosen to be the circle centered at $0$ with radius $\e/2$. It remains to evaluate $\left.\partial_\omega\right|_{\omega=0}(\Pi_\omega)_{\vv_i\vv_i}$, which depends on the $\omega$-derivatives of $m_{1c,\omega}(z)$ and $m_{2c,\omega}(z)$ at $\omega=0$. Differentiating the equations in \eqref{m_12c,omega} with respect to $\omega$ gives
\begin{equation}\label{dm_12c,omega}
\begin{aligned}
 \partial_\omega m_{1c,\omega}(z)&=d_n\int\frac{x^2z \cdot \partial_\omega m_{2c,\omega}(z)}{(z+zm_{2c,\omega}(z)x)^2}\pi_A(\dd x),\\
\partial_\omega m_{2c,\omega}(z)&=\int\frac{x^2z \cdot \partial_\omega m_{1c,\omega}(z)+x^2}{(z+zm_{1c,\omega}(z)x+\omega x)^2}\pi_B(\dd x).
\end{aligned}
\end{equation}
Evaluating at $\omega=0$ yields
\begin{equation}\label{dm_12c,omega_0}
\begin{aligned}
m_{1a}(z)&=d_n\int\frac{x^2zm_{2a}(z)}{(z+zm_{2c}(z)x)^2}\pi_A(\dd x)=z\gamma_1(z)m_{2a}(z),\\
m_{2a}(z)&=\int\frac{x^2zm_{1a}(z)+x^2}{(z+zm_{1c}(z)x)^2}\pi_B(\dd x)=z\gamma_2(z)m_{1a}(z)+\gamma_2(z),
\end{aligned}
\end{equation}
where we abbreviate $m_{1a}(z):=\partial_\omega \lvert_{\omega=0}m_{1c,\omega}(z)$ and $m_{2a}(z):=\partial_\omega \lvert_{\omega=0} m_{2c,\omega}(z)$, and recall that the functions $\gamma_1$ and $\gamma_2$ are defined in \eqref{def:alpha(z) beta(z) a(z) b(z) s4}.
Solving the linear system \eqref{dm_12c,omega_0} yields
$$m_{1a}(z)=\frac{z\gamma_1(z)\gamma_2(z)}{1-z^2\gamma_1(z)\gamma_2(z)},\quad m_{2a}(z)=\frac{\gamma_2(z)}{1-z^2\gamma_1(z)\gamma_2(z)},$$
where $m_{2a}$ agrees with the definition in (\ref{def:alpha(z) beta(z) a(z) b(z) s4}). We can now compute $\left.\partial_\omega\right|_{\omega=0}(\Pi_\omega)_{\vv_i\vv_i}$ as
\begin{align*}
\left.\partial_\omega\right|_{\omega=0}(\Pi_\omega)_{\vv_i\vv_i}&=\pB{\Pi_{1,\omega=0}\pb{zm_{2a}(z)A}\Pi_{1,\omega=0}}_{\vv_i^a\vv_i^a}=\frac{m_{2a}(z)\sigma_i^a}{z(1+m_{2c}(z)\sigma_i^a)^2}.
\end{align*}
Together with \eqref{eq:diff_G_Pi_derv}, this gives
\[(G\BB^2 G)_{\vv_i\vv_i} =\frac{m_{2a}(z)\sigma_i^a}{z(1+m_{2c}(z)\sigma_i^a)^2} +\OO_\prec(n^{-1/2}).\]
Combining this with the deterministic limit of $G_{\vv_i\vv_i}$ in \eqref{eq:Gvivi} and substituting into \eqref{h01}, we get
\begin{align}
\E h_i(0,1)&= 2(\ell-1) \frac{\sqrt{\sigma_i^a}}{\sqrt{n z}}\frac{m_{2a}(z)\sigma_i^a}{(1+m_{2c}(z)\sigma_i^a)^3} \E(\mathcal{P}_i^a)^{\ell-2} +\OO_\prec(n^{-1}).\label{final_h01}
\end{align}

\subsubsection{Estimate of $h_i(2,0)$}

Using \eqref{Gderivative}, we compute $h_i(2,0)$ as
\begin{align}
&h_i(2,0)= \sum_{j,\mu}\frac{\kappa_3(x_{j\mu})}{2}\vv_i(j)\qa{\partial_{j\mu}^2(\BB G)_{\mu\vv_i}}(\mathcal{P}_i^a)^{\ell-1}\label{h20}\\
&= z(\mathcal{P}_i^a)^{\ell-1}\frac{\kappa_3}{2}\sum_{j,\mu}\vv_i(j)\Bigl\{(\BB G\AA)_{\mu j}\qa{(\BB G\AA)_{\mu j}(\BB G)_{\mu\vv_i}+(\BB G\BB)_{\mu\mu}(\AA G)_{j\vv_i}}\nonumber\\
&\quad +\qa{(\BB G\AA)_{\mu j}(\BB G\AA)_{\mu j}+(\BB G\BB)_{\mu\mu}(\AA G\AA)_{jj}}(\BB G)_{\mu\vv_i}+ (\BB G\BB)_{\mu\mu}\qa{(\AA G\AA)_{jj}(\BB G)_{\mu\vv_i}+(\AA G\BB)_{j\mu}(\AA G)_{j\vv_i}}\nonumber\\
&\quad +\qa{(\BB G\AA)_{\mu j}(\BB G\BB)_{\mu\mu}+(\BB G\BB)_{\mu\mu}(\AA G\BB)_{j\mu}}(\AA G)_{j\vv_i}\Bigr\}\nonumber\\
&=z(\mathcal{P}_i^a)^{\ell-1}\kappa_3\sum_{j,\mu}\vv_i(j)\pB{(\BB G\AA)_{\mu j}^2(\BB G)_{\mu\vv_i}+2(\BB G\AA)_{\mu j}(\BB G\BB)_{\mu\mu}(\AA G)_{j\vv_i}+(\BB G\BB)_{\mu\mu}(\AA G\AA)_{jj}(\BB G)_{\mu\vv_i}}.\nonumber
\end{align} 
We estimate these terms using the following bounds, which follow directly from the operator norm bound \eqref{eq:bound_G12}. Under the setup of Lemma \ref{wardlemma}, for any deterministic unit vectors $\uu, \xx, \yy\in\mathbf S$, we have
\begin{align}\label{eq:bound_G_by_1}
\sum_{a}\left|(\Gamma_1G \Gamma_2)_{\uu a}\right|^{2}\prec 1,\ \ \abs{(\Ga_1G \Ga_2)_{\xx\yy}}\prec 1.
\end{align}
Combining \eqref{eq:bound_G_by_1} with the Cauchy-Schwarz inequality, we bound the first two terms in the last line of \eqref{h20} as follows:
\begin{align}
&  \kappa_3\sum_{j,\mu}\vv_i(j)(\BB G\AA)_{\mu j}^2(\BB G)_{\mu\vv_i}\prec n^{-3/2} \sum_j \abs{\vv_i(j)} \sum_\mu \absb{(\BB G \AA)_{\mu j}}^2 \prec n^{-3/2} \sum_j \abs{\vv_i(j)} \lesssim n^{-1},\label{eq:bound_h20_1}\\
&\kappa_3\sum_{j,\mu}\vv_i(j)(\BB G\AA)_{\mu j}(\BB G\BB)_{\mu\mu}(\AA G)_{j\vv_i} \prec n^{-3/2}\sum_{j}\absa{\vv_i(j)(\AA G)_{j\vv_i}}\cdot  \sqrt{n}\pbb{\sum_\mu |(\BB G\AA)_{\mu j}|^2}^{1/2} \nonumber\\
&\qquad \qquad\qquad \qquad\qquad \qquad\qquad \qquad \prec n^{-1} \pbb{\sum_{j}|(\AA G)_{j\vv_i}|^2}^{1/2} \prec n^{-1}.\label{eq:bound_h20_2}
\end{align}
To handle the last term on the right-hand side of \eqref{h20}, we first use the local law \eqref{aniso_outstrong} to write
\begin{equation}
(\BB G\BB)_{\mu\mu}=(\BB \Pi\BB)_{\mu\mu}+\OO_\prec(n^{-1/2}).\label{eq:bound_GBG1}
\end{equation} 
Moreover, choose a deterministic vector $\mathbf{b}\in\RR^{p+n}$ by setting $\mathbf{b}(i)=0$ for $1\le i \le p$ and $\mathbf{b}({\mu})=\overline{(\BB \Pi\BB)_{\mu\mu}}$ for $p+1\le \mu\le p+n$. This vector has $l^2$-norm of order $\OO(\sqrt{n})$. Note that $\vv_i$ has nonzero entries only in the $j$-indices, whereas $\BB \mathbf b$ has nonzero entries only in the $\mu$-indices. Hence, applying the local law \eqref{aniso_outstrong} again gives
\begin{equation}
  (\BB G)_{\mathbf b\vv_i} \prec n^{-1/2}\|\mathbf b\| \lesssim 1 .\label{eq:bound_GBG2}
\end{equation} 
Using \eqref{eq:bound_GBG1} and \eqref{eq:bound_GBG2}, together with \eqref{eq:bound_G_by_1} and the Cauchy-Schwarz inequality, we obtain the bound
\begin{align}
 \sum_\mu (\BB G\BB)_{\mu\mu}(\BB G)_{\mu\vv_i} &=  (\BB G)_{\mathbf b\vv_i} + \OO_\prec\pbb{n^{-1/2}\sum_\mu \absa{(\BB G)_{\mu\vv_i}}} \prec 1 + \pbb{\sum_\mu |(\BB G)_{\mu\vv_i}|^2}^{1/2} \prec 1.\label{eq:bound_GBG_BG}
\end{align}
Combining this bound with \eqref{eq:bound_G_by_1}, we control the last term on the right-hand side of \eqref{h20} as
\begin{align}
&\kappa_3\sum_{j,\mu}\vv_i(j)(\BB G\BB)_{\mu\mu}(\AA G\AA)_{jj}(\BB G)_{\mu\vv_i}\prec n^{-3/2}\sum_{j}|\vv_i(j)| \lesssim  n^{-1}.\label{eq:bound_h20_3}
\end{align}
Substituting \eqref{eq:bound_h20_1}, \eqref{eq:bound_h20_2}, and \eqref{eq:bound_h20_3} into \eqref{h20}, we obtain
\begin{equation}\label{final_h20}
\E h_i(2,0) \prec n^{-1}.
\end{equation}

\subsubsection{Estimate of $h_i(1,1)$}

Using \eqref{Gderivative} and \eqref{Pderivative}, we compute $h_i(1,1)$ as
\begin{align}\label{h11}
h_i(1,1)=&~\sum_{j,\mu}\kappa_3(x_{j\mu})\vv_i(j)\partial_{j\mu}(\BB G)_{\mu\vv_i}\cdot \partial_{j\mu}(\mathcal{P}_i^a)^{\ell-1}\\
=&~2(\ell-1)z^{2} (\mathcal{P}_i^a)^{\ell-2}\cdot \sqrt{n} \kappa_3\sum_{j,\mu}\vv_i(j)(G\AA)_{\vv_i j}(\BB G)_{\mu\vv_i} \pb{(\BB G\AA)_{\mu j}(\BB G)_{\mu \vv_i}+(\BB G\BB)_{\mu\mu}(\AA G)_{j\vv_i}}. \nonumber
\end{align}
Using \eqref{eq:bound_G_by_1} and \eqref{eq:bound_GBG_BG}, we bound the two terms on the right-hand side of \eqref{h11} as follows:
\begin{align*}
&\sqrt{n}\kappa_3 \sum_{j,\mu}\vv_i(j)(G\AA)_{\vv_i j}(\BB G)_{\mu\vv_i}^2(\BB G\AA)_{\mu j}  \prec n^{-1} \sum_j\absb{\vv_i(j)(G\AA)_{\vv_i j}}  \sum_\mu|(\BB G)_{\mu\vv_i}|^2\\
&\qquad \qquad \prec n^{-1} \sum_j\absb{\vv_i(j)(G\AA)_{\vv_i j}} \lesssim n^{-1} \pbb{\sum_j|(G\AA)_{\vv_i j}|^2}^{1/2} \prec n^{-1},\\
&\sqrt{n}\kappa_3 \sum_{j,\mu}\vv_i(j)(G\AA)_{\vv_i j}(\AA G)_{j\vv_i} \sum_\mu (\BB G\BB)_{\mu\mu} (\BB G)_{\mu\vv_i}\prec n^{-1} \sum_{j,\mu}|(G\AA)_{\vv_i j}|^2 \prec n^{-1} .
\end{align*}
Therefore,
\begin{equation}\label{final_h11}
\E h_i(1,1) \prec n^{-1}.
\end{equation}

\subsubsection{Estimate of $h_i(0,2)$}

Using \eqref{Gderivative} and \eqref{Pderivative}, we compute $h_i(0,2)$ as
\begin{align}
h_i(0,2)&= \sum_{j,\mu}\frac{\kappa_3(x_{j\mu})}{2}\vv_i(j)(\BB G)_{\mu\vv_i}\partial_{j\mu}^2(\mathcal{P}_i^a)^{\ell-1} \nonumber\\
&=2(\ell-1)(\ell-2)z^3(\mathcal{P}_i^a)^{\ell-3}\cdot n\kappa_3\sum_{j,\mu}\vv_i(j)(\BB G)_{\mu\vv_i}(G\AA)_{\vv_i j}^2(\BB G)_{\mu\vv_i}^2\label{h02}\\
&+(\ell-1) z^{2}(\mathcal{P}_i^a)^{\ell-2}\cdot \sqrt{n}\kappa_3\sum_{j,\mu}\vv_i(j)(\BB G)_{\mu\vv_i} (G\AA)_{\vv_i j}\qb{(\BB G\AA)_{\mu j}(\BB G)_{\mu\vv_i}+(\BB G\BB)_{\mu\mu}(\AA G)_{j\vv_i}}\nonumber\\
&+(\ell-1) z^{2}(\mathcal{P}_i^a)^{\ell-2}\cdot \sqrt{n}\kappa_3\sum_{j,\mu}\vv_i(j)(\BB G)_{\mu\vv_i}\qb{(G\AA)_{\vv_i j}(\BB G\AA)_{\mu j}+(G\BB)_{\vv_i\mu}(\AA G\AA)_{jj}}(\BB G)_{\mu\vv_i} .\nonumber
\end{align}
Using \eqref{eq:bound_G_by_1} and \eqref{eq:bound_GBG_BG}, we bound the terms on the right-hand side of \eqref{h02} as follows:
\begin{align*}
&n\kappa_3 \sum_{j,\mu}\vv_i(j) (G\AA)_{\vv_i j}^2(\BB G)_{\mu\vv_i}^3 \prec \sqrt{n}\kappa_3 \sum_{j}|(G\AA)_{\vv_i j}^2|\prec \sqrt{n}\kappa_3 \lesssim n^{-1},\\
&\sqrt{n}\kappa_3 \sum_{j,\mu}\vv_i(j) (G\AA)_{\vv_i j}(\BB G\AA)_{\mu j}(\BB G)_{\mu\vv_i}^2 \prec \sqrt{n}\kappa_3 \sum_{j}|\vv_i(j)| |(G\AA)_{\vv_i j}| \prec \sqrt{n}\kappa_3 \lesssim n^{-1},\\
&\sqrt{n}\kappa_3\sum_{j}\vv_i(j)(G\AA)_{\vv_i j}(\AA G)_{j\vv_i} \sum_\mu (\BB G\BB)_{\mu\mu}(\BB G)_{\mu\vv_i} \prec \sqrt{n}\kappa_3\sum_{j}|(G\AA)_{\vv_i j}|^2 \prec \sqrt{n}\kappa_3 \lesssim n^{-1},\\
&\sqrt{n}\kappa_3 \sum_{j,\mu}\vv_i(j)(\BB G)_{\mu\vv_i}^2(G\BB)_{\vv_i\mu}(\AA G\AA)_{jj} \prec \kappa_3 \sum_{j}|\vv_i(j)| \prec \sqrt{n}\kappa_3 \lesssim n^{-1}, 
\end{align*}
where, in deriving the first and fourth bounds, we also used
\begin{align}\label{eq:bound_GBG_muvi}
  (\BB G)_{\mu\vv_i}\prec n^{-1/2}
\end{align} 
which follows from the local law \eqref{aniso_outstrong}. Therefore,
\begin{equation}\label{final_h02}
\E h_i(0,2) \prec n^{-1}.\end{equation}

\subsubsection{Estimate of $h_i(3,0)$}

We next bound $h_i(3,0)$, which is defined as
\begin{align*}
 h_i(3,0)&=\frac{1}{6} \sum_{j,\mu} {\kappa_4(x_{j\mu})} \vv_i(j)\partial_{j\mu}^3(\BB G)_{\mu\vv_i}\cdot (\mathcal{P}_i^a)^{\ell-1}.
\end{align*}
Using \eqref{Gderivative}, we can write $\partial_{j\mu}^3(\BB G)_{\mu\vv_i}$ as a linear combination of eight terms, each of which is a product of three resolvent factors, with at least one factor of the form $(\BB G)_{\mu\vv_i}$ or $(G\AA)_{\vv_i j}$. Therefore, by \eqref{eq:bound_G_by_1} and the Cauchy-Schwarz inequality, we can bound $\E h_i(3,0)$ by
\begin{align}\nonumber
\E h_i(3,0) &\prec \kappa_4\E\sum_{j,\mu}|\vv_i(j)| \pa{|(\BB G)_{\mu\vv_i}|+|(G\AA)_{\vv_i j}|} \\
&\prec n^{-2} \pbb{\sum_{j,\mu}|\vv_i(j)|^2}^{1/2}\pbb{\E\sum_{j,\mu}\pa{|(\BB G)_{\mu\vv_i}|+|(G\AA)_{\vv_i j}|}^2}^{1/2} \prec n^{-2}\cdot \sqrt{n}\cdot \sqrt{n} = n^{-1}.\label{final_h30} 
\end{align}

\subsubsection{Estimate of $h_i(2,1)$}
With \eqref{Pderivative}, we compute $h_i(2,1)$ as
\begin{align}
h_i(2,1)&=\sum_{j,\mu}\frac{\kappa_4(x_{j\mu})}{2}\vv_i(j)\partial_{j\mu}^2(\BB G)_{\mu\vv_i}\partial_{j\mu}(\mathcal{P}_i^a)^{\ell-1}\nonumber\\
&=-(\ell-1)z^{3/2}(\mathcal{P}_i^a)^{\ell-2}\cdot  \sqrt{n}\kappa_4\sum_{j,\mu}\vv_i(j)(G\AA)_{\vv_i j}(\BB G)_{\mu\vv_i}\partial_{j\mu}^2(\BB G)_{\mu\vv_i}.\label{h21}
\end{align}
Using the bound $\partial_{j\mu}^2(\BB G)_{\mu\vv_i}=\OO_\prec(1)$, the Cauchy-Schwarz inequality, and \eqref{eq:bound_G_by_1}, we get
\begin{align}
\E h_i(2,1)&\prec n^{-3/2}\sum_{j,\mu}|\vv_i(j)| |(G\AA)_{\vv_i j}||(\BB G)_{\mu\vv_i}| \prec n^{-3/2}\sum_{j}|\vv_i(j)| |(G\AA)_{\vv_i j}| \cdot \sqrt{n}\pbb{\sum_\mu |(\BB G)_{\mu\vv_i}|^2}^{1/2} \nonumber\\
&\prec n^{-3/2}\pbb{\sum_{j}|(G\AA)_{\vv_i j}|^2}^{1/2}\prec n^{-3/2}. \label{final_h21}
\end{align}

\subsubsection{Estimate of $h_i(1,2)$}

Using \eqref{Gderivative} and \eqref{Pderivative}, we compute $h_i(1,2)$ as
\begin{align}
&h_i(1,2)= \sum_{j,\mu}\frac{\kappa_4(x_{j\mu})}{2}\vv_i(j)\partial_{j\mu}(\BB G)_{\mu\vv_i}\cdot \partial_{j\mu}^2(\mathcal{P}_i^a)^{\ell-1}\label{h12}\\
&=-2(\ell-1)(\ell-2) z^{7/2}(\mathcal{P}_i^a)^{\ell-3} \cdot n\kappa_4 \sum_{j,\mu}\vv_i(j)\pb{(\BB G\AA)_{\mu j}(\BB G)_{\mu\vv_i}+(\BB G\BB)_{\mu\mu}(\AA G)_{j\vv_i}}(G\AA)_{\vv_i j}^2(\BB G)_{\mu\vv_i}^2\nonumber\\
&\quad -(\ell-1) z^{5/2} (\mathcal{P}_i^a)^{\ell-2}\cdot \sqrt{n}\kappa_4\sum_{j,\mu}\vv_i(j)\pb{(\BB G\AA)_{\mu j}(\BB G)_{\mu\vv_i}+(\BB G\BB)_{\mu\mu}(\AA G)_{j\vv_i}} \nonumber\\
&\quad \times \Bigl[(G\AA)_{\vv_i j}\pb{(\BB G\AA)_{\mu j}(\BB G)_{\mu\vv_i}+(\BB G\BB)_{\mu\mu}(\AA G)_{j\vv_i}}+\pb{(G\AA)_{\vv_i j}(\BB G\AA)_{\mu j}+(G\BB)_{\vv_i\mu}(\AA G\AA)_{jj}}(\BB G)_{\mu\vv_i}\Bigr].\nonumber
\end{align}
Combining \eqref{eq:bound_G_by_1} with the Cauchy-Schwarz inequality, we bound the terms on the right-hand side of \eqref{h12} as follows:
\begin{align*}
&n\kappa_4 \sum_{j,\mu}\vv_i(j)\pb{(\BB G\AA)_{\mu j}(\BB G)_{\mu\vv_i}+(\BB G\BB)_{\mu\mu}(\AA G)_{j\vv_i}}(G\AA)_{\vv_i j}^2(\BB G)_{\mu\vv_i}^2  \prec n^{-1} \sum_j|(G\AA)_{\vv_i j}^2| \sum_\mu(\BB G)_{\mu\vv_i}^2 \prec n^{-1},\\
&\sqrt{n}\kappa_4 \sum_{j,\mu}\vv_i(j)(\BB G\AA)_{\mu j}(\BB G)_{\mu\vv_i}(G\AA)_{\vv_i j}\pb{(\BB G\AA)_{\mu j}(\BB G)_{\mu\vv_i}+(\BB G\BB)_{\mu\mu}(\AA G)_{j\vv_i}} \\
&\quad \prec n^{-3/2}{\sum_j\absb{\vv_i(j)(G\AA)_{\vv_i j}}}\cdot {\sum_\mu\absb{(\BB G\AA)_{\mu j}(\BB G)_{\mu\vv_i}}} \prec n^{-3/2}{\sum_j\absb{\vv_i(j)(G\AA)_{\vv_i j}}} \prec n^{-3/2},\\
&\sqrt{n}\kappa_4 \sum_{j,\mu}\vv_i(j)\pb{(\BB G\AA)_{\mu j}(\BB G)_{\mu\vv_i}+(\BB G\BB)_{\mu\mu}(\AA G)_{j\vv_i}}(G\BB)_{\vv_i\mu}(\AA G\AA)_{jj}(\BB G)_{\mu\vv_i} \\
&\quad \prec n^{-3/2} \sum_j\absb{\vv_i(j)}  \sum_\mu |(\BB G)_{\mu\vv_i}|^2 \prec n^{-3/2} \sum_j\absb{\vv_i(j)} \lesssim n^{-1},\\
&\sqrt{n}\kappa_4 \sum_{j,\mu}\vv_i(j)(\BB G\BB)_{\mu\mu}(\AA G)_{j\vv_i}(G\AA)_{\vv_i j}(\BB G\AA)_{\mu j}(\BB G)_{\mu\vv_i} \\
&\quad \prec n^{-3/2} \sum_j\absb{(G\AA)_{\vv_i j}}^2 \sum_\mu\absb{(\BB G\AA)_{\mu j}(\BB G)_{\mu\vv_i}} \prec n^{-3/2} \sum_j\absb{(G\AA)_{\vv_i j}}^2 \prec n^{-3/2}.
\end{align*}
The only remaining term is
\begin{equation}\nonumber
-(\ell-1)z^{5/2} (\mathcal{P}_i^a)^{\ell-2}\cdot \sqrt{n}\kappa_4\sum_{j,\mu}\vv_i(j)(\BB G\BB)_{\mu\mu}^2(\AA G)^2_{j\vv_i}(G\AA)_{\vv_i j}.
\end{equation}
We apply the local law \eqref{aniso_outstrong} to replace $G$ by $\Pi$ in the above expression. This gives
\begin{align}
&-(\ell-1)z^{5/2} (\mathcal{P}_i^a)^{\ell-2}\cdot \sqrt{n}\kappa_4\sum_{j}\vv_i(j)\qa{(\Pi\AA)_{\vv_i j}+\OO_\prec(n^{-1/2})}^3\sum_\mu \qa{(\BB \Pi\BB)_{\mu\mu}^2+\OO_\prec(n^{-1/2})} \nonumber\\
=& -(\ell-1)z^{5/2} (\mathcal{P}_i^a)^{\ell-2}\cdot \sqrt{n}\kappa_4\qbb{\sum_{j}\vv_i(j)(\Pi\AA)_{\vv_i j}^3+\OO_\prec(1)}\qbb{\sum_\mu (\BB \Pi\BB)_{\mu\mu}^2 +\OO_\prec(n^{1/2})}\nonumber\\
=& (\ell-1) (\mathcal{P}_i^a)^{\ell-2}\cdot \sqrt{n}\kappa_4\sum_{j}\frac{(\sigma_i^a)^{3/2}\vv_i(j)^4}{\sqrt{z}[1+m_{2c}(z)\sigma_i^a]^3} \sum_\mu (\BB \Pi\BB)_{\mu\mu}^2 +\OO_\prec(n^{-1}).
\label{h12contributionterm}
\end{align} 
Altogether, we obtain
\begin{align}\label{final_h12}
&\E h_i(1,2)= (\ell-1)n^{3/2}\kappa_4\sum_{j}\frac{(\sigma_i^a)^{3/2}\vv_i(j)^4}{\sqrt{z}[1+m_{2c}(z)\sigma_i^a]^3}\cdot \frac{1}{n} \sum_\mu (\BB \Pi\BB)_{\mu\mu}^2  \cdot \E[(\mathcal{P}_i^a)^{\ell-2}]+\OO_\prec(n^{-1}).
\end{align}

\subsubsection{Estimate of $h_i(0,3)$}

A direct computation using \eqref{Gderivative} and \eqref{Pderivative} gives
\begin{align*}
h_i(0,3)=&-\frac{4}{3}(\ell-1)(\ell-2)(\ell-3) z^{9/2} (\mathcal{P}_i^a)^{\ell-4}\cdot n^{3/2} \kappa_4 \sum_{j,\mu}\vv_i(j)(\BB G)_{\mu\vv_i}^4(G\AA)_{\vv_i j}^3\\
&-\frac{2}{3}(\ell-1)(\ell-2) z^{7/2} (\mathcal{P}_i^a)^{\ell-3}\cdot n \kappa_4\sum_{j,\mu}\vv_i(j)(\BB G)_{\mu\vv_i}^2(G\AA)_{\vv_i j} \cal F_1\\
&-\frac{1}{3}(\ell-1) z^{5/2}(\mathcal{P}_i^a)^{\ell-2} \cdot \sqrt{n}\kappa_4\sum_{j,\mu}\vv_i(j)(\BB G)_{\mu\vv_i} \cal F_2.
\end{align*}
Here $\cal F_1$ is a linear combination of products of three resolvent factors and is bounded by $\OO_\prec(1)$, and the term $\cal F_2$ is a linear combination of products of four resolvent factors, with at least one factor of the form $(\BB G)_{\mu\vv_i}$ or $(G\AA)_{\vv_i j}$. Therefore, by \eqref{eq:bound_G_by_1} and the Cauchy-Schwarz inequality, we bound the second and third terms on the right-hand side of the above expression as follows:
\begin{align*}
  &n \kappa_4\sum_{j,\mu}\vv_i(j)(\BB G)_{\mu\vv_i}^2(G\AA)_{\vv_i j} \cal F_1 \prec  n^{-1}\sum_j  \absa{\vv_i(j)  (G\AA)_{\vv_i j} } \sum_\mu |(\BB G)_{\mu\vv_i}|^2 \prec n^{-1}\sum_j  \absa{\vv_i(j)  (G\AA)_{\vv_i j} } \prec n^{-1},\\ 
  &\sqrt{n}\kappa_4\sum_{j,\mu}\vv_i(j)(\BB G)_{\mu\vv_i} \cal F_2 \prec n^{-3/2}\sum_{j,\mu}|\vv_i(j)| \left(\left|(\BB G)_{\mu\vv_i}\right|^2+\left|(\BB G)_{\mu\vv_i}\right|\left|(G\AA)_{\vv_i j}\right|\right) \\
  &\quad \prec n^{-3/2}\pbb{\sum_{j}|\vv_i(j)| +\sqrt{n}\sum_{j}|\vv_i(j)|\left|(G\AA)_{\vv_i j}\right| } \prec n^{-1}.
\end{align*}
For the first term, \eqref{eq:bound_G_by_1} and \eqref{eq:bound_GBG_muvi} give
\begin{align*}
  n^{3/2} \kappa_4 \sum_{j,\mu}\vv_i(j)(\BB G)_{\mu\vv_i}^4(G\AA)_{\vv_i j}^3 \prec n^{-1/2}\sum_j |(G\AA)_{\vv_i j}|^2 \cdot \sum_\mu |(\BB G)_{\mu\vv_i}|^4 \prec n^{-3/2}\sum_j |(G\AA)_{\vv_i j}|^2\prec n^{-3/2}.
\end{align*}
Altogether, we obtain
\begin{align}\label{final_h03}
&\E h_i(0,3) \prec n^{-1}.
\end{align}

\subsubsection{Estimate of the remainder}

Finally, we control the remainder term $\mathcal{R}_i:= \sum_{j,\mu}\vv_i(j)\mathcal{R}_i^{j\mu}$, where $\mathcal{R}_i^{j\mu}$ satisfies \eqref{remainder0}.
As for $G$ and ${\mathcal{P}}_i^a$, \smash{$\wt G$ and $\wt{\mathcal{P}}_i^a$} satisfy the following deterministic bounds at $z$:
\[\norm{\wt G(z)}\lesssim (\Im z^{1/2})^{-1}\sim n^{2},\quad |\wt{\mathcal{P}}_i^a(z)| \lesssim n^{5/2}.\] 
These bounds imply
\[\sup_{\xi_{j\mu}\in \R}\absb{\partial_{j\mu}^4\pb{(\BB \wt G)_{\mu\vv_i}(\wt{\mathcal{P}}_i^a)^{\ell-1}}} \lesssim n^{10+\frac{5}{2}(\ell-1)}.\] 
On the other hand, by the high-moment assumption \eqref{eq_highmoment} on $x_{j\mu}$, we have $\expect\qa{\abs{x_{j\mu}}^5\mathds{1}(|x_{j\mu}|>n^{-1/2+\e})}\le n^{-D}$ for any large constant $D>0$. Hence, the second term in \eqref{remainder0} is bounded by $n^{-1}$, provided $D$ is chosen large enough depending on $\ell$. For the first term in \eqref{remainder0}, using $\expect\qa{\abs{x_{j\mu}}^5}\lesssim n^{-5/2}$, we get
\begin{align}
  \cal R_i &\lesssim n^{-5/2}\E\sum_{j,\mu}|\vv_i(j)|\sup_{\abs{\xi_{j\mu}}\leq n^{-1/2+\e}}\absb{\partial_{j\mu}^4\pb{(\BB \wt G)_{\mu\vv_i}(\wt{\mathcal{P}}_i^a)^{\ell-1}}}+\OO(n^{-1})\lesssim \sum_{t=0}^4 \E\cal R_i(t)+\OO(n^{-1}), \label{remainder}
\end{align}
where, for $t\in\{0,1,2,3,4\}$, 
\[\cal R_i(t):= n^{-5/2}\sum_{j,\mu}|\vv_i(j)|\sup_{\abs{\xi_{j\mu}}\leq n^{-1/2+\e}}\absb{\partial_{j\mu}^{4-t} (\BB \wt G)_{\mu\vv_i}}\absb{\partial_{j\mu}^{t}(\wt{\mathcal{P}}_i^a)^{\ell-1}}. \]
To control these terms, we use the estimate
\begin{align}\label{eq:replace_GbyG}
\sup_{\abs{\xi_{j\mu}}\leq n^{-1/2+\e}}\|\wt G - G\| \lesssim  n^{-1/2+\e} \quad \text{with high probability},  
\end{align}
which follows from the identity $\wt G=G(z)[1-(x_{j\mu}-\xi_{j\mu})\DD G(z)]^{-1}$, with $\DD$ defined in \eqref{defn_DD}, together with the operator norm bound \eqref{eq:bound_G12} for $G$. By \eqref{eq:replace_GbyG}, we also obtain that, for any $0\le k\le 4$,
\begin{align}\label{eq:bound_wtP-P}
\sup_{\abs{\xi_{j\mu}}\leq n^{-1/2+\e}}\absa{\partial_{j\mu}^k\wt{\mathcal{P}}_i^a-\partial_{j\mu}^k{\mathcal{P}}_i^a}\prec n^{\e},\quad \sup_{\abs{\xi_{j\mu}}\leq n^{-1/2+\e}}\absa{\wt{\mathcal{P}}_i^a}\prec n^{\e},\quad \sup_{\abs{\xi_{j\mu}}\leq n^{-1/2+\e}}\absa{\partial_{j\mu} \wt{\mathcal{P}}_i^a}\prec n^{\e},
\end{align}
where for the last bound we also used \eqref{Pderivative} and \eqref{eq:bound_GBG_muvi} to bound $\partial_{j\mu} {\mathcal{P}}_i^a$ by $\OO_\prec(1)$.

With \eqref{eq:replace_GbyG} and \eqref{eq:bound_G12}, for $t\in\{0,1\}$ we have $\partial_{j\mu}^{4-t} (\BB \wt G)_{\mu\vv_i}\prec 1$ and $\partial_{j\mu}^{t}(\wt{\mathcal{P}}_i^a)^{\ell-1}\prec n^{(\ell-1)\e}$ uniformly in $\xi_{j\mu}$. Thus, for $t\in\{0,1\}$,
\begin{align}\label{Ri01}
\cal R_i(t)\prec  n^{-5/2 +(\ell-1)\e} \sum_{j,\mu}|\vv_i(j)| \lesssim n^{-1 +(\ell-1)\e}. 
\end{align}
For $t\in \{2,3\}$, \smash{$\partial_{j\mu}^{4-t} (\BB \wt G)_{\mu\vv_i}$} can be written as a linear combination of products of $(5-t)$ resolvent factors, with at least one factor of the form \smash{$(\BB \wt G)_{\mu\vv_i}$} or \smash{$(\AA \wt G)_{j\vv_i}$}. Hence, by \eqref{eq:replace_GbyG},
\[ \sup_{\abs{\xi_{j\mu}}\leq n^{-1/2+\e}}\absb{\partial_{j\mu}^{4-t} (\BB \wt G)_{\mu\vv_i}}\prec \absb{(\BB G)_{\mu\vv_i}}+\absb{(\AA G)_{j\vv_i}}+\OO_\prec(n^{-1/2+\e}). \]
Moreover, by \eqref{eq:bound_wtP-P}, we obtain
\[\partial_{j\mu}^{t}(\wt{\mathcal{P}}_i^a)^{\ell-1} \prec n^{(\ell-1)\e} + n^{1/2+(\ell-2)\e} \lesssim n^{1/2+(\ell-2)\e}.\]
Here the factor $n^{(\ell-1)\e}$ comes from products involving only $\wt{\mathcal{P}}_i^a$ and its first-order derivative, while the factor $n^{1/2+(\ell-2)\e}$ comes from products involving a higher-order derivative \smash{$\partial_{j\mu}^{r}\wt{\mathcal{P}}_i^a$} with $r\ge 2$, which is bounded by $n^{1/2}$ since $\partial_{j\mu}^{r}{\mathcal{P}}_i^a\prec n^{1/2}$. Thus, for $t\in\{2,3\}$,
\begin{align}
\cal R_i(t) &\prec  n^{-2 +(\ell-2)\e} \E\sum_{j,\mu}|\vv_i(j)|\qa{\abs{(\BB G)_{\mu\vv_i}}+\abs{(\AA G)_{j\vv_i}}+\OO_\prec(n^{-1/2+\e})} \label{Ri23}\\
&\prec n^{-2 +(\ell-2)\e} \pbb{\sum_{j}|\vv_i(j)| \sum_\mu \abs{(\BB G)_{\mu\vv_i}} +n \sum_{j}|\vv_i(j)|\abs{(\AA G)_{j\vv_i}}}+n^{-1 +(\ell-1)\e} \prec n^{-1 +(\ell-1)\e}, \nonumber
\end{align}
where in the last step we used the Cauchy-Schwarz inequality and \eqref{eq:bound_G_by_1} again.

Finally, we consider $\cal R_i(4)$. We expand $\partial_{j\mu}^{4}(\wt{\mathcal{P}}_i^a)^{\ell-1}$ as a sum of products of $\wt{\mathcal{P}}_i^a$ and its derivatives. If a product involves at most one higher-order derivative \smash{$\partial_{j\mu}^{r}\wt{\mathcal{P}}_i^a$} with $r\ge 2$, then the corresponding term is bounded by $\OO_\prec(n^{-1 +(\ell-1)\e})$, as in the cases $t\in \{2,3\}$. Thus, it remains only to consider the term $(\partial_{j\mu}^{2}\wt{\mathcal{P}}_i^a)^2 (\wt{\mathcal{P}}_i^a)^{\ell-3}$:
\begin{align}
&  \cal R_i(4)\prec n^{-1 +(\ell-1)\e}+ n^{-5/2}\sum_{j,\mu}|\vv_i(j)|\sup_{\abs{\xi_{j\mu}}\leq n^{-1/2+\e}}\absb{(\BB \wt G)_{\mu\vv_i}}\absb{(\partial_{j\mu}^{2}\wt{\mathcal{P}}_i^a)^2 (\wt{\mathcal{P}}_i^a)^{\ell-3}} \nonumber\\
&  \prec n^{-1 +(\ell-1)\e}+ n^{-5/2+(\ell-3)\e}\sum_{j,\mu}|\vv_i(j)|\sup_{\abs{\xi_{j\mu}}\leq n^{-1/2+\e}}\qa{\absb{(\BB G)_{\mu\vv_i}}+\OO_\prec(n^{-1/2+\e})}\absb{(\partial_{j\mu}^{2}\wt{\mathcal{P}}_i^a)^2} \nonumber\\
&  \prec n^{-1 +(\ell-1)\e}+ n^{-3/2+(\ell-3)\e}\sum_{j,\mu}|\vv_i(j)|\qa{\absb{(\BB G)_{\mu\vv_i}}+\OO_\prec(n^{-1/2+\e})}\qa{\absb{(\BB G)_{\mu\vv_i}}+\absb{(\AA G)_{j\vv_i}}+\OO_\prec(n^{-1/2+\e})}^2 \nonumber\\
&  \prec n^{-1 +(\ell-1)\e}+ n^{-3/2+(\ell-3)\e} \sum_{j,\mu}|\vv_i(j)|\absb{(\BB G)_{\mu\vv_i}}^2+n^{-3/2+(\ell-3)\e}\sum_{j,\mu}|\vv_i(j)|\absb{(\BB G)_{\mu\vv_i}}\absb{(\AA G)_{j\vv_i}}   \nonumber\\ 
&\quad  + n^{-2+(\ell-2)\e}\sum_{j,\mu}|\vv_i(j)|\qa{\absb{(\BB G)_{\mu\vv_i}}^2+\absb{(\BB G)_{\mu\vv_i}}\absb{(\AA G)_{j\vv_i}}+\OO_\prec(n^{-1+2\e})} \nonumber\\
&\prec  n^{-1 +(\ell-1)\e}+ n^{-1+(\ell-3)\e} + n^{-3/2+ \ell \e} \lesssim n^{-1 +(\ell-1)\e}.\label{Ri4}
\end{align}
Here, the second step uses \eqref{eq:replace_GbyG} and \eqref{eq:bound_wtP-P}. The third step uses \eqref{eq:replace_GbyG} and the fact that $\partial_{j\mu}^{2}\wt{\mathcal{P}}_i^a$ can be written as $\sqrt{n}$ times a product of three resolvent factors, at least one of which is of the form \smash{$(\BB \wt G)_{\mu\vv_i}$} or \smash{$(\AA \wt G)_{j\vv_i}$}. The fifth step uses the Cauchy-Schwarz inequality and \eqref{eq:bound_G_by_1}. 

Substituting \eqref{Ri01}, \eqref{Ri23}, and \eqref{Ri4} into \eqref{remainder}, we obtain
\begin{equation}\label{final_Ri}
  \cal R_i \prec n^{-1+(\ell-1)\e}.
\end{equation}

Finally, we combine (\ref{expansion}) with \eqref{final_h10}, \eqref{final_h01}, \eqref{final_h20}, \eqref{final_h11}, \eqref{final_h02}, \eqref{final_h30}, \eqref{final_h21}, \eqref{final_h12}, \eqref{final_h03}, and \eqref{final_Ri}. Choosing $\e$ small enough so that $(\ell-1)\e\le 1/4$, we obtain
\begin{align}\label{recursive_moment_z}
  \expect[(\mathcal{P}_i^a)^\ell]=&~(\ell-1)\frac{(\sigma_i^a)^2}{(1+m_{2c}(z)\sigma_i^a)^4} \qbb{2m_{2a}(z) + n^{2}\kappa_4\sum_{j} \vv_i(j)^4 \cdot \frac{1}{n} \sum_\mu (\BB \Pi\BB)_{\mu\mu}^2 }\E[(\mathcal{P}_i^a)^{\ell-2}]\\
  &~+\OO_\prec(n^{-1/6}). \nonumber
\end{align}
Recall that $k_4=n^2\kappa_4$ and  $\sum_{j} \vv_i(j)^4=s_4(\vv_i^a)$. Moreover, by \eqref{eq:diff_G_m}, we have
\begin{equation}
\label{eq:Pivivi}
\frac{1}{1+m_{2c}(z)\sigma_i^a}=\frac{1}{1+m_{2c}(\theta_i^a)\sigma_i^a} +\OO(n^{-2}) = \frac{1+d_i^a}{d_i^a}+ \OO(n^{-2}).
\end{equation}
Similarly, we have $m_{2a}(z)=m_{2a}(\theta_i^a)+\OO(n^{-2})$ and 
\[\frac{1}{n} \sum_\mu (\BB \Pi(z)\BB)_{\mu\mu}^2=\frac{1}{n} \sum_\mu \left(z^{-1}B(1+m_{1c}(z)B)^{-1}\right)_{\mu\mu}^2=b(z)=b(\theta_i^a) +\OO(n^{-2}).\]
Thus, \eqref{recursive_moment_z} can be rewritten in the form \eqref{expansion result}, which proves Lemma \ref{recursive moment estimate}.

\bibliographystyle{alpha}
\bibliography{sample}

\end{document}